\documentclass[reqno]{amsart}

\usepackage{graphicx} 
\usepackage{hyperref}
\usepackage[mathscr]{euscript}
\usepackage{amsmath}
\usepackage{amssymb}
\usepackage{enumerate}
\usepackage{graphicx}
\usepackage{slashed}
\usepackage{verbatim}

\newtheorem{MainThm}{Theorem}

\theoremstyle{plain}
\newtheorem{thm}[equation]{Theorem}
\newtheorem{hypthm}[equation]{Hypothetical Theorem}
\newtheorem{cor}[equation]{Corollary}
\newtheorem{lem}[equation]{Lemma}
\newtheorem{prop}[equation]{Proposition}
\newtheorem{hypothesis}[equation]{Hypothesis}

\theoremstyle{definition}
\newtheorem{definition}[equation]{Definition}
\newtheorem{example}[equation]{Example}
\newtheorem{defn}[equation]{Definition}

\theoremstyle{remark}
\newtheorem{remark}[equation]{Remark}
\newtheorem{rem}[equation]{Remark}

\unitlength1cm
\numberwithin{equation}{subsection}

\newcommand{\bD}{\mathrm{D}}
\newcommand{\bK}{\mathrm{KO}}

\newcommand{\bN}{\mathbb{N}}
\newcommand{\bQ}{\mathbb{Q}}
\newcommand{\bR}{\mathbb{R}}
\newcommand{\bS}{\mathbb{S}}
\newcommand{\bZ}{\mathbb{Z}}

\newcommand{\gA}{\bold{A}}

\newcommand{\gx}{\mathbf{x}}
\newcommand{\gy}{\mathbf{y}}
\newcommand{\gz}{\mathbf{z}}

\newcommand{\cA}{\mathcal{A}}
\newcommand{\cB}{\mathcal{B}}
\newcommand{\cC}{\mathcal{C}}
\newcommand{\cD}{\mathcal{D}}
\newcommand{\cE}{\mathcal{E}}
\newcommand{\cF}{\mathcal{F}}
\newcommand{\cG}{\mathcal{G}}
\newcommand{\cK}{\mathcal{K}}
\newcommand{\cL}{\mathcal{L}}
\newcommand{\cM}{\mathcal{M}}
\newcommand{\cO}{\mathcal{O}}
\newcommand{\cP}{\mathcal{P}}
\newcommand{\cQ}{\mathcal{Q}}
\newcommand{\cR}{\mathcal{R}}
\newcommand{\cU}{\mathcal{U}}
\newcommand{\cX}{\mathcal{X}}
\newcommand{\cY}{\mathcal{Y}}
\newcommand{\cZ}{\mathcal{Z}}

\newcommand{\Conf}{\mathrm{Conf}}

\newcommand{\const}{\mathrm{const}}
\newcommand{\ad}{\mathrm{ad}}

\newcommand{\bott}{\operatorname{bott}}
\newcommand{\Bun}{\mathrm{Bun}}

\newcommand{\Cl}{\mathbf{Cl}}

\newcommand{\cstar}{\mathbf{C}^{\ast}}

\newcommand{\dom}{\mathrm{dom}}
\newcommand{\dirac}{\slashed{D}}
\newcommand{\Dir}{\slashed{\mathfrak{D}}}

\newcommand{\eps}{\epsilon}
\newcommand{\Eig}{\operatorname{Eig}}

\newcommand{\grotimes}{\hat{\otimes}}
\newcommand{\hofib}{\operatorname{hofib}}

\newcommand{\ind}{\operatorname{ind}}
\newcommand{\id}{\operatorname{id}}
\newcommand{\inc}{\operatorname{inc}}
\newcommand{\inddiff}{\operatorname{inddiff}} 

\newcommand{\Set}{\mathsf{Set}}
\newcommand{\ssSet}{\mathsf{ssSet}}
\newcommand{\CatNU}{\mathsf{Cat}^{\mathrm{nu}}}
\newcommand{\Sheaves}{\mathsf{Sheaves}}
\newcommand{\Top}{\mathsf{Top}}
\newcommand{\hoTop}{\mathsf{Ho(Top)}}
\newcommand{\Mfds}{\mathsf{Mfds}}

\newcommand{\KO}{\mathrm{K}\mathrm{O}}

\newcommand{\MT}{\mathrm{MT}}

\newcommand{\MTSO}{\mathrm{MTSO}}
\newcommand{\Psc}{\mathrm{P}}

\newcommand{\Mor}{\mathrm{Mor}}
\newcommand{\morita}{\operatorname{morita}}
\newcommand{\norm}[1]{\| #1 \|}

\newcommand{\Ob}{\mathrm{Ob}}

\newcommand{\Path}{P}
\newcommand{\pr}{\operatorname{pr}}

\newcommand{\Riem}{\cR}
\newcommand{\res}{\mathrm{res}}

\newcommand{\lst}{\mathrm{lst}}
\newcommand{\rst}{\mathrm{rst}}

\newcommand{\psc}{\mathrm{psc}}
\newcommand{\spann}{\mathrm{span}}
\newcommand{\spec}{\operatorname{spec}}
\newcommand{\Spin}{\mathrm{Spin}}
\newcommand{\supp}{\operatorname{supp}}
\newcommand{\spinor}{\slashed{\mathfrak{S}}}
\newcommand{\scal}{\mathrm{scal}}

\newcommand{\twomatrix}[4]{\begin{pmatrix} #1 & #2 \\ #3 & #4  \end{pmatrix}}
\newcommand{\tor}{\mathrm{tor}}

\newcommand\lra{\longrightarrow}

\newcommand\Diff{\mathrm{Diff}}
\newcommand\Emb{\mathrm{Emb}}

\newcommand\colim{\operatorname*{colim}}

\newcommand{\round}{\circ}

\newcommand{\inter}[1]{\mathrm{int}{(#1)}}

\newcommand{\hAut}{\mathrm{hAut}}

\newcommand{\CircNum}[1]{\ooalign{\hfil\raise .00ex\hbox{\scriptsize #1}\hfil\crcr\mathhexbox20D}}
\newcommand{\ev}{\mathrm{ev}}

\newcommand{\SE}{\mathrm{SE}}

\newcommand{\Cob}{\mathcal{C}}
\newcommand{\PCob}{\mathcal{P}}

\newcommand{\scpr}[1]{\langle #1 \rangle}

\newcommand{\cstarred}{\mathbf{C}^*_{\mathrm{r}}}
\newcommand{\op}{\mathrm{op}}

\newcommand{\Simp}{\mathrm{Simp}}
\newcommand{\inj}{\mathrm{inj}}

\newcommand{\Conc}{\mathcal{Q}} 
\newcommand{\rep}[1]{| #1 |}
\newcommand{\cDs}{\hat{\mathcal{D}}} %

\newcommand{\riem}{\mathrm{riem}}
\newcommand{\sh}{\mathrm{sh}}
\newcommand{\trg}{\mathrm{trg}}

\newcommand{\Inddiff}{\mathrm{Inddiff}} 

\usepackage[all]{xy}
\SelectTips{cm}{10}
\CompileMatrices

\title[The psc cobordism category]{The positive scalar curvature\\cobordism category}

\author{Johannes Ebert}
\thanks{J. Ebert was partially supported by the SFB 878, by EPSRC grant EP/M027783/1, and by the Deutsche Forschungsgemeinschaft under Germany's Excellence Strategy EXC 2044 390685587, Mathematics M\"unster: Dynamics--Geometry--Structure.}
\email{johannes.ebert@uni-muenster.de}
\address{
Mathematisches Institut der Westf{\"a}lischen Wilhelms-Universit{\"a}t M{\"u}nster\\
Einsteinstr. 62\\
DE-48149 M{\"u}nster\\
Germany
}

\author{Oscar Randal-Williams}
\thanks{O. Randal-Williams was supported by EPSRC grant EP/M027783/1, the ERC under the European Union's Horizon 2020 research and innovation programme (grant agreement No. 756444), and a Philip Leverhulme Prize from the Leverhulme Trust.}
\email{o.randal-williams@dpmms.cam.ac.uk}
\address{
Centre for Mathematical Sciences\\
Wilberforce Road\\
Cambridge CB3 0WB\\
UK}

\date{\today}

\keywords{Positive scalar curvature, Gromov--Lawson surgery, cobordism categories, diffeomorphism groups, secondary index invariant}
\subjclass[2010]{19K56, 53C27, 55P47, 55R35, 57R22, 57R65, 57R90, 58D17, 58D05, 58J20}
\begin{document}

\begin{abstract}
We prove that many spaces of positive scalar curvature metrics have the homotopy type of infinite loop spaces. Our result in particular applies to the path component of the round metric inside $\Riem^+ (S^d)$ if $d \geq 6$. 

To achieve that goal, we study the cobordism category of manifolds with positive scalar curvature. Under suitable connectivity conditions, we can identify the homotopy fibre of the forgetful map from the psc cobordism category to the ordinary cobordism category with a delooping of spaces of psc metrics. This uses a version of Quillen's Theorem B and instances of the Gromov--Lawson surgery theorem. 

We extend some of the surgery arguments by Galatius and the second named author to the psc setting to pass between different connectivity conditions. Segal's theory of $\Gamma$-spaces is then used to construct the claimed infinite loop space structures. 

The cobordism category viewpoint also illuminates the action of diffeomorphism groups on spaces of psc metrics. We show that under mild hypotheses on the manifold, the action map from the diffeomorphism group to the homotopy automorphisms of the spaces of psc metrics factors through the Madsen--Tillmann spectrum. This implies a strong rigidity theorem for the action map when the manifold has trivial rational Pontrjagin classes. 

A delooped version of the Atiyah--Singer index theorem proved by the first named author is used to moreover show that the secondary index invariant to real $K$-theory is an infinite loop map. These ideas also give a new proof of the main result of our previous work with Botvinnik. 
\end{abstract}

\maketitle

\newpage

\tableofcontents

\newpage

\section{Introduction}

For a closed manifold $M$, let $\Riem^+ (M)$ denote the space of Riemannian metrics of positive scalar curvature (psc) on $M$. Using Chernysh's improvement \cite{Chernysh} of the Gromov--Lawson surgery method \cite{GL}, one can show that as long as $\dim (M)=\dim (N)\geq 3$ there is a connected sum map
\[
 \Riem^+ (M) \times \Riem^+ (N) \lra \Riem^+ (M \sharp N),
\]
well-defined up to homotopy. In particular, if $M=N=S^d$, one obtains a multiplication 
\[
\mu:\Riem^+ (S^d) \times \Riem^+ (S^d) \lra \Riem^+ (S^d), 
\]
which can be shown to be homotopy unital (the round metric is a homotopy unit), homotopy associative, and homotopy commutative. 

Much more precisely, Walsh \cite{WalshH} has shown that up to homotopy $\Riem^+ (S^d)$ admits an action of the little $d$-disc operad and hence is an $E_d$-space. The underlying $H$-space structure of Walsh's $E_d$-structure is given by the map $\mu$. The existence of such an $E_d$-structure implies that the path component of $\Riem^+ (S^d)$ which contains the unit is a $d$-fold loop space. Slightly better, if $\Riem^+ (S^d)^{\mathrm{st}} \subset \Riem^+ (S^d)$ denotes the subspace of those elements which are invertible up to homotopy (which is a union of path components), then Walsh's results imply that $\Riem^+ (S^d)^{\mathrm{st}}$ has the homotopy type of a $d$-fold loop space. In this paper, we go beyond Walsh's work and prove:
\begin{MainThm}\label{Main:infiniteloopspacetheoremsphere}
As long as $d \geq 6$ the space $\Riem^+ (S^d )^{\mathrm{st}}$ has the homotopy type of an infinite loop space.
\end{MainThm}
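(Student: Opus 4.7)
The strategy is to realise $\Riem^+(S^d)^{\mathrm{st}}$ as the loop space of a space manifestly equipped with an infinite loop space structure, namely the homotopy fibre of a map of classifying spaces of symmetric monoidal topological categories. Disjoint union endows both the ordinary $d$-dimensional cobordism category $\Cob$ and its psc enhancement $\Cob^+$ with symmetric monoidal structures, and by Segal's $\Gamma$-space machinery, the classifying spaces $B\Cob$ and $B\Cob^+$ are infinite loop spaces. The forgetful functor $\Cob^+\to\Cob$ is symmetric monoidal, so the induced map $B\Cob^+\to B\Cob$ is an infinite loop map; in particular, its homotopy fibre $F$ at any basepoint inherits an infinite loop space structure.

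The heart of the argument is to identify $F$, taken over a suitable basepoint in $B\Cob$, with a delooping of $\Riem^+(S^d)$. For this one invokes a version of Quillen's Theorem B: the fibre is computed by the space of \emph{psc lifts} of loops in $B\Cob$, which after unfolding becomes the bar construction on $\Riem^+(W)$ for an appropriate closed $d$-manifold $W$, with its $E_\infty$-structure from iterated connected sum. To verify the hypotheses of Theorem B one needs parametrised Gromov--Lawson--Chernysh surgery, showing that psc metrics extend across the traces of any relevant surgery, and that the comparison maps are homotopy equivalences. Under strong connectivity hypotheses on the cobordism categories in question, $W$ can be taken to be $S^d$.

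To remove the auxiliary connectivity hypothesis and conclude for the category that naturally arises, one then extends the Galatius--Randal-Williams surgery techniques to the psc setting, comparing the classifying spaces of cobordism categories with differing connectivity restrictions. The hypothesis $d\geq 6$ enters precisely here: it guarantees enough room for surgeries in codimension $\geq 3$, which is simultaneously the threshold for the Gromov--Lawson surgery theorem to produce psc extensions and the threshold for the topological surgery arguments to preserve the relevant connectivity.

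Once $F\simeq B\Riem^+(S^d)$ is established, looping yields $\Omega F \simeq \Riem^+(S^d)^{\mathrm{st}}$ via the group completion theorem, because each invertible component of an $E_\infty$-space is equivalent to a component of its group completion. The desired infinite loop space structure on $\Riem^+(S^d)^{\mathrm{st}}$ is inherited from that of $F$. The hard part is the fibre identification in the second paragraph: the Chernysh surgery theorem concerns individual psc metrics, but the version needed here must act coherently on simplices of the nerve of $\Cob^+$ in a manner compatible with Theorem B, and this parametrised refinement is the main technical investment of the paper.
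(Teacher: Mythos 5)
Your overall architecture matches the paper's: a psc cobordism category with a forgetful functor to the ordinary one, Quillen's Theorem B to identify the homotopy fibre with a delooping of spaces of psc metrics, GRW-style parametrised surgery to relax the connectivity conditions, and Segal $\Gamma$-spaces from disjoint union to produce the infinite loop structure on the fibre.

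The genuine gap is in your final step, where you claim $\Omega F \simeq \Riem^+(S^d)^{\mathrm{st}}$ ``via the group completion theorem, because each invertible component of an $E_\infty$-space is equivalent to a component of its group completion.'' That general statement is false: for $M = \coprod_n B\Sigma_n$ under disjoint union the unit component is a point and is invertible, while the unit component of $\Omega BM$ is $B\Sigma_\infty^+ \simeq Q_0S^0$. Moreover, the McDuff--Segal theorem needs a calculus-of-fractions hypothesis on the $\pi_0$-action on homology, and the paper explicitly records that it does not know whether this holds for the concatenation monoid $\Riem^+([0,1]\times M)_{g,g}$. The paper's fix is the \emph{stability} condition: one restricts throughout to the wide subcategories of right-stable morphisms ($\PCob_\theta^{2,1,\rst}$, $\Conc(M)^{\rst}$), for which composition with any morphism is a weak equivalence on morphism spaces; this makes the relevant semi-simplicial map a quasifibration and gives $\Riem^+([0,1]\times M)^{\rst}_{g,g} \xrightarrow{\sim} \Omega B\Conc(M)^{\rst}$ directly (Theorem \ref{thm:Concordance}), with no group completion. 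The price is that the fibre theorem, the surgery theorem and the $\Gamma$-space structures must all be established for the $\rst$-decorated categories, and one needs the existence theorem for right-stable metrics (Theorem \ref{thm:StabMetrics}) both to verify the Theorem B hypotheses and to transport the base-point from $\emptyset$ to $S^{d-1}$ via the torpedo metric on $D^d$. As written, your argument deloops the full concordance category and would only identify $\Omega F$ with a group completion, not with $\Riem^+(S^d)^{\mathrm{st}}$.
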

We do \emph{not} claim that the entire space $\Riem^+ (S^d)$ is an $E_\infty$-space: our techniques will apply directly to the subspace $\Riem^+ (S^d )^{\mathrm{st}}$. Walsh's theorem is geometrically quite plausible, especially after using Chernysh's theorem to replace $\Riem^+ (S^d)$ with the homotopy equivalent space $\Riem^+ (D^d)_{g_\round^{d-1}}$ of metrics on the $d$-disc which are collared and agree with the round metric on the boundary. Our infinite loop space structure is less geometrically clear, and is akin to Tillmann's theorem \cite{Tillmann} that the plus-constructed stable mapping class group, which has a geometrically evident double loop space structure, in fact has an infinite loop space structure. As in the case of Tillmann's theorem, it is difficult to compare our infinite loop space structure with Walsh's $d$-fold loop space structure: we do \emph{not} claim that ours extends his.

\subsection{Stable metrics and a generalisation}
Theorem \ref{Main:infiniteloopspacetheoremsphere} is a special case of a more general result, which needs some further preliminaries to state. Firstly, let us recall the stability conditions for psc metrics on cobordisms which we introduced in \cite {ERWpsc2}. For a cobordism $W: M_0 \leadsto M_1$ and $g_i \in \Riem^+ (M_i)$, we let $\Riem^+ (W)_{g_0,g_1}$ be the space of all psc metrics on $W$ which are of the form $dx^2+g_i $ near $M_i$, with respect to some (given) collars. For $h \in \Riem^+ (W)_{g_0,g_1}$, there are composition maps 
\[
 \mu (h,\_) : \Riem^+ (V)_{g_1,g_2} \lra \Riem^+ (W \cup V)_{g_0,g_2}
\]
and 
\[
 \mu(\_,h): \Riem^+ (V')_{g_{-1} ,g_0} \lra \Riem^+ (V'\cup W)_{g_{-1},g_1},
\]
defined for cobordisms $V': M_{-1} \leadsto M_0$ and $V:M_1 \leadsto M_2$ and $g_i \in \Riem^+ (M_i)$. We say that $h$ is \emph{right stable} if $\mu(h,\_)$ is a weak equivalence for all such cobordisms $V$, and \emph{left stable} if $\mu(\_,h)$ is a weak equivalence for all such $V'$, and \emph{stable} if it is both left and right stable. It turns out that a right stable metric on $[0,1] \times M$ is also left stable, and cylinder metrics $dx^2+g$ are right stable. The space
\[
 \Riem^+ ([0,1] \times M)_{g,g}^{\mathrm{st}} \subset \Riem^+ ([0,1] \times M)_{g,g}
\]
of all stable psc metrics on $[0,1] \times M$ is a union of path components of $ \Riem^+ ([0,1] \times M)_{g,g}$. The above composition maps extend to a map
$$\mu(\_,\_) : \Riem^+ ([0,1] \times M)_{g,g} \times \Riem^+ ([0,1] \times M)_{g,g} \to \Riem^+ ( [0,2]\times M)_{g,g} \cong \Riem^+ ([0,1] \times M)_{g,g}$$
equipping $\Riem^+ ([0,1] \times M)_{g,g}$ with the structure of an $H$-space (and in fact of an $E_1$-space), and $\Riem^+ ([0,1] \times M)_{g,g}^{\mathrm{st}}$ is the subspace of those elements which are invertible up to homotopy. With this vocabulary introduced, we can state the more general version of Theorem \ref{Main:infiniteloopspacetheoremsphere}.
\begin{MainThm}\label{Main:infiniteloopspacetheorem}
Let $M^{d-1}$ be a closed manifold and $g \in \Riem^+ (M)$. Assume that
\begin{enumerate}[(i)]
 \item $d \geq 6$,
 \item there is a cobordism $W: \emptyset \leadsto M$ such that $M \to W$ is $2$-connected and such that
 \item there is a right stable metric $h \in \Riem^+ (W)_g$.
\end{enumerate}
Then the space $\Riem^+ ([0,1] \times M)_{g,g}^{\mathrm{st}}$ has the homotopy type of an infinite loop space, with underlying $E_1$-space structure given by $\mu$.
\end{MainThm}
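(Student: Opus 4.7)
The plan is to construct a symmetric monoidal psc cobordism category $\mathcal{C}^+$, together with the forgetful functor to the ordinary cobordism category $\mathcal{C}$, and to identify the homotopy fibre of $B\mathcal{C}^+ \to B\mathcal{C}$ (over the relevant component) with $B\mathrm{Riem}^+([0,1]\times M)_{g,g}^{\mathrm{st}}$. Since both $B\mathcal{C}^+$ and $B\mathcal{C}$ will be infinite loop spaces via Segal's $\Gamma$-space machinery, the homotopy fibre is an infinite loop space; looping once gives the desired conclusion.

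First, I would set up a topological category $\mathcal{C}^+$ whose objects are pairs $(N,g_N)$ with $N$ a closed $(d-1)$-manifold and $g_N \in \mathrm{Riem}^+(N)$, and whose morphisms from $(N_0,g_0)$ to $(N_1,g_1)$ are cobordisms equipped with psc metrics collared by $g_0,g_1$, with composition by gluing. Disjoint union of manifolds-with-metrics equips $\mathcal{C}^+$ (and $\mathcal{C}$) with a permutative structure, which produces a $\Gamma$-object of topological categories and hence a $\Gamma$-space on classifying spaces. Standard Segal machinery (together with the observation that on the level of $\pi_0$, invertibility is achieved after passing to the stable subcategory) shows that the group completion of $B\mathcal{C}^+$, and in particular each of its loop spaces, has a canonical infinite loop space structure. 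The forgetful functor $\mathcal{C}^+ \to \mathcal{C}$ is a morphism of $\Gamma$-objects, so $B\mathcal{C}^+ \to B\mathcal{C}$ is an infinite loop map after group completion, and its homotopy fibre carries an infinite loop space structure.

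The central step is to compute this homotopy fibre using a variant of Quillen's Theorem B applied to $\mathcal{C}^+ \to \mathcal{C}$ over the component of $(M,g)$. The candidate fibre is $B\mathrm{Riem}^+([0,1]\times M)_{g,g}^{\mathrm{st}}$: over the identity $1$-morphism of $M$ in $\mathcal{C}$, the preimage in $\mathcal{C}^+$ is the space $\mathrm{Riem}^+([0,1]\times M)_{g,g}$ of cylinder psc metrics, whose $H$-space structure under $\mu$ provides the delooping. The Theorem B hypothesis demands that for any morphism $[W] \in B\mathcal{C}$, the induced map on appropriate homotopy-overcategories is a weak equivalence; in concrete terms this means that pre- or post-composition with any psc cobordism induces a weak equivalence between fibres of the forgetful map. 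Assumption (iii), that a right stable metric $h$ on a $2$-connected null-bordism $W$ exists, together with the Gromov--Lawson--Chernysh surgery theorem, is precisely what allows such an equivalence to be exhibited: one reduces an arbitrary cobordism to a standard one by surgeries below the middle dimension (using the dimension hypothesis (i) $d\geq 6$ and the 2-connectedness of $W$ from (ii)), and the stability of $h$ ensures that composition with $h$ converts the resulting map on psc-metric spaces into a weak equivalence. This part is where the surgery arguments of Galatius and the second author are extended to the psc setting.

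The main obstacle is the verification of the Theorem B hypothesis in Step 3. It requires controlled parameterised surgery in the presence of psc metrics and the careful interplay between the connectivity hypothesis and the Gromov--Lawson--Chernysh theorem; moreover, one must show that the resulting identification of the homotopy fibre is compatible with the $\mu$-multiplication on $\mathrm{Riem}^+([0,1]\times M)_{g,g}$ (so that the eventual $E_1$-structure underlying the infinite loop structure agrees with $\mu$, as asserted). Once Step 3 is complete, the fibre sequence of infinite loop spaces $B\mathrm{Riem}^+([0,1]\times M)_{g,g}^{\mathrm{st}} \to B\mathcal{C}^+ \to B\mathcal{C}$ yields the theorem after applying $\Omega$.
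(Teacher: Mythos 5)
Your outline follows the paper's strategy closely: a psc cobordism category with forgetful functor to the ordinary one, Quillen's Theorem~B to identify the homotopy fibre with a delooping of $\Riem^+([0,1]\times M)_{g,g}^{\mathrm{st}}$, and Segal's $\Gamma$-space machinery via disjoint union to make everything an infinite loop map. However, there is one genuine gap that your plan does not confront. The verification of the Theorem~B hypotheses requires more than the existence of the single stable metric $h$ from hypothesis~(iii): for \emph{every} morphism $W'$ of the category one must show that composition induces equivalences between fibre categories, and the only available tool is the Gromov--Lawson--Chernysh existence theorem for \emph{two-sided} stable metrics, which needs both the cobordisms to be $2$-connected relative to their outgoing boundary \emph{and} the structure maps of the objects to $B$ (the tangential $2$-type) to be $2$-connected. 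So Theorem~B can only be run on the subcategory $\PCob_\theta^{2,1,\rst}$ with these object constraints. But this subcategory is \emph{not} closed under disjoint union --- taking disjoint unions destroys the $2$-connectivity of the structure maps to $B$ --- so it carries no $\Gamma$-space structure, while the category $\PCob_\theta^{2,\rst}$ that does carry one does not satisfy the Theorem~B hypotheses. Reconciling these two requirements is a substantial step: one must prove that $B\PCob_\theta^{2,1,\rst}\to B\PCob_\theta^{2,\rst}$ is a weak equivalence by a parametrised ``surgery on objects'' argument in the presence of psc metrics (Theorem~D of the paper). Your proposal places the surgery input inside the Theorem~B verification, but that is not where the difficulty lies; as written, the plan would stall at the point where you try to make the fibre identification and the infinite loop structure live on the same category.

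A secondary misattribution: hypothesis~(iii) is not what makes the Theorem~B hypotheses hold (that is Gromov--Lawson--Chernysh applied to all morphisms of $\Cob_\theta^{2,1}$). Its actual role is the base-point change: the $\Gamma$-space structure only exhibits the homotopy fibre over the unit object $\emptyset$ as an infinite loop space, and the right stable metric $h$ on the nullbordism $W$ provides a path from $(M,g)$ to $\emptyset$ in $B\PCob_\theta^{2,\rst}$, which is needed to transport the infinite loop structure to the fibre over $M$ in a way independent of choices. Without addressing this, your ``over the relevant component'' is not yet a proof that the fibre over $M$ (rather than over $\emptyset$) is an infinite loop space.
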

Again, we do \emph{not} claim that $\Riem^+ ([0,1] \times M)_{g,g}$, without the stability condition,  is an $E_\infty$-space. Theorem \ref{Main:infiniteloopspacetheorem} implies Theorem \ref{Main:infiniteloopspacetheoremsphere}: by \cite{Chernysh}, there is a homotopy equivalence
$$\Riem^+ (S^d) \simeq \Riem^+ ([0,1] \times S^{d-1})_{g_\round^{d-1}, g_\round^{d-1}}$$
under which the subspaces $\Riem^+ (S^d)^{\mathrm{st}}$ and $\Riem^+ ([0,1] \times S^{d-1})_{g_\round^{d-1}, g_\round^{d-1}}^{\mathrm{st}}$ correspond. 

\begin{rem}
The most classical example of an $E_d$-space is $\coprod_{n \geq 0} \Conf^n (\bR^d)$, the space of unordered configurations in $\bR^d$. Its group completion is the very interesting $d$-fold loop space $\Omega^d S^d$. It follows from the general theory of $E_d$-spaces that the subspace of invertible elements in $\coprod_{n \geq 0} \Conf^n (\bR^d)$ is a $d$-fold loop space, but in this case the subspace of invertible elements is just $\Conf^0 (\bR^d)=*$, and so the statement that it is a $d$-fold loop space is vacuous. This situation is quite common and shared by many classical examples, so the reader guided by analogies may ask whether the same is true in the situation of Theorem \ref{Main:infiniteloopspacetheorem}.

This is not true, at least not if $M$ carries a spin structure, by the main result of \cite{BERW}. By Theorem B of that paper, there is, for even $d \geq 6$, a map $\rho:\Omega^{\infty+1} \MT \Spin (d) \to \Riem^+ ([0,1] \times M)_{g,g}$ such that the composition with the index difference map $\inddiff_{g+dt^2}: \Riem^+ ([0,1] \times )_{g,g} \to \Omega^{\infty+d+1} \KO$ is highly nontrivial in homotopy. 
Even though this is not explicitly stated in \cite{BERW}, the map $\rho$ can be chosen to map the base point component of its source to an arbitrarily chosen component of $\Riem^+ ([0,1] \times M)_{g,g}$, for example to the component containing $g+dt^2$. But $g+dt^2$ lies in $\Riem^+ ([0,1] \times M)_{g,g}^{\mathrm{st}}$; therefore, we obtain a homotopically nontrivial map from the unit component of $\Omega^{\infty+1}_0\MT \Spin (d)$ into $\Riem^+ ([0,1] \times M)_{g,g}^{\mathrm{st}}$. This explains why $\Riem^+ ([0,1]\times M^{d-1})_{g,g}^{\mathrm{st}}$ is not contractible for even $d \geq 6$, and the case of odd $d \geq 7$ can be dealt with similarly by an inspection of the proof of Theorem C of \cite{BERW}.

In \S 8 of this paper, we shall give a new proof of the main results of \cite{BERW} (and of \cite{ERWpsc2}), which applies equally to odd-dimensional manifolds and directly to the subspace $\Riem^+ ([0,1]\times M)_{g,g}^{\mathrm{st}}$. This shows that $\Riem^+ ([0,1]\times M)_{g,g}^{\mathrm{st}} \not \simeq *$ when $M$ has a spin structure.
\end{rem}

\subsection{Outline of the proof of Theorem \ref{Main:infiniteloopspacetheorem}}
The proof of Theorem \ref{Main:infiniteloopspacetheorem} employs the \emph{cobordism category of manifolds with positive scalar curvature metrics}. The definition involves the notion of a tangential structure, so let us fix a fibration $\theta: B \to B\mathrm{O}(d)$, and denote by $\gamma_\theta:= \theta^* \gamma_d$ the pullback of the universal vector bundle along $\theta$. The (ordinary) cobordism category $\Cob_\theta$ has as its objects the closed $(d-1)$-dimensional $\theta$-manifolds and as its morphisms the $d$-dimensional $\theta$-cobordisms $W: M_0 \leadsto M_1$. Galatius, Madsen, Tillmann, and Weiss \cite{GMTW} have identified the homotopy type of the classifying space $B \Cob_\theta$ as the infinite loop space $\Omega^{\infty-1} \MT \theta$ of the Thom spectrum of the virtual vector bundle $-\gamma_\theta$. 

In their work \cite{GRW} on the homology of diffeomorphism groups of high-dimensional manifolds, Galatius and the second named author introduced some important subcategories of $\Cob_\theta$. The first is $\Cob_\theta^\kappa \subset \Cob_\theta$, which is the wide subcategory\footnote{A subcategory $\cB \subset \cC$ is \emph{wide} if $\cB$ contains all objects of $\cC$.} whose morphisms are the $\theta$-cobordisms $W: M_0 \leadsto M_1$ such that the inclusion map $M_1 \to W$ is $\kappa$-connected.
A further subcategory is the full subcategory $\Cob_\theta^{\kappa,l} \subset \Cob_\theta^{\kappa}$ on all objects whose structure maps $M \to B$ are $(l+1)$-connected\footnote{The definition is phrased in a slightly different way in \cite{GRW}, and this is responsible for the degree shift. In order to avoid confusion with the terminology of \cite{GRW}, we chose to stick to this somewhat unnatural convention.}. For suitable values of $\kappa$ and $l$, Theorems 3.1 and 4.1 of \cite{GRW} imply that the classifying spaces of these categories are weakly equivalent. For us it is the case $(\kappa,l)=(2,1)$ that is relevant, and in this case the maps
\begin{equation}\label{eqn:grwtheoremintroduction}
B \Cob_\theta^{2,1} \lra B \Cob_\theta^2 \lra B \Cob_\theta \simeq \Omega^{\infty-1} \MT \theta
\end{equation}
are weak equivalences provided that $d \geq 6$ and $B$ satisfies Wall's \cite{WallFin} finiteness condition $(F_2)$. The latter condition is satisfied for example if $B = B \Spin (d) \times BG$ when $G$ is a finitely presented group, or more generally if $\theta$ is the tangential $2$-type of a compact mani\-fold. 

The definition of the psc cobordism categories $\PCob_\theta^{\kappa,l}$ is straightforward: an object consists of a pair $(M,g)$ of an object $M$ of $\Cob_\theta^{\kappa,l}$ and a psc metric $g \in \Riem^+ (M)$, and a morphism $(W,h): (M_0,g_0) \leadsto (M_1,g_1)$ consists of a morphism $W: M_0 \leadsto M_1$ in $\Cob_\theta^{\kappa,l}$, together with a psc metric $h \in \Riem^+ (W)_{g_0,g_1}$. There is a suitable topology on $\PCob_\theta^{\kappa,l}$, which we will not describe in this introduction. There are wide subcategories $\PCob_\theta^{\kappa,l,\rst} \subset \PCob_\theta^{\kappa,l}$ whose morphisms are the pairs $(W,h)$ where $h$ is right stable. The forgetful functor
\[
 F: \PCob_\theta \lra \Cob_\theta
\]
restricts to functors $F^{\kappa}, F^{\kappa,l}$, and $F^{\kappa,l,\rst}$ between the respective subcategories. The first major step towards the proof of Theorem \ref{Main:infiniteloopspacetheorem} is to identify the homotopy fibres of (some of) those forgetful functors. To describe those homotopy fibres, we introduce the \emph{concordance category} $\Conc (M)$ of psc metrics on a closed manifold. Roughly, an object of $\Conc (M)$ is a psc metric $g \in \Riem^+ (M)$, and a morphism $g_0 \leadsto g_1$ is a \emph{concordance}, i.e. $h \in \Riem^+([0,1] \times M)_{g_0,g_1}$. There is a subcategory $\Conc (M)^{\rst}$ of right stable concordances.
\begin{MainThm}\label{Main:fibretheorem}
Let $M$ be an object of $\Cob_\theta^{2,1}$ and assume that $d \geq 6$. Then there are weak homotopy equivalences
\[
 B \Conc (M) \simeq \hofib_{M} (BF^{2,1} : B\PCob_\theta^{2,1} \to B\Cob_\theta^{2,1})
\]
and 
\[
 B \Conc (M)^{\rst} \simeq \hofib_{M} (BF^{2,1,\rst} : B\PCob_\theta^{2,1, rst} \to B\Cob_\theta^{2,1})
\]
(for a map $f:X \to Y$, the symbol $\hofib_y (f)$ denotes the homotopy fibre taken at the base-point $y \in Y$).
\end{MainThm}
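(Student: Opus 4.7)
The overall strategy is to apply a version of Quillen's Theorem~B adapted to topological categories, as signaled in the abstract. For a functor $F: \cC \to \cD$ and an object $M \in \cD$, such a version identifies the classifying space of an appropriately defined homotopy-fibre category over $M$ with $\hofib_M(BF)$, subject to the hypothesis that every morphism in $\cD$ induces a weak equivalence between such fibre categories. In our setting, one expects the homotopy-fibre category at $M$ to be weakly equivalent to $\Conc(M)$ (respectively $\Conc(M)^{\rst}$), since its objects ultimately encode psc metrics on $M$ and its morphisms over the identity encode psc metrics on $[0,1] \times M$, i.e.\ concordances.

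For the right stable variant, the Theorem~B hypothesis takes a clean form: given $W: M_0 \leadsto M_1$ in $\Cob_\theta^{2,1}$, the $2$-connectedness of $M_1 \to W$ combined with the Gromov-Lawson-Chernysh surgery method and the stability results of \cite{ERWpsc2} produces a right stable psc metric $h$ on $W$. Composition with $(W, h)$ then induces weak equivalences on the relevant spaces $\Riem^+(V)_{g_0, g_1}$ by the very definition of right stability, and these assemble to weak equivalences of the fibre-category classifying spaces. For the non-stable variant, one needs an additional step to relate general psc cobordisms to right stable ones: Gromov-Lawson surgery on a $2$-connected cobordism lets one compare an arbitrary psc metric to one that is right stable, reducing the Theorem~B hypothesis for $F^{2,1}$ to that for $F^{2,1,\rst}$. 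The identification of the homotopy-fibre category with $\Conc(M)$ in the general case then proceeds by a cofinality argument that exploits the existence of right stable extensions of boundary psc metrics along $2$-connected cobordisms.

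The main obstacle is the careful execution of Theorem~B in the topological-category setting: the nerves of $\PCob_\theta^{2,1}$ and $\Cob_\theta^{2,1}$ carry nontrivial topology on their spaces of morphisms, so one needs a simplicial- or sheaf-theoretic model of the nerves in which levelwise homotopy-fibre constructions assemble correctly under geometric realization. A secondary subtlety is ensuring that the surgery-theoretic comparisons described above are natural enough to function in parametrized families of cobordisms, which is what is really required to conclude that the induced maps on classifying spaces are weak equivalences. Finally, for the general (non-stable) statement one must take some care that the cofinality argument genuinely recovers all of $\Conc(M)$ rather than only $\Conc(M)^{\rst}$.
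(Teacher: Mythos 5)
Your proposal follows essentially the same route as the paper: the result is deduced from the topological Quillen Theorem B of \cite{SxTech}, the fibre categories $F^{2,1}/M$ are identified with concordance categories by a cofinality (Theorem A) argument, and the fibre-transition hypothesis is verified --- in both the stable and non-stable cases --- using the existence of stable psc metrics on cobordisms that are $2$-connected relative to the outgoing boundary. The one imprecision is that the composition functors in question actually require \emph{left} stability of the auxiliary metric $h$ rather than right stability, which the paper supplies by first showing (Theorem \ref{prop:right-stable-on-2.1.cob-stable}) that left and right stability coincide for such cobordisms after preparatory surgery.
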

This is a special case of the more precise and general Theorem \ref{thm:Fibre} below. The ingredients for the proof are the existence results for right stable metrics (Theorems E and D of \cite{ERWpsc2}) and a version of Quillen's Theorem B for topological categories (Theorem 4.9 of \cite{SxTech}). 

The relation of $B \Conc (M)^{\rst}$ with actual spaces of psc metrics is described as follows. A standard delooping argument (given in Theorem \ref{thm:Concordance}) shows that the tautological map
\begin{equation}\label{eq:firstdelooping}
 \Riem^+ ([0,1] \times M)_{g,g}^{\mathrm{st}}= \Riem^+ ([0,1] \times M)_{g,g}^{\rst}  \stackrel{\sim}{\lra} \Omega_g B \Conc (M)^{\rst}
\end{equation}
is a weak equivalence. By the equivalences \eqref{eqn:grwtheoremintroduction} the target space of $BF^{2,1,\rst}$ is equivalent to an infinite loop space. We would like to argue that $B \PCob_\theta^{2,1,\rst}$ is an infinite loop space and that $BF^{2,1,\rst}$ is an infinite loop map and hence conclude that $B \Conc (M)^{\rst}$ is an infinite loop space, thereby proving Theorem \ref{Main:infiniteloopspacetheorem}. The space $ B \Cob_\theta$ is a special $\Gamma$-space in the sense of Segal \cite{Segcat}: the composition law is given by disjoint union of the manifolds. Taking disjoint unions preserves the connectivity of the inclusion maps $M_1 \to W$, so that $B \Cob_\theta^2$ is a $\Gamma$-space as well. Similarly, $B \PCob_\theta^{2,\rst}$ is a special $\Gamma$-space and $BF^{2,\rst}$ is a map of $\Gamma$-spaces. It follows that the base-point component of the homotopy fibre $\hofib_\emptyset BF^{2,\rst}$ is an infinite loop space. 
However, $B \Cob_\theta^{2,1}$ and $B \PCob_\theta^{2,1,\rst}$ are \emph{not} $\Gamma$-spaces: taking disjoint union does not preserve the connectivity of the structure maps. To overcome this problem, we carry over some of the parametrised surgery methods of \cite{GRW} to the psc cobordism category. More precisely, we shall prove:
\begin{MainThm}\label{surgery-theorem-introcution}
If $2(l+1) < d$, $l \leq \kappa$, $l \leq d-\kappa-2$, $d-l-1 \geq 3$ and if $B$ is of type $(F_{l+1})$ in the sense of \cite{WallFin}, then the maps
\[
B \PCob_\theta^{\kappa,l} \lra B \PCob_\theta^{\kappa} \; \text{and} \;  B \PCob_\theta^{\kappa,l,\rst} \lra B \PCob_\theta^{\kappa,\rst}
\]
are weak homotopy equivalences.
\end{MainThm}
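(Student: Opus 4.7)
The plan is to adapt the parametrised surgery methods of \cite{GRW} from the ordinary cobordism category to the psc cobordism category. In \cite{GRW}, the corresponding statement that $B\Cob_\theta^{\kappa,l} \to B\Cob_\theta^\kappa$ is a weak equivalence is proved by resolving the cobordism category by semi-simplicial spaces whose $p$-simplices encode sequences of surgeries on objects along embedded spheres $S^i$ with $i \leq l$, together with their traces. The dimension assumptions $2(l+1) < d$, $l \leq \kappa$, and $l \leq d - \kappa - 2$, together with the $(F_{l+1})$-hypothesis on $B$, are exactly those required in \cite{GRW} so that such embeddings exist, the resulting traces are morphisms in $\Cob_\theta^\kappa$, and the resolution is effective.

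First I would construct a psc analogue of this resolution: each $p$-simplex carries additionally, for each surgery in the sequence, a psc metric on the trace cobordism extending the given metric on the incoming boundary. The geometric input making this well defined is the Gromov--Lawson surgery theorem \cite{GL} as refined by Chernysh \cite{Chernysh}: since $i \leq l$, the surgery sphere has codimension at least $d - 1 - l \geq 3$ in the $(d-1)$-dimensional object $M$, so the Gromov--Lawson construction produces a psc metric on the trace, and its parametric version from \cite{Chernysh}, exploited throughout \cite{ERWpsc2}, ensures continuity in families. This handles the non-$\rst$ case. Second, to handle the right stable variant, I would upgrade each trace metric to a right stable one using Theorem E of \cite{ERWpsc2}: since the trace of a single below-middle-dimensional surgery has highly connected endpoint inclusion, the trace metric is homotopic relative to the incoming boundary to a right stable one, and right stability is preserved under composition (so sequences of surgeries yield right stable morphisms of $\PCob_\theta^{\kappa,\rst}$ whose target object lies in $\PCob_\theta^{\kappa,l}$). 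The claimed weak equivalences then follow by running the resolution argument of \cite{GRW} in the psc setting: over each $p$-simplex of the base GRW resolution, the fibre in the psc resolution is the space of compatible psc metrics on the trace (together with, in the right-stable case, a choice of homotopy into the right stable locus), which is weakly contractible by Chernysh's theorem and by the stability existence results of \cite{ERWpsc2}.

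The main obstacle I anticipate is the parametric nature of the construction: one must ensure that the Gromov--Lawson--Chernysh extension of psc metrics and the right stability property persist in continuous families of surgery data, and that surgery traces compose functorially in the psc setting. These are precisely the issues addressed by the family-theoretic methods of \cite{Chernysh} and \cite{ERWpsc2}, so the proof is essentially a matter of assembling those ingredients into the GRW resolution framework and verifying that the connectivity bookkeeping of \cite{GRW} --- in particular the role of $l \leq \kappa$ and $l \leq d-\kappa-2$ in keeping traces inside $\Cob_\theta^\kappa$ --- transfers verbatim to the psc setting, which it does because the underlying cobordism data is unchanged.
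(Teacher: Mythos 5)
Your overall strategy---adapting the surgery-on-objects resolution of \cite{GRW}, equipping surgery traces with Gromov--Lawson psc metrics, and invoking the stability results of \cite{ERWpsc2} for the right-stable variant---is the one the paper follows. But there is a genuine gap in where you place the psc data and in which contractibility statement you invoke. You propose that each simplex of the resolution carries ``a psc metric on the trace cobordism extending the given metric on the incoming boundary,'' and that the fibres of the psc resolution over the GRW resolution are contractible spaces of such metrics. This cannot work as stated: the objects and morphisms of $\PCob_\theta^{\kappa,l-1}$ already carry psc metrics, and the given metric on an object $M_i$ (and on the ambient cobordisms containing it) is in general not of the standard form $g_\tor^{d-l-1}+g_\round^{l}$ near the image of the surgery embedding, so the Gromov--Lawson trace metric does not extend it and the surgery move cannot be performed. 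The real work is to deform the \emph{given} family of psc metrics on the composable cobordisms $W|_{[a_{i-1},a_i]}$ so that it becomes standard on a neighbourhood of each surgery locus, in families over the parameter space, compatibly across all the cobordisms of a $p$-simplex, and without leaving the right-stable locus. The paper isolates this as a separate equivalence (the inclusion of metric-coupled surgery data into uncoupled surgery data, Theorem \ref{thm:Contractibility}(ii)), proved by an inductive extension argument combining Theorems \ref{thm:improved-chernysh-theorem} and \ref{thm:chernysh-theorem}; it moreover requires a non-obvious device (the doubled embedding $\alpha'$ of Definition \ref{defn:Xpsc-2}), because the surgery embeddings meet the individual cobordisms $W|_{[t_0,t_1]}$ in \emph{non-compact} thickened submanifolds, to which Chernysh's theorem does not apply directly.

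A second, smaller gap concerns right stability. ``Upgrading each trace metric to a right stable one'' pointwise is not enough in a parametrised argument. The paper instead fixes once and for all a metric on the model trace $\tilde{K}$ with the property that whenever it is glued into a psc manifold as the trace of a surgery, the resulting cobordism is right stable (Lemma \ref{lem:psc-metric-on-standard-family}(iv), resting on Lemma 3.4.1 of \cite{ERWpsc2}), pulls it back along the family of embeddings of the moving standard family, and then propagates right stability to all the new cobordisms $\mathcal{K}^t_{e,\ell}(W)|_{[s_i,s_j]}$ using the two-out-of-three properties of Lemma \ref{lem:2-out-of-three}. One must also check that the surgery move leaves some product-type slice in each collar interval (whence the lower bound $\eps_i \geq 2(p+2)$ in Definition \ref{defn:Xpsc-2}), since right stability is only meaningful between such slices. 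These points are fixable with the tools you name, but they are where the actual content of the psc adaptation lies, and your proposal does not yet identify them.
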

In our case of interest $(\kappa,l)=(2,1)$, Theorem \ref{surgery-theorem-introcution} holds as long as $d \geq 5$. It follows that the base-point component of $\hofib_\emptyset BF^{2,1,\rst}$ is an infinite loop space, and if $M$ and $g$ are chosen as in the hypothesis of Theorem \ref{Main:infiniteloopspacetheorem}, we can identify the homotopy fibres of $BF^{2,\rst}$ over $M$ and over $\emptyset$, which finishes our outline of the proof of Theorem \ref{Main:infiniteloopspacetheorem}. 

\subsection{Diffeomorphism group actions}
The cobordism category approach is also useful to illuminate the action by diffeomorphisms on spaces of metrics of positive scalar curvature. More specifically, let $W^d$ be a compact manifold with boundary $M = \partial W$ and let $g \in \Riem^+ (M)$. Let $\hAut(\Riem^+ (W)_g)$ be the monoid of homotopy automorphisms of the space $\Riem^+ (W)_g$. The action of the diffeomorphism group $\Diff_\partial (W)$ on $\Riem^+ (W)_g$ by pulling metrics back gives a map
\[
A:\Diff_\partial (W)\to \hAut (\Riem^+ (W)_g)
\]
of topological monoids and hence a map $B \Diff_\partial (W) \to B \hAut (\Riem^+ (W)_g)$ on classifying spaces. For each $h \in \Riem^+ (W)_g$, we furthermore get the orbit map $\sigma_h: \Diff_\partial (W) \to \Riem^+ (W)_g$, $f \mapsto f^* g$, which clearly factors through $A$. 

We prove that under favorable circumstances, $A$ factors through an infinite loop space. For that, we have to assume that $(W,M)$  is $2$-connected. Let $\theta:B \to B\mathrm{O}(d)$ be the tangential $2$-type of $W$. There is a natural map
\[
\alpha_W: B \Diff_\partial (W) \lra   \Omega_{\emptyset,M} B \Cob_\theta^2 \simeq  \Omega_{\emptyset,M} B \Cob_\theta \simeq \Omega^\infty \MT \theta,
\]
which we shall explain in \eqref{eqn:defnalphaW}. Consider the Borel construction
\[
 E\Diff_\partial (W) \times_{\Diff_\partial (W)} \Riem^+ (W)_g \lra B \Diff_\partial (W)
\]
given by the pullback action of diffeomorphisms on psc metrics. We shall prove the following result (a more general version is given in Theorem \ref{thm:diff-action-comesfromMT}):
\begin{MainThm}\label{mainthmintro:diffaction}
If $W$ and $\theta$ are as stated and $d \geq 6$, there exists a homotopy cartesian diagram
\[
\xymatrix{
E\Diff_\partial (W) \times_{\Diff_\partial (W)} \Riem^+ (W)_g \ar[r] \ar[d] & \mathcal{X} \ar[d]  \\
B \Diff_\partial (W) \ar[r]^-{\alpha_W} & \Omega^\infty \MT \theta.
}
\]
for a certain space $\mathcal{X}$. 

The map $A: \Diff_\partial (W) \to \hAut(\Riem^+ (W)_g)$ factors up to homotopy through $H$-space maps $\Diff_\partial (W) \simeq \Omega B \Diff_\partial (W) \stackrel{\Omega \alpha_W}{\to} \Omega^{\infty+1} \MT \theta$.
\end{MainThm}

Since $\pi_0 (\Omega^{\infty+1} \MT \theta)$ is an abelian group, we may conclude for example that the image of the group homomorphism $A_*:\pi_0 (\Diff_\partial (W)) \to \pi_0 (\hAut(\Riem^+ (W)_g)$ is abelian.

In certain special cases, an analogue of Theorem \ref{mainthmintro:diffaction} was implicitly proven in \cite[\S 4]{BERW} and \cite[\S 4]{ERWpsc2}, for even-dimensional manifolds only, by obstruction theory. The key ingredient for the obstruction argument was to first prove, by different means, that the image of $A_*:\pi_0 (\Diff_\partial (W)) \to \pi_0 (\hAut(\Riem^+ (W)_g)$ is abelian. In this paper, the logic is reversed. 

\begin{rem}
There are two ways in which Theorem \ref{mainthmintro:diffaction} is not expected to be the optimal result in this direction. Firstly, one might try to get rid of the hypothesis that $M\to W$ is $2$-connected, but we did not succeed in doing so using the techniques of this paper. The methods developed by Perlmutter \cite{PerlmutterMorse} \cite{PerlmutterPsc} seem better suited to this situation.

Secondly, the kernel of the action map $\pi_0 (\Diff_\partial (W)) \to \pi_0 (\hAut(\Riem^+ (W)_g))$ is in general larger than the kernel of $\pi_0 (\Diff_\partial (W)) \to \pi_1 (\MT \theta)$. In fact, it contains the kernel of the mapping torus map $\pi_0 (\Diff_\partial (W)) \to \Omega^\theta_{d+1}$ to the cobordism group of $(d+1)$-dimensional $\theta$-manifolds. This is shown in the PhD thesis of Georg Frenck \cite{Frenck}, by a fairly direct Morse-theoretic argument. 
\end{rem}

Theorem \ref{mainthmintro:diffaction} can be used to prove rigidity theorems for the action of the diffeomorphism group on spaces of psc metrics. As a sample for such results, in \S \ref{subsec:finitenessheorem} we prove the following theorem.

\begin{MainThm}\label{thm:finiteness-of-orbitmap}
Let $W$ be a simply connected manifold of dimension $d \geq 6$, with $2$-connected boundary inclusion $M \to W$ and assume that all rational Pontrjagin classes of $TW$ are trivial. Then for each $g \in \Riem^+ (M)$, each $h \in \Riem^+ (W)_g$ and each $k \geq 0$, the image of the map
\[
\sigma_h : \pi_k (\Diff_\partial (W)) \lra \pi_k (\Riem^+ (W)_g)
\]
induced by the orbit map $\sigma_h$ is finite. 
\end{MainThm}

\begin{rem}
The case of Theorem \ref{thm:finiteness-of-orbitmap} in which $W=D^d$ is a disc and $k$ is small compared to $d$ has previously been obtained by Botvinnik, Hanke, Schick and Walsh \cite{BHSW}. For even $d$ and the manifolds $\sharp^g (S^n \times S^n) \setminus D^{2n}$, this might be extracted from \cite{BERW}, \S 4. For the case $k=0$, see \cite{Frenck} for a more direct argument.
\end{rem}

\subsection{Index-theoretic consequences}
In the case where the manifolds have spin structures, we also prove index-theoretic results. Let $G$ be a finitely presented group and consider $\theta:B\Spin (d) \times BG \to B\mathrm{O}(d)$. A $\theta$-structure on a $d$-manifold $W$ is the same as a spin structure on $W$ and a map $W \to BG$. These ingredients may be used to define the Rosenberg--Dirac operator on $W$, which is linear over the group $\cstar$-algebra $\cstar(G)$ of $G$ (everything applies to both, the reduced and the maximal group version of $\cstar(G)$). Like the usual spin Dirac operator, it satisfies the Lichnerowicz--Schr\"odinger formula. If $g_0 \in \Riem^+ (\partial W)$ and $h_0 \in \Riem^+ (W)_{g_0}$, one defines the secondary index invariant
$$\inddiff_{h_0}: \Riem^+ (W)_{g_0} \lra \Omega^{\infty+d+1} \KO(\cstar(G))$$
as in \cite[\S 5]{ERWpsc2}. 
\begin{MainThm}\label{thm:indddiff-infiniteloopmap-introduction}
If $(M,g_0)$ are as in Theorem \ref{Main:infiniteloopspacetheorem}, and in addition $M$ is spin and has fundamental group $G$, then the map 
\[
\inddiff_{dx^2+g_0}: \Riem^+ ([0,1] \times M)^{\rst}_{g_0,g_0} \lra \Omega^{\infty+d+1} \KO(\cstar(G))
\]
is an infinite loop map, where the source space carries the infinite loop space structure from Theorem
\ref{Main:infiniteloopspacetheorem}.
\end{MainThm}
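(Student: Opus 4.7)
The plan is to exhibit $\inddiff_{h_0}$ as the fibre transport of a square of maps of $\Gamma$-spaces. Recall that the infinite loop space structure on $\Riem^+ ([0,1] \times M)^{\rst}_{g_0,g_0}$ provided by Theorem \ref{Main:infiniteloopspacetheorem} comes from identifying this space, via \eqref{eq:firstdelooping} together with Theorems \ref{Main:fibretheorem} and \ref{surgery-theorem-introcution}, with (the basepoint component of) the loops at $M$ of the homotopy fibre of the $\Gamma$-space map $BF^{2,\rst}: B \PCob_\theta^{2,\rst} \to B \Cob_\theta^2$. Hence it suffices to construct a commuting square of $\Gamma$-spaces
\[
\xymatrix{
B\PCob_\theta^{2,\rst} \ar[r] \ar[d]_-{BF^{2,\rst}} & \ast \ar[d] \\
B\Cob_\theta^{2} \ar[r]^-{\topind} & \Omega^{\infty+d} \KO(\cstar(G))
}
\]
whose induced map on homotopy fibres over $M$, looped once, recovers $\inddiff_{h_0}$.

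The bottom map $\topind$ should arise from the delooped Atiyah--Singer index theorem advertised in the abstract: applied to the family of Rosenberg--Dirac operators twisted by the universal flat $\cstar(G)$-bundle over $BG$, it yields a map of spectra $\MT\theta \to \Sigma^{-d} \KO(\cstar(G))$, and hence an infinite loop map $B\Cob_\theta \simeq \Omega^{\infty-1}\MT\theta \to \Omega^{\infty+d}\KO(\cstar(G))$, or equivalently a map of $\Gamma$-spaces. Its compatibility with the $\Gamma$-space structure is precisely the additivity of indices under disjoint union, which is automatic from the construction.

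The main content is the construction of the null-homotopy of the composite $\topind \round BF^{2,\rst}$ internally to $\Gamma$-spaces. Pointwise this is dictated by the Lichnerowicz--Schr\"odinger formula: on any morphism $(W,h)$ of $\PCob_\theta^{2,\rst}$ the associated Dirac operator, constrained by the positive boundary metrics, is invertible in the appropriate sense, so its $\KO(\cstar(G))$-index vanishes by a canonical functional-calculus contraction supported away from the spectrum. The delicate point, and the expected principal obstacle, is to package this contraction naturally as a functor of topological categories in a way that commutes with disjoint union of manifolds. I expect to handle this by introducing an auxiliary category of Dirac operators with psc boundary data that maps to a path object for $\topind$ and receives a functor from $\PCob_\theta^{2,\rst}$; since the direct sum of two invertible self-adjoint operators is again invertible, $\Gamma$-equivariance is built into such a construction, paralleling the categorical framework underlying the first named author's delooped index theorem.

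Taking basepoint components of homotopy fibres over $M$ in the resulting $\Gamma$-square and applying the chain of equivalences from Theorems \ref{Main:fibretheorem} and \ref{surgery-theorem-introcution} and \eqref{eq:firstdelooping} produces an infinite loop map $\Riem^+ ([0,1] \times M)^{\rst}_{g_0,g_0} \to \Omega^{\infty+d+1} \KO(\cstar(G))$. It remains to identify this map with $\inddiff_{h_0}$, which is a naturality check: by construction the fibre transport around a loop represented by a concordance $h \in \Riem^+([0,1] \times M)_{g_0,g_0}^{\rst}$ computes the relative $\KO(\cstar(G))$-index of the two Dirac operators on $[0,1] \times M$ obtained from $h_0$ and $h$ respectively, and this is exactly the definition of the secondary index invariant from \cite[\S 5]{ERWpsc2}. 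With all of this in place the theorem follows; the entire difficulty is concentrated in the $\Gamma$-equivariant null-homotopy of the previous paragraph.
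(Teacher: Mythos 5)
Your architecture is the same as the paper's: a delooped index map on $B\Cob_\theta$ (the first author's $\ind_1$), a $\Gamma$-equivariant nullhomotopy of its restriction to the psc cobordism category obtained by factoring through the contractible subspace of \emph{invertible} operators, and a final comparison of the resulting fibre-transport map with $\inddiff_{h_0}$. Your ``auxiliary category of Dirac operators with psc boundary data'' is exactly the paper's sheaf $\cD_{\theta,\bullet}^{\psc,\op}$, and additivity under disjoint union does make the whole construction a map of $\Gamma$-spaces essentially for free, as you say.

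Two caveats on where the weight of the argument actually sits. First, in the vanishing step the operator that must be shown invertible is not the Dirac operator of $(W,h)$ alone: it lives on the noncompact elongation, where the metric has positive scalar curvature only over a finite window, and it is the Dirac operator perturbed by a Clifford-linear potential $r(x-x_0)\eta\otimes\varepsilon$. Lichnerowicz--Schr\"odinger gives invertibility only over the psc region; over the ends invertibility comes from the potential, and the two regimes are glued by a partition-of-unity estimate (Lemma \ref{lem:glueestimates}). This forces quantitative constraints ($r \leq \kappa/8C$, lower bounds on the collar lengths $\eps_i$ in terms of $\kappa$ and $r$) to be built into the auxiliary category, and one must then check that the decorated category is still weakly equivalent to $B\PCob_\theta$ (Lemma \ref{lem:forgetfulmap-on-psc.-categories-isaweakequivalence}). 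Second, and more seriously, the final identification is not a naturality check: the fibre transport of the delooped construction and the sheaf map defining $\inddiff_{h_0}$ are given by visibly different operator families, and the paper needs the whole of Theorem \ref{thm:comparison-delooped-index-diference1} --- a chain of explicit operator homotopies, each requiring its own invertibility estimate --- to compare them. The answer is moreover only $(-\bott)\circ\inddiff_{h_0}$ rather than $\inddiff_{h_0}$ on the nose, which already shows the comparison cannot be tautological. As a proof your proposal is therefore incomplete at precisely these two points, although the route is the right one.
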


To achieve the proof of Theorem \ref{thm:indddiff-infiniteloopmap-introduction}, we construct a ``delooped version'' of $\inddiff_{h_0}$. In \cite{JEIndex2}, the first named author constructed an index map
\[
\ind_1: B \Cob_\theta \lra  \Omega^{\infty+d-1} \KO(\cstar(G))
\]
given in operator-theoretic terms. It is a delooping of the family index of the Rosenberg--Dirac operator on closed manifolds, in the following sense. When we compose $\Omega \ind_1$ with the tautological map $\tau: \Cob_\theta (\emptyset,\emptyset) \to \Omega B \Cob_\theta$ from the space of closed $d$-dimensional $\theta$-manifolds, we obtain the family index of the Rosenberg--Dirac operator, interpreted as a map
\[
\ind_0:  \Cob_\theta (\emptyset,\emptyset) \lra \Omega^{\infty+d} \KO(\cstar(G)).
\]
The analytical description of $\ind_1$ enables us to construct a nullhomotopy of the composition 
\[
 \ind_1 \circ BF: B \PCob_\theta \lra \Omega^{\infty+d-1} \KO(\cstar(G)),
\]
by an application of the Lichnerowicz--Schr\"odinger formula. This nullhomotopy yields a map
\[
\hofib_M (BF) \lra \Omega^{\infty+d} \KO(\cstar(G)),
\]
whose homotopy class depends on the choice of $g_0 \in \Riem^+ (M)$. Composing with the obvious map $B \Conc (M) \to \hofib_M BF$, we obtain 
\[
 B \inddiff_{h_0} : B \Conc (M) \lra \Omega^{\infty+d} \KO(\cstar(G)),
\]
the \emph{delooped index difference} (here $h_0=dx^2 + g_0$). It is not hard to show that the restriction of $B \inddiff_{h_0}$ to $B \Conc(M)^{\rst}$ is an infinite loop map. An elementary, but tedious computation (Theorem \ref{thm:comparison-delooped-index-diference1}) proves that the composition 
\[
 \Riem^+ ([0,1] \times M)_{g_0,g_0}^{\rst} \stackrel{\sim}{\lra} \Omega_{g_0} B \Conc (M)^{\rst} \xrightarrow{\Omega B \inddiff_{h_0}} \Omega^{\infty+d+1} \KO (\cstar(G)) 
\]
is homotopic to $\inddiff_{h_0}$, which concludes the proof of Theorem \ref{thm:indddiff-infiniteloopmap-introduction}. We moreover prove (Theorem \ref{thm:comparison-delooped-index-diference2}) that the composition 
\[
 \Riem^+ (M) \lra B \Conc(M) \xrightarrow{B \inddiff_{h_0}} \Omega^{\infty+d} \KO(\cstar(G))
\]
is homotopic to $\inddiff_{g_0}$ (the first map is the inclusion of $0$-simplices). Another application of those ideas gives a new proof of the main results of \cite{ERWpsc2} and \cite{BERW} which also works for odd-dimensional manifolds (Theorem \ref{berw-pscpi1-newproof}). 

\subsection{Implication of the concordance-implies-isotopy conjecture}

In chapter \S \ref{sec:concimpliesisotopy-consequences}, we explain how an affirmative solution of the concordance-implies-isotopy problem for psc metrics leads to a cleaner formulation of many of our main results. In short, it would imply, for $M$ as in Theorem \ref{Main:infiniteloopspacetheorem}, that $\Riem^+ ([0,1] \times M)=\Riem^+ ([0,1] \times M)^{\mathrm{st}}$. However, the \emph{proofs} would not simplify at all, except for shorter notation.

\subsection*{Outline of the paper}

\S \ref{sec:psc-recollection} is of preparatory nature; we mainly recall the stability condition for psc metrics from \cite{ERWpsc2} and prove some auxiliary elementary lemmas about stable psc metrics. In \S\ref{sec:psc-cob-category}, we introduce the psc cobordism category. There are many ways to write down point-set topological models for the cobordism categories, and the proofs in the subsequent sections employ several of them. This slightly unfortunate fact makes \S \ref{sec:psc-cob-category} relatively long. 
In \S \ref{sec:fibre-theorem}, we prove Theorem \ref{Main:fibretheorem} and the equivalence \eqref{eq:firstdelooping}. In \S \ref{sec:loopinf-structure}, we give the parametrised surgery proof of Theorem \ref{surgery-theorem-introcution}. The proof is written to be as parallel as possible as the proofs in \cite[\S 4, \S 6]{GRW}, and this section is written with a reader who is fully familiar with that paper in mind. While Theorem \ref{surgery-theorem-introcution} is crucial for all our results, we have written the rest of the paper so that the reader can take Theorem \ref{surgery-theorem-introcution} as a black box. 
In \S \ref{sec:6}, we put the strands from the previous sections together and complete the proof of Theorem \ref{Main:infiniteloopspacetheorem}, after giving a review of the theory of $\Gamma$-spaces following Segal \cite{Segcat}. 
In \S \ref{sec:diffactiononpscspace}, we prove Theorem \ref{mainthmintro:diffaction}, using the results from \S \ref{sec:fibre-theorem}, Theorem \ref{surgery-theorem-introcution} and some basic semi-simplicial technique. The reader of the index-theoretic part, \S \ref{sec:indextheory}, needs to know the theory of \cite{JEIndex2}. In \S \ref{sec:concimpliesisotopy-consequences}, we show that an affirmative solution of the concordance-implies-isotopy conjecture affects the formulation of Theorem \ref{Main:infiniteloopspacetheorem}. This makes strong use of the existence theorems for stable psc metrics from \cite{ERWpsc2} but is otherwise self-contained. In Appendix \ref{appendixsheaves}, we prove a version of the key technical ingredient for the proofs in \cite{GRW} in the context of sheaves, which is used in \S \ref{sec:loopinf-structure}.

\section{Recollections on spaces of psc metrics}\label{sec:psc-recollection}

\subsection{Spaces of psc metrics on manifolds with boundaries}

For a closed mani\-fold $M$, we let $\Riem (M)$ be the space of all Riemannian metrics, equipped with the usual Fr\'echet topology and we let $\Riem^+ (M) \subset \Riem (M)$ be the open subspace of all Riemannian metrics with positive scalar curvature. 

Let $W$ be a compact manifold with boundary $M$. We assume that the boundary of $W$ comes equipped with a collar $c:  [0,\infty) \times M \to W$. The collar identifies $ [0,\infty)  \times M$ with an open subset of $W$ and we usually use this identification without further mentioning.

For $\eps>0$, we denote by $\Riem^+ (W)^{\eps}$ the space of all Riemannian metrics $h$ on $W$ with positive scalar curvature such that $c^* h = dx^2+g$ on $ [0,\eps] \times M$ for some metric $g$ on $M$, with the usual Fr\'echet topology. We let
\[
 \Riem^+ (W):= \colim_{\eps \to 0} \Riem^+ (W)^\eps.
\]
Elements in $\Riem^+ (W)$ are psc metrics $h$ which are of the form $dx^2+g$ near $M$, and the scalar curvature of $g$ is positive. Hence assigning to $h$ the boundary value $g$ defines a continuous map 
\[
\res: \Riem^+ (W) \lra \Riem^+ (M) .
\]
We define
\[
 \Riem^+ (W)_g := \res^{-1}(g) \subset \Riem^+ (W).
\]
\begin{thm}[Chernysh \cite{Chernysh2}]\label{thm:improved-chernysh-theorem} 
For a compact manifold $W$ with collared boundary $M$, the restriction map $\res:\Riem^+ (W) \to \Riem^+ (M)$ is a Serre fibration.
\end{thm}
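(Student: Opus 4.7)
The plan is to verify the homotopy lifting property against discs. Fix $n \geq 0$, a map $\varphi : D^n \times I \to \Riem^+(M)$, and a partial lift $\tilde\varphi_0 : D^n \times \{0\} \to \Riem^+(W)$ with $\res \circ \tilde\varphi_0 = \varphi|_{D^n \times \{0\}}$. The task is to produce a continuous $\tilde\varphi : D^n \times I \to \Riem^+(W)$ extending $\tilde\varphi_0$ and lifting $\varphi$. By compactness of $D^n$, the image of $\tilde\varphi_0$ lies in $\Riem^+(W)^{\eps}$ for some uniform $\eps > 0$, so every $\tilde\varphi_0(z)$ is a genuine product $dx^2 + g_0(z)$ on $[0,\eps] \times M$ with $g_0(z) = \varphi(z,0)$.

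The construction is the standard "collar-stretching" trick. Choose a smooth cutoff $\chi : [0,L] \times I \to I$ with $\chi(0,t) = t$, $\chi(x, 0) = 0$ for all $x$, $\chi(L, t) = 0$ for all $t$, and $\chi$ supported in $[0, L] \times I$ with derivatives bounded by $C/L$ in the first variable. I will first pretend there is a collar of length $L \geq \eps$ available, and then rescale. Define
\[
\tilde\varphi(z,t) :=
\begin{cases}
dx^2 + \varphi\bigl(z,\chi(x,t)\bigr) & \text{on the collar } [0,L] \times M,\\
\tilde\varphi_0(z) & \text{on } W \setminus \bigl([0,L] \times M\bigr).
\end{cases}
\]
By the boundary conditions on $\chi$ the two formulas agree smoothly on the overlap, and at $t = 0$ we have $\chi(\cdot,0) \equiv 0$, so $\tilde\varphi(z,0) = \tilde\varphi_0(z)$ on the whole of $W$. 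Continuity in $(z,t)$ is automatic.

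The essential point is that this $\tilde\varphi(z,t)$ has positive scalar curvature if $L$ is chosen large enough. For a metric of the form $dx^2 + g_{s(x)}$ on $[0,L] \times M$ with $g_s$ a smooth family of psc metrics, a direct computation gives a scalar curvature of the form
\[
\scal\bigl(dx^2 + g_{s(x)}\bigr) = \scal\bigl(g_{s(x)}\bigr) + A\bigl(g,\partial_s g\bigr) \cdot s'(x) + B\bigl(g,\partial_s g,\partial_s^2 g\bigr) \cdot s'(x)^2 + C\bigl(g,\partial_s g\bigr) \cdot s''(x),
\]
where $A,B,C$ depend continuously on the data. Applied to $s(x) = \chi(x,t)$ and $g_s = \varphi(z,s)$, the term $\scal(g_{s(x)})$ is bounded below by a positive constant $c > 0$ uniform in $(z,t) \in D^n \times I$ by compactness, whereas $s'(x)$ and $s''(x)$ are $O(1/L)$ and $O(1/L^2)$ respectively. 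Hence for $L$ sufficiently large, $\scal(\tilde\varphi(z,t)) > 0$ uniformly, so $\tilde\varphi(z,t) \in \Riem^+(W)$. The required actual collar of length $L$ is produced by pushing in the original collar via a diffeomorphism of $W$ that stretches $[0,\eps] \times M$ to $[0, L] \times M$; this identifies $\Riem^+(W)^{\eps}$ with an open subspace of $\Riem^+(W)^L$, and the above construction is carried out inside the latter.

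The main obstacle is bookkeeping: one must verify that the cutoff $\chi$, the uniform positive bound $c$, and the collar-rescaling diffeomorphism can all be chosen continuously in the parameter $z \in D^n$, and that the resulting lift still restricts to the prescribed $\tilde\varphi_0$ at $t = 0$ and to $\varphi(z,t)$ at $x = 0$. Once this compactness-and-stretching package is assembled, the scalar curvature estimate above is the only geometric input, and the homotopy lifting property follows.
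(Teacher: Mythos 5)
The paper does not prove this statement at all: it is quoted from the literature, with the remark that Chernysh's original argument only yields a quasifibration and that the Serre fibration statement is Theorem 1.1 of the cited paper of Ebert--Frenck. So the only question is whether your argument is correct, and it has a genuine gap exactly at the point where the quasifibration/fibration distinction lives. Your geometric core is fine: the formula for $\scal(dx^2+g_{s(x)})$, the uniform lower bound $c>0$ from compactness of $D^n\times I$, and the conclusion that the correction terms are harmless once the transition happens over an interval of length $L\gg 1$ --- this is the standard Gromov--Lawson/Gajer computation. The problem is the sentence producing the long collar. A diffeomorphism of $W$ stretching $[0,\eps]\times M$ onto $[0,L]\times M$ does \emph{not} identify $\Riem^+(W)^{\eps}$ with a subspace of $\Riem^+(W)^{L}$: pulling back $dx^2+g$ under a reparametrisation $x\mapsto\lambda(x)$ of the collar coordinate gives $(\lambda')^2\,dx^2+g$, which is not of product form in the new collar, so the pullback leaves the space on which $\res$ is defined. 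If instead you try to create length by gluing an external cylinder $[0,L]\times M$ and identifying $W\cup([0,L]\times M)\cong W$, then the time-zero slice of your lift is $\tilde\varphi_0(z)$ with its collar stretched, which is only \emph{homotopic} to, not equal to, the prescribed initial condition $\tilde\varphi_0(z)$. Either way the strict homotopy lifting property is not verified; what you get is precisely the ``lifting up to deformation of the initial condition'' that yields Chernysh's quasifibration statement but not a Serre fibration.

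Closing this gap is the actual content of the Ebert--Frenck theorem: one must run the collar deformation inside the given collar of length $\eps$, where the derivative bounds on $\chi$ are of order $1/\eps$ rather than $1/L$, and the curvature estimate fails unless the homotopy $\varphi$ moves slowly; this forces a subdivision of the time interval into steps on which $\varphi$ has small variation, together with a mechanism for restoring a product collar after each step so that the iteration does not exhaust the collar. None of this is ``bookkeeping''. (A secondary, fixable point: for the lift to land in $\Riem^+(W)$ at all, each time slice must be an honest product $dx^2+\varphi(z,t)$ with the $M$-factor constant in $x$ on some $[0,\delta]\times M$, so you need $\chi(x,t)=t$ for all $x$ near $0$, not merely $\chi(0,t)=t$.)
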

In \cite{Chernysh2} it is only shown that $\res$ is a quasifibration. The version as stated is proven as Theorem 1.1 of \cite{EbFrenck}. 

\subsection{The Gromov--Lawson--Chernysh theorem}\label{subsec:gromov-lawson-recap}

\begin{defn}\label{defn:torpedo-metric}
By $g_\round^{k-1} \in \Riem (S^{k-1})$, we denote the round metric on $S^{k-1}$, i.e. the metric induced from the euclidean metric by the standard inclusion $S^{k-1} \subset \bR^{k}$. It has constant scalar curvature $\scal (g_\round^{k-1}) \equiv (k-1)(k-2)$. Let $\delta >0$. A \emph{$\delta$-$R$-torpedo metric} $g_\tor^k$ on $\bR^{k}$, $k \geq 3$, is an $\mathrm{O}(k)$-invariant metric such that 
$\scal (g_\tor^k)\geq \frac{1}{\delta^2}(k-1)(k-2)$ and such that 
\[
\varphi^* g_\tor^k = dr^2 + \delta g_\round^{k-1}
\]
near $[R,\infty)\times S^{k-1}$, where $\varphi:(0,\infty) \times S^{k-1} \to \bR^k \setminus 0$ is the diffeomorphism defined by $(r,x)\mapsto rx$. For more details, see \S 2.3 of \cite{Walsh01}.
\end{defn}

\begin{defn}
Let $d-k \geq 3$, let $W^d$ be a compact manifold with collared boundary $M$, let $V^k$ be a compact manifold with collared boundary $N$ and let $\phi: V \times \bR^{d-k} \to W$ be an embedding. Assume that $\phi^{-1} ( [0,\infty) \times M) =  [0,\infty)  \times N$ and such that inside the collar, $\phi$ is of the form $\id \times \varphi$ for some embedding $\varphi:N \times \bR^{d-k} \to M$. 

Let $h_V$ be a Riemannian metric on $V$ which is collared near the boundary, pick $\delta>0$ such that $\scal (h_V) + \frac{1}{\delta^2}(k-1)(k-2) >0$ and fix a $\delta$-$1$-torpedo metric $g_\tor^{d-k}$ on $\bR^{d-k}$. By
\[
\Riem^+ (W,\phi) \subset \Riem^+ (W),
\] 
we denote the space of all $h \in \Riem^+ (W)$ such that $\phi^* h = h_V + g_\tor^{d-k}$ near $V \times D^{d-k} \subset V \times \bR^{d-k}$. Furthermore, we let
\[
 \Riem^+ (W,\phi)_g := \Riem^+ (W,\phi) \cap \Riem^+ (W)_g \subset  \Riem^+ (W)_g,
\]
where $g \in \Riem^+ (M,\varphi)$ is a suitable boundary condition. 
\end{defn}

The following result due to Chernysh \cite{Chernysh, Chernysh2} is a sharpening of a famous result by Gromov--Lawson \cite{GL}, and is of crucial importance for this paper:
\begin{thm}[Chernysh]\label{thm:chernysh-theorem}
Assume that $d-k \geq 3$. Then
\begin{enumerate}[(i)]
\item the inclusion
\[
 \Riem^+ (W,\phi) \lra \Riem^+ (W)
\]
is a weak homotopy equivalence, and
\item for each $g \in \Riem^+ (M,\varphi)$, the inclusion
\[
\Riem^+ (W,\phi)_g \lra \Riem^+ (W)_g
\]
is a weak homotopy equivalence.
\end{enumerate}
\end{thm}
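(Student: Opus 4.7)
The plan is to prove (i) by a parametrised Gromov--Lawson surgery construction in families, and to derive (ii) from (i) by comparing the two fibration sequences obtained from the boundary restriction maps.

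For (i), the strategy is to show that every map of a compact CW pair $(K,L) \to (\Riem^+(W),\Riem^+(W,\phi))$ can be continuously deformed into $\Riem^+(W,\phi)$ rel $L$. The technical heart is a parametrised Gromov--Lawson construction, carried out in three stages. First, using the normal exponential map along $\phi(V \times 0)$, one brings each psc metric $h \in K$ to a warped--product form $h_V(s) + h_{\mathrm{rot}}(s)$ on a tubular neighbourhood of $V \times 0$, with $h_{\mathrm{rot}}$ rotationally symmetric in the normal directions. Second, one deforms $h_{\mathrm{rot}}$ through $O(d-k)$-invariant psc metrics on $\bR^{d-k}$ to the standard $\delta$-torpedo metric $g_\tor^{d-k}$. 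This is the step where the codimension hypothesis $d-k \geq 3$ is essential, because the curvature formula for warped products with a torpedo factor produces an arbitrarily large positive contribution (of order $1/\delta^2$) that can be made to dominate all other terms, keeping $\scal > 0$ throughout the deformation. Third, once $\phi^* h$ has the form $h_V(s) + g_\tor^{d-k}$ near $V \times D^{d-k}$, one deforms the $V$-factor $h_V(s)$ through metrics to the prescribed $h_V$, which is possible because the torpedo curvature compensates for any temporary loss. Each stage depends continuously on $h$, and if $h$ already lies in $\Riem^+(W,\phi)$ each stage can be made constant on it, delivering the desired relative homotopy.

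For (ii), I would compare the commutative square of restriction maps
\[
\xymatrix{
\Riem^+ (W,\phi) \ar[r] \ar[d]_{\res} & \Riem^+ (W) \ar[d]^{\res} \\
\Riem^+ (M,\varphi) \ar[r] & \Riem^+ (M)
}
\]
in which both vertical maps are Serre fibrations: the right one by Theorem \ref{thm:improved-chernysh-theorem}, and the left one by an adaptation of the same argument to the constrained subspace. The top horizontal map is a weak equivalence by (i) applied to $W$, and the bottom horizontal map is a weak equivalence by (i) applied to the closed manifold $M$ together with the embedding $\varphi : N \times \bR^{d-k} \to M$, which still has codimension $d-k \geq 3$. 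A diagram chase on the long exact sequences of homotopy groups --- equivalently, the five lemma for homotopy fibres --- then yields the weak equivalence $\Riem^+(W,\phi)_g \to \Riem^+(W)_g$ for each $g \in \Riem^+(M,\varphi)$.

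The main obstacle is the parametrised surgery in (i). Pointwise, the Gromov--Lawson move is classical, but making all the choices --- the size of the tubular neighbourhood, the cutoff and bending functions shaping the interpolation to the torpedo, and the parameter $\delta$ itself --- depend continuously on $h$ and uniformly over a compact family requires genuine care. A circularity arises because the radius on which the deformation can be carried out with positive scalar curvature depends on pointwise lower bounds for $\scal(h)$, which themselves vary with the family parameter. Chernysh \cite{Chernysh2} resolves this with a careful family argument, further streamlined in \cite{EbFrenck}; the proof one would write in detail is essentially a geometric reformulation of that argument. One must additionally verify that when $h$ is already collared near $M$ the construction may be arranged to leave the collar undisturbed, which is precisely what is needed to make the restricted map a Serre fibration and to enable the deduction of (ii).
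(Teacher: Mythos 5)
The paper does not prove this theorem: it is quoted as Chernysh's result, with \cite{Chernysh, Chernysh2} cited and with \cite{EbFrenck} named as a complete, self-contained exposition. So there is no in-paper proof to compare against, and your proposal should be judged as a reconstruction of the argument in those references. As such, your outline of (i) is faithful to the standard strategy: a parametrised Gromov--Lawson isotopy in stages (normal form near $\phi(V\times 0)$, deformation of the rotational factor to a torpedo, adjustment of the $V$-factor), with the hypothesis $d-k\geq 3$ entering through the $1/\delta^2$ scalar curvature of the torpedo/sphere factor, and you correctly locate the genuine difficulty in making all choices ($\delta$, tube radii, bending functions) continuous in $h$ and uniform over a compact family. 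This is an accurate map of where the content lies, but it is a plan rather than a proof; the paper deliberately outsources exactly this to \cite{EbFrenck}.

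One substantive caveat on your deduction of (ii) from (i): the five-lemma argument on the square of restriction maps requires that the \emph{left} vertical map $\res:\Riem^+(W,\phi)\to\Riem^+(M,\varphi)$ be a Serre fibration (or at least a quasifibration), so that its honest fibre $\Riem^+(W,\phi)_g$ computes the homotopy fibre. This is not a formal consequence of Theorem \ref{thm:improved-chernysh-theorem}, which concerns the unconstrained spaces; it is an additional statement of the same depth, and is itself one of the points that has to be established in \cite{EbFrenck}. You flag this, but it should be counted as an unproven input on a par with the parametrised surgery itself. The alternative, which is closer to how the boundary-condition case is usually handled, is to run the relative parametrised surgery argument of (i) directly with the boundary condition $g$ held fixed throughout (the deformation can be arranged to be constant on the collar), which yields (ii) without invoking a fibration property of the constrained restriction map.
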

A complete and self-contained exposition of the proof (which also corrects some minor flaws) appears in \cite{EbFrenck}. 

\begin{defn}
Let $W$ be a compact $d$-dimensional manifold, possibly with boundary $M$. A surgery datum (i.e.\ embedding) $\phi:S^{k} \times \bR^{d-k} \to \inter{W}$ is \emph{admissible} if $2 \leq k \leq d-3$. Two compact $d$-manifolds $W$ and $W'$ with the same boundary are \emph{admissibly cobordant} if one can obtain $W'$ from $W$ by a sequence of admissible surgeries in the interior.
\end{defn}
Let
\[
W_\phi:= (W\setminus \phi(S^k \times D^{d-k})) \cup_{S^k \times S^{d-k-1} } (D^{k+1} \times S^{d-k-1})
\]
be the result of performing a surgery along $\phi$. The following easy consequence of Theorem \ref{thm:chernysh-theorem} is Theorem 2.5 of \cite{BERW}.
\begin{cor}\label{cor:chernysh-theorem}
An admissible surgery datum $\phi$ determines a preferred homotopy class of weak homotopy equivalences
\[
\SE_\phi: \Riem^+ (W)_g \simeq \Riem^+ (W_{\phi})_g,
\]
the \emph{surgery equivalence} determined by $\phi$.
\end{cor}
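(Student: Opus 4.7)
\textbf{Proof plan for Corollary \ref{cor:chernysh-theorem}.} The plan is to realise $\SE_\phi$ as a zig-zag of weak equivalences of the form
\[
\Riem^+(W)_g \xleftarrow{\ \simeq\ } \Riem^+(W,\phi)_g \xrightarrow{\ \cong\ } \Riem^+(W_\phi,\phi')_g \xrightarrow{\ \simeq\ } \Riem^+(W_\phi)_g,
\]
where $\phi'$ denotes a suitable ``dual'' embedding of a tubular neighbourhood of the co-core sphere $\{0\} \times S^{d-k-1} \subset D^{k+1} \times S^{d-k-1} \subset W_\phi$. The two outer maps are weak equivalences by Theorem \ref{thm:chernysh-theorem}(ii), and the middle map is the identification of the ``standard'' subspaces on the two sides of the surgery.

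First I would apply Theorem \ref{thm:chernysh-theorem}(ii) to the surgery datum $\phi$ with $V=S^k$, $h_V$ a fixed round metric on $S^k$, and a $\delta$-$1$-torpedo metric $g_\tor^{d-k}$ on $\bR^{d-k}$. This requires the codimension condition $d-k \geq 3$, which follows from admissibility ($k \leq d-3$). Next I would choose the dual embedding $\phi': S^{d-k-1} \times \bR^{k+1} \to \inter{W_\phi}$ extending the co-core, and apply Theorem \ref{thm:chernysh-theorem}(ii) again, now for $V = S^{d-k-1}$ with $h_V$ a round metric and a $\delta'$-torpedo on $\bR^{k+1}$. Here the codimension hypothesis becomes $k+1 \geq 3$, i.e.\ $k \geq 2$, which is the other half of admissibility.

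The heart of the argument is to construct the middle homeomorphism. The complement $X := W \setminus \phi(S^k \times \inter{D^{d-k}})$ is canonically identified with $W_\phi \setminus \phi'(S^{d-k-1} \times \inter{D^{k+1}})$, with both having boundary $\partial_0 := S^k \times S^{d-k-1}$ (together with the original $M$). A metric in $\Riem^+(W,\phi)_g$ is determined by its restriction to $X$, provided that restriction agrees, near $\partial_0$, with the appropriate product of round and torpedo collars; likewise on the $W_\phi$ side. By choosing the torpedo scales $\delta$ and $\delta'$ so that both standard metrics restrict to the \emph{same} product of round metrics on $\partial_0$ (and arranging collars of equal length), the two subspaces are canonically homeomorphic via ``cut, reinterpret, reglue.''

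The main technical obstacle is the compatibility of the standard metrics on the overlap $\partial_0$: one has to check that torpedo metrics on $\bR^{d-k}$ and on $\bR^{k+1}$ can be chosen so that their cylindrical ends glue to matching products of round metrics on $S^k \times S^{d-k-1}$, without disturbing positivity of scalar curvature. This is a routine matter of scaling (cf.\ Definition \ref{defn:torpedo-metric} and \cite[\S 2.3]{Walsh01}) but must be done in families so as to produce a genuine homeomorphism rather than just a bijection. Finally, to obtain a \emph{preferred homotopy class}, I would note that the two outer weak equivalences in the zig-zag are \emph{inclusions} of subspaces, so admit homotopy inverses that are unique up to (preferred) homotopy; the middle map is canonical. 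The resulting homotopy class $\SE_\phi$ is then independent, up to homotopy, of the auxiliary choices of $\delta$, $\delta'$, collar lengths, and specific round metrics, by the standard contractibility-of-choices argument.
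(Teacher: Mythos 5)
Your proposal is correct and is essentially the argument of the paper's source: the paper does not prove the corollary itself but cites \cite[Theorem 2.5]{BERW}, whose proof is exactly the zig-zag $\Riem^+(W)_g \hookleftarrow \Riem^+(W,\phi)_g \cong \Riem^+(W_\phi,\phi')_g \hookrightarrow \Riem^+(W_\phi)_g$ with the two inclusions handled by Theorem \ref{thm:chernysh-theorem}(ii) (using $d-k\geq 3$ and $k+1\geq 3$ respectively) and the middle identification given by cutting along $S^k\times S^{d-k-1}$ and regluing the standard pieces. The remark following the corollary (``$\SE_\phi$ is not explicitly given, but only a zig-zag of maps'') confirms this is the intended construction.
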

We remark that $\SE_\phi$ is not explicitly given, but only a zig-zag of maps. This is not a problem for our purposes: we only use $\SE_\phi$ to identify the sets of path components of both spaces. 

\subsection{The stability condition}

In our previous work \cite{ERWpsc2}, we proved a gene\-ralisation of Theorem \ref{thm:chernysh-theorem}, which is also a key ingredient in the present paper and which is therefore recalled here. For composable sequences 
$$M_{-1} \stackrel{V'}{\leadsto} M_0 \stackrel{W}{\leadsto} M_1 \stackrel{V}{\leadsto} M_2$$
of cobordisms, $g_i \in \Riem^+ (M_i)$ and $h \in \Riem^+ (W)_{g_0,g_1}$, there are \emph{gluing maps} 
\[
 \mu (\_,h): \Riem^+ (V')_{g_{-1},g_0} \lra \Riem^+ (V'\cup W)_{g_{-1},g_1}
\]
and 
\[
  \mu (h,\_): \Riem^+ (V)_{g_{1},g_2} \lra  \Riem^+ (W \cup V)_{g_0,g_2}.
\]
\begin{defn}\label{defn:right-stable}
Let $W: M_0 \leadsto M_1$ be a cobordism and let $h \in \Riem^+ (W)_{g_0,g_1}$. Then $h$ is called \emph{left-stable} if the map $\mu (\_, h): \Riem^+ (V)_{g_{-1},g_0} \to \Riem^+ (V \cup W)_{g_{-1},g_1}$ is a weak equivalence for all cobordisms $V:M_{-1}\leadsto M_0$ and all boundary conditions $g_{-1}$. Dually, $h$ is \emph{right-stable} if the map $\mu (h,\_): \Riem^+ (V)_{g_1,g_2} \to \Riem^+ (W \cup V)_{g_0,g_2}$ is a weak equivalence for all cobordisms $V: M_1 \leadsto M_2$ and all boundary conditions $g_2$. Finally, $h$ is \emph{stable} if it is both left and right stable. By
\[
 \Riem^+ (W)^{\rst} \subset \Riem^+ (W) \quad \text{and} \quad  \Riem^+ (W)_{g_0,g_1}^{\rst} \subset \Riem^+ (W)_{g_0,g_1}, 
\]
we denote the subspaces of right stable psc metrics, and define $\Riem^+ (W)_{g_0,g_1}^{\mathrm{st}}$ similarly. 
\end{defn}

The following result encapsulate most instances of the Gromov--Lawson surgery method that we shall use in this paper.

\begin{thm}[Theorem 3.1.2 of \cite{ERWpsc2}]\label{thm:StabMetrics}
Let $d \geq 6$ and let $W: M_0 \leadsto M_1$ be a $d$-dimensional cobordism.
\begin{enumerate}[(i)]
\item If the pair $(W, M_1)$ is $2$-connected then for each $g_0 \in \Riem^+ (M_0)$, there is $g_1 \in \Riem^+ (M_1)$ and a right stable $h \in \Riem^+ (W)_{g_0,g_1}$. 
\item If the pairs $(W, M_0)$ and $(W, M_1)$ are both 2-connected, then every right stable $h \in \Riem^+ (W)_{g_0,g_1}$ is also left stable.
\end{enumerate}
\end{thm}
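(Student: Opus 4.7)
My plan for (i) is to construct $h$ handle-by-handle using the Gromov--Lawson--Chernysh surgery technique. The assumption that $M_1 \hookrightarrow W$ is $2$-connected, combined with $d \geq 6$, allows me to appeal to Smale's handle cancellation techniques (the Whitney trick and handle trading) to arrange a Morse function $f: W \to [0,1]$ with $f^{-1}(0) = M_0$, $f^{-1}(1) = M_1$, and all critical points of index at most $d-3$; equivalently, the dual handle decomposition based at $M_1$ has all handles of index at least $3$. Splitting along regular levels presents $W = W_1 \cup_{N_1} \cdots \cup_{N_{r-1}} W_r$ as a composition of elementary cobordisms $W_i$, each carrying a single handle of index $k_i \leq d-3$, whose attaching sphere $S^{k_i - 1} \subset N_{i-1}$ sits in codimension $d - k_i \geq 3$.

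Starting from $g_0$, I would inductively construct psc metrics $h_i \in \Riem^+(W_i)_{g_{i-1}, g_i}$ of Gromov--Lawson--Chernysh form, with a standard torpedo profile near each attaching handle. The crucial step is to verify that each such trace metric $h_i$ is right stable: a parametrised version of Chernysh's theorem (Theorem \ref{thm:chernysh-theorem}) with prescribed incoming boundary should identify the gluing map $\mu(h_i, \_)$ up to homotopy with the inverse of the surgery equivalence $\SE_\phi$ of Corollary \ref{cor:chernysh-theorem}, hence as a weak equivalence. Since right stability is preserved under concatenation via the formula $\mu(h \cup h', \_) = \mu(h, \_) \circ \mu(h', \_)$, the composite $h := h_1 \cup \cdots \cup h_r \in \Riem^+(W)_{g_0, g_1}$ is right stable, where $g_1 := g_r$.

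For (ii), given $h$ right stable with both $(W, M_0)$ and $(W, M_1)$ now $2$-connected, the plan is to find a companion metric $h'$ on the reversed cobordism $\overline W: M_1 \leadsto M_0$ so that $h \cup h' \in \Riem^+(W \cup \overline W)$ is equivalent, through admissible interior surgeries, to a cylinder metric on $[0,1] \times M_0$. Such an $h'$ should come from applying (i) to $\overline W$, for which the relevant pair $(\overline W, M_0) = (W, M_0)$ is $2$-connected; this produces a right-stable $h'$ on $\overline W$, and by time-reversal symmetry $h'$ is left stable when viewed on $W$. A Morse-theoretic identification in the spirit of the $h$-cobordism theorem should realise $W \cup \overline W$ as admissibly cobordant to the cylinder on $M_0$, carrying $h \cup h'$ to a cylinder metric, which is automatically stable, hence left stable. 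The decomposition $\mu(\_, h \cup h') = \mu(\_, h') \circ \mu(\_, h)$ combined with the two-out-of-three principle -- using that $\mu(\_, h')$ is a weak equivalence by left stability of $h'$, and that $\mu(\_, h \cup h')$ is a weak equivalence by stability of the cylinder metric -- then forces $\mu(\_, h)$ to be a weak equivalence, i.e.\ $h$ is left stable.

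The main obstacle I expect is sharpening Chernysh's theorem to a parametrised statement fine enough to identify the gluing map on an elementary cobordism with the inverse of a surgery equivalence, and making precise the form of the GL--Chernysh trace metric. In part (ii), the subtle step is realising $W \cup \overline W$ as an admissibly-surgered cylinder so that $h \cup h'$ is carried to a product metric; this requires combining Smale-type handle cancellations with Chernysh's theorem in parametrised form.
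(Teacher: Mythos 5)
The paper does not prove this statement: it is quoted verbatim from the authors' earlier work (Theorem 3.1.2 of \cite{ERWpsc2}), so there is no proof here to compare against. Measured against the argument in that reference, your plan for (i) is the correct one: handle trading (using $d\geq 6$ and $2$-connectivity of $(W,M_1)$) to obtain elementary cobordisms of index $\leq d-3$, standard Gromov--Lawson--Chernysh metrics on each piece, and closure of right stability under composition. But the ``crucial step'' you isolate --- that $\mu(h_i,\_)$ is a weak equivalence for the standard trace metric --- is not a parametrised refinement extractable from Theorem \ref{thm:chernysh-theorem}: Chernysh's theorem compares two spaces of metrics on a \emph{fixed} manifold, whereas here one must control a gluing map \emph{across} the elementary cobordism. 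This is exactly the main technical theorem of \cite{ERWpsc2} (the parametrised Gromov--Lawson construction), whose proof occupies most of that paper; your proposal for (i) reduces the statement to its hardest ingredient without supplying it.

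Part (ii) has two genuine gaps. First, a left/right bookkeeping error: applying (i) to $\overline{W}\colon M_1\leadsto M_0$ produces a \emph{right} stable $h'\in\Riem^+(\overline W)_{g_1,g_0'}$, i.e.\ $\mu(h',\_)$ is an equivalence; but in your factorisation $\mu(\_,h\cup h')=\mu(\_,h')\circ\mu(\_,h)$ the two-out-of-three principle (Lemma \ref{lem:2-out-of-three}(iii)) requires $\mu(\_,h')$ to be an equivalence, i.e.\ $h'$ to be \emph{left} stable on $\overline W$ with incoming condition $g_1$. Time-reversal converts that into a right stable metric on $W$ with \emph{prescribed outgoing} condition $g_1$, which is not what (i) provides, so the hypothesis of your two-out-of-three is unverified. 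Second, even granting a suitable $h'$, the surgery equivalence of Corollary \ref{cor:chernysh-theorem} only matches $[h\cup h']$ with \emph{some} path component of metrics on $[0,1]\times M_0$; there is no reason it should be the component of the product metric, and an arbitrary psc metric on a cylinder is not known to be stable --- that is essentially the concordance-implies-isotopy problem (cf.\ \S\ref{sec:concimpliesisotopy-consequences}). The argument in \cite{ERWpsc2} avoids both problems: since \emph{both} pairs are $2$-connected one can arrange a handle decomposition with all indices in $[3,d-3]$, so the standard metric $k$ built from $g_0$ is simultaneously left and right stable; right stability of $k$ lets one write $h\simeq k\cup c$ for a concordance $c$, and the problem reduces to showing that a right stable concordance is left stable, which follows from a monoid-inverse argument exploiting the known stability of genuine product metrics rather than from a geometric identification of $W\cup\overline W$ with the cylinder.
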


Let us collect some fairly straightforward but important facts about stable metrics.
The following simple observation is immediate from the definitions and will be used repeatedly.

\begin{lem}[Lemma 3.3.1 of \cite{ERWpsc2}]\label{lem:2-out-of-three}
Let $(W, h): (M_0, g_0) \leadsto (M_1, g_1)$ and $(W', h'): (M_1, g_1) \leadsto (M_2, g_2)$ be psc cobordisms. Then
\begin{enumerate}[(i)]
\item If $h$ and $h'$ are left-stable, then so is $h\cup h'$. 
\item\label{it:2of3ii} If $h$ and $h'$ are right-stable, then so is $h\cup h'$. 
\item If $h'$ and $h\cup h'$ are left-stable, then so is $h$.
\item If $h$ and $h\cup h'$ are right-stable, then so is $h'$.
\end{enumerate}
\end{lem}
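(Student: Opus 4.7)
The plan is to reduce all four statements to the \emph{2-out-of-3 property} for weak equivalences together with an associativity identity for the gluing maps. More precisely, for any cobordism $V:M_2 \leadsto M_3$ with boundary condition $g_3 \in \Riem^+(M_3)$, the two ways of gluing
\[
\Riem^+(V)_{g_2,g_3} \xrightarrow{\mu(h',\_)} \Riem^+(W'\cup V)_{g_1,g_3} \xrightarrow{\mu(h,\_)} \Riem^+(W\cup W'\cup V)_{g_0,g_3}
\]
agree with the single gluing $\mu(h\cup h',\_)$, because gluing of cylinder-collared psc metrics is strictly associative up to the canonical identification of the underlying manifolds. Symmetrically, left gluing gives
\[
\mu(\_,h\cup h') = \mu(\_,h') \circ \mu(\_,h)
\]
on spaces $\Riem^+(V')_{g_{-1},g_0}$. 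These identities are essentially tautological once one unwinds the definition of the gluing maps, and will be the only geometric input needed.

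Given the associativity identities, each of the four claims is immediate from 2-out-of-3 for weak equivalences applied universally in the test cobordism $V$ (respectively $V'$). For (ii), if $h$ and $h'$ are right-stable, then for every test cobordism $V:M_2\leadsto M_3$ and every $g_3$, both factors $\mu(h,\_)$ and $\mu(h',\_)$ in the composition above are weak equivalences, hence so is their composition $\mu(h\cup h',\_)$; since $V$ and $g_3$ were arbitrary, $h\cup h'$ is right-stable. Claim (iv) is the dual direction: right-stability of $h$ makes $\mu(h,\_)$ a weak equivalence, and right-stability of $h\cup h'$ makes the composition $\mu(h,\_)\circ \mu(h',\_)$ a weak equivalence, so 2-out-of-3 forces $\mu(h',\_)$ to be a weak equivalence for every $V$ and $g_3$, i.e. $h'$ is right-stable. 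Claims (i) and (iii) are the formally identical arguments with left gluing.

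There is no substantive obstacle here; the only thing worth verifying carefully is that the associativity identity for the gluing maps holds on the nose, including the prescribed collar behaviour at $M_1$ so that $\mu(h,\_)\circ \mu(h',\_)$ and $\mu(h\cup h',\_)$ literally produce the same metric on $W\cup W'\cup V$ (rather than metrics related only by an ambient isotopy). This is handled by the standing convention that elements of $\Riem^+(W)_{g_0,g_1}$ are cylindrical in a collar of $M_0\sqcup M_1$ of some definite width, so the three-fold glued metric is well-defined and both orders of gluing yield the same result. With this convention in place, the proof is essentially a one-line invocation of 2-out-of-3.
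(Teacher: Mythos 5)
Your proof is correct and is exactly the intended argument: the paper does not prove this lemma itself (it cites it as Lemma 3.3.1 of \cite{ERWpsc2} and calls it ``immediate from the definitions''), and the content is precisely the associativity of the gluing maps combined with two-out-of-three for weak equivalences, applied with the test cobordism $W'\cup V$ (resp.\ $V'\cup W$) in the cancellation cases. Your remark about on-the-nose associativity is a reasonable precaution but not essential, since a homotopy between $\mu(h\cup h',\_)$ and $\mu(h,\_)\circ\mu(h',\_)$ would suffice for the two-out-of-three argument.
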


\begin{lem}\label{lem:stability-homotopy-invariant}
The subspaces $\Riem^+ (W)_{g_0,g_1}^{\rst} \subset \Riem^+ (W)_{g_0,g_1}$ and $\Riem^+ (W)^{\rst} \subset \Riem^+ (W)$ are unions of path components. The same holds for left stable metrics.
\end{lem}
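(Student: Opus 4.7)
The plan is to show that right-stability is preserved along paths, which immediately yields the claim for both $\Riem^+(W)^{\rst}$ and $\Riem^+(W)_{g_0,g_1}^{\rst}$; the left-stable case is symmetric, and the stable case then follows by intersection.

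For the fixed-boundary subspace $\Riem^+(W)_{g_0,g_1}^{\rst}$, the argument is immediate: a path $\{h_t\}_{t\in[0,1]}$ keeps the boundary values $(g_0,g_1)$ fixed, so for any cobordism $V: M_1 \leadsto M_2$ and $g_2 \in \Riem^+(M_2)$, the family $\mu(h_t,-) : \Riem^+(V)_{g_1,g_2} \to \Riem^+(W\cup V)_{g_0,g_2}$ is a genuine homotopy from $\mu(h_0,-)$ to $\mu(h_1,-)$. Homotopic maps are simultaneously weak equivalences, so right-stability of $h_0$ transfers to $h_1$.

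For $\Riem^+(W)^{\rst}$ the boundary values $g_i(t) := \res_i(h_t)$ also vary with $t$, so the source and target of $\mu(h_t,-)$ move as well. To handle this I would set up two Serre fibrations over $[0,1]$ by pulling back the restriction fibrations of Theorem \ref{thm:improved-chernysh-theorem}:
\begin{align*}
\mathcal{E} &:= [0,1] \times_{\Riem^+(M_1)\times\Riem^+(M_2)} \Riem^+(V),\\
\mathcal{E}' &:= [0,1] \times_{\Riem^+(M_0)\times\Riem^+(M_2)} \Riem^+(W\cup V),
\end{align*}
where the maps from $[0,1]$ are $t \mapsto (g_1(t),g_2)$ and $t \mapsto (g_0(t),g_2)$. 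The fibre of $\mathcal{E}$ over $t$ is $\Riem^+(V)_{g_1(t),g_2}$ and that of $\mathcal{E}'$ is $\Riem^+(W\cup V)_{g_0(t),g_2}$. Gluing with $h_t$ defines a map $\Phi: \mathcal{E} \to \mathcal{E}'$ over $[0,1]$ by $\Phi(t,k) := (t,\mu(h_t,k))$, which on fibres restricts to $\mu(h_t,-)$.

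Since $[0,1]$ is contractible, every fibre inclusion $\mathcal{E}_t \hookrightarrow \mathcal{E}$ and $\mathcal{E}'_t \hookrightarrow \mathcal{E}'$ is a weak equivalence. Applying the two-out-of-three property to the commuting square formed by $\Phi$, $\Phi_0 = \mu(h_0,-)$ and the two fibre inclusions at $t = 0$, right-stability of $h_0$ implies that $\Phi$ is a weak equivalence; the analogous square at $t = 1$ then shows that $\Phi_1 = \mu(h_1,-)$ is a weak equivalence as well, so $h_1$ is right-stable. I do not foresee a serious obstacle: the only real technical point is the invocation of Theorem \ref{thm:improved-chernysh-theorem} to ensure that the pullbacks $\mathcal{E}$ and $\mathcal{E}'$ are honest Serre fibrations over $[0,1]$; the rest of the argument is formal.
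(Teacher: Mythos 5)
Your proof is correct and uses essentially the same argument as the paper: the fixed-boundary case is immediate from homotopy invariance of $\mu(h,\_)$, and the varying-boundary case is handled by comparing fibre maps of the restriction fibrations from Theorem \ref{thm:improved-chernysh-theorem}. The paper merely phrases the second step as a commutative square of fibrations over $\Riem^+(W)\times\Riem^+(M_2)$ rather than pulling back along a path to $[0,1]$, which is a cosmetic difference.
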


\begin{proof}
If $h,h' \in \Riem^+ (W)_{g_0,g_1}$ lie in the same path component, then $\mu(h,\_)$ and $\mu(h',\_)$ are homotopic, which already shows the first claim. For the second claim, let $M_0 \stackrel{W}{\leadsto} M_1 \stackrel{V}{\leadsto} M_2$ be two cobordisms. Consider the commutative diagram
\[
\xymatrix{
\ar[d]^{\id \times r_2}  \Riem^+ (W) \times_{\Riem^+ (M_1)} \Riem^+ (V) \ar[r]^-{\mu(\_,\_)} & \Riem^+ (W \cup V)\ar[d]^{ (r_0,r_2)}\\
\Riem^+ (W) \times \Riem^+ (M_2) \ar[r]^-{r_0 \times \id} & \Riem^+ (M_0)\times \Riem^+ (M_2)
}
\]
where $r_i$ denotes the suitable restriction map. For $(h,g_2) \in \Riem^+(W) \times \Riem^+ (M_2)$, the fibre map 
\[
(\id \times r_2)^{-1} (h,g_2) \to (r_0,r_2)^{-1} (h,g_2) 
\]
is precisely the map $\mu(h,\_):\Riem^+ (V)_{r_1 (h),g_2}\to \Riem^+ (W \cup V)_{r_0(h),g_2}$. The vertical maps are fibrations, by Theorem \ref{thm:improved-chernysh-theorem}. Therefore, if $h'$ is in the same component of $\Riem^+ (W)$ as $h$, then the fibre map over $(h,g_2)$ is a weak equivalence if and only if the fibre map over $(h',g_2)$ is. 

This proves the Lemma for right stable metrics, and the proof for left stable metrics is completely analogous.
\end{proof}

Lemma \ref{lem:stability-homotopy-invariant} has a generalisation where the underlying manifolds are allowed to be varied continuously too, as follows. 

\begin{lem}\label{lem:stability-invariant-under-continuously-changing-W}
Let $\pi:E \to B$ be a bundle of compact manifolds with boundary. Assume that the boundary bundle $\partial E$ is decomposed into two parts $\partial_0 E$ and $\partial_1 E$ (so that $E$ can be viewed as a bundle of cobordisms). Let $W_x:= \pi^{-1}(x)$ and $M_{i,x} := W_x \cap \partial_i E$ (so that each $W_x$ is a cobordism $M_{0,x} \leadsto M_{1,x}$). 
Let $(h_x)_{x \in B}$ be a continuous family of psc metrics on the fibres of $\pi$. 

Then if $B$ is path-connected and if $h_y \in \Riem^+ (W_y)$ is right stable for one $y \in B$, then $h_x \in \Riem^+ (W_x)$ is right stable for each other $x \in B$. The same holds for left stability.
\end{lem}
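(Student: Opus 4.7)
The plan is to reduce this to Lemma~\ref{lem:stability-homotopy-invariant} by trivialising $\pi$ locally along a path from $y$ to $x$. The substance of Lemma~\ref{lem:stability-homotopy-invariant} is that on a \emph{fixed} cobordism, right stability is a property of the path component. The present lemma simply upgrades this to the case where the underlying cobordism is allowed to vary within a bundle, and the natural strategy is to patch together local trivialisations.

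More precisely, I would fix a path $\gamma : [0,1] \to B$ with $\gamma(0) = y$ and $\gamma(1) = x$. Since $\pi$ is a bundle of cobordisms (i.e.\ a smooth fibre bundle together with sub-bundles $\partial_0 E, \partial_1 E$ of $\partial E$), every point of $B$ has an open neighbourhood $U$ over which there is an isomorphism $\pi^{-1}(U) \cong U \times W^{U}$ of bundles of cobordisms for some compact $W^U : M_0^U \leadsto M_1^U$; such trivialisations exist because $\partial_0 E$ and $\partial_1 E$ are themselves sub-bundles. By compactness of $[0,1]$, I would choose a subdivision $0 = t_0 < t_1 < \cdots < t_n = 1$ and trivialising neighbourhoods $U_1, \dots, U_n$ with $\gamma([t_{i-1}, t_i]) \subset U_i$.

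On each $U_i$, the trivialisation identifies the continuous family $(h_z)_{z \in U_i}$ with a continuous family of psc metrics on the single compact cobordism $W^{(i)} := W^{U_i}$, and of course this identification preserves right stability. By Lemma~\ref{lem:stability-homotopy-invariant}, $\Riem^+(W^{(i)})^{\rst}$ is a union of path components, so the metrics $h_{\gamma(t)}$ for $t \in [t_{i-1}, t_i]$ are either all right stable or none of them is. Iterating from $h_{\gamma(t_0)} = h_y$ yields right stability of $h_{\gamma(t_n)} = h_x$. The argument for left stability is identical. I do not expect any real obstacle here: the only technical point is the existence of local trivialisations respecting the boundary decomposition, and this is routine bundle theory.
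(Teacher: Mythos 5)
Your proposal is correct and follows essentially the same route as the paper: pull back along a path from $y$ to $x$ and reduce via trivialisation to a continuous family of metrics on a fixed cobordism, where Lemma \ref{lem:stability-homotopy-invariant} applies. The only cosmetic difference is that the paper pulls back to $B=[0,1]$ first and then trivialises the whole bundle at once (every bundle over an interval is trivial), whereas you patch together local trivialisations along the path; both are fine.
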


\begin{proof}
By pulling back the bundle along a path from $y$ to $x$, we find that it is enough to consider the case $B=[0,1]$, $y=0$ and $x=1$. But then the bundle $\pi$ can be trivialised, so that $E\cong  [0,1] \times W_0$. The family $(h_x)$ becomes a continuous map $[0,1] \to \Riem^+ (W_0)$, and the claim follows immediately from Lemma \ref{lem:stability-homotopy-invariant}.
\end{proof}

\begin{lem}[Lemma 3.3.3 of \cite{ERWpsc2}]\label{lem:stability-under-surgery}
Let $W^d: M_0 \leadsto M_1$ be a cobordism and let $\phi: S^{k} \times D^{d-k} \to \inter{W}$ be an admissible surgery datum. Let $[h]\in \pi_0(\Riem^+ (W)_{g_0,g_1})$ and $[h'] \in  \pi_0(\Riem^+ (W_{\phi})_{g_0,g_1})$ correspond under the weak equivalence $\SE_{\phi}$. Then $h'$ is left stable (right stable) iff $h$ is left stable (right stable).
\end{lem}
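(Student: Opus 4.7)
The plan is to reduce the lemma to a naturality statement about the surgery equivalence $\SE_\phi$. Since right stability of a metric is a property of its path component (by Lemma \ref{lem:stability-homotopy-invariant}), I need only exhibit, for every cobordism $V: M_1 \leadsto M_2$ and every $g_2 \in \Riem^+(M_2)$, a commutative square (at least up to homotopy)
\[
\xymatrix{
\Riem^+(V)_{g_1,g_2} \ar[r]^-{\mu(h,\_)} \ar@{=}[d] & \Riem^+(W\cup V)_{g_0,g_2} \ar[d]^{\SE_\phi} \\
\Riem^+(V)_{g_1,g_2} \ar[r]^-{\mu(h',\_)} & \Riem^+(W_\phi\cup V)_{g_0,g_2}
}
\]
whose right vertical map is a weak equivalence. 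Given this, the top map is a weak equivalence if and only if the bottom map is, which is exactly the statement that $h$ is right stable if and only if $h'$ is right stable. The case of left stability is completely symmetric, obtained by gluing $V': M_{-1} \leadsto M_0$ on the left.

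The crucial geometric observation that makes this work is that the surgery datum $\phi$ takes values in $\inter{W}$, so it is equally an admissible surgery datum for the composite cobordism $W \cup V$, and $(W \cup V)_\phi = W_\phi \cup V$. Consequently, Corollary \ref{cor:chernysh-theorem} applied to the larger cobordism supplies the right vertical weak equivalence. So the content of the argument is the (homotopy-)commutativity of the square.

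To verify commutativity, I would trace through the zig-zag definition of $\SE_\phi$ stage by stage. The zig-zag takes the form
\[
\Riem^+(W)_{g_0,g_1} \xleftarrow{\sim} \Riem^+(W,\phi)_{g_0,g_1} \xrightarrow{\cong} \Riem^+(W_\phi,\phi')_{g_0,g_1} \xleftarrow{\sim} \Riem^+(W_\phi)_{g_0,g_1},
\]
where $\phi'$ is the dual surgery datum and the middle map exchanges a torpedo fibre over $S^k$ with a torpedo fibre over $S^{d-k-1}$ inside a standard handle region, leaving the metric on the complement unchanged. Because $\phi$ lies in $\inter{W}$, the gluing map $\mu(\_,\_)$ preserves the condition of being standardised near the surgery region: restriction to $\Riem^+(W,\phi)$ factors through $\Riem^+(W\cup V, \phi)$, and similarly on the $W_\phi$ side. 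The middle horizontal map only alters the metric inside the handle region, while $\mu$ only alters the metric on the $V$-part, so these two operations commute on the nose. This gives the desired homotopy-commutative square after applying it to each segment of the zig-zag.

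The only subtle point—and the one I would expect to occupy the most care—is checking that the middle ``swap'' map in the zig-zag really does commute strictly with gluing $V$. This reduces to the fact that the swap affects only the standardised neighbourhood $\phi(S^k \times D^{d-k})$ (or its surgered counterpart $D^{k+1}\times S^{d-k-1}$), which sits in $\inter{W}$ and is therefore disjoint from the region where $V$ is attached. Once this is verified, the rest of the argument is formal, and the left-stable case follows by the obvious symmetric modification using cobordisms $V':M_{-1}\leadsto M_0$.
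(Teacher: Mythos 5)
Your argument is correct, and it is the intended one. Note that the paper does not actually prove this lemma itself — it is imported verbatim as Lemma 3.3.3 of \cite{ERWpsc2} — but the proof given there is precisely your reduction: choose the representative $h$ standardised near $\phi$ (permissible since stability is a property of path components by Lemma \ref{lem:stability-homotopy-invariant}), observe that $\phi$ is also an admissible surgery datum for $W\cup V$ with $(W\cup V)_\phi = W_\phi\cup V$, and check that the gluing maps intertwine the Chernysh zig-zags because the swap only modifies the metric inside $\phi(S^k\times D^{d-k})\subset\inter{W}$, which is disjoint from where $V$ is attached. (Only a cosmetic slip: the last arrow of your zig-zag should be the inclusion $\Riem^+(W_\phi,\phi')_{g_0,g_1}\to\Riem^+(W_\phi)_{g_0,g_1}$, i.e.\ pointing the other way.)
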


Later on, we need a sharpening of Theorem \ref{thm:StabMetrics} which involves tangential structures. 

\begin{defn}\label{defn:tangentialstructure}
Let $\theta:B \to B\mathrm{O}(d)$ be a fibration and let $\gamma_\theta$ be the pullback of the universal $d$-dimensional vector bundle along $\theta$. A \emph{$\theta$-structure} on a $d$-dimensional manifold $W$ is a bundle map $\ell: TW \to \gamma_\theta$, and a $\theta$-structure on a $(d-1)$-dimensional manifold $M$ is a bundle map $\ell : \bR \oplus TM \to \gamma_\theta$. A $\theta$-structure $\ell$ on a cobordism $W: M_0 \leadsto M_1$ induces $\theta$-structures on $M_i$, namely $\bR \oplus TM_i \cong TW|_{M_i} \stackrel{\ell}{\to} \gamma_\theta$, where the first isomorphism is induced by the collar\footnote{Here we need the collar around $M_0$ to be of the form $ [0,\infty) \times M_0  \to W$ and around $M_1$ to be of the form $ (-\infty,1] \times M_1  \to W$.} of $W$.

The fibration $\theta$ is \emph{once-stable} if it is pulled back from a fibration $\theta'$ over $B\mathrm{O}(d+1)$. 
\end{defn}

A $\theta$-structure $\ell: T W \to \gamma_\theta$ covers a map $W \to B$, sometimes referred to as the \emph{structure map}. Slightly abusing notation, we often denote the structure map also by $\ell: W \to B$. This should not cause confusion.

\begin{thm}\label{prop:right-stable-on-2.1.cob-stable}
Let $\theta$ be once-stable. Let $(W,\ell_W) : (M_0, \ell_{M_0}) \leadsto (M_1, \ell_{M_1})$ be a $\theta$-cobordism with $d \geq 6$. Assume that the inclusion $i_1:M_1 \to W$ and the structure maps $\ell_{M_i}: M_i \to B$ are $2$-connected. Then the following statements hold.
\begin{enumerate}[(i)]
\item A psc metric on $W$ is left stable if and only if it is right stable.
\item For each $g_0 \in \Riem^+ (M_0)$, there exists $g_1 \in \Riem^+ (M_1)$ and a stable $h \in \Riem^+ (W)_{g_0,g_1}$.
\item For each $g_1 \in \Riem^+ (M_1)$, there exists $g_0 \in \Riem^+ (M_0)$ and a stable $h \in \Riem^+ (W)_{g_0,g_1}$.
\end{enumerate}
\end{thm}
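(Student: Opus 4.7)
The plan is to reduce to Theorem \ref{thm:StabMetrics}, which requires both $(W,M_0)$ and $(W,M_1)$ to be 2-connected. Under our hypotheses only $(W,M_1)$ is given to be 2-connected, so I will modify $W$ by admissible interior $\theta$-surgeries to arrange that $(W,M_0)$ becomes 2-connected as well, then transport the conclusions back to the original $W$ via Lemma \ref{lem:stability-under-surgery}.

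First, from the factorisation $\ell_{M_1} = \ell_W \circ i_1$ and the 2-connectivity of $i_1$ and $\ell_{M_1}$, the standard composition argument shows that $\ell_W : W \to B$ is 2-connected. Applying the same reasoning to $\ell_{M_0} = \ell_W \circ i_0$ shows that $i_0$ is automatically an isomorphism on $\pi_0$ and $\pi_1$; the only potential obstruction to 2-connectivity of $i_0$ is the surjectivity of $(i_0)_\ast$ on $\pi_2$, which can fail because of classes in $\ker \pi_2(\ell_W) \subset \pi_2(W)$ not lying in the image of $(i_0)_\ast$. I will kill such classes by a finite sequence of admissible surgeries of index $3$ on embedded $2$-spheres in $\inter{W}$. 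Since $d \geq 6 > 2\cdot 2$, every $\alpha \in \ker \pi_2(\ell_W)$ is represented by a framed embedding $\phi : S^2 \times D^{d-2} \hookrightarrow \inter{W}$, and because $\alpha$ lies in the kernel, the composition $S^2 \to W \to B$ is nullhomotopic; once-stability of $\theta$ then ensures that the $\theta$-structure extends across $D^3 \times D^{d-2}$, so the surgery is a $\theta$-surgery. The resulting $W_\phi$ has $\pi_k$ unchanged for $k \leq 1$ and $\pi_2(W_\phi) = \pi_2(W)/\langle \alpha \rangle$, while $(W_\phi, M_1)$ remains 2-connected because the attached $(d-3)$-handle satisfies $d-3 \geq 3$ and hence cannot introduce new $\pi_2$ classes. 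Iterating produces a $\theta$-cobordism $W'$ with the same boundary and boundary $\theta$-structure for which $\ell_{W'} : W' \to B$ is an isomorphism on $\pi_2$, and consequently both $(W', M_0)$ and $(W', M_1)$ are 2-connected.

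Theorem \ref{thm:StabMetrics} now applies to $W'$ and yields statements (ii) and (iii) for $W'$ directly; combined with part (ii) of that theorem, statement (i) for $W'$ follows. To transport the three conclusions back to $W$, I use that the surgery equivalence $\SE_\phi$ of Corollary \ref{cor:chernysh-theorem} is a weak equivalence $\Riem^+(W)_{g_0,g_1} \simeq \Riem^+(W')_{g_0,g_1}$ which by Lemma \ref{lem:stability-under-surgery} preserves left and right stability of path components; combined with Lemma \ref{lem:stability-homotopy-invariant}, all three conclusions transfer from $W'$ to $W$. The main subtlety lies in the surgery step: one must verify that each surgery is genuinely a $\theta$-surgery, which is exactly what once-stability of $\theta$ provides, and that the iteration does not destroy the 2-connectivity of $(W, M_1)$ present at the start, which requires $d - 3 \geq 3$ (hence the hypothesis $d \geq 6$).
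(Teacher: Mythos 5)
Your overall strategy is exactly the paper's: perform admissible interior $\theta$-surgeries on embedded $2$-spheres to make $(W,M_0)$ $2$-connected, apply Theorem \ref{thm:StabMetrics} to the modified cobordism $W'$, and transport the conclusions back through the surgery equivalence of Corollary \ref{cor:chernysh-theorem} using Lemmas \ref{lem:stability-under-surgery} and \ref{lem:stability-homotopy-invariant}. (For part (iii) one needs the reversed-cobordism dual of Theorem \ref{thm:StabMetrics}(i), which you elide with ``directly''; that is minor.)

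There is, however, one genuine gap: the finiteness of your surgery process. You propose to kill \emph{all} of $\ker\pi_2(\ell_W)$ so that $\ell_{W'}$ becomes an isomorphism on $\pi_2$, and assert that a finite sequence of surgeries suffices. But $\ker\pi_2(\ell_W)$ is a $\bZ[\pi_1 W]$-submodule of $\pi_2(W)$ that need not be finitely generated: the theorem imposes no finiteness hypothesis on $B$, and even for compact $W$ the module $\pi_2(W)$ itself need not be finitely generated over the group ring. So the iteration is not known to terminate, and your intermediate target is in any case stronger than necessary. What is actually needed is only that $(i_0)_*:\pi_2(M_0)\to\pi_2(W')$ be surjective, i.e.\ $\pi_2(W',M_0)=0$; and the module $\pi_2(W,M_0)$ \emph{is} finitely generated over $\bZ[\pi]$ by \cite[\S 1]{WallFin}, because $(W,M_0)$ is a compact pair and $i_0$ induces an isomorphism on fundamental groups. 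This is exactly what the paper's Lemma \ref{lem:surgery-lemma-fabian-michael} uses: choose finitely many $y_1,\dots,y_r\in\pi_2(W)$ whose images generate $\pi_2(W,M_0)$, correct each $y_i$ by an element of the image of $(i_0)_*$ so that $(\ell_W)_*(y_i)=0$ (possible since $(\ell_{M_0})_*$ is onto $\pi_2(B)$ --- this is your own observation), and perform surgery on just those $r$ classes. With this repair, the rest of your argument --- triviality of the normal bundles, preservation of the $2$-connectivity of $(W,M_1)$, once-stability to extend the $\theta$-structure over the traces, and the transfer of stability via $\SE$ --- goes through as in the paper.
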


For the proof, we need a result which is very similar to \cite[Theorem 2.2]{Rosenberg1986} and to a result in \cite[Appendix B]{HebestreitJoachim} (which were used for similar purposes in those papers).

\begin{lem}\label{lem:surgery-lemma-fabian-michael}
Let $\theta$ be once-stable and let $W: M_0 \leadsto M_1$ be a $d$-dimensional $\theta$-cobordism, $d \geq 6$, such that the structure maps $\ell_{M_0}:M_0 \to B$ and $\ell_W: W \to B$ are both $2$-connected. Then $W$ is admissibly $\theta$-cobordant (relative to its boundary) to a cobordism $W': M_0 \leadsto M_1$ such that the inclusion $M_0 \to W'$ is $2$-connected. If $M_1 \to W$ was $2$-connected, then so is $M_1 \to W'$.
\end{lem}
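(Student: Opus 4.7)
The plan is to kill $\pi_2(W,M_0)$ by a finite sequence of admissible surgeries on embedded $2$-spheres in $\inter{W}$, in the spirit of classical handle-trading arguments (cf.\ \cite[Theorem 2.2]{Rosenberg1986}). First, since $\ell_W$ and $\ell_{M_0}$ are both $2$-connected and $\ell_W \circ i_0 = \ell_{M_0}$, the inclusion $i_0: M_0 \hookrightarrow W$ induces isomorphisms on $\pi_0$ and $\pi_1$. By the long exact sequence of the pair, the map $\pi_2(W) \to \pi_2(W,M_0)$ is therefore surjective, and by the relative Hurewicz theorem $\pi_2(W,M_0) \cong H_2(W,M_0)$, which is finitely generated since $W$ is compact.

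Pick a generator $[\alpha] \in \pi_2(W,M_0)$ and a lift $\beta \in \pi_2(W)$. Since $\ell_{M_0}$ is surjective on $\pi_2$, I can choose $\gamma \in \pi_2(M_0)$ with $(\ell_{M_0})_*\gamma = (\ell_W)_*\beta$ and replace $\beta$ by $\beta - (i_0)_*\gamma$; this does not change the image of $\beta$ in $\pi_2(W,M_0)$ but arranges $(\ell_W)_*\beta = 0$ in $\pi_2(B)$. Because $d \geq 6$, general position realises $\beta$ as a smooth embedding $f: S^2 \hookrightarrow \inter{W}$. The composite $\ell_W \circ f : S^2 \to B$ being null-homotopic, choosing a null-homotopy and lifting via the fibration $\theta$ yields a trivialisation $TW|_{S^2} \cong \epsilon^d$; hence the normal bundle of $f$ is stably trivial, and since $\pi_2(BO(d-2)) \to \pi_2(BO)$ is an isomorphism for $d-2 \geq 3$, it is actually trivial. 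A choice of trivialisation gives an admissible surgery datum $\phi : S^2 \times \bR^{d-2} \to \inter{W}$ (admissible since $2 \leq 2 \leq d-3$). Once-stability of $\theta$ then ensures that the $\theta$-structure extends across the trace of this surgery: the trace is a $(d+1)$-manifold, and the only obstruction --- extending the null-homotopic map $\ell_W \circ f$ over $D^3$ --- vanishes.

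Performing the surgery produces $W_\phi$, admissibly $\theta$-cobordant to $W$ relative to the boundary. Writing $W_0$ for the complement of an open tubular neighbourhood of $f(S^2)$ in $W$, the pairs $(W,W_0)$ and $(W_\phi,W_0)$ are $(d-3)$- and $2$-connected respectively; analysing the long exact sequences of the triples $(V,W,M_0)$ and $(V,W_\phi,M_0)$ (for $V$ the trace) shows that $H_2(W_\phi,M_0) \cong H_2(W,M_0)/\langle [\alpha] \rangle$ and that $M_0 \hookrightarrow W_\phi$ remains $1$-connected. Iterating over a finite generating set for $\pi_2(W,M_0)$ produces the desired $W'$. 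If $M_1 \to W$ was $2$-connected, the same connectivity bounds for the pairs $(W,W_0)$ and $(W_\phi,W_0)$ show $M_1 \to W_0$ is $2$-connected (as $d-3 \geq 3$) and hence so is $M_1 \to W_\phi$; this property is therefore preserved throughout the iteration.

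The main technical obstacle I anticipate is the interaction of the normal framing with the $\theta$-structure: once-stability of $\theta$ is crucial to make sense of $\theta$-structures on traces of surgeries, which are one dimension higher than $W$, and to ensure that the trivialisation of the normal bundle can be chosen compatibly with the given $\theta$-framing on $W$. The remaining steps are routine bookkeeping via standard surgery theory.
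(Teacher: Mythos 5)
Your overall strategy---represent a generating set of $\pi_2(W,M_0)$ by classes in $\pi_2(W)$ that die in $\pi_2(B)$, realise them by embedded framed $2$-spheres, and do admissible $\theta$-surgery---is exactly the paper's argument, and your treatment of the normal bundle and of the role of once-stability is fine. However, there is one genuine error: the claim that ``by the relative Hurewicz theorem $\pi_2(W,M_0)\cong H_2(W,M_0)$, which is finitely generated since $W$ is compact.'' This is false when $\pi:=\pi_1(M_0)\cong\pi_1(W)$ is nontrivial, which is the case of interest here (the lemma is applied with $B=B\Spin(d)\times BG$ for an arbitrary finitely presented $G$). The relative Hurewicz theorem in this setting only identifies $H_2(W,M_0)$ with the $\pi$-coinvariants of $\pi_2(W,M_0)$; the group $\pi_2(W,M_0)$ itself is typically not finitely generated as an abelian group, and killing a finite generating set of $H_2(W,M_0)$ does not kill $\pi_2(W,M_0)$. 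Since the goal is $2$-connectivity of $M_0\to W'$, i.e.\ $\pi_2(W',M_0)=0$, your iteration as written only achieves $H_2(W',M_0)=0$ and the proof does not close in the non-simply-connected case.

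The fix is the one the paper uses: because $i_0:M_0\to W$ induces an isomorphism on fundamental groups, $\pi_2(W,M_0)$ is a \emph{finitely generated $\bZ[\pi]$-module} by Wall's finiteness results (\cite[\S 1]{WallFin}), and one performs surgery on lifts $y_1,\dots,y_r\in\pi_2(W)$ of a finite $\bZ[\pi]$-module generating set (each surgery on a single embedded sphere kills the entire $\bZ[\pi]$-orbit of its class in $\pi_2(W,M_0)$). With that substitution, and replacing your $H_2$-bookkeeping at the end by the corresponding statement for $\pi_2$ as a $\bZ[\pi]$-module (equivalently, equivariant homology of universal covers), your argument coincides with the paper's; the final general-position remark about preserving $2$-connectivity of $M_1\to W'$ is also as in the paper.
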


\begin{proof}
The maps $M_0 \overset{i_0}\to W \overset{\ell_W}\to B$ both induce isomorphisms on $\pi_1$, and we write $\pi$ for the common fundamental group. The long exact homotopy sequence of the pair $(W,M_0)$ and the maps to $B$ induce a diagram with exact row
\[
 \xymatrix{
 \pi_2 (M_0)\ar@{->>}[dr]_-{(\ell_{M_0})_*} \ar[r]^-{(i_0)_*} & \pi_2 (W) \ar@{->>}[d]^-{(\ell_W)_*}\ar[r]^-{q_*} & \pi_2 (W,M_0) \ar[r] & 0\\
 & \pi_2 (B). & & 
 }
\]
As the inclusion $i_0 : M_0 \to W$ induces an isomorphism on fundamental groups, the $\bZ[\pi]$-module $\pi_2 (W,M_0)$ is finitely generated by \cite[\S 1]{WallFin}. Hence we can pick elements $y_1 ,\ldots,y_r \in \pi_2 (W)$ such that $\{q_* (y_i)\}_{i\leq r}$ generate $\pi_2 (W,M_0)$. By a diagram chase, we can moreover pick these $y_i$ so that $(\ell_W)_* (y_i)=0 \in \pi_2 (B)$. Since $\dim W \geq 5$, we can represent each $y_i$ by an embedded $2$-sphere, and since the image in $\pi_2 (B)$ is zero, the normal bundle of this sphere is trivial, and hence we find embeddings $\phi_i: S^2 \times D^{d-2} \to \inter{W}$ representing the $y_i$. Doing $\theta$-surgeries on all these spheres yields a new $\theta$-cobordism $W'$. Since the surgery on an embedded $S^2 \times D^{d-2}$ as well as the opposite surgery on $S^{d-3} \times D^3$ is in codimension at least $3$, $W'$ is admissibly $\theta$-cobordant to $W$. If $d \geq 6$, then the initial $2$-connectivity of $M_1 \to W'$ is not destroyed, by a general position 
argument.
\end{proof}

\begin{proof}[Proof of Theorem \ref{prop:right-stable-on-2.1.cob-stable}]
Let $W'$ be a cobordism as provided by Lemma \ref{lem:surgery-lemma-fabian-michael}. Since $W$ and $W'$ are admissibly cobordant, there is a surgery equivalence
\[
 \SE: \Riem^+ (W)_{g_0,g_1} \simeq \Riem^+ (W')_{g_0,g_1}
\]
and left and right stable metrics on $W$ and $W'$ correspond under $\SE$, by Lemma \ref{lem:stability-under-surgery}. Thus part (i) follows from Theorem \ref{thm:StabMetrics} (ii). Part (ii) is immediate from part (i) and Theorem \ref{thm:StabMetrics} (i). For part (iii), we use a dual version of Theorem \ref{thm:StabMetrics} (i) (i.e.\ apply it to the reversed cobordism): given $g_1 \in \Riem^+ (M_1)$, there is $g_0 \in \Riem^+ (M_0)$ and a left-stable $h \in \Riem^+ (W')_{g_0,g_1}$, because $i_0: M_0 \to W'$ is $2$-connected. By part (i), $h$ is also right stable.
\end{proof}

\section{The psc cobordism category}\label{sec:psc-cob-category}

\subsection{Cobordism categories}\label{sec:CobCat}

We shall make use of the theory of cobordism categories as developed in \cite{GMTW, GRW}. More specifically, we use the version $\Cob_\theta$ defined in \cite{GRW}, which is a non-unital topological category. Let us recall this category in a form which will be most convenient for our needs. We shall fix a fibration $\theta : B \to B\mathrm{O}(d)$ as in Definition \ref{defn:tangentialstructure}. 

An object of $\Cob_\theta$ consists of a pair $(M, \ell)$ of a $(d-1)$-dimensional closed submani\-fold $M \subset \bR^\infty$ and a $\theta$-structure $\ell$ on $M$. A morphism from $(M_0, \ell_0)$ to $(M_1, \ell_0)$ in $\Cob_\theta$ consists of a triple $(t, W, \ell)$ of a real number $t>0$, a $d$-dimensional submani\-fold $W \subset [0,t] \times \bR^\infty$ which is equal to $[0,t] \times M_0$ near $\{0\} \times \bR^\infty$ and equal to $[0,t] \times M_1$ near $\{t\} \times \bR^\infty$, and a $\theta$-structure $\ell : TW \to \gamma_\theta$ which restricts to $\ell_i$ on $M_i$. Composition of morphisms is given by translation and union of subsets of $\bR \times \bR^\infty$. This may be seen to give sets of objects and morphisms in bijection with those of \cite[Definition 2.6]{GRW}, and they are topologised using this bijection. We now explain this topology 
in more familiar terms.

For a $(d-1)$-manifold $M$, we let the space of embeddings $\Emb(M, \bR^\infty)$ be given the Whitney $C^\infty$-topology. This is contractible, has an obvious free action by the diffeomorphism group $\Diff(M)$ and the quotient map $\Emb(M;\bR^\infty) \to \Emb(M;\bR^\infty)/\Diff(M)$ is a universal $\Diff(M)$-principal bundle \cite{BinzFischer}. Furthermore, let $\Bun(\bR \oplus TM, \gamma_\theta)$ be the space of bundle maps with the compact-open topology and define
$$
\mathcal{M}^\theta(M) := \bigl(\Emb(M, \bR^\infty) \times \Bun(\bR \oplus TM, \gamma_\theta) \bigr) / \Diff(M).
$$
The space of objects of $\Cob_\theta$ can be described as
$$
\Ob(\Cob_\theta) = \coprod_{[M]} \mathcal{M}^\theta(M),
$$
where the disjoint union is over closed $(d-1)$-manifolds, one in each diffeomorphism class. Similarly, for a $d$-dimensional cobordism $W$ from $M_0$ to $M_1$ equipped with collars $e_0 : [0,1) \times M_0 \hookrightarrow W$ and $e_1 : (0,1] \times M_1 \hookrightarrow W$ and $\epsilon>0$, we let $\Emb_\epsilon(W, [0,1]\times\bR^\infty)$ denote the space of $\epsilon$-collared embeddings, and $\Diff_\epsilon(W)$ denote the group of $\epsilon$-collared diffeomorphisms, both with the Whitney $C^\infty$-topology, and put
$$
\mathcal{M}^\theta(W)_\epsilon := \bigl(\Emb_\epsilon(W, [0,1]\times\bR^\infty) \times \Bun(TW, \gamma_\theta)\bigr)/\Diff_\epsilon(W)
$$
and
\[
 \mathcal{M}^{\theta}(W) :=\colim_{\epsilon\to 0}  \mathcal{M}^{\theta}(W)_\epsilon.
\]
The space of morphisms of $\Cob_\theta$ can be described as
$$
\Mor(\Cob_\theta) = (0,\infty) \times \coprod_{[W]} \colim\limits_{\epsilon \to 0}\mathcal{M}^\theta(W)_\epsilon,
$$
where the disjoint union is over $d$-dimensional cobordisms, one in each diffeomorphism class.

In \cite{GRW}, certain subcategories $\Cob_{\theta}^{\kappa,l} \subset \Cob_\theta$ were considered, and since they will be crucial for us as well, we recall their definition.

\begin{defn}
For $\kappa \geq 0$, we define $\Cob_{\theta}^{\kappa} \subset \Cob_\theta$ as the wide subcategory whose morphism space $\Mor (\Cob^{\kappa}_\theta) \subset \Mor (\Cob_\theta)$ consists of all $(t,W,\ell)$ such that the inclusion $W|_t \to W$ is $\kappa$-connected. 

For $l \geq -1$, we define $\Cob_\theta^{\kappa,l} \subset \Cob_\theta^{\kappa}$ as the full subcategory on those $(M,\ell) \in \Ob (\Cob_\theta^{\kappa})$ such that the map $\ell: M \to B$ is $(l+1)$-connected.
\end{defn}

\begin{rem}
In \cite{GRW}, a variant of $\Cob_\theta^{\kappa,l}$ was defined which only makes sense when all manifolds are required to contain an embedded copy of a certain manifold $L$ or $L \times [0,t]$. The present definition is better suited to our needs. The index shift (that the objects in $\Cob_\theta^{\kappa,l}$ are required to be $(l+1)$-connected relative to $B$) has its origins in \cite{GRW} and we decided not to change the notation, in order to avoid confusion with that paper. 
\end{rem}

\begin{thm}[Theorem 3.1 of \cite{GRW}]\label{thm:grw-thm3.1}
If $2 \kappa \leq d-2$, the inclusion $B \Cob_{\theta}^{\kappa} \to B \Cob_{\theta}^{\kappa-1}$ is a weak homotopy equivalence.
\end{thm}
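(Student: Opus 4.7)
The plan is to implement the parametrised surgery strategy of \cite{GRW}: improve the connectivity of the inclusion $W|_t \to W$ by performing surgery on embedded spheres of codimension $\kappa$ in the interior of $W$, and show that the space of surgery data one can attach is contractible in the appropriate parametrised sense. The dimension condition $2\kappa \leq d - 2$ is exactly what is needed so that $\kappa$-spheres (and the traces of the resulting surgeries, which are handles of index $\kappa + 1$) can be embedded in $d$-dimensional cobordisms in general position, and, crucially, so that performing such a surgery does \emph{not} destroy $(\kappa-1)$-connectedness of the inclusion at the outgoing end.

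First I would introduce an auxiliary \emph{semi-simplicial resolution} $\Cob_{\theta}^{\kappa,+}$ of $\Cob_\theta^{\kappa-1}$, enlarging each morphism space by the data of a locally finite family of pairwise disjoint embeddings $\phi_i : S^{\kappa-1} \times D^{d-\kappa} \hookrightarrow \inter{W}$, each coming with a null-homotopy of $\ell \circ \phi_i|_{S^{\kappa - 1} \times \{0\}}$, together with a choice of extension of the $\theta$-structure to the trace of the surgery. A $p$-simplex over $(t,W,\ell)$ consists of a tuple of such data indexed by $\{0, \ldots, p\}$. The augmentation to $\Cob_\theta^{\kappa-1}$ forgets the surgery data. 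Using that the relative homotopy group $\pi_\kappa(W, W|_t)$ vanishes for a morphism in $\Cob_\theta^\kappa$ and is, in general, carried by the class of some embedded sphere whenever the $\theta$-structure can be extended (here $2\kappa \leq d - 2$ is used to represent classes by embeddings and to ensure that such embeddings can be made disjoint), one shows by the usual barycentric subdivision / microfibration argument that the geometric realisation of this semi-simplicial space maps by a weak equivalence to $\Mor(\Cob_\theta^{\kappa-1})$. This upgrades (via the nerve) to a levelwise equivalence between $N_\bullet\Cob_\theta^{\kappa-1}$ and a semi-simplicial resolution of its nerve, hence a weak equivalence on classifying spaces.

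Second, I would build a zig-zag between $\Cob_{\theta}^{\kappa,+}$ and $\Cob_\theta^{\kappa}$. A morphism equipped with augmentation data can be replaced by the result of simultaneously performing the surgeries dictated by its embedded spheres; the trace of these surgeries gives an extra cobordism collar that one can glue on, providing a natural transformation between two functors to $\Cob_\theta^{\kappa-1}$ after a suitable reparametrisation in the time coordinate $t$. A straightforward check using Lemma-style 2-out-of-3 arguments for connectivity of inclusions of submanifolds (analogous to Lemma~\ref{lem:2-out-of-three}) shows that the resulting morphism lies in $\Cob_\theta^\kappa$, since each surgery kills one generator of $\pi_\kappa(W, W|_t)$ without affecting the lower homotopy groups, and general position together with $2\kappa \leq d-2$ ensures that no new obstruction to $(\kappa-1)$-connectivity is introduced. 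The induced map of classifying spaces is then a weak equivalence by a Quillen Theorem~B style argument: the homotopy fibre over any object of $\Cob_\theta^{\kappa}$ is the classifying space of a non-unital topological category of ``ways to realise the given object as the result of surgery,'' which is contractible because the space of surgery patterns on any fixed $W$ is cofinal and filtered.

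The main obstacle, as in \cite{GRW}, is the contractibility-of-fibres step in the first part: one must verify that \emph{parametrised families} of morphisms admit families of surgery data, with the appropriate compatibility as parameters vary, and that such families form a microfibration over the base semi-simplicial space. This is the technical heart and relies on combining the Whitney embedding theorem with the fact that the bundle-theoretic obstruction to extending a $\theta$-structure over a handle attachment vanishes whenever $\theta$ is arbitrary (no stability is needed here because we are working in codimension $\geq \kappa \geq 1$ within the interior and the trivialisation of the normal bundle is automatic from the dimension). Once this contractibility statement is in hand, the remainder of the argument is formal manipulation of nerves and bisimplicial realisations, so the effort is essentially concentrated in producing surgery data in families and in checking that the performed surgeries preserve the tangential and connectivity data along the way.
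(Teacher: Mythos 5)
First, a remark on the comparison itself: the paper does not prove this statement — it is quoted as Theorem 3.1 of \cite{GRW} and used as a black box — so your sketch has to be measured against the argument in that reference. At the structural level you have reproduced the right template (a semi-simplicial resolution by spaces of surgery data, contractibility of those spaces via general position and a flag-complex criterion, and a surgery move realising the improvement of connectivity as a homotopy which is constant where no surgery is needed). However, the surgery you actually propose is the wrong one, and this is a genuine gap rather than a presentational issue.

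The condition defining $\Cob_\theta^{\kappa}$ is that $\pi_j(W, W|_t)=0$ for $j \leq \kappa$, where $W|_t$ is the \emph{outgoing boundary}. By the long exact sequence of the pair, killing $\pi_\kappa(W,W|_t)$ requires in particular killing $\ker\bigl(\pi_{\kappa-1}(W|_t) \to \pi_{\kappa-1}(W)\bigr)$, and no modification confined to $\inter{W}$ that leaves the slice $W|_t$ untouched can do this: surgery on a framed interior $(\kappa-1)$-sphere (whose tubular neighbourhood is $S^{\kappa-1}\times D^{d-\kappa+1}$, not the $(d-1)$-dimensional $S^{\kappa-1}\times D^{d-\kappa}$ you wrote) only quotients the absolute group $\pi_{\kappa-1}(W)$, so the kernel above can only grow, and the relative connectivity gets worse, not better. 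The correct surgery data consists of embedded framed \emph{relative} discs $(D^\kappa,\partial D^\kappa)\times D^{d-\kappa} \hookrightarrow (W|_{[a_{i-1},a_i]}, W|_{a_i})$ generating $\pi_\kappa(W|_{[a_{i-1},a_i]}, W|_{a_i})$ (finitely many suffice by Wall finiteness, exactly as in Lemma \ref{lem:surgery-lemma-fabian-michael}), and the move surgers the \emph{slice} $W|_{a_i}$ along the boundary spheres $\partial D^\kappa \times D^{d-\kappa} \subset W|_{a_i}$, i.e.\ trades the corresponding $\kappa$-handles across the cut; the inequality $2\kappa \leq d-2$ is what guarantees that removing a $\kappa$-handle from the incoming end of the adjacent morphism (a change of relative dimension $d-\kappa \geq \kappa+2$) does not disturb its connectivity condition. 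Your concluding step also cannot work as written: the trace of an interior surgery on the $d$-manifold $W$ is $(d+1)$-dimensional and has no home in $\Cob_\theta$; only the trace of a surgery on the $(d-1)$-dimensional slice can be inserted as an extra piece of cobordism, and the passage back to $\Cob_\theta^{\kappa}$ is carried out in \cite{GRW} by an explicit one-parameter family in the poset model rather than by a Quillen Theorem B argument on a category of ``surgery patterns''.
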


The following is essentially Theorem 4.1 of \cite{GRW}, with a small modification. Recall that Wall \cite{WallFin} has defined a topological space $B$ to have \emph{type ($F_n$)} if there exists a finite CW-complex $K$ and and $n$-connected map $K \to B$. 

\begin{thm}[Galatius--Randal-Williams]
\label{thm:grw-thm4.1}
Assume that
\begin{enumerate}[(i)]
\item $2(l+1) < d$,
\item $l \leq \kappa$,
\item $l \leq d-\kappa-2$,
\item the space $B$ is of type ($F_{l+1}$).
\end{enumerate}
Then the inclusion map $B \Cob_\theta^{\kappa,l}\to B \Cob_\theta^{\kappa,l-1}$ is a weak homotopy equivalence. 
\end{thm}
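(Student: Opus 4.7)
The plan is to adapt, with only cosmetic changes, the parametrized surgery argument used to prove \cite[Theorem 4.1]{GRW}; the difference between the convention for $\Cob_\theta^{\kappa,l}$ here and in \cite{GRW} is one of indexing only. I would work simplicial-degreewise in the nerve: for each $p \geq 0$, the inclusion $N_p\Cob_\theta^{\kappa,l} \hookrightarrow N_p\Cob_\theta^{\kappa,l-1}$ is the inclusion of those composable $p$-chains all of whose objects have structure map $(l+1)$-connected to $B$, into those whose objects are only required to be $l$-connected to $B$. After geometric realization it suffices to show that each such levelwise inclusion is a weak equivalence, and I would prove this via a bi-semi-simplicial resolution.

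To resolve, I would introduce an auxiliary semi-simplicial space $R_{\bullet,p}$ whose $q$-simplices over a composable chain $(M_0 \stackrel{W_1}{\leadsto} M_1 \stackrel{W_2}{\leadsto} \cdots \stackrel{W_p}{\leadsto} M_p)$ in $\Cob_\theta^{\kappa,l-1}$ are $(q+1)$-tuples of pairwise disjoint framed $\theta$-surgery data $\phi_j : S^l \times D^{d-l-1} \hookrightarrow M_{i(j)}$, equipped with null-homotopies of the composites $\ell_{i(j)} \circ \phi_j|_{S^l \times 0}$ in $B$, whose simultaneous surgery on each $M_{i(j)}$ produces an $(l+1)$-connected structure map. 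The forgetful map $\|R_{\bullet,p}\| \to N_p\Cob_\theta^{\kappa,l-1}$ is a weak equivalence because each fibre is the realization of a highly connected semi-simplicial space; this contractibility-of-surgery-data statement is the technical heart of \cite[\S 4, \S 6]{GRW}, and uses the hypothesis $2(l+1) < d$ (general position both to embed $(l+1)$-handles and to disjoin them) together with $B$ of type $(F_{l+1})$ (finite generation of the relative $\pi_{l+1}$ of $\ell$ as a $\bZ[\pi_1 B]$-module, so that a finite set of surgery data suffices). Simultaneously, replacing each $M_{i(j)}$ by its surgered outcome $M_{i(j)}'$ and splicing in the trace cobordisms produces a second augmentation $\|R_{\bullet,p}\| \to N_p\Cob_\theta^{\kappa,l}$, which is a weak equivalence for the same reason. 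The hypothesis $l \leq d-\kappa-2$ ensures that the trace cobordisms remain in $\Cob_\theta^\kappa$ (the dual handle has index $d-l-1$, giving $(d-l-2)$-connectivity of the right-side inclusion), while $l \leq \kappa$ is needed so that the resulting modified composable chains remain morphism-by-morphism inside $\Cob_\theta^\kappa$.

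The main obstacle is the propagation of the microfibration and general-position arguments of \cite[\S 4, \S 6]{GRW} to the nerve setting: one must choose surgery data coherently across all $p+1$ objects appearing in a composable chain and verify that face and degeneracy maps behave appropriately, which requires some auxiliary statements about sheaves of framed embeddings of the kind deferred to Appendix \ref{appendixsheaves}. I expect that essentially no new ideas beyond those of \cite{GRW} are required, but the bookkeeping is nontrivial, and as announced in the outline, this is the point where the section must run parallel to \cite[\S 4, \S 6]{GRW}. Once the two levelwise equivalences above are established, the conclusion $B\Cob_\theta^{\kappa,l} \simeq B\Cob_\theta^{\kappa,l-1}$ follows by the standard spectral sequence comparing the bidegree realizations of the bi-semi-simplicial spaces involved.
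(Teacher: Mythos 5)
There is a genuine structural gap. Your plan rests on the claim that for each $p$ the levelwise inclusion $N_p \Cob_\theta^{\kappa,l} \hookrightarrow N_p \Cob_\theta^{\kappa,l-1}$ is a weak equivalence, but this is false: the condition that $\ell_M : M \to B$ be $(l+1)$-connected is locally constant, so this inclusion is the inclusion of a \emph{proper} union of path components (already on objects, $\Ob(\Cob_\theta^{\kappa,l})$ misses every diffeomorphism class whose structure map is $l$- but not $(l+1)$-connected). Your two-augmentation resolution cannot repair this: the ``forget the surgery data'' augmentation and the ``perform the surgeries'' augmentation send a given point of $\|R_{\bullet,p}\|$ to chains of manifolds of different diffeomorphism types, hence to different path components of $N_p\Cob_\theta^{\kappa,l-1}$, so the two augmentations are not homotopic. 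At best you would obtain an abstract zig-zag equivalence between the two nerves (which, as noted, cannot exist levelwise anyway), not the statement that the \emph{inclusion} induces an equivalence on classifying spaces.

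The point of the argument in \cite[\S 4, \S 6]{GRW}, which this paper follows (see the proof of Theorem \ref{thm:SurgBelowMid} and Remark \ref{rem:proofof:thm:grw-thm4.1}), is precisely that the comparison can only be made after realisation, via a one-parameter \emph{surgery move}: a homotopy $\mathscr{S}$ inside a flexible model $\norm{X_\bullet^{\kappa,l-1}}$ from the forgetful map on the space of surgery data to a map that factors through $\norm{X_\bullet^{\kappa,l}}$. This forces two features absent from your setup: the surgery data must be thickened $(l+1)$-handles embedded in the ambient cobordism extending in the time direction (not just framed spheres in the objects $M_{i(j)}$ with nullhomotopies in $B$), so that the standard family $\bar{\cP}_t$ can be glued in; and one must use the models $X_\bullet^{\kappa,l}$ of Definition \ref{defn:x-space}/\cite[Definition 2.18]{GRW}, in which the product structure is only required over a small movable subinterval of each $(a_i-\eps_i,a_i+\eps_i)$, because the intermediate stages of the move destroy the cylindrical structure at the levels $a_i$ and change the diffeomorphism type of the objects --- something no path within a fixed nerve level can do. Your identification of the roles of hypotheses (i) and (iv) in producing enough surgery data (general position plus Wall's finiteness criterion for type $(F_{l+1})$) does match the paper's Remark \ref{rem:proofof:thm:grw-thm4.1} and the proof of Theorem \ref{thm:Contractibility}, but the surrounding architecture needs to be replaced by the surgery-move argument.
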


\begin{remark}
As we already said, in \cite{GRW}, there are variant categories $\Cob_{\theta,L}^{\kappa,l}$ considered, where the categories we are considering correspond to setting $L = \emptyset$. The formulation of \cite[Theorem 4.1]{GRW} is meaningful only if $L \neq \emptyset$ as it requires $\ell_L : L \to B$ to be $(l+1)$-connected. 
We indicate in Remark \ref{rem:proofof:thm:grw-thm4.1} the minor way in which the proof in \cite{GRW} needs to be adjusted to prove Theorem \ref{thm:grw-thm4.1} as stated. 
This variant has also been observed by Hebestreit--Perlmutter \cite[Theorem 3.3.2 et seq.]{HP}.
\end{remark}

Important for us is the case where $(\kappa,l)=(2,1)$. In that case Theorems \ref{thm:grw-thm3.1} and \ref{thm:grw-thm4.1} both apply as long as $d \geq 6$ and $B$ has type ($F_2$), showing that the inclusions $\Cob_{\theta}^{2,1} \to \Cob_\theta^2 \to \Cob_\theta$ induce weak equivalences on classifying spaces. 

\subsection{Positive scalar curvature cobordism categories}\label{sec:PSCCobCat}

We shall now describe an analogue of this where all manifolds are equipped with psc metrics.

\begin{defn}
Let $\PCob_\theta$ be the non-unital topological category described as follows. For each closed $(d-1)$-manifold $M$ let
$$
\mathcal{M}_{\psc}^\theta(M) := (\Emb(M, \bR^\infty) \times \Bun(\bR \oplus TM, \gamma_\theta) \times \mathcal{R}^+(M))/\Diff(M),
$$
and set
$$
\Ob(\PCob_\theta) := \coprod_{[M]} \mathcal{M}_{\psc}^\theta(M)
$$
where the disjoint union is over closed $(d-1)$-manifolds, one in each diffeomorphism class. For a $d$-dimensional cobordism $W$ from $M_0$ to $M_1$ equipped with collars $e_0 : [0,1) \times M_0 \hookrightarrow W$ and $e_1 : (0,1] \times M_1 \hookrightarrow W$ let
$$
\mathcal{M}^\theta_{\psc}(W)_\epsilon := (\Emb_\epsilon(W, [0,1]\times\bR^\infty) \times \Bun(TW, \gamma_\theta) \times \mathcal{R}^+(W)^{\epsilon})/\Diff_\epsilon(W),
$$
and set
$$
\Mor(\PCob_\theta) = (0,\infty) \times \coprod_{[W]} \colim\limits_{\epsilon \to 0}\mathcal{M}^\theta_{\psc}(W)_\epsilon
$$
where the disjoint union is over $d$-dimensional cobordisms, one in each diffeomorphism class. As in the case of the ordinary cobordism category, composition is defined by translation and union. This defines a non-unital topological category analogously to $\Cob_\theta$, and forgetting psc metrics defines a continuous functor 
\[
 F: \PCob_\theta \lra \Cob_\theta.
\]
\end{defn}
We can write objects in $\PCob_\theta$ as $(M,\ell,g)$, with $(M,\ell) \in \Ob (\Cob_\theta)$ and $g \in \Riem^+ (M)$. A morphism from $(M_0,\ell_0,g_0)$ to $(M_1,\ell_1,g_1)$ is a tuple $(t,W,\ell,h)$, with $(t,W,\ell): (M_0,\ell_0)\leadsto (M_1,\ell_1)$ a morphism in $\Cob_\theta$ and $h \in \Riem^+ (W)_{g_0,g_1}$. 
For a pair of objects $(M_0, \ell_0, g_0), (M_1, \ell_1, g_0) \in \Ob(\PCob_\theta)$ and $(t,W, \ell) \in \Cob_\theta(M_0, M_1)$ the space 
$$\PCob_\theta((M_0, \ell_0, g_0), (M_1, \ell_1, g_0)) \cap F^{-1}(t, W, \ell)$$
is canonically identified with the space $\Riem^+(W)_{g_0, g_1}$ of psc metrics on $W$ subject to the boundary conditions $g_0$ and $g_1$.

\begin{defn}
We define $\PCob_\theta^{\kappa} \subset \PCob_\theta$ as the wide subcategory containing all morphisms in $\PCob_\theta$ whose image under $F: \PCob_\theta \to \Cob_\theta$ lie in $\Cob_\theta^\kappa$. 
We define $\PCob_\theta^{\kappa,l} \subset \PCob_\theta^{\kappa}$ is the full subcategory containing all objects whose image under $F$ lie in $\Cob_\theta^{\kappa,l}$. Finally, we define $\PCob_\theta^{\kappa,\rst} \subset \PCob_\theta^{\kappa}$ and $\PCob_\theta^{\kappa,l,\rst} \subset \PCob_\theta^{\kappa,l}$ as the wide subcategories containing all morphisms $(W,\ell,t,h)$ such that the psc metric $h$ is right-stable. The restrictions of the forgetful functor $F$ are denoted
\[
 F^{\kappa}, \; F^{\kappa,l}, \; \text{and} \; F^{\kappa,l,\rst}.
\]
\end{defn}
One of our main goals is the identification of the homotopy fibres of the maps
\[
 BF^{\kappa,l}: B \PCob_{\theta}^{\kappa,l}\lra B \Cob_\theta^{\kappa,l} \quad\text{and}\quad   BF^{\kappa,l,\rst}: B \PCob_{\theta}^{\kappa,l,\rst}\lra B \Cob_\theta^{\kappa,l}.
\]
To achieve that goal, we shall apply some of the categorical techniques described in \cite{SxTech}, such as Quillen's Theorem A and B for non-unital topological categories. The following discussion shows that one of the main technical hypotheses of these theorems is satisfied. Let us first recall some definitions from \cite{SxTech}.

\begin{defn}\label{def:SxTechRecollections}
Let $\cC$ be a non-unital topological category, with source map $s=d_1:\Mor(\cC) \to \Ob (\cC)$ and target map $t=d_0:\Mor(\cC) \to \Ob (\cC)$. We say that
\begin{enumerate}[(i)]
 \item $\cC$ is \emph{left fibrant} if $s$ is a fibration, \emph{right fibrant} if $t$ is a fibration and \emph{fibrant} if $(s,t): \Mor(\cC)\to \Ob (\cC) \times \Ob (\cC)$ is a fibration.
 \item $\cC$ has \emph{soft right units} if the under category $c \backslash \cC$ has contractible classifying space for each $c \in \Ob (\cC)$ and $\cC$ has \emph{soft left units} if the over-category $ \cC/c$ has contractible classifying space for each $c \in \Ob (\cC)$.
 \item $\cC$ has \emph{weak left units} if for each object $b \in \Ob (\cC)$, there is a morphism $u: b\to b'$ in $\cC$ so that the map
\[
\cC (-,b) := d_0^{-1} (b)\stackrel{u \circ -}{\lra}\cC (-,b')
\]
is a weak homotopy equivalence. Dually, $\cC$ has \emph{weak right units} if for each object $b \in \cC_0$, there is a morphism $u:b'\to b$ in $\cC$ such that
\[
\cC (b,-):=d_1^{-1}(b) \stackrel{- \circ u}{\lra} \cC (b',-)
\]
is a weak homotopy equivalence. 
\end{enumerate}
\end{defn}

The notion of weak units is a technical one, and will be used only as a tool to establish that a category has soft units, via \cite[Lemma 3.14]{SxTech}. That lemma says that if $\cC$ has weak left units and is right fibrant, then it has soft left units, and similarly with left and right interchanged.

\begin{prop}\label{prop:Fibrant}\mbox{}
\begin{enumerate}[(i)]
\item The forgetful functor $F$ induces fibrations 
$$F_0: \Ob (\PCob_\theta) \lra \Ob (\Cob_\theta)\quad \text{ and } \quad F_1: \Mor (\PCob_\theta) \lra \Mor (\Cob_\theta).$$
\item The non-unital topological categories $\Cob_\theta$ and $\PCob_\theta$ are fibrant.
\end{enumerate}
\end{prop}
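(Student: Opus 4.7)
The plan for part (i) is to realise $F_0$ and $F_1$ as associated fibre bundles, hence as Serre fibrations. By \cite{BinzFischer} the space $\Emb(M,\bR^\infty)$ is a universal principal $\Diff(M)$-bundle, so the map
$$\bigl(\Emb(M,\bR^\infty)\times\Bun(\bR\oplus TM,\gamma_\theta)\times\Riem^+(M)\bigr)/\Diff(M) \lra \bigl(\Emb(M,\bR^\infty)\times\Bun(\bR\oplus TM,\gamma_\theta)\bigr)/\Diff(M)$$
is a fibre bundle with fibre $\Riem^+(M)$, on which $\Diff(M)$ acts by pullback. Summing over diffeomorphism classes yields $F_0$. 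An analogous argument using the principal $\Diff_\epsilon(W)$-bundle $\Emb_\epsilon(W,[0,1]\times\bR^\infty)$ realises $F_1$, after passing to $\colim_{\epsilon\to 0}$, as a colimit of $\Riem^+(W)^\epsilon$-bundles; since any compact CW-source factors through some finite $\epsilon$-stage, the colimit is still a Serre fibration.

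For part (ii) one must show that $(s,t):\Mor(\cC)\to\Ob(\cC)^2$ is a Serre fibration for both $\cC=\Cob_\theta$ and $\cC=\PCob_\theta$. For $\cC=\Cob_\theta$, I would work one diffeomorphism class of cobordism $W$ at a time. Before taking the $\Diff_\epsilon(W)$-quotient, the map is induced by the boundary restriction on embedding and bundle-map factors. The restriction
$$\Emb_\epsilon(W,[0,1]\times\bR^\infty) \lra \Emb(M_0,\bR^\infty)\times\Emb(M_1,\bR^\infty)$$
is a Serre fibration by the parametrised isotopy extension theorem (the $\epsilon$-collared condition gives a canonical way to extend a lift linearly in the normal direction), and the restriction on $\Bun(TW,\gamma_\theta)$ is a Serre fibration because it is the restriction of sections of a fibre bundle to a closed submanifold of the base. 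Since the restriction map on embeddings intertwines the $\Diff_\epsilon(W)$-action with the $\Diff(M_0)\times\Diff(M_1)$-action on the boundary embedding spaces, the associated-bundle construction over the corresponding universal principal bundles produces the desired Serre fibration on each component.

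For $\cC=\PCob_\theta$, I would factor $(s,t)$ as
$$\Mor(\PCob_\theta) \xrightarrow{p_1} \Mor(\Cob_\theta) \times_{\Ob(\Cob_\theta)^2} \Ob(\PCob_\theta)^2 \xrightarrow{p_2} \Ob(\PCob_\theta)^2.$$
The map $p_2$ is a pullback of $(s,t):\Mor(\Cob_\theta)\to\Ob(\Cob_\theta)^2$ along the forgetful projection $\Ob(\PCob_\theta)^2\to\Ob(\Cob_\theta)^2$, hence a Serre fibration by the previous paragraph. The fibre of $p_1$ over a point $\bigl((t,W,\ell),(g_0,g_1)\bigr)$ is precisely $\Riem^+(W)_{g_0,g_1}$; parametrising over the moduli space of $W$, the map $p_1$ is the associated-bundle, for the principal $\Diff_\epsilon(W)$-bundle on $\Emb_\epsilon(W,[0,1]\times\bR^\infty)$, of the Chernysh restriction fibration $\Riem^+(W)\to\Riem^+(M_0)\times\Riem^+(M_1)$ from Theorem \ref{thm:improved-chernysh-theorem}. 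Hence $p_1$ is a Serre fibration, and so is $p_2\circ p_1$.

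The main technical obstacle is the interplay of the various $\Diff$-actions with the boundary restriction and the associated-bundle constructions; in particular one must verify that the Chernysh fibration is $\Diff_\epsilon(W)$-equivariant (which is immediate from the naturality of the restriction map) so that it descends to a fibration of moduli spaces, and that the parametrised isotopy extension lifts can be chosen to preserve the $\epsilon$-collar conditions defining $\Emb_\epsilon(W,[0,1]\times\bR^\infty)$.
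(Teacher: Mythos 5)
Your argument follows the same basic strategy as the paper's: realise the maps as equivariant Serre fibrations descending along principal bundles, with Chernysh's theorem (Theorem \ref{thm:improved-chernysh-theorem}) supplying the psc input, isotopy extension supplying the embedding input, and the cofibration $M \to W$ supplying the bundle-map input. Part (i) and your map $p_1$ for $\PCob_\theta$ are genuine Borel/associated-bundle constructions (a single group acting on everything), matching the paper's Lemma \ref{sublemma:lem:fibrant}(i). Your factorisation of $(s,t)$ for $\PCob_\theta$ through $\Mor(\Cob_\theta)\times_{\Ob(\Cob_\theta)^2}\Ob(\PCob_\theta)^2$ is a mild but pleasant variation: it isolates all of the ``two different groups'' difficulty in the $\Cob_\theta$ case and handles the metric directions by a pullback plus a same-group Borel construction applied to the Chernysh fibration, whereas the paper runs the whole triple product $(E\times B\times R)/G$ through a single descent lemma.

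The one step you assert rather than prove is the descent step for $\Cob_\theta$ itself. The map $(E_0\times B_0)/\Diff_\epsilon(W)\to (E_1\times B_1)/(\Diff(M_0)\times\Diff(M_1))$ is \emph{not} an associated bundle, because the two quotients are taken by different groups, so ``the associated-bundle construction'' does not literally apply. What you need is: a Serre fibration $\psi:\tilde{E}_0\to\tilde{E}_1$ between total spaces of principal $G_0$- and $G_1$-bundles, equivariant over a homomorphism $G_0\to G_1$, induces a Serre fibration $\psi'$ on quotients. This is exactly the paper's Lemma \ref{sublemma:lem:fibrant}(ii), and it has a short proof you should supply: $\psi'\circ p_0 = p_1\circ\psi$ is a composition of Serre fibrations, and $p_0$ is a Serre fibration with nonempty fibres, so the initial condition of any lifting problem for $\psi'$ over a disc can be lifted through $p_0$ and the problem solved for $\psi'\circ p_0$ instead. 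With that supplied (and a remark, as in the paper, that the $\epsilon$-colimits commute with the products and quotients in compactly generated spaces, so that your component-wise and colimit arguments combine), your proof is complete.
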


In order to prove this proposition we shall use the following general result.

\begin{lem}\label{sublemma:lem:fibrant}
\mbox{}
\begin{enumerate}[(i)]
\item Let $G$ be a topological group, let $p:E \to B$ a $G$-principal bundle and let $f: X\to Y$ be a $G$-equivariant map which is a Serre fibration. Then the induced map on Borel constructions $\id \times_G f: E \times_G X \to E \times_G Y$ is a Serre fibration.
\item Let $\varphi:G_0 \to G_1$ be a homomorphism of topological groups which is also a Serre fibration, let $p_i:E_i\to E_i /G_i$ be $G_i$-principal bundles and let $\psi: E_0 \to E_1$ be a Serre fibration and $\varphi$-equivariant. Then the induced map $\psi':E_0/G_0 \to E_1 / G_1$ is a Serre fibration.
\end{enumerate}
\end{lem}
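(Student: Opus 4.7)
Both parts will be verified by checking the homotopy lifting property against disks $D^n$, and both follow the same three-step scheme: lift the initial data from the quotient to the total space using the principal bundle structure, lift the homotopy to the total space using either the principal bundle (which is itself a Serre fibration) or the Serre fibration hypothesis on the relevant map between total spaces, and finally project back down to the quotient.

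For part (i), given a lifting problem $H \colon D^n \times I \to E \times_G Y$ with initial lift $\tilde h_0 \colon D^n \to E \times_G X$, I will first lift $\tilde h_0$ to a map $\hat h_0 \colon D^n \to E \times X$. The pullback of the principal $G$-bundle $E \times X \to E \times_G X$ along $\tilde h_0$ is a principal $G$-bundle over the contractible CW space $D^n$, hence trivial, and any section provides such a lift. Next, applying the homotopy lifting property for the principal $G$-bundle $E \times Y \to E \times_G Y$ with initial condition $(\id \times f) \circ \hat h_0$ yields a lift $\tilde H \colon D^n \times I \to E \times Y$ of $H$. Then, since $\id \times f \colon E \times X \to E \times Y$ is itself a Serre fibration (being the pullback of $f$ along the projection $E \times Y \to Y$), the homotopy $\tilde H$ lifts to $\hat H \colon D^n \times I \to E \times X$ starting at $\hat h_0$. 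Composing $\hat H$ with the quotient map $E \times X \to E \times_G X$ delivers the required lift of $H$ extending $\tilde h_0$.

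Part (ii) runs in a completely parallel manner. Given $H \colon D^n \times I \to E_1/G_1$ and a lift $f_0 \colon D^n \to E_0/G_0$ of $H|_{t=0}$, first lift $f_0$ to $\hat f_0 \colon D^n \to E_0$ via the $G_0$-principal bundle $p_0$ (again using triviality over $D^n$); then lift $H$ to $\tilde H \colon D^n \times I \to E_1$ with $\tilde H|_{t=0} = \psi(\hat f_0)$ using the homotopy lifting property of the Serre fibration $p_1$; then lift $\tilde H$ to $\hat H \colon D^n \times I \to E_0$ with $\hat H|_{t=0} = \hat f_0$ using the Serre fibration $\psi$; and finally project through $p_0$ to obtain the desired lift. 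Note that this scheme does not directly invoke the hypothesis that $\varphi$ is a Serre fibration; that assumption is recorded in the lemma because it is used in the downstream applications together with (ii).

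The principal technical point is the initial step of lifting a map $D^n \to Z/G$ through the principal bundle $Z \to Z/G$. This uses local triviality of the $G$-principal bundles in question — which is part of the paper's standing notion of principal bundle — together with the fact that any principal bundle over a contractible CW base is trivial, so that pulling back to $D^n$ yields a trivial bundle, which admits a section. Once this is in hand, both parts are formal consequences of applying the homotopy lifting property of three Serre fibrations in sequence.
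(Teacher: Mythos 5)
Your proposal is correct. For part (ii) it is essentially the paper's own argument: the paper observes that $\psi'\circ p_0 = p_1\circ\psi$ is a composite of Serre fibrations, lifts the initial datum $f_0$ to $E_0$ (using that $p_0$ is a fibration with nonempty fibres over the cell $D^n$), solves the lifting problem for the composite, and projects back down; your two-stage lift through $p_1$ and then $\psi$ is just that same lift of the composite carried out in two steps. For part (i) your route is genuinely different from the paper's, which disposes of it in one line by noting that being a Serre fibration is local on the base: over a trivialising open set $U\subset B$ the map $\id\times_G f$ becomes $U\times X\to U\times Y$, and a map that is a Serre fibration over each member of an open cover of its target is a Serre fibration. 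You instead rerun the three-stage lifting scheme of part (ii), using that $E\times X\to E\times_G X$ and $E\times Y\to E\times_G Y$ are principal $G$-bundles and that $\id_E\times f$ is a fibration; this has the merit of treating (i) and (ii) uniformly and of avoiding the (classical but not entirely trivial) locality theorem, at the cost of being longer. Two small remarks: the initial lift over $D^n$ can be obtained either from triviality of the pulled-back bundle (as you do) or, as the paper does, directly from the homotopy lifting property of a fibration with nonempty fibres over a contractible cell — both are fine; and your observation that the hypothesis that $\varphi$ is a Serre fibration is never used is consistent with the paper, whose proof of (ii) does not use it either.
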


\begin{proof}
The first part follows easily from the fact that being a Serre fibration is a local property.

For the second part note that $E_0 \overset{p_0}\to E_0/G_0 \overset{\psi'}\to E_1 / G_1$ is equal to $p_1 \circ \psi$, a composition of Serre fibrations, so is a Serre fibration. But $p_0$ is a Serre fibration with non-empty fibres, so $\psi'$ is a Serre fibration too. (This is standard: given a lifting problem $(a: D^n \times [0,1] \to E_1/G_1, b : D^n \times \{0\} \to E_0/G_0$, we may lift $b$ to $b' : D^n \times \{0\} \to E_0$ as $D^n$ is a cell and the fibres of $p_0$ are nonempty. We may then solve the lifting problem $(a,b')$, as $\psi' \circ p_0$ is a Serre fibration, which in particular solves the original lifting problem.)
\end{proof}

\begin{proof}[Proof of Proposition \ref{prop:Fibrant}]
Part (i) is immediate from the definitions, Lemma \ref{sublemma:lem:fibrant} (i) and from the fact that $\Emb(M,\bR^\infty) \to \Emb(M,\bR^\infty) / \Diff(M)$ is a principal bundle. 

To show part (ii) in the case of $\PCob_\theta$, it is enough to show that for each cobordism $W$ from $M_0$ to $M_1$, the source/target map $\cM_{\psc}^\theta (W) \to \cM_{\psc}^{\theta} (M_0) \times \cM_{\psc}^{\theta} (M_1)$ is a Serre fibration. To simplify notation, we may assume $M_0=\emptyset$ and set $M:=M_1$. We abbreviate $B_0 := \Bun (TW, \gamma_\theta)$ and $B_1 := \Bun(\bR \oplus TM , \gamma_\theta)$. The restriction map $B_0 \to B_1$ is a Serre fibration because $M \to W$ is a cofibration. Furthermore, if we write $R_0^\eps := \Riem^+ (W)^{\epsilon}$, $R_0 := \colim_{\epsilon \to 0} R_0^{\epsilon}$ and $R_1 := \Riem^+ (M)$, then the restriction map $R_0 \to R_1$ is a Serre fibration, by Theorem \ref{thm:improved-chernysh-theorem}. Let $E_0^\epsilon := \Emb_\epsilon(W, [0,1]\times\bR^\infty)$, $E_0 := \colim_{\epsilon\to 0} E_0^{\epsilon}$ and $E_1 := \Emb(M,\bR^\infty)$, and similarly  $G_0^{\epsilon}:= \Diff_\eps(W)$, $G_0:= \colim_{\eps \to 0} G_0^\epsilon$ and $G_1 := \Diff(M)$. Then the restriction maps
$$E_0 \lra E_1 \quad\text{ and }\quad G_0 \lra G_1$$
are also Serre fibrations. This may be proved in parallel with the proof of Theorem \ref{thm:improved-chernysh-theorem} given in \cite{EbFrenck}: in brief, using the collar structure it is easy to see that both maps have the homotopy lifting property for smooth homotopies, by pushing the ``graph" of the homotopy into the collar; one then establishes an analogue of \cite[Lemma 5.1]{EbFrenck}, using the local Fr{\'e}chet space structure of $\Emb(M,\bR^N)$ or $\Diff(M)$, to reduce lifting arbitrary homotopies to lifting smooth ones.

We have to prove that the map
\[
 \colim_{\eps \to 0} (E_0^{\eps} \times B_0 \times R_0^{\eps}) / G_0^{\eps} \lra (E_1 \times B_1 \times R_1)/G_1
\]
is a Serre fibration. But the source of this map is the same as\footnote{We implicitly work in the category of compactly generated spaces as in \cite{Strick} to interchange products and colimits.} 
\begin{align*}
  \colim_{\eps \to 0} (E_0^{\eps} \times B_0 \times R_0^{\eps}) / G_0^{\eps} &=  \colim_{\eps,\delta \to 0, \delta \leq \epsilon} (E_0^{\eps} \times B_0 \times R_0^{\delta}) / G_0^{\eps}\\
 &=   \colim_{\eps \to 0} (E_0^{\eps} \times B_0 \times R_0) / G_0^{\eps} = (E_0 \times B_0 \times R_0)/G_0,
\end{align*}
and so it remains to prove that $(E_0 \times B_0 \times R_0)/G_0 \to (E_1 \times B_1 \times R_1)/G_1$ is a fibration. With all the things said above, this is a consequence of Lemma \ref{sublemma:lem:fibrant} (ii). The case of the category $\Cob_\theta$ is by the same argument, ignoring the spaces $R_i$ throughout. 
\end{proof}

\begin{lem}\label{lem:soft-units}
The categories $\Cob_\theta$ and $\PCob_\theta$ have weak right and left units, and therefore soft right and left units.
\end{lem}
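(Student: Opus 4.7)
We outline the proof for $\Cob_\theta$ and soft right units; the remaining three cases are analogous, with soft left units following by reversing collar direction, and the $\PCob_\theta$ cases handled identically by equipping $[0,\epsilon] \times M$ with the product psc metric $dt^2 + g$ (which has positive scalar curvature whenever $g$ does). Fix $c = (M, \ell) \in \Ob(\Cob_\theta)$. The plan is to exploit the cylinder morphisms
\[
u_\epsilon := (\epsilon, [0,\epsilon] \times M, \mathrm{pr}^*\ell) \in \Cob_\theta(c, c), \qquad \epsilon > 0,
\]
as weak units, and then to deduce softness via a parametrised extra-degeneracy argument.

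\textbf{Step 1 (weak units).} For any $c'$, pre-composition $(-) \circ u_\epsilon : \Cob_\theta(c, c') \to \Cob_\theta(c, c')$, $W \mapsto W \circ u_\epsilon$, extends the source collar of $W$ by $\epsilon$. Under the colimit description $\Mor(\Cob_\theta) = (0,\infty) \times \coprod_{[W]} \colim_{\delta \to 0} \mathcal{M}^\theta(W)_\delta$, this operation merely shifts the collar parameter $\delta$ and is therefore a weak homotopy equivalence. Thus the $u_\epsilon$ are weak right units in the sense of Definition \ref{def:SxTechRecollections}(iii), and a dual argument (post-composition at the target) yields weak left units.

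\textbf{Step 2 (softness).} To show $B(c \backslash \Cob_\theta)$ is weakly contractible, define for each $\epsilon > 0$ the ``pseudo extra degeneracy''
\[
s_{-1}^\epsilon : N_k(c \backslash \Cob_\theta) \lra N_{k+1}(c \backslash \Cob_\theta), \quad (c \xrightarrow{f_0} c_0 \to \cdots \to c_k) \mapsto (c \xrightarrow{u_\epsilon} c \xrightarrow{f_0} c_0 \to \cdots \to c_k).
\]
These maps satisfy $d_{i+1} \circ s_{-1}^\epsilon = s_{-1}^\epsilon \circ d_i$ for $i \geq 0$ on the nose, while $d_0 \circ s_{-1}^\epsilon$ is the ``extend source collar'' operation, which is a weak equivalence (not the identity) by Step 1. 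Parametrising $\epsilon$ over the contractible half-line $(0, \infty)$ and applying the standard extra-degeneracy contraction then yields a deformation of $|N_\bullet(c \backslash \Cob_\theta)|$ onto a point.

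\textbf{Main obstacle.} The technical issue is that no honest identity exists at $c$, so the family $s_{-1}^\epsilon$ cannot be extended to $\epsilon = 0$, and the simplicial identities on $d_0 \circ s_{-1}^\epsilon$ hold only up to the weak equivalence of Step 1. The cleanest route is to invoke a general result of \cite{SxTech} stating that weak left/right units together with fibrancy (Proposition \ref{prop:Fibrant}(ii)) imply soft units; alternatively, one carries out the parametrised extra-degeneracy argument directly, with $\epsilon$ varying over $(0,\infty)$. Either approach depends crucially on the fibrancy established in Proposition \ref{prop:Fibrant}.
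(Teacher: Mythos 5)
Your proposal takes essentially the same route as the paper: exhibit cylinders as weak units and then invoke the general result of \cite{SxTech} (Lemma 3.14 there) that weak units plus fibrancy give soft units, so the extra-degeneracy discussion in your Step 2 is an unnecessary detour. The one point to tighten is that the psc case is not handled "identically" by the collar-shifting/colimit argument: one must know that gluing the cylinder metric $dx^2+g$ induces a weak equivalence on spaces of psc metrics with fixed boundary conditions, which is Corollary 2.2 of \cite{BERW} and requires a genuine collar-stretching argument rather than just the description of the topology on morphism spaces.
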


\begin{proof}
As both categories are fibrant by Proposition \ref{prop:Fibrant} (ii),  having weak units implies having soft units by Lemma 3.14 of \cite{SxTech}. Let us prove that $\PCob_\theta$ has weak left units; the other cases are analogous. Using Remark 3.13 of \cite{SxTech}, this means that for each $(M_1,\ell_1,g_1) \in \Ob (\PCob_\theta)$, there is a morphism $u: (M_1,\ell_1,g_1) \to (M_2,\ell_2,g_2)$, such that for all $(M_0,\ell_0,g_0)\in \Ob (\PCob_\theta)$, the composition map
\[
 \PCob_\theta ((M_0,\ell_0,g_0),(M_1,\ell_1,g_1)) \stackrel{u \circ -}{\lra}  \PCob_\theta ((M_0,\ell_0,g_0),(M_2,\ell_2,g_2))
\]
is a weak equivalence. Let $ (M_2,\ell_2,g_2):=(M_1,\ell_1,g_1)$ and let $u:=(1, [0,1 ]\times M_1 , \ell_1',dx^2+g_1)$, where $\ell_1'$ is the canonical extension of $\ell_1$ to the cylinder. To check that $u \circ -$ is a weak equivalence, use Corollary 2.2 (ii) of \cite{BERW}.
\end{proof}

\begin{defn}
A subcategory $\cB \subset \cC$ of a topological category is \emph{clopen} if both $\Ob (\cB) \subset \Ob (\cC)$ and $\Mor (\cB) \subset \Mor (\cC)$ are open and closed.
\end{defn}

\begin{lem}\mbox{}
\begin{enumerate}[(i)]
\item The subcategories $\Cob_\theta^{\kappa} , \Cob_\theta^{\kappa,l} \subset \Cob_\theta$ are clopen and have weak right and left units.
\item The subcategories $\PCob_\theta^{\kappa} , \PCob_{\theta}^{\kappa,\rst}, \PCob_\theta^{\kappa,l}, \PCob_\theta^{\kappa,l,\rst} \subset \PCob_\theta$ are clopen and have weak right and left units.
\end{enumerate}
\end{lem}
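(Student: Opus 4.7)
The two assertions are independent. For clopen-ness I would argue that each of the three conditions defining these subcategories---$\kappa$-connectivity of $W|_t\to W$, $(l+1)$-connectivity of $\ell\colon M\to B$, and right-stability of $h$---is invariant under continuous deformation of the data, hence cuts out a union of path components. For the weak-unit statement I would unpack the units produced in the proof of Lemma \ref{lem:soft-units} and check directly that they satisfy the extra conditions.

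\textbf{Clopen-ness of the topological conditions.} The space $\Ob(\Cob_\theta)$ is the disjoint union over diffeomorphism classes of the Borel constructions $\mathcal{M}^\theta(M)$. Within each such piece, the $(l+1)$-connectivity of the map $\ell\colon M\to B$ underlying a bundle map $\bR\oplus TM\to\gamma_\theta$ depends only on the path component in $\Bun(\bR\oplus TM,\gamma_\theta)$, since homotopic maps induce the same maps on homotopy groups; after passing to the quotient by $\Diff(M)$ this remains a union of path components, hence clopen. The same reasoning applied to the pairs $(W|_t\hookrightarrow W)$ shows that the $\kappa$-connectivity condition is clopen in $\Mor(\Cob_\theta)$. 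Because the forgetful functor $F\colon\PCob_\theta\to\Cob_\theta$ is continuous (in fact, by Proposition \ref{prop:Fibrant}, a Serre fibration on both objects and morphisms), the preimages $\PCob_\theta^{\kappa}$ and $\PCob_\theta^{\kappa,l}$ are clopen inside $\PCob_\theta$.

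\textbf{Clopen-ness of right-stability.} Fibrewise, Lemma \ref{lem:stability-homotopy-invariant} already tells us that right-stability selects a union of path components of each $\Riem^+(W)_{g_0,g_1}$. To see that it is clopen in the full morphism space $\Mor(\PCob_\theta)$---where the cobordism, collar, tangential structure, boundary conditions and metric all vary---I would invoke Lemma \ref{lem:stability-invariant-under-continuously-changing-W}: along any continuous path of bundles of psc cobordisms, right-stability of the fibrewise metric is preserved. Applied to paths in $\mathcal{M}^\theta_{\psc}(W)_\epsilon$ (which carries exactly such a tautological bundle), this upgrades Lemma \ref{lem:stability-homotopy-invariant} to the statement that the right-stable locus is a union of path components of $\Mor(\PCob_\theta)$, hence clopen.

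\textbf{Containing the weak units.} The weak units constructed in the proof of Lemma \ref{lem:soft-units} are, on the one hand, $(1,[0,1]\times M,\ell')$ for $\Cob_\theta$ and, on the other, $(1,[0,1]\times M,\ell',dx^2+g)$ for $\PCob_\theta$. The inclusion $\{1\}\times M\hookrightarrow[0,1]\times M$ is a homotopy equivalence and is therefore $\kappa$-connected for every $\kappa$, so these units lie in $\Cob_\theta^{\kappa}$ and (when $M$ is already an object of $\Cob_\theta^{\kappa,l}$) in $\Cob_\theta^{\kappa,l}$. Furthermore, cylinder metrics $dx^2+g$ are right-stable, as noted in the introduction; hence the psc units lie in the right-stable subcategories as well.

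\textbf{Main obstacle.} The only step that is not purely formal is the clopen-ness of right-stability as a property of morphisms, because one must allow the underlying cobordism and all the auxiliary data (embedding, collar, bundle map, boundary metrics) to vary simultaneously with the psc metric. Lemma \ref{lem:stability-invariant-under-continuously-changing-W} was written with exactly this application in mind and resolves the difficulty cleanly; without it, one would have to redo the fibration argument from that lemma in the present context.
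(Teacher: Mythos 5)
Your proof is correct and follows essentially the same route as the paper, whose own argument is a one-line appeal to Lemma \ref{lem:stability-homotopy-invariant} for part (ii) and declares part (i) clear. Your extra care in invoking Lemma \ref{lem:stability-invariant-under-continuously-changing-W} to handle the simultaneous variation of embedding, collar and metric in the Borel construction is a reasonable way to fill in the detail the paper leaves implicit.
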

\begin{proof}
The clopenness in (i) is clear, and in (ii) follows from Lemma \ref{lem:stability-homotopy-invariant}. For the weak units, we take the cylinder cobordisms from the proof of Lemma \ref{lem:soft-units}.
\end{proof}

It is clear that a clopen subcategory of a fibrant category is fibrant, so Proposition \ref{prop:Fibrant} gives the following.

\begin{cor}\label{psccob-fibratnt}
The categories $\Cob_\theta^{\kappa,l}$, $\PCob_\theta^{\kappa,l}$, and $\PCob_\theta^{\kappa,l,\rst}$ are fibrant and have weak units. The forgetful functors $F^{\kappa,l}: \PCob_\theta^{\kappa,l}\to \Cob_\theta^{\kappa,l}$ and $F^{\kappa,l,\rst}: \PCob_\theta^{\kappa,l,\rst} \to \Cob_\theta^{\kappa,l}$ are fibrations on object and morphism spaces. 
\end{cor}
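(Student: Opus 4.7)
The corollary is essentially a packaging of Proposition~\ref{prop:Fibrant}, Lemma~\ref{lem:soft-units} and the clopenness lemma just above it, so my plan is simply to decompose the three assertions (fibrancy, weak unitality, the two forgetful functors being fibrations on objects and morphisms) and check each by restriction from $\PCob_\theta \to \Cob_\theta$.

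For fibrancy, I would use the general observation that if $\cC$ is a fibrant non-unital topological category and $\cB \subset \cC$ is a clopen subcategory, then $\cB$ is fibrant. Indeed, $(s,t):\Mor(\cC)\to\Ob(\cC)\times\Ob(\cC)$ is a Serre fibration by Proposition~\ref{prop:Fibrant}(ii), so its restriction to the open preimage $(s,t)^{-1}(\Ob(\cB)\times\Ob(\cB)) \to \Ob(\cB)\times\Ob(\cB)$ is again a Serre fibration, and a further restriction to the clopen subset $\Mor(\cB)$ of this preimage is a Serre fibration onto the same base. The preceding lemma states that each of $\PCob_\theta^{\kappa,l}$ and $\PCob_\theta^{\kappa,l,\rst}$ is a clopen subcategory of $\PCob_\theta$, so fibrancy is immediate.

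For weak unitality, I would inspect the proof of Lemma~\ref{lem:soft-units}: the weak left unit of an object $(M,\ell,g)$ produced there is the cylinder morphism $u=(1,[0,1]\times M,\ell',dx^2+g)$. Two things need to be checked. First, $u$ lies in the smaller subcategory: the inclusion $M\hookrightarrow [0,1]\times M$ is a deformation retract and hence $\kappa$-connected for any $\kappa$, so $u\in\Mor(\Cob_\theta^{\kappa})$; and cylinder metrics are right stable (recalled in Section~\ref{sec:psc-recollection}), so $u\in\Mor(\PCob_\theta^{\kappa,\rst})$; since fullness of $(-)^{\kappa,l}$ is a condition on objects only and both endpoint objects agree with $(M,\ell,g)$, $u$ remains in $\PCob_\theta^{\kappa,l}$ and $\PCob_\theta^{\kappa,l,\rst}$ whenever $(M,\ell,g)$ does. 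Second, that $u\circ -$ restricts to a weak equivalence on the relevant morphism spaces: this is automatic because the clopenness lemma implies that morphism spaces in the subcategories are unions of path components of morphism spaces in $\PCob_\theta$, and $u\circ -$ preserves this decomposition and restricts on each component to the weak equivalence already established in $\PCob_\theta$. The analogous argument with the cylinder viewed as a right unit handles the other side.

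For the last assertion, Proposition~\ref{prop:Fibrant}(i) gives that $F_0$ and $F_1$ are Serre fibrations. The subcategories $\PCob_\theta^{\kappa,l}$ and $\PCob_\theta^{\kappa,l,\rst}$ are defined by pulling back clopen conditions from $\Cob_\theta^{\kappa,l}$ along $F$ together with (in the stable case) the further clopen condition of right stability; so $F^{\kappa,l}_i$ and $F^{\kappa,l,\rst}_i$ are again restrictions of the Serre fibrations $F_i$ to clopen preimages of clopen subspaces of the target, hence Serre fibrations. I do not anticipate any real obstacle: the content is purely formal, and the only mild care required is to track that the restrictions on both source and target are consistently to clopen subsets so that the standard ``restriction of a Serre fibration to a clopen preimage is a Serre fibration'' lemma applies at each step.
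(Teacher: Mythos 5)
Your proposal is correct and follows exactly the route the paper takes: the paper's entire justification is the remark that a clopen subcategory of a fibrant category is fibrant, combined with Proposition \ref{prop:Fibrant} and the preceding lemma that these subcategories are clopen and contain the weak units (cylinders with product metrics). You have merely spelled out the details that the paper leaves implicit, and they check out.
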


\subsection{Models for the psc cobordism category}

The point-set topological model for $\PCob_\theta^{\kappa,l}$ described above is appropriate for the proof of Theorem \ref{Main:fibretheorem}, which identifies the homotopy fibre of the forgetful maps $BF^{2,1,\rst}$ and $BF^{2,1}$. However, for most other purposes in this paper it is better to use different models. We shall use variants of the ``poset model'' for the cobordism categories defined in \cite[Section 2.6]{GRW}, but it is more convenient for us to use the language of sheaves as in \cite{MadsenWeiss} and \cite{GMTW} instead of point-set topology. Let us give a brief review of that formalism. For details, the reader is referred to \cite[\S 2.1, \S 2.4, \S 4.1 and Appendix A]{MadsenWeiss}.

\subsubsection{The language of sheaves}\label{sec:LangSh}

Let $\Mfds$ be the category of all smooth manifolds and smooth maps, referred to as \emph{test manifolds}. A \emph{sheaf} on $\Mfds$ is a contravariant functor $\cF: \Mfds \to \Set$, which satisfies the gluing condition for open covers $(U_i)_{i \in I}$ of a manifold $X$. In other words, if $z_i \in \cF(U_i)$ is a collection of elements (often called ``sections'') such that $z_i|_{U_i \cap U_j}= z_j|_{U_i \cap U_j}$ for all $i,j \in I$, then there is a unique $z \in \cF(X)$ such that $z|_{U_i}=z_i$ for all $i$. The sheaves on $\Mfds$ are the objects of a category $\Sheaves$. 

The \emph{points} of $\cF$ are the elements of the set $\cF(\ast)$. A section $z \in \cF(X)$ defines a function $f_z:X \to \cF(\ast)$, namely $x \mapsto j_x^* (z)$, where $j_x:\ast \to X$ is the inclusion of $x$ into $X$. Most (but not all) sheaves we meet in this paper have the property that a section $z$ is determined by the function $f_z$. 

For a subset $\cG(*) \subset \cF(*)$ and a test manifold $X$, we let $\cG(X) \subset \cF(X)$ be the set of those $z \in \cF(X)$ such that $j_x^* (z) \in \cG(*)$ for each $x \in X$. This defines a subsheaf $\cG \subset \cF$.

Let $\Delta_e^p := \{(x_0, \ldots, x_p) \in \bR^{p+1} \, | \, \sum_{i=0}^p x_i = 1\}$ denote the extended $p$-simplex; for varying $p$ these define a cosimplicial object $\Delta^\bullet_e : \Delta \to \Mfds$ in the category of smooth manifolds. To a sheaf $\cF$ one can therefore associate a simplicial set $[p] \mapsto \cF(\Delta_e^p)$, and the \emph{representing space} $\rep{\cF}$ of $\cF$ is by definition the fat\footnote{In \cite{MadsenWeiss}, the ordinary geometric realisation is used. Since we will have to deal with sheaves of \emph{semi}-simplicial sets in this paper, it is more convenient to use the fat geometric realisation. In any case, the homotopy types agree.} geometric realisation
\[
 \rep{[p] \mapsto \cF(\Delta_e^p)}
\]
of this simplicial set. We say that a map of sheaves is a weak equivalence if the induced map of representing spaces is a weak equivalence of spaces.

\begin{rem}
Evaluation defines functions $\Delta^p \times \cF(\Delta_e^p) \to \cF(\ast)$ which are compatible for varying $p$, giving a (surjective) function
$$\rep{\cF} \lra \cF(\ast).$$
We may endow $\cF(\ast)$ with the quotient topology from this map, which is easily seen to have the property that the functions $f_z : X \to \cF(\ast)$ become continuous for every $z \in \cF(X)$. However this topology might be somewhat pathological in that it receives more continuous maps than these, even up to homotopy: proving it does not involves establishing an ``approximation" theorem for maps into $\cF(\ast)$ (endowed with this topology). In any case $\rep{\cF}$ always has the correct homotopy type, and one can avoid a certain amount of anguish by working solely with sheaves and their representing spaces.
\end{rem}

Elements $z_0, z_1 \in \cF(X)$ are \emph{concordant} if there is a $z \in \cF(X \times \bR)$ which agrees with $\mathrm{pr}_X^*(z_0)$ on a neighbourhood of $X \times (-\infty,0]$ and with $\mathrm{pr}_X^*(z_1)$ on a neighbourhood of $X \times [1,\infty)$. This defines an equivalence relation on $\cF(X)$, and we write $\cF[X]$ for the set of equivalence classes. It is shown in Proposition 2.17 of \cite{MadsenWeiss} that there is a natural bijection
$$[X; \rep{\cF}] \overset{\sim}\lra \cF[X]$$
from the set of homotopy classes of continuous maps from $X$ to $\rep{\cF}$ to the set of concordance classes of sections of $\cF$ over $X$. 
The following criterion is useful to prove that a map of sheaves is a weak equivalence.

\begin{prop}[Proposition 2.18 of \cite{MadsenWeiss}]\label{prop:SurjCrit}
Let $f: \cE \to \cF$ be a map of sheaves. For a closed subset $A \subset X$, we let 
$$\cE(A \subset X) :=\colim_{\substack{U \supset A\\ U \text{ open}}} \cE(U)$$ 
denote the set of germs of sections of $\cE$ near $A$, and for $z \in \cE(A \subset X)$, we let $\cE[X,A;z]$ be the set of concordance classes (relative to $A$) of sections which agree with $z$ near $A$. If for each $X \in \Mfds$, $A \subset X$ closed, and $z\in \cE(A \subset X)$ the map 
\[
 f_* : \cE[X, A; z] \lra \cF[X, A; f(z)]
\]
is surjective, then $f: \cE \to \cF$ is a weak equivalence. (This is simply an instance of the philosophy that injectivity is relative surjectivity.)
\end{prop}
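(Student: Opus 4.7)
The plan is to verify that $\rep{f} : \rep{\cE} \to \rep{\cF}$ is a weak homotopy equivalence by checking the standard lifting criterion: for every $n \geq 0$ and every commutative square
\[
\xymatrix{
S^{n-1} \ar[r] \ar[d] & \rep{\cE} \ar[d]^{\rep{f}} \\
D^n \ar[r] & \rep{\cF}
}
\]
there should exist a map $D^n \to \rep{\cE}$ extending the top map, whose composition with $\rep{f}$ is homotopic rel $S^{n-1}$ to the bottom map. The parenthetical remark that ``injectivity is relative surjectivity'' refers precisely to the fact that this single family of lifting criteria simultaneously encodes surjectivity and injectivity on every $\pi_n$, which is in turn matched by the \emph{relative} (not merely absolute) concordance-class surjectivity hypothesis of the proposition.

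First I would establish a relative enhancement of Proposition 2.17 of \cite{MadsenWeiss}: for any closed $A \subset X$ and germ $z \in \cF(A \subset X)$, with induced ``germ of map'' $\rho_z : A \to \rep{\cF}$ (well-defined up to homotopy), there is a natural bijection between $\cF[X, A; z]$ and the set of homotopy classes rel $A$ of maps $X \to \rep{\cF}$ restricting to $\rho_z$ on $A$. This should be obtained by adapting the proof of the absolute version --- which identifies homotopy classes with concordance classes by approximating continuous maps by maps induced from sections --- carrying out the approximation only on the complement of a neighborhood of $A$, so that a prescribed germ is preserved.

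Given this dictionary, the lifting problem translates directly into a concordance extension problem. After homotopy we may assume that the top map $S^{n-1} \to \rep{\cE}$ is induced by a germ $\alpha \in \cE(A \subset D^n)$ for some closed collar neighborhood $A$ of $S^{n-1}$ in $D^n$, and that the bottom map $D^n \to \rep{\cF}$ is induced by a section $\beta \in \cF(D^n)$ extending the germ $f(\alpha)$. The surjectivity hypothesis applied to $(D^n, A, f(\alpha))$ then furnishes an extension $\tilde\alpha \in \cE(D^n)$ of $\alpha$ such that $f(\tilde\alpha)$ is concordant to $\beta$ rel $A$. The induced map $D^n \to \rep{\cE}$ is the required lift.

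The main obstacle will be the relative version of Proposition 2.17. The absolute version already involves a delicate approximation argument to replace a continuous map $X \to \rep{\cF}$ by one induced from a global section of $\cF$, and making this argument work rel a closed subset $A$ (so as to preserve a prescribed germ and only modify by homotopies rel $A$) is the technically most involved point of the plan. Once this relative dictionary is in place, the remainder of the argument is formal.
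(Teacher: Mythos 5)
Your outline is correct and is essentially the argument given in the source the paper cites for this statement: Madsen--Weiss prove Proposition 2.18 by first establishing a relative form of Proposition 2.17 (identifying $\cF[X,A;z]$ with homotopy classes rel $A$) in their Appendix A, and then translating the $(D^n, S^{n-1})$ lifting criterion for weak equivalences into exactly the relative concordance-extension problem you describe. You also correctly locate the real technical work in the relative representability statement, which is where the cited proof spends its effort.
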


\subsubsection{The sheaf version of the cobordism categories}

\begin{defn}\label{ex:SheafPsi}
Let $U$ be a manifold and $d \in \bN_0$. Let $\theta: B \to B\mathrm{O}(d)$ be a Serre fibration, and let $\gamma_\theta:= \theta^* \gamma_d$ be the pullback of the universal vector bundle. 
The sheaf $\psi_\theta(U)$ assigns to a test manifold $X$ the set of all pairs $(M,\ell)$, where 
\begin{enumerate}[(i)]
 \item $M \subset U \times X$ is a submanifold which is closed as a subspace, such that the projection map $\pi=\pr_X: M \to X$ is a submersion with $d$-dimensional fibres.
 \item $\ell$ is a $\theta$-structure on the vertical tangent bundle $T_v M := \ker (d\pi) \to M$, in other words a bundle map $\ell: T_v M \to \gamma_\theta$.
\end{enumerate}
Pullback along a smooth map $f: X_0 \to X_1$ yields a map $\psi_\theta(U) (X_1) \to \psi_\theta(U)(X_0)$.
\end{defn}

\begin{rem}\label{rem:SP}
Note that $\psi_\theta(U)(*)$ is the set of all $d$-dimensional submanifolds $M \subset U$ which are closed as a subspace and are equipped with a $\theta$-structure. In Sections 2.1 and 2.3 of \cite{GRW10} a topology on the set $\psi_\theta(U)(*)$ is defined, with the resulting topological space denoted by $\Psi_\theta(U)$. By Lemma 2.17 of \cite{GRW10}, the function $f_z:X \to \Psi_\theta (U)$ induced by an element $z \in \psi_\theta(U)(X)$ is ``smooth'' and so in particular continuous. The canonical evaluation function $\rep{\psi_\theta(U)} \to \Psi_\theta(U)$ is therefore continuous, and in fact is a weak homotopy equivalence by the smooth approximation theorem, Lemma 2.18 of \cite{GRW}. In Theorem A.3 of \cite{SPInv} Schommer-Pries has shown that the quotient topology on $\psi_\theta(U)(*)$ from $\rep{\psi_\theta(U)}$ agrees with that of $\Psi_\theta(U)$.
\end{rem}

A \emph{fibrewise Riemannian metric} on a submersion $\pi:M\to X$ is a smooth bundle metric $g$ on $T_v M$. The restriction of $g$ to a fibre $\pi^{-1}(x)$ is a Riemannian metric $g_x$ on $\pi^{-1}(x)$.

\begin{defn}
The sheaf $\psi_\theta^{\riem}(U)$ assigns to a test manifold $X$ the set of all triples $(M,\ell, g)$, where 
\begin{enumerate}[(i)]
\item $(M, \ell) \in \psi_\theta(U)(X)$, and
\item $g$ is a fibrewise Riemannian metric on $M$.
\end{enumerate}
\end{defn}

A \emph{semi-simplicial sheaf} or \emph{sheaf of semi-simplicial sets} is a functor $\cF_\bullet : \Mfds \to \ssSet$ such that each $\cF_q :\Mfds \to \Set$ is a sheaf of sets in the sense described above.
Then $[q] \mapsto \rep{\cF_q}$ is a semi-simplicial space, whose fat geometric realisation we denote by $\norm{\cF_\bullet}$. If $\cC :\Mfds \to \CatNU$ is a sheaf of (small) non-unital categories (i.e.\ a functor such that $\Ob(\cC), \Mor(\cC) : \Mfds \to \Set$ are sheaves of sets in the sense described above), then taking nerves gives a semi-simplicial sheaf $N_\bullet \cC$.

We may easily encode the categories $\Cob_\theta^{\kappa,l}$, $ \PCob_\theta^{\kappa,l}$, $ \PCob_\theta^{\kappa,l, \psc}$, and $ \PCob_\theta^{\kappa,l, \rst}$ from Section \ref{sec:PSCCobCat} in the language of sheaves, as their spaces of objects and morphisms all have a natural notion of ``smooth maps from a manifold".

\begin{definition}
Let $\Ob(\sh\cC_\theta) : \Mfds \to \Set$ be the sheaf which assigns to a test manifold $X$ the set of pairs $(M, \ell_M)$ of a submanifold $M \subset \bR^\infty \times X$ such that the projection $\pi_X : M \to X$ is a proper submersion with $(d-1)$-dimensional fibres, and $\ell_M : \bR \oplus T_v M \to \gamma_\theta$ is a bundle map.

For a smooth function $t: X \to (0,\infty)$, we denote
\[
 [0,t] \times \bR^\infty \times X := \{ (s,v,x) \in \bR \times \bR^\infty \times X\, \vert\, 0 \leq s \leq t(x)\}
\]
and 
\[
 \{t\} \times \bR^\infty \times X := \{ (s,v,x) \in \bR \times \bR^\infty \times X \,\vert\, s = t(x)\} \cong  \{0\} \times \bR^\infty \times X .
\]
Let $\Mor(\mathrm{sh}\cC_\theta) : \Mfds \to \Set$ be the sheaf which assigns to a test manifold $X$ the set of triples $(t, W, \ell_W)$ of a smooth function $t : X \to (0,\infty)$, a submanifold $W \subset [0,t] \times \bR^\infty \times X$ such that the projection $\pi : W \to X$ is a proper submersion with fibres $W_x := \pi^{-1}(x)$ being $d$-dimensional manifolds whose boundary lies in $ (\{0\} \cup \{t\})  \times \bR^\infty \times X$, and $\ell_W : T_v W \to \gamma_\theta$ is a bundle map, such that there is a smooth $\epsilon : X \to (0,\infty)$ so that $(W_x,\ell_W\vert_{W_x}) \in \cM^\theta(W_x)_{\epsilon(x)}$ for each $x \in X$.

The source and target maps are given by restriction to $\{0\} \times \bR^\infty \times X$ or $\{t\} \times \bR^\infty \times X$ and the obvious identification. The composition map is given by translating and gluing. This defines a sheaf of non-unital categories, which is very much similar to the one defined in \cite[Definition 2.8]{GMTW}.
\end{definition}

Note that $(N_p \mathrm{sh}\cC_\theta)(*) = N_p \cC_\theta$ as sets, and with the topology on $N_p \cC_\theta$ described in Section \ref{sec:CobCat} the evaluation maps $X \times (N_p \mathrm{sh}\cC_\theta)(X) \to N_p \cC_\theta$ are continuous: this follows using Lemma 2.17 of \cite{GRW10}, as discussed in Remark \ref{rem:SP} above. Thus there is a continuous map $\rep{N_p \mathrm{sh}\cC_\theta} \to N_p \cC_\theta$ given by evaluation. This map is a weak equivalence: this follows using the smooth approximation, Lemma 2.18 of \cite{GRW}, as discussed in Remark \ref{rem:SP} above. Thus taking the fat geometric realisation gives a weak equivalence $\norm{N_\bullet \mathrm{sh}\cC_\theta} \overset{\sim}\to B\cC_\theta$. Similarly, if we define $\mathrm{sh}\cC_\theta^{\kappa, l}$ as sub-(sheaves of non-unital categories) of $\mathrm{sh}\cC_\theta$ by imposing the connectivity conditions pointwise, then we have 
\begin{equation*}
\norm{N_\bullet \mathrm{sh}\cC_\theta^{\kappa, l}} \overset{\sim}\lra B\cC_\theta^{\kappa, l}. 
\end{equation*}

\begin{definition}
Let $\Ob(\mathrm{sh}\cP_\theta) : \Mfds \to \Set$ be the sheaf which assigns to a test manifold $X$ the set of triples $(M, \ell_M, g_M)$ where $(M, \ell_M) \in \Ob(\mathrm{sh}\cC_\theta)$ and $g_M$ is a fibrewise Riemannian metric on $ M$ with positive scalar curvature (that is, $(g_M)_x$ has positive scalar curvature for each $x \in X$).

Let $\Mor(\mathrm{sh}\cP_\theta) : \Mfds \to \Set$ be the sheaf which assigns to a test manifold $X$ the set of quadruples $(t, W, \ell_W, g_W)$ where $(t, W, \ell_W) \in \Mor(\mathrm{sh}\cC_\theta)$ and $g_W$ is a fibrewise Riemannian metric on $ W$ with positive scalar curvature such that there is a smooth $\epsilon : X \to (0,\infty)$ so that $(W_x, \ell_W\vert_{W_x}, g_W\vert_{W_x}) \in \cM^\theta_{\psc}(W_x)_{\epsilon(x)}$ for all $x \in X$.
\end{definition}

Again maps into each $N_p \cP_\theta$ have smooth approximations, so there are weak equivalences $\norm{N_\bullet \mathrm{sh}\cP_\theta} \overset{\sim}\to B\cP_\theta$. If we define $\mathrm{sh}\cP_\theta^{\kappa, l, (\rst)}$ as sub-(sheaves of non-unital categories) of $\mathrm{sh}\cP_\theta$ by imposing the connectivity conditions (and perhaps right-stability) pointwise, then we have equivalences 
\begin{equation}\label{eqn:equivalence-cobcat-vs-sheafcobcatpsc}
\norm{N_\bullet \mathrm{sh}\cP_\theta^{\kappa, l, (\rst)}} \overset{\sim}\lra B\cP_\theta^{\kappa, l, (\rst)}. 
\end{equation}

\subsubsection{Poset models}\label{sec:PosetModels}

We wish to establish models analogous to the $\norm{\cD_\theta^{\kappa,l}(\bR^N)_\bullet} \simeq B \Cob_\theta^{\kappa,l}$ of \S 2.6 of \cite{GRW}, but in the context of sheaves.

\begin{defn}\label{defn:nakedDspace}
The sheaf $\cD_\theta(\bR^N)$ assigns to a test manifold $X$ the set of all $(W,\ell)\in \psi_\theta (\bR \times \bR^N)(X)$ such that $W \subset \bR \times (-1,1)^N \times X$. 
We also define
\[
\cD_\theta:= \colim_{N \to \infty} \cD_\theta (\bR^N),
\]
where the colimit is taken in the category of sheaves.
\end{defn}

By Section 4 of \cite{GMTW}, there is a weak equivalence
\begin{equation}\label{eq:longmanioflds-versus-ordcobcat}
 \rep{\cD_\theta} \simeq B \Cob_\theta.
\end{equation}

\begin{defn}\label{defn:cylindrical}
For $(W,\ell) \in \cD_\theta(\bR^N)(X)$, we denote by $x_1: W \to \bR$ the restriction of the projection to $\bR$. Note that $x_1$ is fibrewise proper, i.e. $(x_1,\pi):W \to \bR \times X$ is proper. For $J \subset \bR$, we write $W|_J := x_1^{-1}(J)= W \cap\bigl( J \times \bR^N \times X \bigr)$. If $I \subset \bR$ is an interval, we say that $W$ is \emph{cylindrical over $I$}, if for every $t \in I$, we have the equality $W|_I = I \times W|_t$ as subsets of $\bR \times \bR^{N} \times X$ and as $\theta$-manifolds. 
We say that $W$ is \emph{cylindrical over $J \subset \bR$} if it is cylindrical over each interval $I \subset J$, and we say that $W$ is \emph{cylindrical near $a \in \bR$} if there is an open neighbourhood of $a$ over which $W$ is cylindrical. 

Let in addition $g$ be a Riemannian metric on $W$. We say that $(W,\ell,g)$ is \emph{cylindrical over $J$} if $(W,\ell)$ is cylindrical over $J$ in the above sense and if for each interval $I \subset J$ and each $t \in I$, the metric $g|_{W_I}$ is equal to $dx_1^2 + g|_{W|_t}$. Similarly, we define $(W,\ell,g)$ to be cylindrical near $a \in \bR$ if is cylindrical over some neighbourhood of $a$.
\end{defn}

\begin{defn}\label{defn:Dspaces}
The semi-simplicial sheaf $\cD_\theta^{\kappa,l}(\bR^N)_\bullet$ has its sheaf of $p$-simplices given as follows. An element of $\cD_\theta^{\kappa,l} (\bR^N)_p(X)$ is a tuple $(W,\ell_W,a,\eps)$ where
\begin{enumerate}[(i)]
\item $(W,\ell_W ) \in \cD_\theta(\bR^N)$,
\item $a= (a_0,\ldots,a_p)$ and $\epsilon=(\eps_0, \ldots, \eps_p)$ are tuples of smooth functions $X \to \bR$, with $\eps_i>0$ and $a_{i-1} + \eps_{i-1} < a_i - \eps_i$ (for each $i$),
\end{enumerate}
such that after restriction to each point of $X$ it satisfies
\begin{enumerate}[(i)]
\setcounter{enumi}{2}
\item $W$ is cylindrical over each interval $[a_i-\eps_i,a_i+\eps_i]$, and for any $t \in \cup_{i=0}^p[a_i-\eps_i, a_i+\eps_i]$  we have that $(W|_t, \ell_W\vert_{W|_t})$ is an object of $\Cob_{\theta}^{\kappa,l}$, and for two such $t < t'$ we have that $(W|_{[t,t']}, \ell_W\vert_{W|_{[t,t']}})$ is a morphism of $\Cob_{\theta}^{\kappa,l}$.
\end{enumerate}
The face map $d_i: \cD_\theta^{\kappa,l} (\bR^N)_p(X) \to \cD_\theta^{\kappa,l} (\bR^N)_{p-1}(X)$ forgets $a_i$ and $\eps_i$.  We write $\cD_{\theta,p}^{\kappa,l}$ for the colimit of these sheaves for $N \to \infty$.
\end{defn}
There is a weak equivalence
\begin{equation}\label{eqn:poset-model-for-ordcobcat}
\norm{\cD_{\theta,\bullet}^{\kappa,l}} \simeq B\Cob_\theta^{\kappa,l} 
\end{equation}
which is proven completely analogously to Proposition 2.14 of \cite{GRW} but adapted to sheaves. Next, we introduce the psc version of $\cD_{\theta,\bullet}^{\kappa,l}$. 

\begin{defn}\label{defn:d-space}
The semi-simplicial sheaf $\cD_\theta^{\kappa,l,\psc} (\bR^N)_\bullet$ has its sheaf of $p$-simplices given as follows. An element of $\cD_\theta^{\kappa,l,\psc} (\bR^N)_p(X)$ is a tuple $(W,\ell_W,a,\eps,g_W)$ where $(W,\ell_W,a,\eps) \in \cD_\theta^{\kappa,l} (\bR^N)_p(X)$ and $g_W$ is a fibrewise Riemannian metric on $W$ such that after restriction to each point of $X$
\begin{enumerate}[(i)]
\item the Riemannian $\theta$-manifold $(W,\ell_W,g)$ is cylindrical over each of the intervals $[a_i-\frac{3}{4}\eps_i,a_i+\frac{3}{4}\eps_i]$ and 
\item the scalar curvature of $g_W$ is positive over $W|_{[a_0,a_p]}$. 
\end{enumerate}
We define $\cD_\theta^{\kappa,l,\rst} (\bR^N)_p(X)  \subset \cD_\theta^{\kappa,l,\psc} (\bR^N)_p(X)$ as the subset of all those elements $(W,\ell_W,a,\eps,g_W)$ such that after restriction to each point of $X$ the psc metrics $g_W|_{[a_i,a_{i+1}]}$ are right stable (for $i=0, \ldots,p-1$). The $i$th face map again forgets $a_i$ and $\epsilon_i$, and we again write $\cD_{\theta,p}^{\kappa,l,\psc}$ and $\cD_{\theta,p}^{\kappa,l,\rst}$ for the colimits of these sheaves as $N \to \infty$.
\end{defn}
 
There are weak equivalences 
\begin{equation}\label{eqn:comparison-pscbobmodelssheaf}
\norm{\cD_{\theta,\bullet}^{\kappa,l, \psc}} \simeq \norm{N_\bullet\mathrm{sh}\PCob_\theta^{\kappa,l}} \quad \text{ and } \quad  \norm{\cD_{\theta,\bullet}^{\kappa,l,\rst}} \simeq \norm{N_\bullet\mathrm{sh}\PCob_\theta^{\kappa,l, \rst}} 
\end{equation}
which are also proven completely analogously to Proposition 2.14 of \cite{GRW}, using that $\Riem^+ (W)^\eps \simeq \Riem^+ (W)$ by Corollary 2.2 of \cite{BERW}. Combining them with the equivalences 
\eqref{eqn:equivalence-cobcat-vs-sheafcobcatpsc} gives equivalences
\begin{equation}\label{eqn:poset-model-for-psccobcat}
\norm{\cD_{\theta,\bullet}^{\kappa,l, \psc}} \simeq B\PCob_\theta^{\kappa,l} \quad \text{ and } \quad  \norm{\cD_{\theta,\bullet}^{\kappa,l,\rst}} \simeq B\PCob_\theta^{\kappa,l, \rst}.
\end{equation}

\subsubsection{Long and infinitesimal collars}

There are two variants of $\cD_{\theta,\bullet}^{\kappa,l}$ (and its psc versions) which are sometimes useful. In the first variant, we prescribe strict lower bounds for the collar lengths $\epsilon_i$. 

\begin{defn}\label{defn:longcollars}
Fix a sequence of real numbers with $0<c_0 \leq c_1 \leq \ldots$. Let $\cD_{\theta,p,c_p}(\bR^N)(X) \subset \cD_{\theta,p}(\bR^N)(X)$ be the subset of those $(W,\ell_W,a,\eps)$ such that $\eps_i(x) \geq c_p$ for all $i \in [p]$ and all $x \in X$. These form the $p$-simplices of a sub-semi-simplicial sheaf $\cD_{\theta,\bullet, c}(\bR^N)$ of $\cD_{\theta,\bullet} (\bR^N)$. 

Similar definitions are made for the colimit $N \to \infty$, for the connectivity conditions and for the decorations $\psc$ and $\rst$.
\end{defn}

\begin{lem}\label{collar-strtching-cobcat}
The inclusion $\cD_{\theta,p,c_p}^{\kappa,l} (\bR^N) \to  \cD_{\theta,p}^{\kappa,l}(\bR^N)$ is a weak equivalence for each $p$, and the same is true with the decorations $\psc$ and $\rst$.
\end{lem}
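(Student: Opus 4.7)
I would verify the concordance-surjectivity criterion of Proposition~\ref{prop:SurjCrit}: given $z = (W, \ell_W, a, \epsilon) \in \cD_{\theta,p}^{\kappa,l}(\bR^N)(X)$ together with a germ near a closed $A \subset X$ (defined on some open $U \supset A$) satisfying $\epsilon_i \geq c_p$ there, I must produce a concordance rel a smaller neighbourhood of $A$ to a section lying in $\cD_{\theta,p,c_p}^{\kappa,l}(\bR^N)$ on all of $X$. The idea is to stretch each cylindrical interval by reparametrising the $\bR$-coordinate.

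Choose nested opens $A \subset V \subset V' \subset U$ with $\overline{V} \subset V'$, a smooth cutoff $\chi : X \to [0,1]$ which vanishes on $V$ and equals $1$ on $X \setminus V'$, a monotone $\rho : \bR \to [0,1]$ with $\rho(s) = 0$ for $s \leq 0$ and $\rho(s) = 1$ for $s \geq 1$, and a fixed symmetric nonnegative bump $B : \bR \to \bR$ with $\int B = 1$ and $\mathrm{supp}(B) \subset (-\tfrac{3}{4}, \tfrac{3}{4})$. Define
\[
\beta_i(t, x, s) := \frac{2 c_p \, \chi(x) \, \rho(s)}{\epsilon_i(x)} \cdot B\!\left(\frac{t - a_i(x)}{\epsilon_i(x)}\right), \qquad \phi_{s,x}(t) := t + \sum_{i=0}^p \int_{-\infty}^t \beta_i(\tau, x, s) \, d\tau .
\]
Then $\phi_{s,x}$ is an orientation-preserving diffeomorphism of $\bR$ depending smoothly on $(s,x)$, equals the identity whenever $\chi(x)\rho(s) = 0$, and is a pure translation on open neighbourhoods of $a_i(x) \pm \epsilon_i(x)$ (since $\mathrm{supp}(B) \subset (-\tfrac{3}{4}, \tfrac{3}{4})$). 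By symmetry of $B$, the image $\phi_{s,x}([a_i - \epsilon_i, a_i + \epsilon_i])$ is the symmetric interval of half-length $\widetilde\epsilon_i := \epsilon_i + c_p \chi(x) \rho(s)$ about $\widetilde a_i := \phi_{s,x}(a_i)$, and the ordering $\widetilde a_{i-1} + \widetilde \epsilon_{i-1} < \widetilde a_i - \widetilde \epsilon_i$ is preserved. Using $V' \subset U$, one checks $\widetilde\epsilon_i \geq c_p$ at $s = 1$ throughout $X$ (it equals $\epsilon_i \geq c_p$ on $V$, is $\geq \epsilon_i \geq c_p$ on $V'$, and equals $\epsilon_i + c_p$ outside $V'$).

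Take the concordance to be the section of $\cD_{\theta,p}^{\kappa,l,(\psc)}(\bR^N)$ over $X \times \bR$ given by
\[
\widetilde W := \{(\phi_{s,x}(t), v, x, s) : (t,v,x) \in W,\ s \in \bR\} \subset \bR \times (-1,1)^N \times X \times \bR,
\]
with centres $\widetilde a_i$ and radii $\widetilde \epsilon_i$. The $\theta$-structure is the $\phi$-pushforward of $\ell_W$, which preserves cylindrical $\theta$-structures since $\phi_{s,x}$ acts only on the $\bR$-factor. The metric is defined as the cylinder metric $dt^2 + g_W\vert_{M_i(x)}$ on each new cylindrical interval (where $M_i(x) := W|_{a_i(x)}$), and as the $\phi$-pushforward of $g_W$ elsewhere. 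These agree to infinite order at each $\widetilde a_i \pm \widetilde \epsilon_i$: $\phi_{s,x}$ is a pure translation on a neighbourhood of these points, and $g_W$ is a genuine cylinder metric on the original cylindrical intervals, so all its $t$-derivatives vanish at $a_i \pm \epsilon_i$ from both sides by smoothness.

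For the $\psc$ variant, the new cylinder metric has $\scal = \scal(g_W\vert_{M_i(x)}) > 0$; for the $\rst$ variant, each piece $\widetilde W|_{[\widetilde a_i, \widetilde a_{i+1}]}$ is a continuous deformation through $s$ of the corresponding piece of $W$, so right-stability of its metric is preserved throughout by Lemma~\ref{lem:stability-invariant-under-continuously-changing-W}. The main (mild) obstacle is the smoothness check for the transitional metric at the new cylindrical boundaries; the rest is routine bookkeeping.
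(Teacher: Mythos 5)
Your strategy coincides with the paper's: both verify the relative surjectivity criterion of Proposition \ref{prop:SurjCrit} by reparametrising the $\bR$-coordinate through a family of diffeomorphisms that stretch each collar interval, are the identity near $A$ and at concordance-time $0$, and are pure translations near the collar endpoints (your $\phi_{s,x}$ are an explicit construction of the paper's functions $h_{x,t}$, and conditions (i)--(v) there correspond exactly to the properties you derive from $\chi$, $\rho$ and the symmetric bump $B$). The bookkeeping on the manifold and $\theta$-structure, the verification that $\widetilde\eps_i \geq c_p$ at $s=1$, and the right-stability argument via Lemma \ref{lem:stability-invariant-under-continuously-changing-W} are all fine.

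The one step that fails as written is the metric gluing, precisely the point you flagged as the ``mild obstacle''. Definition \ref{defn:d-space}(i) only requires $g_W$ to be cylindrical over $[a_i-\tfrac{3}{4}\eps_i, a_i+\tfrac{3}{4}\eps_i]$; it is the underlying \emph{manifold} (Definition \ref{defn:Dspaces}(iii)) that is cylindrical over the full interval $[a_i-\eps_i,a_i+\eps_i]$. Hence at $a_i\pm\eps_i$ the metric $g_W$ need not agree with $dt^2+g_W\vert_{M_i(x)}$ even at order zero, so your glued metric is in general discontinuous at $\widetilde a_i\pm\widetilde\eps_i$, and the assertion that ``all its $t$-derivatives vanish at $a_i\pm\eps_i$'' is false. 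A second symptom of the same misreading: on $V$, where $\chi=0$ and $\phi_{s,x}=\mathrm{id}$, your construction still overwrites $g_W$ by the cylinder metric on the outer annuli of $[a_i-\eps_i,a_i+\eps_i]$, so the concordance is not constant near $A$. The repair is to perform the gluing inside the region where \emph{both} $g_W$ is cylindrical \emph{and} $\phi'_{s,x}\equiv 1$, which is nonempty because $\supp(B)$ is compact in $(-\tfrac{3}{4},\tfrac{3}{4})$: for instance, put the cylinder metric on $\phi_{s,x}\bigl([a_i-\tfrac{3}{4}\eps_i, a_i+\tfrac{3}{4}\eps_i]\bigr)$ and the pushforward of $g_W$ outside; near the interface the two metrics literally coincide (both are translates of the cylinder metric $dt^2+g_W\vert_{M_i(x)}$), the metric is untouched over $V$, and a short computation shows $[\widetilde a_i-\tfrac{3}{4}\widetilde\eps_i,\widetilde a_i+\tfrac{3}{4}\widetilde\eps_i]\subset\phi_{s,x}\bigl([a_i-\tfrac{3}{4}\eps_i, a_i+\tfrac{3}{4}\eps_i]\bigr)$, so the required cylindricality of the new datum still holds. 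With this adjustment your argument goes through and matches the paper's.
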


\begin{proof}
We will use the criterion of Proposition \ref{prop:SurjCrit}, so must show that
$$\cD_{\theta,p,c_p}^{\kappa,l} (\bR^N)[X, A; z] \lra \cD_{\theta,p}^{\kappa,l}(\bR^N)[X, A; \mathrm{inc}(z)]$$
is surjective for all $X$, $A \subset X$ closed, and germs $z$ of $\cD_{\theta,p,c_p}^{\kappa,l} (\bR^N)$ near $A$.

An element of the right-hand side is represented by a tuple $(W, \ell_A, a, \epsilon)$ such that on some open neighbourhood $U \supset A$ we have $\epsilon_i \vert_U \geq c_p$ for all $i$. We may choose a smooth map
\begin{align*}
X \times \bR \times \bR &\lra \bR\\
(x, t, s) &\longmapsto h_{x,t}(s)
\end{align*}
such that each $h_{x,t}$ is a diffeomorphism, and
\begin{enumerate}[(i)]
\item $h_{x,t}(s)=s$ on a neighbourhood of $X \times (-\infty,0] \times \bR$,
\item there is a neighbourhood $U \supset V \supset A$ with $h_{x,t}(s)=s$ for $x \in V$ and all $t$,
\item for each $x \in X$ and each $q\in (0,1)$ the functions 
$$t \mapsto \begin{cases}h_{x,t} (a_i(x)+q\eps_i(x)) - h_{x,t} (a_i(x))\\
h_{x,t} (a_i(x))-h_{x,t} (a_i(x)-q\eps_i(x))
\end{cases}$$
are equal and nondecreasing in $t$,
\item  we have $|h_{x,t} (a_i(x)\pm\eps_i(x))- h_{x,t} (a_i(x))| \geq c_p$ on a neighbourhood of $X \times [1,\infty) \subset X \times \bR$,
\item  we have $h'_{x,t}(s) = 1$ on a neighbourhood of
$$\{(x,t,  s) \in X \times \bR \times \bR \, | \, s= a_i(x)\pm \eps_i(x)\}.$$
\end{enumerate}

Pulling back the data $\xi := (W, \ell_W, a, \epsilon)$ along the smooth map 
$$\psi: (s,v,x,t) \mapsto (h_{x,t}(s),v,x) :  \bR \times \bR^N \times X \times \bR \to  \bR \times \bR^N \times X$$
gives an element $\xi' := (W', \ell_{W'}, a', \epsilon') \in \cD_{\theta,p}^{\kappa,l}(\bR^N)(X \times \bR)$ with $W'= \psi^{-1}(W)$, $\ell_{W'} = \ell_W \circ D\psi$, $a_i'(x,t)= h_{x,t}(a_i(x))$, and $\epsilon'_i(x,t) = h_{x,t} (a_i(x)+q\eps_i(x)) - h_{x,t} (a_i(x))$. This indeed represents an element, by (iii). 

The restriction of $\xi'$ to a neighbourhood of $X \times (-\infty,0]$ is the pullback of $\xi$ by (i), whose restriction to a neighbourhood of $A \times \bR$ is constant by (ii), and whose restriction to a neighbourhood of $X \times [1,\infty)$ is pulled back from a $\xi'' \in \cD_{\theta,p, c_p}^{\kappa,l}(\bR^N)(X)$ by (iv).

If in addition $W$ is equipped with a Riemannian metric $g_W$ which is psc over $[a_0-\eps_0, a_p+\eps_p]$ when we may endow $W'$ with a Riemannian metric $g_{W'}$ which is equal to $\psi^*g_W$ outside of the $[a'_i-\eps'_i, a'_i+\eps'_i]$ and is equal to the evident cylindrical metric on $W'|_{a'_i} \times [a'_i-\eps'_i, a'_i+\eps'_i]$, which indeed fit together by (v). With this choice the above concordance works with the decoration $\psc$. Furthermore, if each $g_W|_{[a_i, a_j]}$ is right-stable so is each $g_{W'}|_{[a'_i, a'_j]}$, as the underlying manifolds are diffeomorphic and under this diffeomorphism the metrics differ by stretching a collar.
\end{proof}

In the second variant of $\cD_{\theta,\bullet}^{\kappa,l}$ the collar lengths are not recorded (but must exist), and the regular values $a_i$ are allowed to coincide.

\begin{defn}\label{defn:inifniteseminalcollars}
Let $\cDs_{\theta,p}^{\psc} (\bR^N)(X)$ be the set of all tuples $(W,\ell,a,g)$ where $(W,\ell) \in \cD_\theta(\bR^N)(X)$, $a=(a_0, \ldots, a_p)$ is a tuple of smooth functions $X \to \bR$ such that $a_0 \leq \ldots \leq a_p$, and $g$ is a fibrewise Riemannian metric on $W$. We require that after restricting to a point of $X$, $(W_x,g_x)$ is cylindrical near each $a_i(x)$ and that $g_x|_{W|_{[a_0(x),a_p(x)]}}$ has positive scalar curvature. There is a version $\cDs_{\theta,p}^{\kappa,l,\psc}(\bR^N)$ with the appropriate connectivity conditions, and a version $\cDs_{\theta,p}^{\kappa,l,\rst}(\bR^N)$ requiring right-stability. The sheaves $\cDs_{\theta,p}^{\kappa,l,\psc}$ and $\cDs_{\theta,p}^{\kappa,l,\rst}$ are obtained by taking the colimit $N \to \infty$. A similar definition is made for $\cDs_{\theta,\bullet}^{\kappa,l}$, without psc metrics. 

This has a semi-simplicial structure by forgetting the $a_i$'s, which extends to a \emph{simplicial} structure by doubling the $a_i$'s.
\end{defn}

\begin{lem}\label{collar-shrinking-cobcat}
The forgetful map $ \cD_{\theta,p}^{\kappa,l}(\bR^N)\to  \cDs_{\theta,p}^{\kappa,l}(\bR^N)$ is a weak equivalence for each $p$, and the same is true with the decorations $\psc$ and $\rst$.
\end{lem}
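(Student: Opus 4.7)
My plan is to apply the relative surjectivity criterion of Proposition \ref{prop:SurjCrit}. The only difference between the two sheaves is that in $\cD_{\theta,p}^{\kappa,l}(\bR^N)$ one records explicit collar widths $\eps_i > 0$ and demands the strict inequalities $a_{i-1} + \eps_{i-1} < a_i - \eps_i$, whereas in $\cDs_{\theta,p}^{\kappa,l}(\bR^N)$ one merely asks that $a_0 \leq \cdots \leq a_p$ and that $W$ be cylindrical near each $a_i$ for some unspecified collar length. So I must show that any $w = (W, \ell_W, a, g_W) \in \cDs_{\theta,p}^{\kappa,l}(\bR^N)(X)$ that agrees on an open neighbourhood of a closed set $A \subset X$ with the forgetful image of a germ $z$ of $\cD_{\theta,p}^{\kappa,l}(\bR^N)$ admits a lift $w' \in \cD_{\theta,p}^{\kappa,l}(\bR^N)(X)$ extending $z$, together with a concordance rel $A$ in $\cDs_{\theta,p}^{\kappa,l}(\bR^N)$ from $w$ to the image of $w'$.

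First I would, using a partition of unity on $X$ together with the pointwise cylindricality hypothesis, produce a smooth $\eta : X \to (0,\infty)$ such that $(W, \ell_W, g_W)$ is cylindrical over $[a_i(x) - \eta(x), a_i(x) + \eta(x)]$ for every $x \in X$ and every $i$; near $A$ I further arrange that each such interval is contained in the cylindrical piece already provided by the germ $z$. Choose a cut-off $\rho \in C^\infty(X;[0,1])$ with $\rho = 0$ on an open neighbourhood of $A$.

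Next I would insert cylinders. For constants $0 = k_0 < k_1 < \cdots < k_p$ (say $k_i = 3^i$), define $W'$ by excising each block $[a_i(x)-\eta(x), a_i(x)+\eta(x)] \times W|_{a_i(x)}$ from $W$ and splicing in the longer cylinder $[a_i(x)-\eta(x)-k_i\rho(x), a_i(x)+\eta(x)+k_i\rho(x)] \times W|_{a_i(x)}$, translating the intermediate pieces of $W$ along the $\bR$-factor by the accumulated amounts to keep the result smooth. Endow $W'$ with the $\theta$-structure obtained by extending cylindrically, and with the metric agreeing with $g_W$ on the unchanged parts and with the product metric $ds^2 + g_W|_{W|_{a_i(x)}}$ on each inserted cylinder; this metric is still psc on the relevant interval, since the scalar curvature on a product cylinder $ds^2 + h$ equals that of $h$. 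Set
\[
a'_i(x) := a_i(x) + \rho(x) \cdot \Bigl( 2\sum_{j<i} k_j + k_i \Bigr), \qquad \eps'_i(x) := \eta(x) + k_i\rho(x),
\]
so that the condition $a'_{i-1} + \eps'_{i-1} < a'_i - \eps'_i$ holds whenever $\rho(x) > 0$ (by the choice of the $k_i$) and holds where $\rho(x) = 0$ because there $w$ was already the image of the germ $z$. This produces an element $w' \in \cD_{\theta,p}^{\kappa,l}(\bR^N)(X)$ which extends $z$.

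Finally, replacing $\rho$ by $\tau(t)\rho$, where $\tau : \bR \to [0,1]$ is a smooth function with $\tau(t) = 0$ for $t \leq 0$ and $\tau(t) = 1$ for $t \geq 1$, yields a one-parameter family of the above constructions over $X \times \bR$, which is a concordance rel $A$ in $\cDs_{\theta,p}^{\kappa,l}(\bR^N)$ from $w$ to the image of $w'$. The decorations $\psc$ and $\rst$ are preserved throughout: psc follows from the cylinder formula just mentioned, and for $\rst$ the metric on each $W'|_{[a'_i, a'_{i+1}]}$ is the gluing of a cylinder, the original piece $W|_{[a_i,a_{i+1}]}$ (which was right stable by hypothesis), and another cylinder, all of which are right stable (cylinder metrics being right stable by the remarks preceding Theorem \ref{thm:StabMetrics}), so the composite is right stable by Lemma \ref{lem:2-out-of-three}(\ref{it:2of3ii}).

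The main obstacle will be carrying out the cylinder-insertion smoothly in $x \in X$ so that $W'$ is a genuine smooth submanifold of $\bR \times \bR^N \times X$ and the whole construction varies smoothly with the concordance parameter; this is a direct but somewhat technical gluing argument, and once it is in place the verifications of psc, right-stability, and of the connectivity conditions (which are automatic because the fibrewise diffeomorphism type of $W'$ relative to its level sets is unchanged) are essentially immediate.
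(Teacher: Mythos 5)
Your strategy (apply Proposition \ref{prop:SurjCrit} and build a concordance that creates room for honest disjoint collars) is reasonable in spirit, but the central inequality fails, and it fails exactly in the degenerate situation the lemma exists to handle, namely when consecutive $a_i$'s coincide. With your choices one computes
\[
a'_i - \eps'_i = a_i - \eta + 2\rho\sum_{j<i}k_j, \qquad a'_{i-1} + \eps'_{i-1} = a_{i-1} + \eta + 2\rho\sum_{j<i}k_j,
\]
so that $(a'_i - \eps'_i) - (a'_{i-1} + \eps'_{i-1}) = (a_i - a_{i-1}) - 2\eta$: the contributions of $\rho$ and of the constants $k_j$ cancel identically. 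Hence the required condition $a'_{i-1}+\eps'_{i-1} < a'_i - \eps'_i$ is equivalent to $a_i - a_{i-1} > 2\eta$, which is precisely the separation you were trying to manufacture and which is false wherever $a_{i-1}(x)=a_i(x)$ (or the two are within $2\eta(x)$ of one another). The reason is that you let each $\eps'_i$ grow by exactly the half-length $k_i\rho$ of the inserted cylinder, so the enlarged collar consumes the entire inserted cylinder and no gap between consecutive collars is produced. Relatedly, the excise-and-splice recipe is not well defined when the blocks $[a_i-\eta,a_i+\eta]\times W|_{a_i}$ for different $i$ coincide or overlap: after translating "the intermediate pieces by the accumulated amounts" the two inserted cylinders would still overlap by $2\eta$, since the translation preserves the original overlap.

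The paper's proof is much simpler and does not modify the $\theta$-manifold $(W,\ell_W)$ at all: since $W_x$ is cylindrical over the whole window $[a_i(x)-\eta(x),a_i(x)+\eta(x)]$, one slides the marked values apart \emph{inside} these windows, replacing $a_i$ by $a_i + i c\eta$ for a small constant $c$; this makes the $a_i$'s distinct wherever $\eta>0$, and one then takes the $\eps_i$'s \emph{small} (of order $c\eta$) rather than large, extending the given $\eps_i$'s from a smaller neighbourhood of $A$. If you wish to keep the cylinder-insertion picture, the same correction applies: place $a'_i$ and $a'_{i+1}$ at points separated by the inserted length $\rho(k_i+k_{i+1})$ and choose $\eps'_i$ small compared with that separation (which also forces you to take $\rho>0$ wherever the original $a_i$'s are not already separated). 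Your remarks about the decorations are fine once the underlying construction is repaired: product metrics have the scalar curvature of their cross-section, and right-stability is preserved under gluing cylinders by Lemma \ref{lem:2-out-of-three}, or more directly by Lemma \ref{lem:stability-homotopy-invariant} since the paper's concordance does not change the metric at all.
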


\begin{proof}
Again, we use the criterion of Proposition \ref{prop:SurjCrit}, so we must show that
$$\cD_{\theta,p}^{\kappa,l}[X, A; z] \lra \cDs_{\theta,p}^{\kappa,l}[X, A;\mathrm{inc}(z)]$$
is surjective (the argument with the decorations $\psc$ or $\rst$ is the same). An element of the target is represented by a family $(W,\ell_W,a)$ over $X$ and a tuple $\eps= (\eps_0, \ldots,\eps_p)$ of smooth functions $\eps_i:U \to (0,\infty)$ defined on an open neighbourhood of $A$. These satisfy the conditions that $a_i(x)+\eps_i (x) < a_{i+1}(x)-\eps_{i+1}(x)$ and that $W_x$ is cylindrical over the intervals $[a_i(x)-\eps_i (x),a_i(x)+\eps_i (x)]$, for all $x \in U$. We construct a concordance which proves that the element lies in the image and this concordance does not change the $\theta$-manifold $(W,\ell_W)$. 

There is a smooth function $\eta$ on $X$ which is positive outside $U$ and vanishes near $A$ such that $W_x$ is cylindrical over $[a_i(x)-\eta(x),a_i(x)+\eta(x)]$, for all $x \in X$. During the first part of the concordance, we change the function $a_i$ linearly to $a_i + i c \eta$, for some small $c>0$. The result is that we can assume that all $a_i$'s are distinct. We can then extend the functions $\eps_i$ (after restricting them to a smaller neighbourhood of $A$) appropriately to find the preimage. 

The cases with the decorations $\psc$ or $\rst$ are entirely analogous.
\end{proof}

\subsubsection{Flexible models}

We wish to establish models analogous to the $\norm{X_\bullet^{\kappa,l}} \simeq \norm{\cD_{\theta, \bullet}^{\kappa,l}}$ of Section 2.8 of \cite{GRW}, but in the context of sheaves and with psc metrics. This makes use of the following weakening of Definition \ref{defn:cylindrical}.

\begin{defn}\label{defn:producttype}
Let $(W, g)$ be a Riemannian manifold and let $x_1 : W \to \bR$ be a smooth function. We say that $g$ is of \emph{product type} over an open subset $J \subset \bR$ with respect to $x_1$ if
\begin{enumerate}[(i)]
\item $J$ consists of regular values of $x_1$, and $\norm{dx_1}\equiv 1$ on $x_1^{-1}(J)$,
\item The Lie derivative $\cL_{\frac{\partial}{\partial x_1}}g$ of $g$ along the vector field dual to $dx_1$ vanishes near $x_1^{-1}(t)$.
\end{enumerate}
\end{defn}

We remark that if $x_1$ is proper, the flow generated by $\frac{\partial}{\partial x_1}$ identifies $x_1^{-1}(J)$ with $J \times x_1^{-1}(t)$ for a small interval $J$ containing $t$. With respect to this identification, the metric $g$ takes the form $ dx_1^2+g_0$ for $g_0:= g|_{x_1^{-1} (t)}$. In this sense, Definition \ref{defn:producttype} is a weakening of Definition \ref{defn:cylindrical}. The following is a psc version of \cite[Definition 2.18]{GRW}, adapted to sheaves.

\begin{defn}\label{defn:x-space}
The semi-simplicial sheaf $X^{\kappa,l, \psc}_\bullet$ has its sheaf of $p$-simplices given as follows. An element of $X^{\kappa,l,\psc}_p(X)$ is a tuple $(a, \epsilon, (W,\ell_W, g_W))$ such that $a=(a_0, \ldots, a_p)$ and $\epsilon=(\epsilon_0, \ldots, \epsilon_p)$ are tuples of smooth maps $X \to \bR$, with $\eps_i>0$ and $a_{i-1} + \eps_{i-1} < a_{i} - \eps_{i}$,
$$W \subset \{(t, v, x) \in \bR \times \bR^N \times X \, | \, a_0(x)-\eps_0(x) < t < a_p(x)+\eps_p(x) \}$$
is a closed subset such that $\pi = \mathrm{pr}_X : W \to X$ is a submersion with $d$-dimensional fibres, $\ell_W : T_v W \to \gamma_\theta$ is a bundle map, and $g_W$ is a fibrewise Riemannian metric on $W$, such that after restriction to each point of $X$ it satisfies
\begin{enumerate}[(i)]

\item $W \subset \bR \times (-1,1)^N$,

\item for each pair of regular values $t_0 < t_1 \in \cup_i (a_i-\epsilon_i, a_i+\epsilon_i)$, the cobordism $(W\vert_{[t_0, t_1]}, W\vert_{t_1})$ is $\kappa$-connected,

\item for each regular value $t \in (a_i-\epsilon_i, a_i+\epsilon_i)$, the map $\ell\vert_t : W\vert_t \to B$, is $(l+1)$-connected,

\item the Riemannian metric $g_{W}$ has positive scalar curvature,
 
\end{enumerate}
and on some open neighbourhood $U$ of each point of $X$ it satisfies
\begin{enumerate}[(i)]
\setcounter{enumi}{4}
\item for each $i \in [p]$ there exist $s_i \in \bR$ and $\delta_i > 0$ such that after restriction to any $x \in U$ we have $a_i-\epsilon_i \leq s_i - \delta_i < s_i + \delta_i \leq a_i+\epsilon_i$ and $(W, g_{W})$ is of product type over $(s_i-\delta_i,s_i+\delta_i)$ with respect to $x_1 : W \to (a_0-\epsilon_0, a_p+\epsilon_p)$.

\end{enumerate}

Define $X^{\kappa,l,\rst}_\bullet$ as the sub-semi-simplicial sheaf of $X^{\kappa,l,\psc}_\bullet$, consisting of those elements which in addition satisfy
\begin{enumerate}[(i)]
\setcounter{enumi}{5}
\item for $i \leq j$ and any $s_i< s_j$ as in (v), the metric $(g_{W})\vert_{[s_i, s_j]}$ is right stable.
\end{enumerate}
\end{defn}

The following is easily proved by adapting the idea of Proposition 2.20 of \cite{GRW} to sheaves. 

\begin{lem}\label{lem:x-space-equivalent-to-d-space}
The natural maps 
\[
\norm{ \cD_\theta^{\kappa,l,\psc} (\bR^N)_{\bullet} } \lra \norm{X^{\kappa,l, \psc}_\bullet} \quad \text{ and } \quad  \norm{ \cD_\theta^{\kappa,l,\rst} (\bR^N)_{\bullet}} \lra \norm{  X^{\kappa,l, \rst}_\bullet}
\]
induced by inclusion are weak homotopy equivalences.
\end{lem}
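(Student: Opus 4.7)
The plan is to apply the surjectivity criterion of Proposition \ref{prop:SurjCrit}. Given a test manifold $X$, a closed subset $A \subset X$, and a germ $z$ of $\cD_\theta^{\kappa,l,\psc}(\bR^N)_p$ near $A$ that extends to a section $\xi = (a, \epsilon, (W, \ell_W, g_W)) \in X^{\kappa,l,\psc}_p(X)$, I will construct a concordance (relative to $A$) within $X^{\kappa,l,\psc}_p$ from $\xi$ to a section lying in the image of the inclusion. The construction adapts the geometric argument of Proposition 2.20 of \cite{GRW} to the present sheaf-theoretic setting, keeping track of the fibrewise Riemannian datum along the way.

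The first step is an \emph{embedding straightening}. The product-type condition of Definition \ref{defn:producttype} on $(s_i-\delta_i, s_i+\delta_i)$ says precisely that the vector field dual to $dx_1$ is a unit gradient field whose flow $\Phi^t$ is a local isometry carrying $W|_{s_i}$ onto $W|_{s_i+t}$ for $|t| < \delta_i$. Choosing $0 < \delta_i' < \delta_i$ and applying the parametrised tubular neighborhood theorem to the fibrewise submanifolds $\{W|_{s_i(x)}\}_{x \in X}$ of $\bR \times \bR^N \times X$, one obtains an ambient isotopy $\Psi_\tau$ of $\bR \times \bR^N \times X$, supported near $x_1^{-1}(\bigcup_i [s_i-\delta_i', s_i+\delta_i'])$, trivial on a neighborhood of $A$, and with $\Psi_1$ sending the embedding of $W \cap x_1^{-1}([s_i-\delta_i', s_i+\delta_i'])$ onto the literal product $[s_i-\delta_i', s_i+\delta_i'] \times W|_{s_i}$ inside $\bR \times \bR^N \times X$, carrying $g_W$ to the cylindrical metric $dx_1^2 + g_W|_{W|_{s_i}}$ there. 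The family $\tau \mapsto (\Psi_\tau)_*\xi$ is then a concordance in $X^{\kappa,l,\psc}_p$ (positivity of scalar curvature is preserved because each $\Psi_\tau$ acts fibrewise by a diffeomorphism) whose endpoint $\xi' = (a, \epsilon, (W', \ell_{W'}, g_{W'}))$ has $(W', g_{W'})$ strictly cylindrical over each $[s_i-\delta_i', s_i+\delta_i']$ in the sense of Definition \ref{defn:cylindrical}.

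The second step is a \emph{collar adjustment}: linearly interpolate $(a^\tau, \epsilon^\tau) := ((1-\tau)a + \tau s, (1-\tau)\epsilon + \tau \delta')$ while leaving $(W', \ell_{W'}, g_{W'})$ fixed. Convexity preserves the ordering condition, using that $s_{i-1} + \delta_{i-1}' \leq a_{i-1}+\epsilon_{i-1} < a_i-\epsilon_i \leq s_i-\delta_i'$ at both endpoints, and at each intermediate time the sub-intervals $(s_i-\delta_i'/2, s_i+\delta_i'/2) \subset (a_i^\tau-\epsilon_i^\tau, a_i^\tau+\epsilon_i^\tau)$ witness the product-type condition (iii) of Definition \ref{defn:x-space}. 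At $\tau = 1$ the data $(s, \delta', (W', \ell_{W'}, g_{W'}))$ lies in $\cD_\theta^{\kappa,l,\psc}(\bR^N)_p$, after possibly shrinking $\delta'$ by a harmless uniform factor so that strict cylindricity on $[s_i - \delta_i', s_i + \delta_i']$ implies the metric cylindricity on $[s_i - \tfrac{3}{4}\delta_i', s_i + \tfrac{3}{4}\delta_i']$ required by Definition \ref{defn:d-space}. The argument for the $\rst$ variant is identical: both concordances deform the fibrewise Riemannian cobordisms through diffeomorphic Riemannian manifolds, so right stability is preserved by Lemma \ref{lem:stability-invariant-under-continuously-changing-W}.

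The main technical obstacle is the parametrised and relative nature of the first step, namely producing $\Psi_\tau$ that depends smoothly on $x \in X$, is trivial near $A$, and is supported precisely where needed. This is carried out by combining the fibrewise local straightenings provided by the isometric flow $\Phi^t$ on the product-type region with a partition of unity argument on $X$, yielding a global smooth ambient isotopy with the required properties.
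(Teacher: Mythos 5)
Your proposal is correct and follows essentially the route the paper intends: the paper's entire proof is the one-line instruction to adapt Proposition 2.20 of \cite{GRW} to sheaves, and your argument — the relative surjectivity criterion of Proposition \ref{prop:SurjCrit}, straightening via the flow of the unit field dual to $dx_1$ on the product-type regions (which simultaneously makes the submanifold literally cylindrical and pushes $g_W$ forward to a cylindrical metric without affecting positivity of scalar curvature or, by Lemma \ref{lem:stability-invariant-under-continuously-changing-W}, right-stability), followed by interpolating $(a,\epsilon)$ to $(s,\delta')$ — is exactly that adaptation. The only points deserving a little more care than you give them are that the straightening is best phrased as a level-preserving family of re-embeddings of $W$ (rather than an ambient isotopy produced by a tubular neighbourhood theorem), and that since the $s_i,\delta_i$ of Definition \ref{defn:x-space}(iii) are only given pointwise they must be chosen over a locally finite cover of $X$ and the straightenings performed chart by chart, taking $(s_i,\delta_i')$ to agree with the given $(a_i,\tfrac{3}{4}\epsilon_i)$ near $A$ so that the concordance is genuinely constant there — but you flag both issues and they are routine.
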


\section{The fibre theorems}\label{sec:fibre-theorem}

\subsection{Statement of results}
The goal of this section is to identify the homotopy fibres of the maps
\begin{align*}
BF^{2,1} : B\cP_\theta^{2,1} &\lra B\cC_\theta^{2,1}\\
BF^{2,1,\rst} : B\cP_\theta^{2,1,\rst} &\lra B\cC_\theta^{2,1}
\end{align*}
in terms of certain categories, which we call concordance categories, constructed from spaces of psc metrics on a fixed long manifold. The result is given in Theorem \ref{thm:Fibre} below, but first we must define the concordance categories.

\begin{defn}\label{defn:concroidancecateogiry}
Let $(W,\ell) \in \cD_\theta(*)$ and let $J \subset \bR$ be an open subset over which $W$ is cylindrical. Let $\mathcal{P}(W,J)$ be the non-unital category with objects given by the set of pairs $(t, g)$ with $t \in J $ and a psc metric $g \in \Riem^+(W\vert_t)$. A morphism from $(t_0, g_0)$ to $(t_1, g_1)$ can exist only if $t_0 < t_1$ in which case it is given by a psc metric $h \in \Riem^+(W\vert_{[t_0, t_1]})_{g_0, g_1}$. Composition of morphisms is given by gluing. 
We let $\cP(W,J)^{\rst} \subset \cP (W,J)$ be the wide subcategory whose morphisms $(t_0,g_0)\to (t_1,g_1)$ are the right-stable psc metrics $h \in \Riem^+ (W|_{[t_0,t_1]})_{g_0,g_1}$, and define $\cP(W,J)^{\lst} \subset \cP (W,J)$ similarly. 
\end{defn}

These categories do not depend on $\ell$. We have not yet defined a topology on $\cP(W,J)$. To this end, we consider $J$ as a non-unital category via $<$ and equip it with its natural topology. There is a forgetful functor $G : \mathcal{P}(W,J) \to J$ and there is also a functor $K : J \to \Cob_{\theta}$ given by sending $t$ to $(W\vert_t, \ell\vert_{W\vert_t})$ and $t<t'$ to $(W\vert_{[t,t']}, \ell\vert_{W\vert_{[t,t']}})$. Similarly, there is a functor $H: \mathcal{P}(W,J) \to \PCob_{\theta}$ which sends an object $(t, g)$ to $(W\vert_t, \ell\vert_{W\vert_t}, g)$ and a morphism $h \in \Riem^+ (W|_{[t_0,t_1]})_{g_0,g_1}$ to $(W\vert_{[t,t']}, \ell\vert_{W\vert_{[t,t']}}, g\vert_{W\vert_{[t,t']}})$. These functors fit into a commutative diagram
\begin{equation}\label{diagram:defining-topology-on-conccate}
\begin{gathered}
\xymatrix{
\cP(W,J) \ar[r]^-{H} \ar[d]^{G} &\PCob_{\theta} \ar[d]^{F}\\
J \ar[r]^-{K} & \Cob_\theta
}
\end{gathered}
\end{equation}
of categories. The induced squares on sets of objects and morphisms are cartesian, so that \eqref{diagram:defining-topology-on-conccate} is a pullback square of categories. We define the topology on $\cP(W,J)$ so that \eqref{diagram:defining-topology-on-conccate} is a pullback square of topological categories. 

If for each $t_{0}<t_1 \in J$, $(W|_{[t_0,t_1]},\ell|_{W|_{[t_0,t_1]}})$ is a morphism in $\Cob_\theta^{\kappa,l}$, the functor $H$ factors through the subcategory $\PCob_\theta^{\kappa,l}$; similarly for $K$. Observe also that then $H$ restricts to a functor $H^{\rst}: \cP (W,J)^{\rst} \to \PCob_{\theta}^{\kappa,l,\rst}$ on the subcategory of right-stable metrics, and we similarly denote by $G^{\rst}$ the restriction of $G$. 

We call the categories $\cP(W,J)$ and $\cP^{\rst}(W,J)$ \emph{concordance categories}, for the following reason. First note that if $J$ is an interval, then $\cP(W,J)= \cP( \bR \times M,J)$. The set $\pi_0 (B \cP( \bR \times M,\bR))$ is the set of all psc metrics $g \in \Riem^+ (M)$, with $g_0$ and $g_1$ identified if and only if there exists a concordance $h \in \Riem^+ ( [0,1]\times M)_{g_0,g_1}$, so is the set of concordance classes of psc metrics on $M$. For sake of readability, we abbreviate
\begin{equation}\label{eqn:defn:concordancecategory}
\Conc (M):= \cP( \bR\times M,\bR) \quad \text{ and }\quad  \Conc (M)^{\rst}:= \cP( \bR  \times M,\bR)^{\rst}.  
\end{equation}
Now we are ready to state the main results of this section.
\begin{thm}\label{thm:Fibre}
Let $W \in \cD_\theta (*)$ and let $J \subset \bR$ be a disjoint union of finitely many open intervals over which $W$ is cylindrical. Suppose that for each $x < y \in J$ the cobordism $W\vert_{[x,y]}$ defines a morphism in $\Cob_{\theta}^{2,1}$. Assume furthermore that $d \geq 6$. Then the squares
\begin{equation*}
\begin{gathered}
\xymatrix{
B\cP(W,J) \ar[r]^-{BH} \ar[d]^{BG} & B\PCob_{\theta}^{2,1} \ar[d]^{BF^{2,1}}\\
 BJ \ar[r]^-{BK} & B\Cob_{\theta}^{2,1}
}
\end{gathered}
\end{equation*}
and
\begin{equation}\label{diag:mainfibretheorem2}
\begin{gathered}
\xymatrix{
B\cP(W,J)^{\rst} \ar[r]^-{BH^{\rst}} \ar[d]^{BG^{\rst}} & B\PCob_{\theta}^{2,1,\rst} \ar[d]^{BF^{2,1,\rst}}\\
 BJ \ar[r]^-{BK} & B\Cob_{\theta}^{2,1}
}
\end{gathered}
\end{equation}
are homotopy cartesian. 
\end{thm}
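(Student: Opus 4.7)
The plan is to invoke the topological Quillen Theorem B of \cite[Theorem 4.9]{SxTech}, which produces a homotopy cartesian square from a strict pullback of non-unital fibrant topological categories with soft units, provided a fibre-transport hypothesis is satisfied. The diagram \eqref{diagram:defining-topology-on-conccate} is manifestly such a strict pullback, and the required fibrancy and weak unitality of $\PCob_\theta^{2,1}$ and $\PCob_\theta^{2,1,\rst}$ are already established by Corollary \ref{psccob-fibratnt} and Lemma \ref{lem:soft-units} (the latter extending to the clopen subcategories); these properties are readily inherited by $J$ and by the strict pullbacks $\cP(W,J)$ and $\cP(W,J)^{\rst}$. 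The proof then reduces to verifying the fibre-transport hypothesis for the two functors $F^{2,1}$ and $F^{2,1,\rst}$.

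The fibre-transport hypothesis requires that for every morphism $(V,\ell_V): M_0 \leadsto M_1$ in $\Cob_\theta^{2,1}$, the induced map on homotopy fibres of $BF^{2,1}$ and $BF^{2,1,\rst}$ is a weak equivalence. This is precisely where the hypotheses $d \geq 6$ and the 2-connectivity built into $\Cob_\theta^{2,1}$ are used: Theorem \ref{prop:right-stable-on-2.1.cob-stable}(ii) guarantees that for any boundary metric $g_0 \in \Riem^+(M_0)$ there is some $g_1 \in \Riem^+(M_1)$ and a stable lift $h \in \Riem^+(V)_{g_0,g_1}$. For $F^{2,1,\rst}$, fibre transport computed using $(V,h)$ is given level-wise by the gluing map $\mu(h,\_) : \Riem^+(V')_{g',g_0} \to \Riem^+(V'\cup V)_{g',g_1}$, which is a weak equivalence by the very definition of right-stability. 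For $F^{2,1}$, the same $(V,h)$ is a valid lift in $\PCob_\theta^{2,1}$, and the fact that $F$ is a Serre fibration on object and morphism spaces (Corollary \ref{psccob-fibratnt}) ensures that the fibre-transport map depends only on the path-component of the chosen lift; hence the same gluing map computes fibre transport, and the conclusion is identical.

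Finally, to match up the output of Theorem B with the statement of the theorem, I would decompose $J=\bigsqcup_i J_i$ into its interval components and observe that each $J_i$, viewed as a non-unital category under $<$, is filtered (any two elements admit a common successor), so $BJ_i$ is contractible and $BJ \simeq \pi_0(J)$. The strict pullback square therefore splits over $\pi_0(J)$ and reduces to the case of a single interval, where both squares being homotopy cartesian is exactly the assertion that $B\cP(W,J)^{(\rst)}$ is the homotopy fibre of $BF^{2,1,(\rst)}$ over the path-component of $B\Cob_\theta^{2,1}$ containing $[W\vert_t]$ for any $t \in J$ --- which is the output of Theorem B. The main obstacle along the way is the fibre-transport hypothesis for $F^{2,1}$, where right-stability is not built into the morphisms; this is resolved by the observation that one is always free to represent a morphism of the base by a right-stable lift, so that Theorem \ref{prop:right-stable-on-2.1.cob-stable} suffices in both cases and the $\rst$-version and the non-$\rst$-version of the fibre theorem come out by essentially the same argument.
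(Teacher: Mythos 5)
Your overall framework is the right one --- Theorem 4.9 of \cite{SxTech} applied to the strict pullback square \eqref{diagram:defining-topology-on-conccate}, with the formal fibrancy and soft-unit hypotheses supplied by Proposition \ref{prop:Fibrant}, Lemma \ref{lem:soft-units} and Lemma \ref{lem:concordance-category-formal-properties}, and with Theorem \ref{prop:right-stable-on-2.1.cob-stable} as the geometric input --- but the way you discharge the substantive hypotheses has a genuine gap. The fibre-transition hypothesis of Theorem B concerns the comma categories: for a morphism $(t,V): M_0 \leadsto M_1$ in $\Cob_\theta^{2,1}$ one must show that $V \circ - : F^{2,1}/M_0 \to F^{2,1}/M_1$ is a weak equivalence on classifying spaces. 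An object of $F^{2,1}/M_i$ is an object $(N,g)$ of $\PCob_\theta^{2,1}$ together with a morphism $N \leadsto M_i$ in $\Cob_\theta^{2,1}$ carrying \emph{no} psc metric, and $V\circ -$ simply postcomposes that leg; it is not the gluing map $\mu(\_,h)$ on spaces of psc metrics, so right-stability of a chosen lift $h$ does not directly make it an equivalence. The actual argument first identifies $B(F^{2,1}/M)$ with $B\cP(\bR\times M,(a,b))$ (Lemma \ref{lem:OverCatConc}), which is itself a nontrivial application of the dual Theorem A: one must show the fibre categories $x/L$ have contractible classifying spaces --- this is where stable metrics and Lemma \ref{lem:VaryingJ} enter --- and one must confront the fact that the relevant augmentation $\zeta^L_{0,0}$ is only a Dold--Serre fibration, not a Serre fibration. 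Only after this identification does the diagram \eqref{diag:proof-fibretheorem} show that $V\circ -$ is an equivalence.

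Second, you omit the other substantive hypothesis of Theorem 4.9 of \cite{SxTech}: that for each $t\in J$ the comparison functor $G/t \to F^{2,1}/K(t)$ induced by the square is a weak equivalence on classifying spaces. This is precisely what ties the strict pullback $\cP(W,J)$ to the homotopy fibre; without it, Theorem B only produces a fibre sequence with fibre $B(F^{2,1}/M)$ and does not identify that fibre with $B\cP(W,J)$. Its verification again goes through Lemma \ref{lem:OverCatConc} together with Lemma \ref{lem:VaryingJ}. (Your reduction of $BJ$ to $\pi_0(J)$ is harmless but also unnecessary once these hypotheses are in place.) In short: the skeleton is correct, but the two hypotheses that carry all the content --- (iii) and (iv) in the paper's enumeration --- are respectively misverified and unaddressed, and both require the concordance-category comparison of Lemma \ref{lem:OverCatConc} rather than a direct appeal to stability of a single lift.
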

If $J \neq \emptyset$, the space $BJ$ is contractible (by a straightforward application of Theorem 6.2 of \cite{GRW} to the augmentation map $N_\bullet J \to *$). Hence Theorem \ref{thm:Fibre} describes the homotopy fibre of $BF^{2,1}$ and $BF^{2,1,\rst}$. Applying it in the situation $W = \bR \times M$ and $J=\bR$ proves Theorem \ref{Main:fibretheorem}. 

The reason we are interested in the category $\PCob_\theta^{2,1,\rst}$ is the following theorem, which relates $B \cP^{\rst}(W,J)$ to actual spaces of psc metrics. For each topological category $\cC$ and objects $a$ and $b$ of $\cC$, there is a tautological map 
\begin{align*}
\tau=\tau_{a,b}:\cC(a,b) &\lra \Omega_{a,b} B \cC \\
f &\longmapsto \left(t \mapsto (f,te_1 + (1-t)e_0)\in N_1 \cC \times \Delta^1 \to B \cC\right)
\end{align*}
to the space of all paths from $a$ to $b$ in $B \cC$. For $(t_0,g_0), (t_1,g_1) \in \Ob (\cP(W,J)^{\rst})$, we have $\cP(W,J)^{\rst} ((t_0,g_0),(t_1,g_1)) = \Riem^+(W\vert_{[t_0,t_1]})_{g_0, g_1}^{\rst}$.

\begin{thm}\label{thm:Concordance}
Let $W \in \cD_\theta(*)$ be cylindrical over the open subset $J \subset \bR$ and let $t_0< t_1 \in J$. Assume that the inclusion $\{t_0,t_1\} \to J$ is $0$-connected. Then for each choice of $g_i \in \Riem^+ (W|_{t_i})$, the tautological map 
\[
\tau: \Riem^+(W\vert_{[t_0,t_1]})_{g_0, g_1}^{\rst} \lra \Omega_{(t_0, g_0),(t_1, g_1)} B\cP(W,J)^{\rst}
\]
is a weak equivalence.
\end{thm}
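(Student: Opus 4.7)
The plan is to execute a standard Segal-style delooping argument: for a fibrant, weakly unital topological category all of whose morphisms act as weak equivalences by composition, the tautological map from any mapping space to the based loop space of the classifying space is a weak equivalence. The role of the right-stability condition is precisely to ensure this ``group-like'' property.

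First I would verify that $\cC := \cP(W,J)^{\rst}$ is a fibrant, weakly unital topological category in the sense of Definition \ref{def:SxTechRecollections}. Fibrancy follows from Corollary \ref{psccob-fibratnt} applied to $\PCob_\theta^{\kappa,l,\rst}$, together with the pullback square \eqref{diagram:defining-topology-on-conccate} and Lemma \ref{sublemma:lem:fibrant}, noting that $J$, viewed as a topological poset, is trivially fibrant. Weak units are provided by short cylinder metrics $dx_1^2 + g$: such a cylinder is right-stable by Lemma \ref{lem:2-out-of-three}, and composition with it is a weak equivalence by Corollary 2.2 of \cite{BERW}. The $0$-connectivity hypothesis on $\{t_0, t_1\} \hookrightarrow J$ then guarantees, via the cylinder weak units, that $(t_0, g_0)$ and $(t_1, g_1)$ lie in a common path component of $B\cC$, so that the target of $\tau$ is well-defined.

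Second, right-stability yields the group-like condition: for every morphism $h: (t,g) \to (t',g')$ in $\cC$ and every object $(s,k)$, the composition map
\[
h \circ -: \cC((s,k),(t,g)) \longrightarrow \cC((s,k),(t',g'))
\]
is a weak equivalence. Concretely, I would apply the topological version of Quillen's Theorem B (Theorem 4.9 of \cite{SxTech}) to the forgetful functor $\cC \downarrow (t_1, g_1) \to \cC$ from the over-category, whose objects are the morphisms $h: (t, g) \to (t_1, g_1)$ in $\cC$. Right-stability supplies the hypothesis of Theorem B, while fibrancy identifies the strict fibre of this forgetful functor over $(t_0, g_0)$ with the mapping space $\cC((t_0, g_0), (t_1, g_1))$. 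The conclusion is a homotopy fibre sequence
\[
\cC((t_0, g_0), (t_1, g_1)) \longrightarrow B(\cC \downarrow (t_1, g_1)) \longrightarrow B\cC.
\]

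The main obstacle will be showing that $B(\cC \downarrow (t_1, g_1))$ is contractible. In a strictly unital category this is immediate since $\id_{(t_1, g_1)}$ is a terminal object, but in the non-unital setting one must work with weak units in its place: composition with a short cylinder unit at $(t_1, g_1)$ gives (up to homotopy) a natural transformation from $\id_{\cC \downarrow (t_1, g_1)}$ to a functor factoring through a contractible subspace of weakly terminal objects. This can be formalised as in the proof of Lemma \ref{lem:soft-units}, exploiting the soft-units property. Granted the contractibility, the fibre sequence above collapses to a weak equivalence $\cC((t_0, g_0), (t_1, g_1)) \simeq \Omega_{(t_0, g_0), (t_1, g_1)} B\cC$, and a direct inspection of the $1$-simplex representing a morphism identifies the equivalence produced this way with the tautological map $\tau$.
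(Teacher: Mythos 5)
Your overall strategy coincides with the paper's: realise $\tau$ as the inclusion of the strict fibre of the source functor from an over-category, prove the over-category has contractible classifying space via soft left units, and use right-stability to verify a quasifibration/Theorem B-type hypothesis. The contractibility step and the identification of the strict fibre are fine and match the paper (which simply invokes the soft-left-units property of $\cP(W,J)^{\rst}$ established in Lemma \ref{lem:concordance-category-formal-properties}).

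However, there is a genuine gap in the middle step: the claimed homotopy fibre sequence
$\cC((t_0,g_0),(t_1,g_1)) \to B(\cC \downarrow (t_1,g_1)) \to B\cC$
with base the \emph{whole} of $B\cC = B\cP(W,J)^{\rst}$ is false, and the hypothesis of Theorem B (equivalently, of the quasifibration criterion, Theorem 2.12 of \cite{SxTech}) fails for it. The source functor $\cC\downarrow (t_1,g_1) \to \cC$ has fibre $\cC((t,g),(t_1,g_1))$ over an object $(t,g)$, and this is \emph{empty} whenever $t \geq t_1$; since $J$ is open it contains such $t$, and there are morphisms $(t_0,g_0) \to (t',g')$ with $t' > t_1$. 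For such a morphism the induced fibre transition is a map from the empty set to the (possibly non-empty) space $\cC((t_0,g_0),(t_1,g_1))$, which is not a weak equivalence, so the semi-simplicial map on nerves is not homotopy cartesian. Right-stability cannot repair this: it only controls gluing maps between non-trivially comparable metric spaces, not the passage past $t_1$. The paper's fix is to take the source functor $H$ with target the subcategory $\cP(W,J_{<t_1})^{\rst}$, over whose objects all fibres have the form $\Riem^+(W|_{[s,t_1]})^{\rst}_{k,g_1}$ with $s<t_1$ and all face maps act by gluing right-stable metrics; one then composes with the inclusion $B\cP(W,J_{<t_1})^{\rst} \to B\cP(W,J)^{\rst}$, which is a weak equivalence because $J_{<t_1}\hookrightarrow J$ is an isotopy equivalence. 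This last point is exactly where the hypothesis that $\{t_0,t_1\}\to J$ is $0$-connected is used — not, as you suggest, merely to place $(t_0,g_0)$ and $(t_1,g_1)$ in a common path component of $B\cC$. Without that hypothesis the restricted base would lose components of $J$ and the comparison would break down. Once you insert this restriction of the base your argument goes through and agrees with the paper's.
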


Theorem \ref{thm:Concordance} is true without any dimension or connectivity hypotheses. It has the consequence that $\Riem^+ (W|_{[t_0,t_1]})^{\rst}_{g_0,g_1}$ has the homotopy type of a loop space, provided that it is non-empty. In particular, Theorem \ref{thm:Concordance} implies that
\begin{equation*}
\Riem^+ (M \times [0,1])^{\rst}_{g,g} \simeq \Omega_{(0,g),(1,g)} B \Conc(M)^{\rst}.
\end{equation*}

\begin{rem}
An $H$-space structure on $\Riem^+ (M \times [0,1])_{g,g}$ is easily explained: one concatenates the psc metrics and scales appropriately; this construction clearly extends to an $E_1$-algebra structure. One can show that $\tau: \Riem^+ (M \times [0,1])_{g,g} \to \Omega_{(0,g),(1,g)} B \Conc(M)$ is a map of $E_1$-algebras. Essentially by definition (together with Theorem \ref{thm:StabMetrics} (ii)), the sub-$E_1$-algebra $\Riem^+ (M \times [0,1])^{\rst}_{g,g}$ is group complete (i.e.\ $\pi_0 (\Riem^+ (M \times [0,1])^\rst_{g,g})$ is a group). 
One can interpret the map $\Riem^+ (M \times [0,1])_{g,g}) \to \Omega_{(0,g),(0,g)} B \Conc (M)$ as a group completion. However, it is unclear to us whether there is the required calculus of fractions to apply the Group Completion Theorem in this situation.
\end{rem}

The next result shows that the forgetful map $BF^{2,1,\rst}$ aligns nicely with the Borel construction given by the action of diffeomorphism groups on spaces of psc metrics. It is the first step towards the proof of Theorem \ref{mainthmintro:diffaction}.

\begin{thm}\label{cor:berw-new-proof}
Let $d \geq 6$ and let $(W : M_0 \leadsto M_1)$ be a morphism in $\Cob_{\theta}^{2,1}$. Then the square
\[
\xymatrix{
\mathcal{M}^\theta_{\psc}(W)^{\rst}_{g_0, g_1} \ar[d]\ar[r]^-{\tau} & \Omega_{(M_0,g_0),(M_1,g_1)} B \PCob_{\theta}^{2,1,\rst} \ar[d]^{\Omega B F^{2,1,\rst}}\\
\mathcal{M}^\theta(W) \ar[r]^-{\tau} & \Omega_{M_0,M_1} B \Cob_{\theta}^{2,1}
}
\]
is homotopy cartesian, where the horizontal maps are the tautological maps.
\end{thm}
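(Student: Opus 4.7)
The plan is to deduce Theorem~\ref{cor:berw-new-proof} from Theorems~\ref{thm:Fibre} and~\ref{thm:Concordance} by choosing a bi-infinite extension of $W$, passing to based path spaces, and then promoting the resulting pointwise identification to a fibrewise statement over $\mathcal{M}^\theta(W)$.

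First, given $W \subset [0,1] \times \bR^\infty$ with its $\theta$-structure and boundary metrics $g_0, g_1$, attach cylinders $(-\infty,0] \times M_0$ and $[1,\infty) \times M_1$ to obtain a bi-infinite submanifold $\hat W \in \cD_\theta(*)$ which is cylindrical over $J := (-\infty,0) \sqcup (1,\infty)$. For every $x<y$ in $J$, the slice $\hat W\vert_{[x,y]}$ is either a cylinder (if $x,y$ lie in the same component of $J$) or a cylindrical extension of $W$; in both cases it is a morphism of $\Cob_\theta^{2,1}$, so Theorem~\ref{thm:Fibre} applies and yields the homotopy cartesian square~\eqref{diag:mainfibretheorem2} for this $\hat W$ and $J$. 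Choose basepoints $t_0 \in (-\infty,0)$ and $t_1 \in (1,\infty)$, so that $\hat W\vert_{t_i}$ is canonically $(M_i,\ell_{M_i})$.

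Next, pass to based path spaces between $(t_0,g_0)$ and $(t_1,g_1)$ in the top-left corner and the induced basepoints elsewhere. Taking based path spaces preserves homotopy cartesian squares (being an iterated homotopy limit construction), and since $BJ$ is contractible we have $P^{BJ}_{t_0,t_1} \simeq *$. The resulting square
\[
\xymatrix{
\Omega_{(t_0,g_0),(t_1,g_1)} B\cP(\hat W, J)^{\rst} \ar[r] \ar[d] & \Omega_{(M_0,g_0),(M_1,g_1)} B\PCob_\theta^{2,1,\rst} \ar[d]^{\Omega BF^{2,1,\rst}}\\
\ast \ar[r] & \Omega_{M_0,M_1} B\Cob_\theta^{2,1}
}
\]
is homotopy cartesian, and its bottom-right basepoint is the tautological path $\tau(W)$ attached to the morphism $\hat W\vert_{[t_0,t_1]} \cong W$. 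By Theorem~\ref{thm:Concordance} (applicable since $\{t_0,t_1\} \hookrightarrow J$ is $0$-connected), the top-left space is weakly equivalent to $\Riem^+(\hat W\vert_{[t_0,t_1]})_{g_0,g_1}^{\rst}$, which is canonically $\Riem^+(W)_{g_0,g_1}^{\rst}$ upon straightening the cylindrical ends. Under this identification, the comparison map $\Riem^+(W)_{g_0,g_1}^{\rst} \to \hofib_{\tau(W)}(\Omega BF^{2,1,\rst})$ is exactly the one induced by the fibre inclusion $\Riem^+(W)_{g_0,g_1}^{\rst} \hookrightarrow \mathcal{M}^\theta_{\psc}(W)_{g_0,g_1}^{\rst}$ followed by $\tau$.

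To conclude, we argue fibrewise over $\mathcal{M}^\theta(W)$. The fibre of the left column $\mathcal{M}^\theta_{\psc}(W)_{g_0,g_1}^{\rst} \to \mathcal{M}^\theta(W)$ over a point $\xi$ is $\Riem^+(W_\xi)_{g_0,g_1}^{\rst}$, and the argument above applied to the bi-infinite extension of $\xi$ identifies this space with the homotopy fibre of $\Omega BF^{2,1,\rst}$ over $\tau(\xi)$ via the comparison induced by $\tau$. Since the square commutes by functoriality of $\tau$ and the induced map on fibres over each $\xi$ is a weak equivalence, the square is homotopy cartesian. The main obstacle is making this identification \emph{coherent in} $\xi$, not just pointwise; the cleanest way is to re-run the argument using the sheaf formalism of Section~\ref{sec:LangSh}, letting $\hat W$ vary in families over a test manifold via $\cD_\theta$, using that the bi-infinite extension $\mathcal{M}^\theta(W) \to \cD_\theta$ is manifestly sheaf-theoretic, and invoking the sheaf incarnation of Theorem~\ref{thm:Fibre} which underlies its proof.
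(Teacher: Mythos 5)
Your proposal follows essentially the same route as the paper's proof: elongate the cobordism, apply Theorem \ref{thm:Fibre} with $J$ a union of two intervals over which the elongation is cylindrical, pass to path/loop spaces using $BJ \simeq *$, identify the resulting fibre term via Theorem \ref{thm:Concordance}, and check the comparison map pointwise over $\mathcal{M}^\theta(W)$. Your closing worry about making the identification coherent in $\xi$ is unnecessary (and the suggested sheaf-theoretic re-run is not needed): homotopy cartesianness is detected by the intrinsic comparison maps on vertical homotopy fibres over individual points of the base, so it suffices to observe, as the paper does, that for each fixed $(V,\ell) \in \mathcal{M}^\theta(W)$ the outer rectangle produced by Theorems \ref{thm:Fibre} and \ref{thm:Concordance} literally agrees with the outer rectangle of the extended square whose left-hand square is the fibre inclusion over $(V,\ell)$.
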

To understand the horizontal maps, note that $\mathcal{M}^\theta(W)$ is a subspace of the morphism space $\Cob_\theta^{2,1} (M_0,M_1)$ (and similarly for the version with stable psc metrics). 

\subsection{The concordance category}

We shall prove Theorems \ref{thm:Fibre} and \ref{thm:Concordance} by the topological version of Quillen's Theorems A and B which we established in Section 4 of \cite{SxTech}, and here we mainly verify some of the formal hypotheses of those theorems. 
The proof of (the topological version of) Quillen's Theorems A and B for a continuous functor $F: \cA \to \cB$ between non-unital topological categories uses a certain bi-semi-simplicial space $(F/\cB)_{\bullet,\bullet}$ whose definition we will have to recall, since certain properties of it belong to the hypotheses of those theorems. 

The following bi-semi-simplicial spaces are an instance of Definition 4.1 of \cite{SxTech}. Let $(F/\cB)_{p,q} \subset N_p \cA \times N_{q+1}\cB$ be the subspace of all elements of the form $(a_0 \to \cdots \to a_p, F(a_p) \to b_0 \to \cdots \to b_q)$. There is an augmentation map
\begin{align*}
 \eta_{p,q}^F: (F/\cB)_{p,q} &\lra N_q \cB\\
 (a_0 \to \cdots \to a_p, F(a_p) \to b_0 \to \cdots \to b_q) &\longmapsto (b_0 \to \cdots \to b_q).
\end{align*}
Dually, we let $(\cB/F)_{p,q} \subset N_p \cA \times N_{q+1}\cB$ be the space of all elements of the form $((a_0 \to \cdots \to a_p,  b_0 \to \cdots \to b_q \to F(a_0))$, with augmentation
\begin{align*}
\zeta_{p,q}^F: (\cB/F)_{p,q} &\lra N_q \cB\\
 (a_0 \to \cdots \to a_p,  b_0 \to \cdots \to b_q \to F(a_0)) &\longmapsto (b_0 \to \cdots \to b_q).
\end{align*}

The terms in following lemma were recalled in Definition \ref{def:SxTechRecollections}.

\begin{lem}\label{lem:concordance-category-formal-properties} In the setting of Definition \ref{defn:concroidancecateogiry}:
\begin{enumerate}[(i)]
\item The categories $J$ and $\cP(W,J)$ are right and left fibrant.
\item The categories $J$ and $\cP(W,J)$ have soft right and left units.
\item The forgetful functor $G:\cP(W,J) \to J$ induces fibrations $G_0: \Ob (\cP(W,J)) \to \Ob (J)$ and $G_1: \Mor (\cP(W,J)) \to \Mor (J)$.
\item The map $\eta_{0,0}^G$ is a fibration. 
\item The same statements are true for the subcategories $\cP(W,J)^{\rst}$ and $\cP(W,J)^{\lst}$.
\end{enumerate}
\end{lem}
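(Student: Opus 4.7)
The plan is to leverage the pullback square \eqref{diagram:defining-topology-on-conccate} together with the fibrancy of $\PCob_\theta$ from Proposition~\ref{prop:Fibrant}. For part (i), I would first observe that the source and target maps of $J$ are the projections of the open subset $\Mor(J)=\{(s,t)\in J^2 : s<t\}$ of $J^2$ to the coordinate factors, and that these are Serre fibrations by an explicit affine-rescaling path lifting within each component of $J$. The source and target maps of $\cP(W,J)$ are then pullbacks of the corresponding fibrations for $\PCob_\theta$ (available by Proposition~\ref{prop:Fibrant}(ii)) along the analogous maps of $J$, hence are themselves fibrations. For (iii), the maps $G_0$ and $G_1$ are literally pullbacks of $F_0$ and $F_1$ along $K_0$ and $K_1$, and so are fibrations by Proposition~\ref{prop:Fibrant}(i). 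For (iv), $(G/J)_{0,0}$ is canonically the fibre product $\Ob(\cP(W,J))\times_{\Ob(J)}\Mor(J)$ formed using the source map of $J$, and $\eta_{0,0}^G$ factors as the projection to $\Mor(J)$ (which is a pullback of $G_0$, hence a fibration by (iii)) followed by the target map of $J$ (a fibration by (i)).

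For (ii), I would invoke \cite[Lemma~3.14]{SxTech} to reduce to constructing weak units. In $J$, given $s_0$ I would pick $s_1\in J$ slightly larger than $s_0$ in the same component; post-composition with $(s_0,s_1)$ then realises as the inclusion $\{x\in J : x<s_0\}\hookrightarrow \{x\in J : x<s_1\}$, which is a homotopy equivalence because it only adjoins a small open interval. The dual construction gives weak right units. In $\cP(W,J)$, the cylindricity of $W$ over $J$ identifies $W|_{[t_0,t_1]}$ with $[t_0,t_1]\times W|_{t_0}$, so for each object $(t_0,g)$ I would take the weak unit $(t_0,g)\to(t_1,g)$ to be the cylinder metric $dx^2+g$; that post-composition with this cylinder is a weak equivalence follows from \cite[Corollary~2.2(ii)]{BERW}, exactly as in the proof of Lemma~\ref{lem:soft-units}.

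For (v), Lemma~\ref{lem:stability-homotopy-invariant} tells us that the right- and left-stable morphisms form unions of path components of $\Mor(\cP(W,J))$; restricting a fibration to unions of components in both source and target leaves it a fibration, so the analogues of (i), (iii) and (iv) in the stable setting are immediate. For soft units, cylinder metrics are right-stable (as stated in the introduction) and left-stable by the symmetric argument applied to the reversed cobordism. Post-composition with such a cylinder metric $u$ both preserves right-stability (Lemma~\ref{lem:2-out-of-three}(ii)) and reflects it, the reflection following from the composition identity $\mu(h\cup u,\_)=\mu(h,\_)\circ\mu(u,\_)$ combined with the fact that $\mu(u,\_)$ is a weak equivalence. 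Consequently the weak-unit construction from (ii) restricts to give weak units in $\cP(W,J)^{\rst}$, and symmetrically in $\cP(W,J)^{\lst}$.

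The main obstacle I anticipate lies in that last step of (v): verifying that the weak equivalence of ambient morphism spaces induced by post-composition with a cylinder metric descends to a weak equivalence of the right-stable morphism subspaces. Stability-preservation is immediate from Lemma~\ref{lem:2-out-of-three}(ii), but stability-reflection needs both the composition identity above and the right-stability of the cylinder; it is this reflection property, together with the observation (Lemma~\ref{lem:stability-homotopy-invariant}) that right-stability is a union-of-components condition, that ensures the weak equivalence restricts as required.
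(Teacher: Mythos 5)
Parts (ii)--(v) of your argument are essentially correct: (iii) and (iv) match the paper's route (the paper cites Lemma 4.6(iii) of \cite{SxTech} for (iv), which is exactly your factorisation), and in (v) you are in fact more careful than the paper, which simply observes that $\cP(W,J)^{\rst}$ is clopen and contains the weak units — your preservation/reflection argument via the identity $\mu(h\cup u,\_)=\mu(h,\_)\circ\mu(u,\_)$ is the content hiding behind that remark.

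The genuine gap is in part (i), for the category $\cP(W,J)$. The target map of $\cP(W,J)$ is \emph{not} a base change of the target map of $\PCob_\theta$. By definition $\Mor(\cP(W,J))=\Mor(J)\times_{\Mor(\Cob_\theta)}\Mor(\PCob_\theta)$ and $\Ob(\cP(W,J))=\Ob(J)\times_{\Ob(\Cob_\theta)}\Ob(\PCob_\theta)$, so the target map is the map induced on fibre products by the three target maps; it is not pulled back from $t\colon\Mor(\PCob_\theta)\to\Ob(\PCob_\theta)$ along any map. One sees this on fibres: the fibre of $t_{\PCob_\theta}$ over $(W\vert_{t_1},\ell,g_1)$ is the space of \emph{all} psc cobordisms into $(W\vert_{t_1},g_1)$, whereas the fibre of the target map of $\cP(W,J)$ over $(t_1,g_1)$ is the much smaller space of pairs $(t_0<t_1,\,h\in\Riem^+(W\vert_{[t_0,t_1]})_{-,g_1})$; if one were a pullback of the other these would be homeomorphic. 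What a formal argument would actually require is a \emph{relative} fibrancy statement, namely that $\Mor(\PCob_\theta)\to\Ob(\PCob_\theta)\times_{\Ob(\Cob_\theta)}\Mor(\Cob_\theta)$ is a fibration, and that is not what Proposition \ref{prop:Fibrant} gives you. The tell-tale sign is that your part (i) never invokes Theorem \ref{thm:improved-chernysh-theorem}: fibrancy of $\cP(W,J)$ genuinely requires extending families of psc metrics inward from the boundary, and this is exactly where Chernysh's theorem enters. The paper's proof solves the lifting problem directly: it extends the lower boundary function $f_0'$ to a function $f_0<f_1$ on $D^n\times[0,1]$, observes that the resulting bundle of cobordisms $\{f_0\leq x_1\leq f_1\}$ over the contractible base is trivial, and then uses Theorem \ref{thm:improved-chernysh-theorem} to extend the psc metrics prescribed on $D^n\times\{0\}$ and on the upper boundary bundle to the whole bundle. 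Your argument for $J$ itself, and the rest of the proposal, can stand once part (i) for $\cP(W,J)$ is repaired along these lines.
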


\begin{proof}
Part (i): we prove that both categories are right fibrant, i.e. that the target maps $d_0$ are fibrations, left fibrancy is proven by the same argument. It is clear that the target map $\Mor (J) \to \Ob (J)$ is a fibration: it is even a locally trivial fibre bundle, because $J \subset \bR$ is open. To treat the category $\cP(W,J)$, consider a lifting problem 
\[
 \xymatrix{
 D^n \times \{0\} \ar[d] \ar[r]^-{a} & \Mor(\cP (W,J)) \ar[d]^{d_0}\\
 D^n \times [0,1] \ar[r]^-{b} \ar@{..>}[ur]^{c} & \Ob (\cP(W,J)).
 }
\]
The map $b$ determines a continuous function $f_1: D^n \times [0,1] \to J$, and the lift $a$ of $b|_{D^n \times \{0\}}$ determines a function $f_0': D^n \to J$ with $f_0' < f_1|_{D^n \times \{0\}}$. Extend $f_0' $ to a continuous function $f_0: D^n \times [0,1] \to J$ with $f_0 < f_1$. Now the projection
\[
\{  (v,t,p) \in D^n  \times [0,1] \times W \,\,\vert\,\,  f_0 (v,t) \leq x_1 (p) \leq f_1 (v,t)\}\lra D^n \times [0,1]
\]
is a smooth fibre bundle, and hence trivial since $D^n \times [0,1]$ is contractible. The psc data contained in the maps $a$ and $b$ determine (compatible) psc metrics on the restriction of the bundle to $D^n \times \{0\}$ and on the upper boundary bundle 
$$\{  (v,t,p) \in D^n  \times [0,1] \times W \,\,\vert\,\,   x_1 (p) = f_1 (v,t)\}.$$
An application of Theorem \ref{thm:improved-chernysh-theorem} shows that we can extend these psc metrics to the whole bundle, which provides the solution to the lifting problem.

Part (ii): by part (i) and Lemma 3.14 of \cite{SxTech}, it is enough to prove that the categories $J$ and $\cP(W,J)$ have weak units. We will show that they have weak left units; the other case is analogous. If $t \in \Ob (J)$, choose $\eps>0$ such that $[t,t+\eps] \subset J$. Then the morphism $t<t+\eps$ is a weak left unit in $J$. If $(t,g) \in \Ob (\cP(W,J))$, the psc metric $dx_1^2+g$ on $W|_{[t,t+\eps] }$ is a morphism in $\cP(W,J)$ and a weak left unit in $\cP(W,J)$ by Corollary 2.2 (ii) of \cite{BERW}. 

Part (iii): clear from the definition of the topology on $\cP(W,J)$ and from Proposition \ref{prop:Fibrant}. 

Part (iv): this follows from parts (i), (iii) and Lemma 4.6 (iii) of \cite{SxTech}.

Part (v): observe that by Lemma \ref{lem:stability-homotopy-invariant}, $\cP(W,J)^{\rst} \subset \cP(W,J)$ is a clopen subcategory and contains the weak units. Therefore, the proofs just given carry over to the case of $\cP(W,J)^{\rst}$ without change, and the same remark applies to $\cP(W,J)^{\lst}$.
\end{proof}

\begin{lem}\label{lem:VaryingJ}
Let $\emptyset \neq J \subset I \subset \bR$ be finite disjoint unions of open intervals over which $W$ is cylindrical. Assume that for all $t_0 < t_1 \in I$, the cobordism $W|_{[t_0,t_1]}$ is a morphism in $\Cob_\theta^{2,1}$ and assume that $d \geq 6$. Then the inclusions $\iota : \cP(W, J) \to \cP(W, I)$ and $\iota^{\rst} : \cP(W, J)^{\rst} \to \cP(W, I)^{\rst}$ induce weak equivalences on classifying spaces.
\end{lem}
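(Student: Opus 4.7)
The plan is to apply the topological version of Quillen's Theorem A (equivalently, Theorem 4.9 of \cite{SxTech}). The formal hypotheses, namely fibrancy and soft units for the categories involved, are supplied by Lemma \ref{lem:concordance-category-formal-properties} for both $\cP(W, J)$ and $\cP(W, I)$ and for their right-stable variants, so we have to verify the slice-contractibility condition.

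First I would reduce to elementary inclusions. Writing $I$ and $J$ as finite disjoint unions of open intervals, one can interpolate via a finite chain $J = J_0 \subset J_1 \subset \cdots \subset J_n = I$ in which each inclusion $J_{k} \subset J_{k+1}$ is of one of two types: (a) $J_{k+1}$ is obtained from $J_k$ by extending one interval slightly on one side (keeping the number of components constant), or (b) $J_{k+1}$ is obtained from $J_k$ by adjoining a single new open interval. Type (a) is cheap: given $(t_0, g_0) \in \cP(W, J_{k+1})$ with $t_0 \in J_{k+1} \setminus J_k$, the short cylinder $dx_1^2 + g_0 \in \Riem^+(W\vert_{[t_0, t_0+\delta]})$ provides a weak unit witnessing that $(t_0, g_0)$ becomes equivalent to an object in $J_k$, so the slice argument is straightforward using the soft units of Lemma \ref{lem:concordance-category-formal-properties}.

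The substantive case is (b). Here the bi-simplicial slice $(\iota / (t_0, g_0))_{\bullet, \bullet}$ of \cite{SxTech} has $(p, q)$-simplices consisting of compatible chains emanating from $(t_0, g_0)$, and we must exhibit a contraction of its realisation. The key input is Theorem \ref{thm:StabMetrics}: for any $t_0 < t$ in $I$, since $W\vert_{[t_0, t]}$ is by hypothesis a morphism in $\Cob_\theta^{2,1}$, there exists a right stable $h \in \Riem^+(W\vert_{[t_0, t]})_{g_0, g}$ for some $g \in \Riem^+(W\vert_t)$. In particular we can choose $t \in J$, producing a morphism $(t_0, g_0) \to (t, g)$ that lands in the image of $\iota$. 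Composing arbitrary chains with such a ``canonical transport'' gives the contracting homotopy of the slice category.

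For the right-stable variant $\iota^{\rst}$, the same strategy runs through: Theorem \ref{thm:StabMetrics}(i) produces right stable metrics directly, Lemma \ref{lem:2-out-of-three}(ii) ensures that the composed morphisms remain right stable, and Theorem \ref{prop:right-stable-on-2.1.cob-stable}(i) upgrades right stability to stability under our 2-connectivity hypothesis, so that gluing on either side is controlled.

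The main obstacle I anticipate is handling objects $(t_0, g_0)$ with $t_0$ strictly larger than every point of $J$ (or strictly smaller than every point): there is no forward (resp.\ backward) morphism from $(t_0, g_0)$ reaching $\cP(W, J)$, so a purely one-sided contraction fails. This is precisely why the two-sided bi-simplicial slice of \cite{SxTech} is needed rather than an elementary overcategory or undercategory. The verification amounts to checking that every object of $\cP(W, I)$ can be linked to one parametrised by a point of $J$ through a finite zigzag of right-stable morphisms produced by Theorem \ref{thm:StabMetrics}, and that these zigzags can be chosen continuously in a way compatible with the fibrancy of Lemma \ref{lem:concordance-category-formal-properties}(iii), so that the resulting contraction of the slice is genuinely continuous.
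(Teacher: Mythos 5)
Your overall strategy --- factor the inclusion into elementary steps, supply the formal hypotheses from Lemma \ref{lem:concordance-category-formal-properties}, and contract the slice categories by composing with a stable ``transport'' metric --- is the right one and matches the paper's. But the transport you construct only points forward, and this leaves a genuine gap. Theorem \ref{thm:StabMetrics}(i) produces a right stable $h \in \Riem^+(W\vert_{[t_0,t]})_{g_0,g}$ with \emph{prescribed incoming} boundary $g_0$ and \emph{free outgoing} boundary; composing with it is $\mu(h,\_)$, which is a weak equivalence exactly because $h$ is right stable, and this contracts the under-categories $(t_0,g_0)/\iota$, i.e.\ it handles new objects lying \emph{below} $J$ (via the dual Theorem A, Theorem 4.8 of \cite{SxTech}). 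For a new object $(t,g)$ lying \emph{above} $J$ one must instead contract the over-category $\iota/(t,g)$: this requires a stable metric on $W\vert_{[t_0,t]}$ with \emph{prescribed outgoing} boundary $g$, and the relevant gluing map is now $\mu(\_,h)$, which is a weak equivalence only if $h$ is \emph{left} stable. Neither ingredient comes from Theorem \ref{thm:StabMetrics}(i); both come from Theorem \ref{prop:right-stable-on-2.1.cob-stable} (parts (iii) and (i) respectively), and this is precisely where the hypothesis that the objects lie in $\Cob_\theta^{2,1}$ (so that the structure maps are $2$-connected) and the surgery Lemma \ref{lem:surgery-lemma-fabian-michael} enter --- in the non-stable case as well, not only for $\iota^{\rst}$ as your closing paragraph suggests.

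Your proposed escape from the resulting directionality problem --- a ``two-sided bi-simplicial slice'' --- is not a tool that \cite{SxTech} provides: the bi-semi-simplicial resolutions there underlie the one-sided Theorems A, its dual, and Theorem B, and the hypothesis to be verified is still contractibility of one-sided fibre categories. (Also, Theorem 4.9 of \cite{SxTech} is Theorem B, not A; Theorems 4.7 and 4.8 are what is needed here.) The paper's resolution is to reduce first to $J$ a single interval and then to factor $J \subset I$ into steps which are either isotopy equivalences (handled levelwise on nerves, more directly than your weak-unit argument for type (a)) or clopen inclusions with the complement entirely on one side of $J$; Theorem A is then applied when the new components lie above $J$, using the stable metric with prescribed outgoing boundary, and its dual when they lie below. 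With that factorisation each elementary step is genuinely one-sided and your contraction idea goes through.
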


\begin{rem}
Lemma \ref{lem:VaryingJ} is the only place in the proof of Theorem \ref{thm:Fibre} where the existence result for right stable metrics (Theorem \ref{thm:StabMetrics}) is used, through Theorem \ref{prop:right-stable-on-2.1.cob-stable}. This is also responsible for the hypothesis $d \geq 6$ and for the connectivity assumption on $W$. These hypotheses are used in the proofs of \emph{both} conclusions of the lemma. 
\end{rem}

\begin{proof}[Proof of Lemma \ref{lem:VaryingJ}]
We give the details for the case of $\iota$; the case of $\iota^{\rst}$ will be analogous. We shall apply the version of Quillen's Theorem A which we have established in Theorem 4.7 of \cite{SxTech}, but to start we make some elementary simplifications. Firstly, it is enough to discuss the case where $J$ is a single interval. Secondly, an inclusion $J \subset I$ of sets as in the statement, with $J$ an interval, may be written as a composition of maps of the following type:
\begin{enumerate}[(i)]
\item $J \to I$ is an isotopy equivalence.
\item $J$ is clopen in $I$ and if $s \in J$ and $t \in I\setminus J$, then $s<t$.
\item $J$ is clopen in $I$ and if $s \in J$ and $t \in I\setminus J$, then $s>t$.
\end{enumerate}
Hence it is enough to prove the lemma in these three cases: we prove the first case and the second case, the third is done by using the dual form of Quillen's Theorem A, i.e.\ Theorem 4.8 of \cite{SxTech}.

For the first case, if $J \subset I$ is an isotopy equivalence then $N_p \cP(W,J) \to N_p \cP(W,I)$ is a weak equivalence for each $p \geq 0$ and hence $B \cP (W,J) \to B \cP (W,J)$ is a weak equivalence. 

For the second case, choose $t_0 \in J$, and write $J_0 := J \cap (-\infty,t_0)$ and $I_0 := J_0 \cup (I \setminus J)$. The inclusions $J_0 \to J$ and $I_0 \to I$ are isotopy equivalences, which means that it is enough to prove that $B\cP(W,J_0) \to B\cP(W,I_0)$ is a weak equivalence. What we have achieved in this step is to ensure that $W$ is cylindrical near $t_0= \sup (J_0)$. 

We claim that the inclusion $\iota:\cP(W,J_0) \to  \cP(W,I_0)$ satisfies the hypotheses (i) and (iv) of Theorem 4.7 of \cite{SxTech}, which will complete the proof. Both categories have soft units and are left and right fibrant, by Lemma \ref{lem:concordance-category-formal-properties} (i) and (ii). 
Since $J_0$ is clopen in $I_0$, the functor $\iota$ is a fibration on both object and on morphism spaces. It follows from Lemma 4.6 (iii) of \cite{SxTech} that the map $\eta_{0,0}^\iota$ is a fibration. Together these verify hypothesis (iv) of Theorem 4.7 of \cite{SxTech}. 

It remains to verify hypothesis (i), i.e.\ that the fibre categories $\iota / (t,g)$ have contractible classifying spaces for all $(t,g) \in \Ob (\cP(W,I_0))$. Suppose first that $t \in J_0$. Then $\iota /(t,g)$ is the over-category $\cP(W,J_0)/(t,g)$, which has a contractible classifying space since $\cP(W,J_0)$ has soft left units by Lemma \ref{lem:concordance-category-formal-properties} (ii). Now suppose that $t  \in I_0 \setminus J_0$. The cobordism $W|_{[t_0,t]}: W|_{t_0} \leadsto W|_t$ is a morphism in $\Cob_\theta^{2,1}$ by assumption. Thus, by Theorem \ref{prop:right-stable-on-2.1.cob-stable}, there is a psc metric $g_0 \in \Riem^+ (W|_{t_0})$ and a \emph{stable} psc metric $h \in \Riem^+ (W|_{[t_0,t]})_{g_0,g}$ (it is here where we are using the hypothesis that $d \geq 6$). The composition $h \circ -$ gives a functor $\cP(W,J)/(t_0,g_0) \to \iota /(t,g)$, which as $h$ is stable induces a levelwise weak equivalence on nerves and hence a weak equivalence on classifying spaces. Therefore, since the 
space $B (\cP(W,J)/(t_0,g_0))$ is contractible (by Lemma \ref{lem:concordance-category-formal-properties} (ii) again), it follows that $B(\iota/(t,g))\simeq *$, as claimed. This finishes the proof for $\iota$, and the same argument applies to $\iota^{\rst}$.
\end{proof}

The next goal is the proof of Theorem \ref{thm:Concordance}. The proof is a version of a well-known delooping argument, adapted to the technical details of the situation. 

\begin{proof}[Proof of Theorem \ref{thm:Concordance}]
Write $x_i:=(t_i,g_i)$. Let $I := J_{<t_1}$, let $\iota^{\rst}: \cP(W,I)^{\rst} \to \cP(W,J)^{\rst}$ be the inclusion functor, and let $H: \iota^{\rst}/ x_1 \to \cP(W,I)^{\rst}$ be the source functor. There is a commutative square
\begin{equation}\label{diag:proofstable-psc-space-is-loopspace}
\begin{gathered}
\xymatrix{
B(\iota^{\rst}/ x_1) \ar[r]^-{K'} \ar[d]^{BH} & P_{x_1} B \cP(W,J)^{\rst} \ar[d]^{\ev_0}\\
B \cP(W,I)^{\rst} \ar[r]^-{B\iota^{\rst}} & B \cP(W,J)^{\rst}.
} 
\end{gathered}
\end{equation}
Here $\Path_{x_1}B \cP(W,J)^{\rst}$ denotes the space of paths in $B \cP(W,J)^{\rst}$ which end at $x_1$, and $\ev_0$ is the map that evaluates at $0$. The map $K'$ is adjoint to the homotopy $K: [0,1] \times B (\iota^{\rst}/x_1) \to B \cP(W,J)^{\rst}$ coming from the natural transformation $\kappa: \iota^{\rst} \circ H \Rightarrow {x_1}$ to the constant functor $x_1$. 
It is clear that $BH^{-1}(x_0) = \Riem^+ (W_{[t_0,t_1]})_{g_0,g_1}^{\rst} $, and the restriction of $K'$ to $BH^{-1}(x_0)\to \ev_0^{-1}(x_0) = \Omega_{x_0,x_1} B \cP(W,J)^{\rst}$ is the map $\tau$, which we want to show is a weak equivalence.

The space $B(\iota^{\rst}/ x_1)$ is contractible since $\cP(W,J)^{\rst}$ has soft left units by Lemma \ref{lem:concordance-category-formal-properties}. Since $\{t_0,t_1\} \to J$ is $0$-connected, the inclusion $I \to J$ is an isotopy equivalence. As in the proof of Lemma \ref{lem:VaryingJ}, it follows that $B\iota^{\rst}$ is a weak equivalence. Because the path space is contractible as well, \eqref{diag:proofstable-psc-space-is-loopspace} is homotopy cartesian. Since $\ev_0$ is a fibration, the claim will follow once we can prove that $BH$ is a quasifibration. The map $BH$ is the geometric realisation of the semi-simplicial map $N_\bullet H$, and we shall prove that each $N_p H$ is a fibration and $N_\bullet H$ is homotopy cartesian in the sense of Definition 2.9 of \cite{SxTech}. It then follows by applying the Dold--Thom criterion \cite[Satz 2.2, Hilfssatz 2.10 and Satz 2.12]{DoldThom} (a convenient reference is \cite[Lemma 4.K.3]{MR1867354}) that $BH$ is a quasifibration. This will finish the proof.

The map $N_p H: N_p (\iota^{\rst}/ x_1) \to N_p (\cP(W,I)^{\rst})$ is given by 
\[
(y_0 \stackrel{h_1}{\to} \cdots \stackrel{h_p}{\to} y_p, y_p \stackrel{h}{\to} x_1 ) \longmapsto (y_0 \stackrel{h_1}{\to} \cdots \stackrel{h_p}{\to} y_p)
\]
(since $\iota^{\rst}$ is an inclusion, we write $y_p$ for $\iota^{\rst}(y_p)$, slightly abusing notation). Because $\cP(W,I)^{\rst}$ is left fibrant by Lemma \ref{lem:concordance-category-formal-properties}, the map $N_p H$ is a fibration. The fibre over $(y_0 \stackrel{h_1}{\to} \cdots \stackrel{h_p}{\to} y_p)$ is, writing $y_i=(s_i,k_i)$, just $\Riem^+ (W_{[s_p,t_1]})_{k_p,g_1}^{\rst}$. The maps between the fibres induced by the face maps are either the identity, or they are given by $\mu(h_p, \_)$ with $h_p \in \Riem^+ (W_{[s_{p-1},s_p]})_{k_{p-1},k_p}^{\rst}$. By the definition of right stability, these maps are weak equivalences. Hence $N_\bullet H$ is homotopy cartesian. 
\end{proof}

\subsection{Proof of Theorems \ref{thm:Fibre} and \ref{cor:berw-new-proof}}

We shall apply the version of Quillen's Theorem B which we have established as Theorem 4.9 of \cite{SxTech}. We will only write down the proof for the first case of Theorem \ref{thm:Fibre}, the proof for the second case is entirely analogous, with decoration ${}^{\rst}$ everywhere. In order to prove the first case of Theorem \ref{thm:Fibre} have to verify the following hypotheses:
\begin{enumerate}[(i)]
\item $J$ and $\Cob_\theta^{2,1}$ are left fibrant and have soft right units (this has been done in Lemma \ref{lem:concordance-category-formal-properties}, Proposition \ref{prop:Fibrant} and Lemma \ref{lem:soft-units}).
\item $\cP(W,J)$ and $\PCob_\theta^{2,1}$ are right fibrant (this has been done in Lemma \ref{lem:concordance-category-formal-properties} and Proposition \ref{prop:Fibrant}). The maps 
$$\eta_{0,0}^G: (G/J)_{0,0} \to \Ob (J) \quad\text{ and }\quad \eta_{0,0}^{F^{2,1}}: (F^{2,1}/\Cob_\theta^{2,1})_{0,0} \to \Ob (\Cob_\theta^{2,1})$$
are fibrations. This follows from the fact that the maps 
$$G_0 : \Ob(\cP(W,J)) \lra \Ob(J) \quad\text{ and }\quad F_0^{2,1} : \Ob(\PCob_{\theta}^{2,1}) \lra \Ob(\Cob_{\theta}^{2,1})$$
are fibrations by Lemma \ref{lem:concordance-category-formal-properties} and Proposition \ref{prop:Fibrant}), and the fact that $J$ and $\Cob_{\theta}^{2,1}$ are right fibrant by Lemma \ref{lem:concordance-category-formal-properties} (i) and Corollary \ref{psccob-fibratnt}, using Lemma 4.6 (iii) of \cite{SxTech}.
\item For each morphism $(t,W): M \leadsto N$ in $\Cob_\theta^{2,1}$, the fibre transition $W \circ - :F^{2,1}/M \to F^{2,1}/N$ induces a weak equivalence on classifying spaces.
\item For each $t \in J$, the functor $G/t \to F^{2,1}/K(t)$ induced by $K$ and $H$ induces a weak equivalence on classifying spaces.
\end{enumerate}
It remains to verify hypotheses (iii) and (iv). To prepare for that, we first describe the homotopy types of the fibre categories $F^{2,1} /M$ in terms of concordance categories, as follows. Let $M \in \Ob(\Cob_{\theta}^{2,1})$. For $a < b \in \bR$ there is a functor
\begin{equation}\label{eq:Comparison}
L : \cP(\bR \times M, (a,b)) \lra F^{2,1}/M
\end{equation}
given on objects by (ignoring the tangential structures in the notation) 
$$L(t, g) := ((M,g), [0,b-t] \times M :  M \leadsto   M)$$
and on morphisms by 
$$L(t_0<t_1, h ) := ([0, t_1-t_0] \times M, h) : L(t_0, g_0) \to L(t_1, g_1).$$
It restricts to a functor 
\begin{equation}\label{eq:Comparison2}
L^{\rst} : \cP(\bR \times M, (a,b))^{\rst} \lra F^{2,1,\rst}/M
\end{equation}
on the subcategories of right stable psc metrics. 

\begin{lem}\label{lem:OverCatConc}
The functors \eqref{eq:Comparison} and \eqref{eq:Comparison2} induce weak equivalences on classifying spaces.
\end{lem}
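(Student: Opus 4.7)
The plan is to apply the topological version of Quillen's Theorem A (Theorem 4.7 of \cite{SxTech}) to both $L$ and $L^{\rst}$; the arguments are parallel, with the $^{\rst}$ case typically the more transparent due to the stronger stability properties granted by Theorem \ref{prop:right-stable-on-2.1.cob-stable}.

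First I would verify the formal fibrancy and soft-unit hypotheses. For the source concordance categories these are Lemma \ref{lem:concordance-category-formal-properties}. For the target comma categories $F^{2,1}/M$ and $F^{2,1,\rst}/M$ they follow by combining Proposition \ref{prop:Fibrant} and Lemma \ref{lem:soft-units}, which give fibrancy and soft units of $\PCob_\theta^{2,1}$ and $\PCob_\theta^{2,1,\rst}$, with Lemma 4.6 of \cite{SxTech}, which transfers such properties to the relevant comma categories.

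The main content is to show that for each $x = ((N, g), W: N \leadsto M) \in \Ob(F^{2,1}/M)$, the under-category $x \downarrow L$ has contractible classifying space (and similarly for $L^{\rst}$). Unpacking, an object of $x \downarrow L$ is a tuple $((t, g'), (V, h))$ with $V \cup [0, b-t] \times M = W$ as cobordisms and $h \in \Riem^+(V)_{g, g'}$, and a morphism from $((t_0, g'_0), (V_0, h_0))$ to $((t_1, g'_1), (V_1, h_1))$ (with $t_0 < t_1$) is a cylindrical concordance $k \in \Riem^+([0, t_1-t_0] \times M)_{g'_0, g'_1}$ such that $h_1 = h_0 \cup k$ on $V_1 = V_0 \cup [0, t_1-t_0] \times M$. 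I would identify this under-category with a variant of a concordance category, namely one of psc metrics on the half-infinite cylindrical extension $W^+ := W \cup_M ([0, \infty) \times M)$ subject to the fixed boundary condition $g$ at $N$, and then establish contractibility by a weak-terminal-object argument analogous to the proofs of Lemma \ref{lem:concordance-category-formal-properties}(ii) and Lemma \ref{lem:VaryingJ}. The crucial input is the existence of a right-stable $h \in \Riem^+(W)_g$ provided by Theorem \ref{thm:StabMetrics}(i): the hypothesis $d \geq 6$ is assumed, and the pair $(W, M)$ is $2$-connected because $W$ is a morphism in $\Cob_\theta^{2,1}$.

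The hardest part is the concrete identification of $x \downarrow L$ with the boundary-anchored concordance category and the subsequent contractibility argument, since one must carefully track how the sub-cobordism $V$ is recovered from $W$ by cutting off a collar of variable length $b-t$ and verify the compatibility of all gluings. For $L^{\rst}$ the step is cleaner because Theorem \ref{prop:right-stable-on-2.1.cob-stable} upgrades the right-stable ``weak units'' supplied by Theorem \ref{thm:StabMetrics}(i) to two-sided stable ones. In the non-$^{\rst}$ case, one must additionally show that the sub-under-category of right-stable extensions is weakly equivalent to all of $x \downarrow L$; this should follow from Lemma \ref{lem:stability-homotopy-invariant} combined with the two-out-of-three property of Lemma \ref{lem:2-out-of-three}, which together allow one to propagate stability through the concordance structure.
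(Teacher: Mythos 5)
Your overall strategy matches the paper's: apply a topological Quillen Theorem A to $L$, and prove contractibility of the comma categories by identifying them with (fibre categories of inclusions of) concordance categories and invoking the existence of stable metrics via Theorem \ref{prop:right-stable-on-2.1.cob-stable}. Two remarks on the set-up: since the comma categories you propose to contract are the under-categories $x/L$ (objects equipped with a morphism \emph{from} $x$ \emph{to} the image of $L$), the relevant statement is the dual form, Theorem 4.8 of \cite{SxTech}, not Theorem 4.7; and your final paragraph's extra step for the non-$\rst$ case (comparing with the sub-category of right-stable extensions) is not needed — both cases reduce in the same way to the contractibility argument already carried out for the fibre categories in Lemma \ref{lem:VaryingJ}.

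The genuine gap is in your first paragraph, where you assert that the fibrancy hypotheses for the comma category transfer formally via Lemma 4.6 of \cite{SxTech}. The hypothesis of Theorem 4.8 requires the augmentation $\zeta_{0,0}^L : ((F^{2,1}/M)/L)_{0,0} \to \Ob(F^{2,1}/M)$ to be a fibration, and this map is \emph{not} a Serre fibration: a point of the source records a parameter $t\in(a,b)$ forcing the underlying cobordism to be cylindrical on a collar of length $b-t$ near $M$, and a path in $\Ob(F^{2,1}/M)$ that immediately destroys cylindricality in the middle of that collar cannot be lifted, since any lift must vary $t$ continuously and hence retain cylindricality over a slightly smaller interval. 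The paper gets around this by invoking Remark 3.5 of \cite{SxTech}, which allows the weaker Dold--Serre condition, and then verifies that condition by an explicit vertical homotopy (pushing $t$ towards $b$ and stretching collars) followed by an application of Theorem \ref{thm:improved-chernysh-theorem}. Without this step, or some substitute for it, the appeal to Quillen's Theorem A does not go through as written.
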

\begin{proof}
We present the proof for $L$; the argument for $L^{\rst}$ is completely analogous. We shall apply our dual form of Quillen's Theorem A (Theorem 4.8 of \cite{SxTech}). 

Hypothesis (i) of Theorem 4.8 of \cite{SxTech} is that the fibre category $x / L$ has contractible classifying space, for each object $x$ of $F^{2,1}/M$. Let $x=((t, W) : (N, g) \leadsto M) \in \Ob(F^{2,1}/M)$. There is a largest $\eps>0$ such that $W$ is cylindrical over $(t-\epsilon, t]$, and for some $\delta>0$, $W$ is cylindrical over $[0, \delta]$. Consider the elongation $\hat{W}: = ( (-\infty,0]  \times N)\cup W \cup ( [t, \infty) \times M)$. Let $J = (t-\eps,t)$ and $I = J \cup (-\infty, \delta)$. The fibre category $x / L$ is isomorphic to the fibre category $(0,g) / \iota$ of the functor $\iota:\cP (\hat{W}, J) \to \cP (\hat{W},I)$ over the object $(0,g)$. It was shown in the proof of Lemma \ref{lem:VaryingJ} that $(0,g) / \iota$ has contractible classifying space. Hence so does $x / L$. 

We will verify hypotheses (ii) and (iii) of Theorem 4.8 of \cite{SxTech} by establishing the conditions of (iv) of that theorem: $\cP(\bR \times M, (a,b))$ is left fibrant,  $F^{2,1}/M$ is right fibrant and has soft left units, and $\zeta_{0,0}^L : ((F^{2,1}/M)/L)_{0,0} \to \Ob (F^{2,1}/M)$ is a fibration.

The category $\cP(\bR \times M, (a,b))$ is left fibrant by Lemma \ref{lem:concordance-category-formal-properties} (ii).

To see that the category  $F^{2,1}/M$ is right fibrant, note that there is a cartesian square
\begin{equation*}
\xymatrix{
\Mor(F^{2,1}/M) \ar[d]^-{t} \ar[r] & \Mor(\PCob_{\theta}^{2,1}) \ar[d]^-{t}\\
\Ob(F^{2,1}/M) \ar[r] & \Ob(\PCob_{\theta}^{2,1})
}
\end{equation*}
given by the source functor $F^{2,1}/M \to \PCob_{\theta}^{2,1}$. The right vertical map is a fibration by Proposition \ref{prop:Fibrant}, and hence the target map $t:\Mor (F^{2,1}/M) \to \Ob (F^{2,1}/M)$ is a fibration. 

As $F^{2,1}/M$ is right fibrant by the above, to see that it has soft right units it is enough, by Lemma 3.14 of \cite{SxTech}, to show that it has weak left units. A weak left unit for an object $((t,W) : (N, g) \leadsto M) \in \Ob(F^{2,1}/M)$ may be given by choosing a small $\epsilon>0$ so that $W$ is cylindrical over $[0,\epsilon]$, in which case
$$(t,W) = (t-\epsilon, W\vert_{[\epsilon, t]}-\epsilon \cdot e_1) \circ (\epsilon, [0,\epsilon] \times N)$$
and $(\epsilon, [0,\epsilon] \times N)$ may be given the metric $ dx_1^2+g$. This gives a morphism
$$(\epsilon, [0,\epsilon] \times N,dx_1^2+g) : (t,W) \lra (t-\epsilon, W\vert_{[\epsilon, t]}-\epsilon \cdot e_1)$$
in $F^{2,1}/M$, which is stable by Corollary 2.2.2 of \cite{BERW} and so induces a weak equivalence on morphism spaces. Together with Lemma 3.14 of \cite{SxTech}, the last two facts show that $F^{2,1}/M$ has soft left units.

The map $\zeta_{0,0}^L : ((F^{2,1}/M)/L)_{0,0} \to \Ob (F^{2,1}/M)$ is \emph{not} a Serre fibration (which would be the hypothesis of Theorem 4.8 of \cite{SxTech} as stated). To understand the problem, note that $((F^{2,1}/M)/L)_{0,0}$ is the space of all $(t,g',W,h,s)$, where $ t \in (a,b)$, $g' \in \Riem^+ (M)$, $(W,s)$ is a $\theta$-cobordism $N \leadsto M$ for some $N$, $s >b-t$, and $W$ is cylindrical over $[s-(b-t),s]$. Finally, $h$ is a psc metric on $W|_{[0,s-(b-t)]}$ which equals $g'$ on the boundary $W|_{s-(b-t)}=M$. The map $\zeta^L_{0,0}:  ((F^{2,1}/M)/L)_{0,0} \to \Ob(F^{2,1}/M)$ sends such a point to $(W,s,g)$, where $g= h|_{N}$. Figure \ref{fig:4} explains why this is not a Serre fibration.

\begin{figure}[h]
\begin{center}
\includegraphics{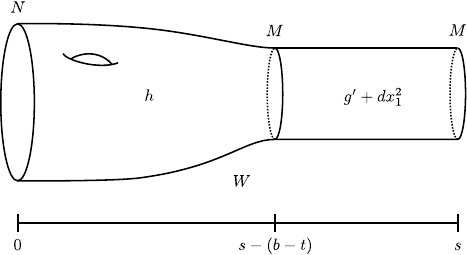}
\caption{A point in $((F^{2,1}/M)/L)_{0,0}$, which is sent by $\zeta^L_{0,0}$ to $(W,s, h\vert_N) \in \Ob(F^{2,1}/M)$. A path in $\Ob(F^{2,1}/M)$ starting at this point which immediately deforms the middle of $W\vert_{[s-(b-t),s]}$ to be non-cylindrical cannot be lifted, as a lift would have to vary $t$ continuously and hence must remain cylindrical over a slightly smaller interval.}\label{fig:4}
\end{center}
\end{figure}

Fortunately, by Remark 3.5 of \cite{SxTech}, the hypothesis that $\zeta_{0,0}^L$ is a Serre fibration can be replaced by the weaker assumption that it is a Dold--Serre fibration, which we claim to be the case. In other words, given a lifting problem
\begin{equation}\label{eq:liftingproblemdoldserre}
\begin{gathered}
\xymatrix{
D^{n} \times \{0\}  \ar[r]^-{G} \ar[d] & ((F^{2,1}/M)/L)_{0,0} \ar[d]^-{\zeta_{0,0}^L} \\
D^n \times [0,1] \ar[r]^-{H} \ar@{..>}[ur] & \Ob(F^{2,1}/M),
}
\end{gathered}
\end{equation}
we can find a vertical homotopy of $G$ to a map $G'$ for which the lifting problem can be solved. The bottom map $H$ of the diagram \eqref{eq:liftingproblemdoldserre} is given by a family over $D^n \times [0,1]$ of cobordisms $W(x,x')$ from manifolds $N(x,x')$ to $M$, of length $s(x,x')$, together with psc metrics $g(x,x')$ on $N(x,x')$. The top map $G$ is given by a function $t:D^{n} \to (a,b)$, psc metrics $h(x,0)$ on $W(x,0)|_{[0,s(x,0)-(b-t(x,0))]}$ which coincide with $g(x,0)$ on $N(x,0)$ (for all $x \in D^{n}$). There exists $\epsilon>0$ such that $W(x,x')|_{[s(x,x')-\epsilon,s(x,x')]}$ is cylindrical for each $(x,x') \in D^n \times [0,1]$. The vertical homotopy of $G$ adjusts the map $G$ until the function $t$ is constant and satisfies $t(x) \geq b-\epsilon$. 
The manifolds $W(x,0)$, the numbers $s(x,0)$ and the psc metrics $g(x,0)$ remain unchanged, and a suitable deformation of $h$ is done by pulling back with a collar-stretching isotopy of $W(x,0)$. After this preliminary homotopy, the lifting problem \eqref{eq:liftingproblemdoldserre} can be solved, by Theorem \ref{thm:improved-chernysh-theorem}. 
\end{proof}

\begin{proof}[Completion of the proof of Theorem \ref{thm:Fibre}]
To prove Theorem \ref{thm:Fibre} it remains to verify hypotheses (iii) and (iv) of Theorem 4.9 of \cite{SxTech}. For hypothesis (iii), let a cobordism $((t,W) : M_0 \leadsto M_1) \in \Mor(\Cob_{\theta}^{2,1})$ be given, having a collar of width $\epsilon$, and form the elongation
$$\hat{W} = (-\infty,0] \times M_0) \cup W \cup ([t,\infty) \times M_1) \subset \bR \times I^{\infty-1}.$$
We may form the diagram
\begin{equation}\label{diag:proof-fibretheorem}
\begin{gathered}
\xymatrix{
\cP(\hat{W}, (-\epsilon,0)) \ar[r] \ar[d]^-L & \cP(\hat{W}, (-\epsilon,0) \sqcup (t-\epsilon,t)) \ar[d]^-{L'} & \cP(\hat{W}, (t-\epsilon, t)) \ar[l] \ar[ld]^-L\\
F^{2,1}/M_0 \ar[r]^-{W \circ - } & F^{2,1}/M_1
}
\end{gathered}
\end{equation}
where the unnamed functors are inclusions, which each induce equivalences on classifying spaces by Lemma \ref{lem:VaryingJ}. The functors marked $L$ are those of Lemma \ref{lem:OverCatConc}, where we use that $\hat{W}\vert_{(-\epsilon,0)} = (-\epsilon,0) \times M_0 \subset \bR \times M_0$ and $\hat{W}\vert_{(t-\epsilon, t)} = (t-\epsilon, t) \times M_1 \subset \bR \times M_1$, which by that lemma both induce weak equivalences on classifying spaces. The functor $L'$ is defined similarly: on objects and some morphisms, its definition is forced by the commutativity of the diagram. The remaining morphisms are of the form $(-\epsilon < a < 0 < t-\epsilon < b < t, h \in \Riem^+(\hat{W}\vert_{[a,b]}))$, and such a morphism is sent to
$$(\hat{W}\vert_{[a,b]}, h):((M_0, h\vert_a), W \circ ([0,-a] \times M_0)) \lra ((M_1, h\vert_b), [0,t-b] \times M_1).$$
With this definition Diagram \eqref{diag:proof-fibretheorem} commutes, and so $W \circ - : F^{2,1}/M_0 \to F^{2,1}/M_1$ induces a weak equivalence on classifying spaces.

For hypothesis (iv), we must show that the functor $H : G/t \to F/K(t)$ induces an equivalence on classifying spaces. The manifold $W$ is cylindrical near $t$, and $J$ is open, so we may suppose it is cylindrical near some open interval $t \in (a,b) \subset J$, where it agrees with $\bR \times K(t)$. The category $G/t$ is isomorphic to $\cP(W, J_{<t})$, and we have a commutative diagram
\begin{equation*}
\xymatrix{
\cP(W, (a,t)) \ar@{=}[r] \ar[d] & \cP(\bR \times K(t), (a,t)) \ar[dd]^-L\\
\cP(W, J_{<t}) \ar[d]^{\cong}\\
G/t \ar[r] & F^{2,1}/K(t).
}
\end{equation*}
The right-hand map is an equivalence on classifying spaces by Lemma \ref{lem:OverCatConc}. The top left-hand map is an equivalence by Lemma \ref{lem:VaryingJ} and hence so is the bottom map, as desired. This finishes the proof of Theorem \ref{thm:Fibre} in the first case. The second case is analogous, using the other cases of Lemma \ref{lem:OverCatConc} and Lemma \ref{lem:VaryingJ}.
\end{proof}

\begin{proof}[Proof of Theorem \ref{cor:berw-new-proof}]
A point in $\mathcal{M}^\theta(W)$ is a $\theta$-cobordism $(V,\ell):M_0 \leadsto M_1$ which is diffeomorphic (relative to the boundary) to $W$. We extend the diagram of Theorem \ref{cor:berw-new-proof} as follows:
\begin{equation}\label{diag:berw-new-proof-proof}
\begin{gathered}
 \xymatrix{
\Riem^+ (V)_{g_0,g_1}^{\rst} \ar[r] \ar[d] &\mathcal{M}^\theta_{\psc}(W)^{\rst}_{g_0, g_1} \ar[d]^{\pi} \ar[r]^-{\tau} & \Omega_{(M_0,g_0),(M_1,g_1)} B \PCob_{\theta}^{2,1,\rst} \ar[d]^{\Omega B F^{2,1,\rst}}\\
\ast \ar[r]^{(V,\ell)} & \mathcal{M}^\theta(W) \ar[r]^{\tau} & \Omega_{M_0,M_1} B \Cob_{\theta}^{2,1}.
 }
\end{gathered}
\end{equation}
The left square is homotopy cartesian, and if we can prove that the outer rectangle is homotopy cartesian, it follows that the right square induces a weak equivalence $\hofib_{(V,\ell)} (\pi) \to \hofib_{\tau (V,\ell)} (\Omega BF^{2,1,\rst})$. If we do so for each point $(V,\ell) \in \mathcal{M}^\theta(W)$, then the right square is homotopy cartesian as required. 

The elongation $\hat{V}:= ((-\infty ,0] \times M_0) \cup V \cup ([1,\infty ) \times M_1)$ is cylindrical over $J:=(-\infty,\epsilon) \cup (1-\epsilon,\infty)$ for some $\epsilon$. On taking path spaces \eqref{diag:mainfibretheorem2} gives the right square of the diagram
\begin{equation}\label{diag:berw-new-proof-proof1}
\begin{gathered}
\xymatrix{
\Riem^+ (V)_{g_0,g_1}^{\rst} \ar[r]^-{\tau} \ar[d] & \Omega_{(0,g_0),(1,g_1)} B\cP(\hat{V},J)^{\rst} \ar[r]^-{\Omega BH^{ \rst}} \ar[d]^{\Omega BG^{\rst}} &  \Omega_{(M_0,g_0),(M_1,g_1)}B\PCob_{\theta}^{2,1,\rst} \ar[d]^{\Omega BF^{2,1,\rst}}\\
\ast \ar[r]^{j} & \Omega_{0,1} BJ \ar[r]^-{\Omega BK} & \Omega_{M_0,M_1} B\Cob_{\theta}^{2,1}.
}
\end{gathered}
\end{equation}
The map $\tau$ is the tautological map from Theorem \ref{thm:Concordance} and hence a weak equivalence by that theorem. The map $j$ sends the point to the path $(t \mapsto ((0<1), ((1-t)e_0,te_1)) \in N_1 J \times \Delta^1 \subset BJ$. The left square commutes by inspection and is hence homotopy cartesian (because $BJ \simeq *$). The right square of \eqref{diag:berw-new-proof-proof1} is homotopy cartesian by Theorem \ref{thm:Fibre}. Hence the outer rectangle of \eqref{diag:berw-new-proof-proof1} is homotopy cartesian. On the other hand, it is straightforward to verify that the outer rectangles of diagrams \eqref{diag:berw-new-proof-proof} and \eqref{diag:berw-new-proof-proof1} agree. Hence \eqref{diag:berw-new-proof-proof} is homotopy cartesian, as desired. 
\end{proof}

\section{The surgery theorem for psc cobordism categories}\label{sec:loopinf-structure}

\subsection{Statement of the result}

In this section we will show how the ``surgery on objects below the middle dimension" technique of \cite{GRW} may be adapted to prove that the inclusion maps $ B\PCob_{\theta}^{2,1, \rst} \to B\PCob_\theta^{2, \rst}$ and $ B\PCob_{\theta}^{2,1} \to B\PCob_\theta^{2}$ are weak homotopy equivalences, just as the map $B\Cob_{\theta}^{2,1} \to B\Cob_\theta^{2}$ was shown to be in that paper. More precisely, we wish to prove the analogue of Theorem 4.1 of \cite{GRW} for both versions (right stable or not) of the psc cobordism categories. At one point we depart slightly from \cite{GRW}: namely, in that paper, the cobordism categories $\Cob_{\theta}^{\kappa,l}$ are not considered at all, but only a version $\Cob_{\theta,L}^{\kappa,l}$ where all objects are required to contain a fixed copy of a certain $(d-1)$-manifold-with-boundary $L$ and all morphisms contain a copy of $L \times [0,1]$. In this paper, we are interested in the case $L=\emptyset$, but unfortunately Theorem 4.1 of \cite{GRW} is stated in a way that does not apply to that case, since it requires the structure map $\ell_L : L\to B$ to be $(l+1)$-connected. We will replace this requirement with a finiteness property of $B$, item (iv) in the following theorem (item (v) is specific to the context of psc metrics and comes from the codimension restriction for Gromov--Lawson surgery).

\begin{thm}\label{thm:SurgBelowMid}
Suppose that the following are satisfied.
\begin{enumerate}[(i)]
\item $2(l+1) < d$,
\item $l \leq \kappa$,
\item $l \leq d-\kappa-2$,
\item the space $B$ is of type ($F_{l+1}$),
\item $d-l-1 \geq 3$.
\end{enumerate}
Then the maps
$$B\PCob_{\theta}^{\kappa,l} \lra B\PCob_{\theta}^{\kappa,l-1} \quad\text{ and }\quad B\PCob_{\theta}^{\kappa,l, \rst} \lra B\PCob_{\theta}^{\kappa,l-1, \rst}$$
are weak homotopy equivalences.
\end{thm}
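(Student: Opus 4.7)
The plan is to adapt the parametrised surgery-on-objects technique of \cite[\S 6]{GRW} to the psc setting. Via the equivalences \eqref{eqn:poset-model-for-psccobcat} and Lemma \ref{lem:x-space-equivalent-to-d-space}, the theorem reduces to showing that the inclusions of semi-simplicial sheaves $X_\bullet^{\kappa,l,\psc} \hookrightarrow X_\bullet^{\kappa,l-1,\psc}$ and $X_\bullet^{\kappa,l,\rst} \hookrightarrow X_\bullet^{\kappa,l-1,\rst}$ of Definition \ref{defn:x-space} are weak equivalences on fat geometric realisations. The flexible model is better suited to this than the poset model because its product-type regions (Definition \ref{defn:producttype}) give the elbow room to perform surgeries on cross-sections and interpolate without having to fight cylindricality conditions.

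To produce the required zigzag I would build an augmented bi-semi-simplicial sheaf $Y_{\bullet,\bullet}$ whose $(p,q)$-simplices consist of a $p$-simplex $(a,\epsilon,(W,\ell_W,g_W))$ of $X_p^{\kappa,l-1,\psc}$ together with $q+1$ ``surgery data''. A single surgery datum is, roughly, an index $i\in[p]$, a regular value $s_i$ in the corresponding product-type interval, and a $\theta$-structured embedding $\phi: S^{l+1}\times \bR^{d-l-2} \hookrightarrow W|_{s_i}$ whose core represents a designated element of $\pi_{l+1}(B,W|_{s_i})$. Hypothesis (iv) ensures that the relative $\pi_1(B)$-module $\pi_{l+1}(B,W|_{s_i})$ is finitely generated, so that finitely many such data suffice to ``kill'' it. The dimensional hypotheses (i)--(iii) are exactly what is required so that the general-position argument of \cite[\S 6]{GRW} goes through to produce embedded spheres realising these classes without destroying the $\kappa$-connectivity of outgoing boundary inclusions; the codimension hypothesis (v), $d-l-1\geq 3$, is precisely what makes Theorem \ref{thm:chernysh-theorem} applicable at each cross-section.

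The crux of the proof is to show that the two augmentations
\[
X_\bullet^{\kappa,l,\psc}\;\longleftarrow\; \norm{Y_{\bullet,\bullet}} \;\longrightarrow\; X_\bullet^{\kappa,l-1,\psc}
\]
are both weak equivalences. The right-hand augmentation forgets the surgery data; its weak equivalence is the content of a sheaf-theoretic variant of \cite[Theorem 6.2]{GRW} which is proven in Appendix \ref{appendixsheaves}. The geometric idea is that the space of valid surgery data over a fixed $p$-simplex is, after geometric realisation, contractible, by the usual ``adjoin or delete a single embedded sphere'' contractibility argument combined with the non-emptiness supplied by (iv). The left-hand augmentation is defined by applying Corollary \ref{cor:chernysh-theorem} to replace each marked cross-section $W|_{s_i}$ by its surgery $W|_{s_i}^\phi$ and inserting a short trace cobordism carrying a suitable psc interpolation metric; the result lies in $X_\bullet^{\kappa,l,\psc}$, and this augmentation is a weak equivalence by a cofinality argument, since a $p$-simplex of $X_\bullet^{\kappa,l,\psc}$ trivially carries the empty surgery datum.

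The hardest and most delicate part is the right-stable case. One must arrange that the inserted trace cobordism carries a right-stable psc metric, and that right-stability of the morphism metrics is preserved under the Chernysh surgery equivalence passing between $W|_{s_i}$ and $W|_{s_i}^\phi$. Both pieces are available: Lemma \ref{lem:stability-under-surgery} shows that right-stability is invariant under surgery equivalences, and Lemma \ref{lem:2-out-of-three} permits gluing of stable metrics along the trace, so no extra hypothesis beyond (i)--(v) is needed. Consequently, the sheaf-theoretic contractibility result in Appendix \ref{appendixsheaves} will have to be proven in a form that respects the right-stability decoration, but this adds no essential difficulty once one knows that admissible $\theta$-surgeries preserve stability.
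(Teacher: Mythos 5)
Your overall architecture --- pass to the flexible models, resolve $X_\bullet^{\kappa,l-1}$ by a (bi-)semi-simplicial sheaf of surgery data whose forgetful augmentation is a weak equivalence via the flag-complex criterion of Appendix \ref{appendixsheaves}, and use a second map that performs the surgeries --- is the paper's strategy (diagram \eqref{proof-thm-surgbelow:overview}). But two of your key steps do not work as stated. First, the surgery data are misspecified. To make $\ell: W|_{s}\to B$ into an $(l+1)$-connected map one kills $\pi_{l+1}(B,W|_{s})$, whose elements are represented by maps of pairs $(D^{l+1},S^{l})$; the relevant surgery is therefore along embedded copies of $S^{l}\times \bR^{d-l-1}$ in the $(d-1)$-manifold $W|_{s}$, together with a $\theta$-structure on a thickened $D^{l+1}$ recording the nullhomotopy in $B$ --- not along $S^{l+1}\times\bR^{d-l-2}$, whose core cannot represent a class of $\pi_{l+1}(B,W|_{s})$ and whose codimension $d-l-2$ is not the one governed by hypothesis (v). More seriously, the data cannot be confined to a single cross-section: in Definition \ref{defn:Xpsc-1} (following \cite{GRW}) the embeddings $e_i$ extend over the whole height range $(a_i-\eps_i,a_p+\eps_p)$ and are compatible with the height function over every higher collar, and it is exactly this ``length'' that allows the surgery to be realised as a one-parameter family of elements of $X_\bullet^{\kappa,l-1}$ (the family $\bar{\cP}_t$) modifying all cross-sections above $a_i$ at once.

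Second, and this is the fatal gap, the claim that the ``perform the surgery'' map $\norm{Y_{\bullet,\bullet}}\to \norm{X_\bullet^{\kappa,l,\psc}}$ is a weak equivalence ``by a cofinality argument'' because empty surgery data provide a section is unfounded: a section does not make a map a weak equivalence, and proving that this map is one is essentially equivalent to the theorem. What the argument actually requires is a homotopy $\mathscr{S}$ \emph{inside} $\norm{X_\bullet^{\kappa,l-1}}$ from the forgetful augmentation to the composite (perform surgery, then include), constant on the subspace of empty surgery data; the theorem then follows formally. Constructing $\mathscr{S}$ --- gluing in the standard family $\bar{\cP}_t$ equipped with the psc metrics of Lemma \ref{lem:psc-metric-on-standard-family}, verifying that product-type intervals survive (this is why Definition \ref{defn:Xpsc-2} imposes $\eps_i\geq 2(p+2)$) and that the inserted metrics are right-stable (Lemma \ref{lem:psc-metric-on-standard-family}(iv) together with Lemma \ref{lem:2-out-of-three}, rather than Lemma \ref{lem:stability-under-surgery}) --- is the technical heart of the proof and is absent from your proposal. ``Inserting a short trace cobordism'' at $s_i$ does not supply this homotopy: to keep the higher part of the long manifold attached one would have to insert the reversed trace as well, which undoes the surgery on all cross-sections above $s_i$ and changes the diffeomorphism type of the underlying manifold, so this operation does not even obviously yield a map to $X_\bullet^{\kappa,l}$ compatible with the inclusion into $X_\bullet^{\kappa,l-1}$.
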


\subsection{Proof of Theorem \ref{thm:SurgBelowMid}}

The proof of Theorem \ref{thm:SurgBelowMid} follows the same strate\-gy as the proof of Theorem 4.1 of \cite{GRW}, and we have written it to be as parallel as possible. In order to avoid repetitions of large portions of that paper, we assume full familiarity with the relevant parts of \cite{GRW} for the rest of this section. There are two cases of Theorem \ref{thm:SurgBelowMid}: one for right stable metrics and one for arbitrary psc metrics. We present the proof in the case of right stable metrics; the other case has the same proof, except that the condition of right-stability is dropped whenever it occurs. The formal framework of the proof is summarised in the following diagram
\begin{equation}\label{proof-thm-surgbelow:overview}
\begin{gathered}
 \xymatrix{
\norm{ D_{\theta,\bullet}^{\kappa,l,\rst} } \ar[r]^-{\sim} \ar@{^(->}[d] & \norm{X_{\bullet}^{\kappa,l, \rst}} \ar@{^(->}[d] \\
 \norm{ D_{\theta,\bullet,0}^{\kappa,l,\rst}} \ar[ur]^-{\mathscr{S}(1,-)} \ar[r]_-{\sim}^-{\mathscr{S}(0,-)}  & \norm{X_{\bullet}^{\kappa,l-1, \rst}},
 }
\end{gathered}
\end{equation}
and we first give an approximate explanation of the terms of this diagram.
\begin{enumerate}[(i)]
 \item The spaces $\norm{X_{\bullet}^{\kappa,l, \rst}}$ and $\norm{X_{\bullet}^{\kappa,l-1,\rst}}$ in the right hand column are the flexible models for $B \PCob_\theta^{\kappa,l,\rst}$ and $B \PCob_\theta^{\kappa,l-1,\rst}$ introduced in Definition \ref{defn:x-space} and the right vertical map is given by inclusion. 
 \item The space $\norm{ D_{\theta,\bullet}^{\kappa,l,\rst} }$ has been introduced in Definition \ref{defn:d-space}; the upper horizontal map is given by inclusion and is a weak equivalence by Lemma \ref{lem:x-space-equivalent-to-d-space}.
 \item The space $\norm{ D_{\theta,\bullet,0}^{\kappa,l,\rst} }$ is a ``space of surgery data'' or better a ``space of psc manifolds equipped with surgery data''. The left vertical map is defined by including empty surgery data. The lower horizontal map forgets the surgery data, and is a weak equivalence, by Theorem \ref{thm:Contractibility} below. 
 \item Finally, we construct a surgery homotopy $\mathscr{S}: [0,1] \times  \norm{ D_{\theta,\bullet,0}^{\kappa,l,\rst}} \to \norm{X_{\bullet}^{\kappa,l-1,\rst}}$, such that $\mathscr{S}(0,-)$ is the forgetful map and such that $\mathscr{S}(1,-)$ factors as indicated. Moreover, the construction is such that $\mathscr{S}$ is the constant homotopy on the subspace $\norm{ D_{\theta,\bullet}^{\kappa,l,\rst} }$.
\end{enumerate}
These properties together imply that the right vertical map is a weak equivalence; combining this with Lemma \ref{lem:x-space-equivalent-to-d-space} and \eqref{eqn:poset-model-for-psccobcat} finishes the proof of Theorem \ref{thm:SurgBelowMid}.

\vspace{2ex}

In the following, we will equip the ``standard family'' that was used in \cite{GRW} to construct the surgery homotopy with suitable psc metrics. Then we will define the space of surgery data and prove that the map which forgets surgery data is a weak equivalence. Finally (and this will be almost the same as in \cite{GRW}), we show how to implement the surgery homotopy.

\subsubsection*{The standard family}

We begin by recalling some definitions and constructions from Section 4.2 of \cite{GRW}. Let $K \subset \bR^{d-l} \times \bR^{l+1}$ be the manifold defined in that section. Its essential features are summarised in Figure \ref{fig:1} below. It is a $d$-dimensional submanifold of $\bR \times \bR^{d-l-1} \times D^{l+1}$ which outside the set $D_{\sqrt{2}}^{d-l} \times D^{l+1}$ coincides with $\bR^{d-l} \times S^l$.

\begin{figure}[htb!]
\begin{center}
\includegraphics{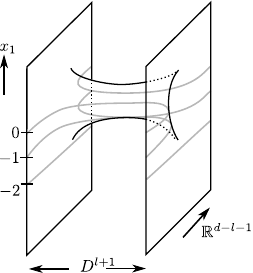}
\caption{The manifold $K$ with $d=2$ and $l=0$.}\label{fig:1}
\end{center}
\end{figure}

The projection $x_1 : K \to \bR$ onto the first coordinate has exactly two critical points on $K$, with critical values $\pm 1$ and of indices $(l+1)$ and $(d-l-1)$. We write $K|_I := x_1^{-1}(I) \cap K$ for a subset $I \subset \bR$. For the proof of Theorem \ref{thm:SurgBelowMid}, only the part $K\vert_{(-6,0)}$ is relevant. 
We fix an isotopy $\lambda_s : \bR \to \bR$ of diffeomorphism such that
\begin{enumerate}[(i)]
 \item $\lambda_0 = \id$, 
 \item $\lambda_s|_{(-\infty,-5)} = \id$,
 \item $\lambda_1 (-4)= -1$,
 \item $\lambda_1 (-3)= -\frac{1}{2}$,
 \item $\lambda_1 (-2) = - \frac{1}{4}$,
 \item $\lambda_s \leq \lambda_{s'}$ for $s \leq s'$,
 \item $\lambda'_t \equiv 1$ near $\lambda_t^{-1}(-1)$.
\end{enumerate}
The restriction of $\lambda_s\vert_{(-6,-2)} : (-6,-2) \to (-6,0)$ is an embedding and serves the same purpose as the isotopy with the same name in \cite[p.\ 306]{GRW} (it is convenient for us to have $\lambda_s$ defined on all of $\bR$ and to satisfy conditions (iv)--(vii)). As in \cite{GRW}, we define a 1-parameter family of submanifolds
\[
 \cP_t := (\lambda_t\vert_{(-6,-2)} \times \id_{\bR^d})^{-1}(K) \subset (-6,-2) \times \bR^{d-l-1} \times \bR^{l+1}.
\]
From a given tangential structure $\ell: TK|_{(-6,0)} \to \gamma_\theta$, a tangential structure on the 1-parameter family $\cP_t$ of manifolds is constructed in \cite[p.\ 306]{GRW}. There is no need for us to keep track of the tangential structure in this section: the surgery move that we will construct will be the same as the one in \cite{GRW} on the underlying manifolds, and so all issues about tangential structures are already addressed in that paper. We choose a 1-parameter family of increasing diffeomorphisms
\[
 \varphi_t (a_i, \eps_i, a_p, \eps_p): \bR \lra \bR,
\]
depending smoothly on $(a_i, \eps_i, a_p,\eps_p)\in \bR^4$ with $\eps_i,\eps_p>0$ and $a_i+\eps_i < a_{p}-\eps_{p}$, such that $\varphi_t(a_i, \eps_i, a_p, \eps_p)$ maps
\begin{enumerate}[(i)]
\item $(-6,-2)$ diffeomorphically onto $(a_i-\eps_i, a_p + \eps_p)$, 
\item $-4$ to $a_i - \frac{1}{2} \eps_i$, and $-5$ to $a_i - \frac{3}{4} \eps_i$,
\item $\varphi_t(a_i,\eps_i,a_p,\eps_p)' \equiv 1$ near $\lambda_t^{-1}(-1)$
\end{enumerate}
(the only difference to the maps with the same name in \cite[p. 308]{GRW} is that these are extended to diffeomorphisms of the whole real line, and we have introduced the last condition, which is convenient for us, and hence introduced a $t$-dependence). Now we define, depending on the parameters $a_i, a_p, \eps_i, \eps_p$ and $t \in [0,1]$, the manifold 
\[
 \bar{\cP}_t := (\varphi_t (a_i, \eps_i, a_p, \eps_p)\vert_{(a_i-\eps_i, a_p + \eps_p)} \times \id_{\bR^d}) (\cP_t) \subset (a_i-\eps_i, a_p + \eps_p) \times \bR^{d-l-1} \times \bR^{l+1},
\]
which is diffeomorphic to $\cP_t$, but with height function rescaled by $\varphi_t(a_i, \eps_i, a_p, \eps_p)$. The manifolds $\cP_t$ and $\bar{\cP}_t$ are the same as to the manifolds with the same name in Sections 4.2 and 4.4 of \cite{GRW}, and in particular Proposition 4.2 of \cite{GRW} holds for these manifolds. 

We will now equip the manifold $\bar{\cP}_t$ with suitable psc metrics. We first fix a $1$-$2$-torpedo metric $g_\tor^{d-l-1}$ on $\bR^{d-l-1}$ (see Definition \ref{defn:torpedo-metric}). 

\begin{lem}\label{lem:psc-metric-on-standard-family}
There exist psc metrics $h=h_{(a_i, \eps_i,a_p,\eps_p,t)}$ on $\bar{\cP}_t=\bar{\cP}_t (a_i, \eps_i, a_p, \eps_p)$, depending smoothly on the parameters $a_i, \eps_i, a_p, \eps_p$ and $t$, with the following properties:
\begin{enumerate}[(i)]
\item In the region 
$$\bar{\cP}_t \setminus (a_i-\eps_i, a_p+\eps_p) \times D_2^{d-l-1} \times S^{l} = (a_i-\eps_i, a_p+\eps_p) \times (\bR^{d-l-1} \setminus D_2^{d-l-1}) \times S^{l}$$
the metric $h$ is equal to $dx_1^2 + g_\tor^{d-l-1} + g_\round^{l}$.
\item In a neighbourhood of the region 
$$\bar{\cP}_t \cap ((a_i-\eps_i, a_i- \tfrac{3}{4} \eps_i] \times \bR^{d-l-1} \times D^{l+1}) = (a_i-\eps_i, a_i- \tfrac{3}{4} \eps_i] \times \bR^{d-l-1} \times S^l$$
the metric $h$ is equal to $dx_1^2 + g_\tor^{d-l-1} + g_\round^{l}$.
\item $h$ is of product type with respect to the height function $x_1$ except on an interval of length\footnote{This condition is vacuous unless $a_p+\eps_p-(a_i-\eps_i)>1$ and we are only interested in this construction if $\eps_i$ is large enough.} $1$.
\item If $s_0 < s_1 \in (a_i-\eps_i, a_p+ \eps_p)$ are two points over which $\bar{\cP}_t$ and $h$ are of product type, the restriction of $h$ to the manifold $U:=\bar{\cP_t} \cap ([s_0,s_1] \times D_3^{d-l-1} \times \bR^{l+1})$ has the following property: if $(M, g_M)$ is any compact $(d-1)$-dimensional psc manifold and $\phi : U\vert_{s_0} \hookrightarrow M$ is an embedding such that $\phi^*(g_M) = h\vert_{U\vert_{s_0}}$, then writing $L := M \setminus \phi(U\vert_{s_0})$ the psc cobordism
$$(U \cup_{[s_0,s_1] \times \partial D_3^{d-l-1} \times S^l} ([s_0,s_1] \times L), h\vert_U \cup (dx_1^2 + g_M\vert_L))$$
is right-stable.
\end{enumerate}
\end{lem}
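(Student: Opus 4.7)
The strategy is to construct a single psc metric $h_K$ on $K\vert_{(-6,0)}$ once and for all, and obtain the family $h_{(a_i,\eps_i,a_p,\eps_p,t)}$ by pulling $h_K$ back along the product diffeomorphisms of the $x_1$-direction that were used to define $\bar{\cP}_t$ in the first place. Condition~(iv) is the only non-routine item.

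\emph{Step 1 (construction of $h_K$).} Starting from the product metric $dx_1^2 + g_\tor^{d-l-1} + g_\round^l$ on the complement of $B^{d-l}_{\sqrt{2}} \times D^{l+1}$ in $K$, I would build $h_K$ by a Gromov--Lawson--Chernysh surgery (Theorem \ref{thm:chernysh-theorem}) across the two critical points of $x_1\vert_K$, which have indices $l+1$ and $d-l-1$. Both surgeries have normal bundle codimension $d-l-1 \geq 3$ by hypothesis~(v), so the construction applies. By choosing the Gromov--Lawson bending parameters small I would arrange that $h_K$ remains of product type $dx_1^2 + (\cdot)$ with respect to $x_1$ outside an interval of length~$1$ around the two critical values, and agrees with $dx_1^2 + g_\tor^{d-l-1} + g_\round^l$ on $K \cap ((-6,-2] \times \bR^{d-l-1} \times S^l)$ and on the entire ``cylindrical end'' $(\bR \setminus B^{d-l-1}_2) \times S^l$.

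\emph{Step 2 (pulling back to $\bar{\cP}_t$).} The family of diffeomorphisms $\lambda_t$ and $\varphi_t(a_i,\eps_i,a_p,\eps_p)$ used to define $\bar{\cP}_t$ act only on the $x_1$-coordinate, so pulling $h_K$ back via $(\varphi_t \circ \lambda_t)\times \id$ gives a smooth family $h_{(a_i,\eps_i,a_p,\eps_p,t)}$ of psc metrics (positivity of scalar curvature is preserved because the pullback only rescales the $x_1$-direction). Properties~(i) and~(ii) are then immediate from the conditions $\lambda_t|_{(-\infty,-5)}=\id$ and $\varphi_t' \equiv 1$ near $\lambda_t^{-1}(-1)$: on the regions specified in (i)~and~(ii) the pullback acts as the identity and so the metric remains in its original product form. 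Property~(iii) follows because the non--product--type region of $h_K$ sits in a compact subinterval of $(-2,2)$ and is carried by the diffeomorphism to a single interval whose length can be controlled by a preliminary rescaling (and is in any case bounded by~$1$ by the initial choice of bending parameters).

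\emph{Step 3 (right stability, the main point).} Given $s_0 < s_1$ over which both $\bar{\cP}_t$ and $h$ are of product type, let $U$ and the glued cobordism
\[
W := U \cup_{[s_0,s_1]\times\partial D_3^{d-l-1}\times S^l} ([s_0,s_1]\times L) : M \leadsto M'
\]
be as in the statement. The only handles of $W$ relative to $M$ are those of $K$ lying in $x_1^{-1}((s_0,s_1))$, which are handles of index $l+1$ and/or $d-l-1$. The pair $(W, M')$ is therefore built by dual handles of indices $d-l-1$ and/or $l+1$; under hypotheses~(i) and~(v) both numbers are at least~$\max(2, 3)=3$, hence $(W,M')$ is $2$-connected (for the boundary-case where only one critical value lies in $(s_0,s_1)$, the analysis is the same; and if neither does, $W$ is a cylinder and any psc metric on it is right-stable). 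By Theorem~\ref{thm:StabMetrics}(i), a right-stable psc metric on $W$ with the prescribed boundary conditions exists.

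The hard part is to ensure that the \emph{specific} metric coming from $h_K$ is itself right-stable. For this I would argue as in \cite{ERWpsc2}: the construction of Gromov--Lawson--Chernysh produces metrics that lie in the path component of a right-stable metric, and by Lemma~\ref{lem:stability-homotopy-invariant} right-stability is constant on path components of $\Riem^+(W)_{g_M, g_{M'}}$. Concretely, I would prove this by exhibiting a path in $\Riem^+(W)_{g_M, g_{M'}}$ from the glued metric to a standard Gromov--Lawson trace metric of the type used in the proof of Theorem~\ref{thm:StabMetrics}(i), using Theorem~\ref{thm:improved-chernysh-theorem} and the fact that all metrics involved agree on the cylinder part $[s_0,s_1]\times L$. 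The main obstacle in executing the plan is precisely this verification, because the construction of $h_K$ has to be arranged from the start to be compatible with the Gromov--Lawson normal form used in \cite{ERWpsc2}; practically this means choosing the torpedo parameters in Step~1 in coordination with the standard ``right-stable'' trace metric, so that the comparison path in $\Riem^+(W)_{g_M,g_{M'}}$ is given by a simple linear interpolation of the bending profile rather than a more delicate argument.
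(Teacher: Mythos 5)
Your overall architecture --- build one psc metric on a fixed model and transport it to the family $\bar{\cP}_t(a_i,\eps_i,a_p,\eps_p)$ by the reparametrising diffeomorphisms of the $x_1$-coordinate --- matches the paper's (the paper pulls back along a family of embeddings $\eta_t$ into an auxiliary manifold $\tilde{K}$ rather than pulling back a metric on $K|_{(-6,0)}$ itself, but that difference is cosmetic, and your treatment of (i)--(iii) is fine). The problem is Step 3: property (iv) is the entire content of the lemma, and you have not proved it. You reduce it to showing that your glued metric lies in the path component of ``a standard Gromov--Lawson trace metric of the type used in the proof of Theorem \ref{thm:StabMetrics}(i)'' together with the assertion that such a trace metric is right-stable, and you yourself flag this comparison as the main obstacle. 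That obstacle is exactly where the mathematics lives. The paper avoids it by not producing the metric through an ad hoc Gromov--Lawson--Chernysh deformation at all: it realises $\tilde{K}$ (the relevant part of $K$, with the top straightened out) as the long trace of a single surgery of index $l+1$ and then quotes Lemma 3.4.1 of \cite{ERWpsc2}, which supplies a psc metric on such a trace for which the analogue of (iv) is part of the conclusion. Right-stability is thus an input packaged into the metric from the start, not a property to be verified afterwards by an isotopy argument.

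Two further concrete problems with Step 3 as written. First, Theorem \ref{thm:StabMetrics} requires $d\geq 6$, whereas the hypotheses in force here (those of Theorem \ref{thm:SurgBelowMid}) only guarantee $2(l+1)<d$ and $d-l-1\geq 3$, so for instance $d=5$, $l=1$ is allowed and your appeal to Theorem \ref{thm:StabMetrics}(i) is unavailable; the paper's citation of Lemma 3.4.1 of \cite{ERWpsc2} needs only the codimension condition $d-l-1\geq 3$. Second, Theorem \ref{thm:StabMetrics}(i) produces a right-stable metric for \emph{some} outgoing boundary condition, not a prescribed one, so even the existence statement you invoke does not follow from it as stated. (A smaller slip: only the index-$(l+1)$ critical point at height $-1$ lies in $K|_{(-6,0)}$; the index-$(d-l-1)$ critical point at height $+1$ plays no role in this lemma, so your Step 1 should involve a single surgery, not two.)
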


\begin{proof}
For the construction of the psc metric $h$, it is convenient for us to consider the variant $\tilde{K}$ of $K$ depicted in Figure \ref{fig:5} (a). We then shall construct a psc metric $g_0$ on $\tilde{K}$ and a 1-parameter family of embeddings 
\[
 \eta_t: \bar{\cP}_t (a_i, \eps_i, a_p, \eps_p) \lra \tilde{K},
\]
and take $h:= \eta_t^* g_0$. 

For the construction of $\tilde{K}$, choose a smooth map $\gamma : \bR \to \bR$ which is equal to $0$ on $[0,\infty)$, is equal to the identity on $(-\infty,-\tfrac{1}{4}]$, and has $\gamma'>0$ on $(-\infty,0)$. Set 
$$\tilde{K} := (\gamma \times \id_{\bR^{d}} )^{-1}(K).$$
This manifold is depicted in Figure \ref{fig:5} (a), satisfies 
\begin{align*}
\tilde{K}|_{(-\infty,-\frac{1}{4})} &= K_{(-\infty,-\frac{1}{4})},\\
\tilde{K}|_{[0, \infty)} &= K|_{0} \times [0,\infty),\\
\tilde{K}|_{(-\infty,-2]} &= (-\infty,-2]  \times  \bR^{d-1-l}\times S^l,
\end{align*}
the map 
$$
\gamma\vert_{(-6,0)} \times \id_{\bR^{d}}  : K\vert_{(-6,0)} \lra \tilde{K}\vert_{(-6,0)}
$$ 
is a diffeomorphism, and the only critical point of the height function $x_1 : \tilde{K} \to \bR$ has critical value $-1$ and index $(l+1)$. This manifold is the long trace of a surgery of index $(l+1)$ on a $(d-1)$-manifold.

\begin{figure}[htb!]
\begin{center}
\includegraphics{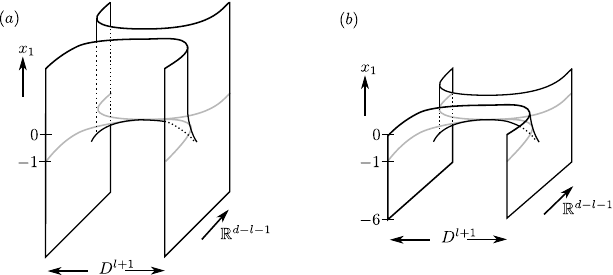}
\caption{The manifolds (a) $\tilde{K}$, and (b) $K\vert_{(-6,0)}$, with $d=2$ and $l=0$.}\label{fig:5}
\end{center}
\end{figure}

By Lemma 3.4.1 of \cite{ERWpsc2} (here the condition $d-l-1 \geq 3$ is used) we may choose a psc metric $g_0$ on $\tilde{K}$ such that
\begin{enumerate}[(i)]
\item $g_0$ is of product type with respect to $x_1$ except on $\tilde{K}|_{[-\frac{3}{2},-\frac{1}{2}]}$,
\item $g_0$ is equal to $dx_1^2 + g_\tor^{d-l-1} + g_\round^{l}$ outside the region $\bR  \times D_2^{d-l-1} \times S^l$,
\item if $t_0 <t_1 \in \bR \setminus [-\frac{3}{2},-\frac{1}{2}]$, then the metric $g_0$ restricted to
$$\tilde{K} \cap ([t_0, t_1] \times D_3^{d-l-1} \times \bR^{l+1})$$
satisfies the analogue of condition (iv) (i.e.\ it yields a right-stable metric whenever it is glued in as the trace of a surgery on a psc manifold).
\end{enumerate}

Let $\nu_t=\nu_{(t,a_i,\eps_i,a_p,\eps_p)}$ be the composition
\[
\bar{\cP}_t (a_i, \eps_i, a_p, \eps_p) \xrightarrow{(\varphi_t (a_i, \eps_i, a_p, \eps_p)\vert_{(a_i-\eps_i, a_p + \eps_p)} \times \id_{\bR^d})^{-1}} \cP_t \xrightarrow{\lambda_t\vert_{(-6,-2)} \times \id_{\bR^d}} K|_{(-6,0)}
\]
which is an embedding. We will next construct a 1-parameter family of embeddings 
\begin{equation}\label{eq:EmbMu}
\mu_t = \mu_{(t,a_i,\eps_i,a_p,\eps_p)} : K|_{(-6,0)} \lra \tilde{K} 
\end{equation}
and put $\eta_t := \nu_t \circ \mu_t$. To this end, let $m_t: \bR \to \bR$ be the diffeomorphism (depending also on $a_i, \eps_i$ and $a_p,\eps_p)$ with $m_t (-1)=-1$ and 
\[
m'_t = (\varphi_t(a_i,\eps_i,a_p,\eps_p)\circ\lambda_t^{-1})'.
\]
By the properties of the functions $\lambda_t$ and $\varphi_t(a_i,\eps_i,a_p,\eps_p)$, we have that $m_t$ is the identity near $-1$. There are, depending smoothly on the data $(a_i,\eps_i,a_p,\eps_p)$, embeddings $\mu_t$ as in \eqref{eq:EmbMu} which
\begin{enumerate}[(i)]
\item are embeddings over the function $m_t: \bR \to \bR$, and
\item satisfy $\mu_t=m_t \times \id_{\bR^{d-l-1}\times S^l}$ outside $(-2,0)\times D_2^{d-l-1} \times S^l$.
\end{enumerate}
These are chosen to be the identity near height $-1$ (where $m_t$ is the identity and near which the manifolds $K\vert_{(-6,0)}$ and $\tilde{K}$ are equal, cf. Figure \ref{fig:5}), and may be constructed using flows of the vector field dual to $dx_1$ elsewhere. 

With all these choices being made, the metric $h:=(\nu_t \circ \mu_t)^* g_0$ satisfies all requirements.
\end{proof}

\subsubsection*{Surgery data}

In this section we construct a psc version of the semi-simplicial space of surgery data from Section 4.3 of \cite{GRW}. The key property is stated as Theorem \ref{thm:Contractibility} below, which is the analogue of Theorem 4.5 of \cite{GRW}. In order to facilitate the proof of that result, we present the definition in two steps. This allows us to deal with the topological and the metric aspects of the proof separately. The first part will be almost the same as the proof in \cite{GRW}, but adapted to the context of sheaves.

\begin{defn}\label{defn:Xpsc-1}
Fix an infinite set $\Omega$. Let $X$ be a test manifold and let $\Gamma=(a,\epsilon,(W, \ell_W, g_W)) \in D^{\kappa,l,\rst}_{\theta}(\bR^N)_p(X)$. Define the set $\bar{Y}^{\rst}_0(\Gamma)$ to consist of tuples $(\Lambda, \delta, e, \ell)$ where 
\begin{enumerate}[(i)]
\item $\Lambda \subset \Omega$ is a finite subset,
\item $\delta: \Lambda \to [p]$ is a function,
\item $e: \Lambda \times (-6,-2) \times \bR^{d-l-1} \times D^{l+1} \times X \to \bR \times (-1,1)^N \times X$ is a smooth embedding over $X$ and
\item $\ell: T_v (\Lambda \times K|_{(-6,0)} \times X) \to \gamma_\theta$ is a bundle map,
\end{enumerate}
such that after restriction to each point (which will not be explicitly denoted) of $X$ the conditions (i)--(iv) listed below are satisfied. 

As a matter of notation, write $\Lambda_i := \delta^{-1}(i)$ and 
\[
 e_i : \Lambda_i \times (a_i-\eps_i, a_p + \eps_p) \times \bR^{d-l-1} \times D^{l+1} \lra \bR \times (-1,1)^N
\]
for the embedding $e|_{\Lambda_i} \circ (\id_{\Lambda_i} \times \varphi (a_i, \eps_i,a_p,\eps_p)^{-1} \times \id_{\bR^{d-l-1} \times D^{l+1}})$. 

Now we require the following conditions:
\begin{enumerate}[(i)]
\item $e^{-1}(W) = \Lambda \times (-6,-2) \times \bR^{d-l-1} \times S^l $ and we write $\partial e$ for the embedding $\Lambda \times (-6,-2) \times \bR^{d-l-1} \times S^l \to W$ obtained by restriction.
\item For $t \in \cup_{k=i}^p (a_k-\eps_k,a_k+\eps_k) $, we have $(x_1 \circ e_i)^{-1} (t)= \Lambda_i \times \{t\} \times \bR^{d-l-1} \times D^{l+1}$.
\item The composition $\ell_{W} \circ D \partial e : T(\Lambda \times K|_{(-6,-2)}) \to \gamma_\theta$ coincides with the restriction of $\ell$.
\item If $\ell_i$ denotes the restriction of $\ell$ to $T(\Lambda_i \times K|_{(-6,0)})$, then the datum $(e_i, \ell_i)$ is enough to perform $\theta$-surgery on $M_i := W|_{a_i}$. The resulting $\theta$-manifold is denoted $\bar{M_i}$ and we require that its structure map $\bar{M_i} \to B$ be $(l+1)$-connected.
\end{enumerate}
We define a semi-simplicial sheaf $\bar{D}_{\theta}^{\kappa,l,\rst}(\bR^N)_{\bullet, 0}$ by
$$\bar{D}_{\theta}^{\kappa,l,\rst}(\bR^N)_{p,0}(X) = \{(\Gamma,y) \, | \, \Gamma \in D^{\kappa,l,\rst}_{\theta}(\bR^N)_p(X), y \in \bar{Y}^{\rst}_0(\Gamma)\}.$$
The $i$th face map forgets the data $e_i$, $a_i$ and $\eps_i$. There is a forgetful map 
$$\bar{D}_{\theta}^{\kappa,l,\rst}(\bR^N)_{\bullet, 0} \lra D_{\theta}^{\kappa,l-1,\rst}(\bR^N)_{\bullet}.$$
\end{defn}

This definition is entirely parallel to Definition 4.3 of \cite{GRW}, adapted to sheaves, with two small differences. Firstly the underlying manifolds carry psc metrics, but as there is no condition that couples the psc metric with the surgery data this is a very mild change. Secondly in \cite{GRW} the last condition is replaced by the requirement that after performing the surgeries, the resulting structure map $\bar{M_i} \to B$ is injective on homotopy groups in degrees $\leq l$. We will explain below how to deal with this difference.

\begin{figure}[htb!]
\begin{center}
\includegraphics{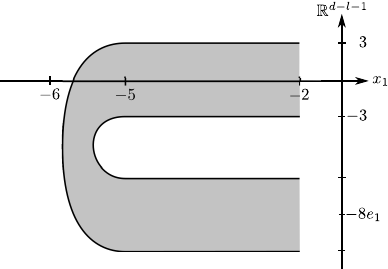}
\caption{The manifold $U$, with the $S^l$-direction not drawn.}\label{fig:picture-of-U}
\end{center}
\end{figure}

In order to be able to do psc surgery we will really be interested in a sub-semi-simplicial sheaf $D_{\theta}^{\kappa,l,\rst}(\bR^N)_{\bullet,0} \subset \bar{D}_{\theta}^{\kappa,l,\rst}(\bR^N)_{\bullet,0}$ in which the psc metrics are coupled to the surgery data in a precise way. In order to define this condition we need another technical detail. We fix an embedding $\alpha: (-10,-2) \times D_3^{d-l-1} \to (-6,-2) \times \bR^{d-l-1}$, such that
\begin{enumerate}[(i)]
 \item for $(t,y) \in (-5,-2) \times D_3^{d-l-1}$, we have $\alpha (t,y)= (t,y)$,
 \item for $(t,y) \in (-10,-7) \times D_3^{d-l-1}$, we have $\alpha (t,y)= (-t-12, -8 e_1 - \bar{y})$, where $\bar{y}:=(-y_1,y_2, y_3,  \ldots,y_{d-l-1})$,
 \item if $t \in (-7,-5)$, then $\alpha (t,y) \in (-6,-5) \times \bR^{d-l-1}$. 
\end{enumerate}
The image of $\alpha \times \id_{S^l}$ is denoted $U \subset (-6,-2)\times \bR^{d-l-1} \times S^l$, and is depicted in Figure \ref{fig:picture-of-U}. 

We have to reparametrise $\alpha$ by $\varphi(a_i,\eps_i,a_p,\eps_p)$, in the following way. Fix diffeomorphisms $\psi(a_i,\eps_i,a_p,\eps_p): (2a_i-2\eps_i-a_p-\eps_p,a_p+\eps_p) \cong (-10,-2)$, smoothly depending on these data, such that
\[
\psi(a_i,\eps_i,a_p,\eps_p)(x)=
\begin{cases}
\varphi (a_i, \eps_i,a_p,\eps_p)^{-1} (x) & a_i - \frac{3}{4}\epsilon_i \leq x \leq a_p+\epsilon_p\\
-6 & x = a_i - \eps_i \}\\
\end{cases}
\]
and such that the function
\[
 x \mapsto \psi(a_i,\eps_i,a_p,\eps_p)(x-(a_i-\eps_i))+6
\]
is odd. We get a new embedding 
\[
\alpha': (2a_i-a_p-\eps_p-2 \eps_i,a_p+\eps_p) \times D_3^{d-l-1} \to  (a_i-\eps_i,a_p+\eps_p) \times \bR^{d-l-1} 
\]
given by
\[
\alpha' (t,y) := (\varphi (a_i, \eps_i, a_p, \eps_p) \times \id_{\bR^{d-l-1}})\circ  \alpha \circ (\psi(a_i, \eps_i, a_p, \eps_p) \times \id_{D_3^{d-l-1}})
\]
which has the same qualitative properties as $\alpha$. This embedding is used in the following definition of the space of psc surgery data. 

\begin{defn}\label{defn:Xpsc-2}
We define $D_{\theta}^{\rst,\kappa,l}(\bR^N)_{p,0}(X) \subset \bar{D}_{\theta}^{\rst,\kappa,l}(\bR^N)_{p,0}(X)$ as the subset of those tuples $(a,\epsilon,(W, \ell_W, g_W), \Lambda, \delta, e,\ell)$ such that after restricting to each point of $X$ we have
\begin{enumerate}[(i)]
\item $\eps_i \geq 2 (p+2)$, and
\item $(\partial e_i \circ (\id_{\Lambda_i} \times \alpha' \times \id_{S^l}))^* g_W = dx_1^2 + g_\tor^{d-l-1} + g_\round^l$ on $\Lambda_i \times (2a_i-2\eps_i-a_p-\eps_p,a_p+\eps_p)\times D^{d-l-1}_3 \times S^l$
\end{enumerate}
for all $i \in [p]$.
\end{defn}

For the purpose of performing surgeries, we only need (ii) to be satisfied on the piece $\Lambda_i \times (a_i-\frac{3}{4}\eps_i , a_p + \eps_p) \times D^{d-l-1}_3 \times S^l$, but for the proof of the following theorem the whole of $\Lambda_i \times (2a_i-2\eps_i-a_p-\eps_p,a_p+\eps_p)\times D^{d-l-1}_3 \times S^l$ will be used. The reason is as follows: working over a single point (i.e.\ $X=\ast$) the embedding
$$\partial e_i \circ (\id_{\Lambda_i} \times \alpha' \times \id_{S^l}) : \Lambda_i \times (2a_i-a_p-\eps_p-2 \eps_i,a_p+\eps_p) \times D_3^{d-l-1} \times S^l \lra W$$
intersects each cobordism $W\vert_{[t_0,t_1]}$ with $t_0, t_1 \in \cup_{k=1}^p (a_k - \eps_k, a_k+\eps_k)$ in a finite set of thickened compact and neatly embedded submanifolds of codimension $d-l-1 \geq 3$ with collars, so by Theorem \ref{thm:chernysh-theorem} there is no homotopical cost to deforming the psc metric on $W$ to be standard on the image of $\partial e_i \circ (\id_{\Lambda_i} \times \alpha' \times \id_{S^l})$. (Note that this argument cannot be applied to $\partial e_i$ itself, as the intersection of the image this embedding with a $W\vert_{[t_0,t_1]}$ generally fails to have the above form as it has components which are thickened \emph{non-compact} submanifolds, to which Theorem \ref{thm:chernysh-theorem} does not apply.)

\begin{thm}\label{thm:Contractibility}
Under hypotheses (i)--(v) of Theorem \ref{thm:SurgBelowMid}, the forgetful map
\[
\norm{ D_{\theta}^{\kappa,l,\rst}(\bR^N)_{\bullet,0}} \lra \norm{ D_{\theta}^{\kappa,l-1,\rst}(\bR^N)_{\bullet}}
\]
is a weak homotopy equivalence. More precisely:
\begin{enumerate}[(i)]
\item If hypotheses (i)--(iv) of Theorem \ref{thm:SurgBelowMid} are satisfied, then the forgetful map
 \[
\norm{ \bar{D}_{\theta}^{\kappa,l, \rst}(\bR^N)_{\bullet,0}} \lra \norm{ D_{\theta}^{\kappa,l-1,\rst}(\bR^N)_{\bullet}}
\]
is a weak homotopy equivalence.
\item If hypothesis (v) of Theorem \ref{thm:SurgBelowMid} is satisfied, then the inclusion map 
\[
D_{\theta}^{\kappa,l,\rst}(\bR^N)_{p,0} \lra \bar{D}_{\theta}^{\kappa,l,\rst}(\bR^N)_{p,0}
\]
is a weak homotopy equivalence for each $p \geq 0$ and hence gives a weak homotopy equivalence after geometric realisation.
\end{enumerate}
\end{thm}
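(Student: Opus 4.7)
The plan is to follow the template of \cite[Theorem 4.5]{GRW}, with two essential adaptations: the connectivity of $\theta$-structure maps after surgery must be strengthened from ``injective on homotopy groups in degrees $\leq l$'' to ``$(l+1)$-connected'', and the psc metric must be coordinated with the surgery data as prescribed in Definition \ref{defn:Xpsc-2}. The two parts of the theorem address these two adaptations separately, and I will treat them in turn.

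For part (i), my approach will be to verify the surjectivity criterion of Proposition \ref{prop:SurjCrit} for the forgetful map $\bar{D}_\theta^{\kappa,l,\rst}(\bR^N)_{\bullet,0} \to D_\theta^{\kappa,l-1,\rst}(\bR^N)_\bullet$. Given a germ of surgery data defined over a neighbourhood of a closed subset $A \subset X$, the task is to extend it by a concordance over all of $X$. Since there is no coupling between surgery data and psc metric in $\bar{D}$, the psc metric $g_W$ will be a passive passenger, and the argument is essentially topological, duplicating that of \cite[Theorem 4.5]{GRW}. The only real novelty lies in upgrading the connectivity target, and here the finiteness hypothesis (iv) enters exactly as in Lemma \ref{lem:surgery-lemma-fabian-michael}: since $B$ is of type $(F_{l+1})$ and the structure maps $M_i \to B$ are already $l$-connected, the relative homotopy group $\pi_{l+1}(B, M_i)$ is finitely generated as a $\bZ[\pi_1(M_i)]$-module by \cite[\S 1]{WallFin}. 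A finite set of generators can then be represented by framed $\theta$-embeddings $S^l \times D^{d-l-1} \hookrightarrow M_i$ using the dimension hypothesis $2(l+1) < d$, and the hypothesis $l \leq d - \kappa - 2$ will ensure that the resulting surgeries do not destroy the $\kappa$-connectivity of morphisms.

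For part (ii), I will deform a given psc metric, in the presence of surgery data, to the prescribed standard form $dx_1^2 + g_\tor^{d-l-1} + g_\round^l$ on a neighbourhood of the image of $\partial e_i \circ (\id_{\Lambda_i} \times \alpha' \times \id_{S^l})$. A preliminary collar-stretching homotopy along the lines of Lemma \ref{collar-strtching-cobcat} handles the size condition $\eps_i \geq 2(p+2)$. The substantive step is the metric adjustment: the geometric point is that $\alpha$ was designed precisely so that, restricted to a slice $W|_{[t_0,t_1]}$ with $t_0 < t_1$ lying in the cylindrical intervals, the image of $\partial e_i \circ (\id \times \alpha' \times \id_{S^l})$ becomes a finite disjoint union of thickened \emph{compact} neatly embedded submanifolds of codimension $d-l-1 \geq 3$. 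This is exactly the situation covered by Chernysh's theorem (Theorem \ref{thm:chernysh-theorem}) in the parametrised form guaranteed by Theorem \ref{thm:improved-chernysh-theorem}, which supplies a concordance from the given psc metric to one that is standard on these submanifolds. Lemma \ref{lem:stability-under-surgery} will ensure that right-stability is preserved throughout, and assembling these slice-by-slice adjustments through Proposition \ref{prop:SurjCrit} will yield the claimed weak equivalence.

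The hardest part will be the parametrised bookkeeping in part (ii): given a germ of a standard metric on a neighbourhood of $A \subset X$, I must produce a \emph{single} global concordance over $X \times \bR$ extending this germ, rather than only a pointwise homotopy. This will require threading the Serre fibration property of Theorem \ref{thm:improved-chernysh-theorem} through the collar-stretching and Chernysh deformations, interpolating with the original metric outside a neighbourhood of $A$ while preserving positive scalar curvature throughout. The role of $\alpha$---bending the chart back on itself so as to compactify its intersection with slices---is what makes Chernysh's theorem applicable at all, since a direct use of $\partial e$ (whose slices are non-compact in the $\bR^{d-l-1}$ direction) would fall outside its scope.
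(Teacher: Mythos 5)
Your part (ii) is essentially the paper's argument: collar-stretching for the length condition, then the observation that $\alpha'$ compactifies the intersection of the surgery chart with each slice $W\vert_{[t_0,t_1]}$ so that Theorem \ref{thm:chernysh-theorem} applies, threaded through the fibration of Theorem \ref{thm:improved-chernysh-theorem} slice by slice. (The stability bookkeeping is via Lemma \ref{lem:stability-homotopy-invariant} rather than Lemma \ref{lem:stability-under-surgery}, since the metric is only changed by an isotopy, but that is cosmetic.)

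Part (i), however, has a genuine structural gap. You propose to apply the relative surjectivity criterion of Proposition \ref{prop:SurjCrit} directly to the forgetful map $\bar{D}_{\theta}^{\kappa,l,\rst}(\bR^N)_{\bullet,0} \to D_{\theta}^{\kappa,l-1,\rst}(\bR^N)_{\bullet}$, i.e.\ to extend a germ of surgery data near $A \subset X$ to a single family of surgery data over all of $X$ up to concordance rel $A$. This step fails: the map is not a levelwise weak equivalence of sheaves, because the space of surgery data on a fixed manifold is neither contractible nor even connected in general, and a family over $X$ need not admit any global choice of surgery data compatible with a prescribed germ (monodromy already obstructs this over $X = S^1$). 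This is precisely the difficulty that the proof of \cite[Theorem 4.5]{GRW} — and the paper's proof of part (i) — is built to circumvent. The actual argument introduces a bi-semi-simplicial resolution $\bar{D}_{\theta}^{\kappa,l,\rst}(\bR^N)_{\bullet,\bullet}$ whose $q$-simplices in the auxiliary direction are $(q+1)$-tuples of \emph{disjoint} surgery data, proves that the augmentation $\norm{\bar{D}_{\theta}^{\kappa,l,\rst}(\bR^N)_{p,\bullet}} \to D_{\theta}^{\kappa,l-1,\rst}(\bR^N)_{p}$ is an equivalence via the topological flag complex criterion (Theorem \ref{thm:flag-complextheoremn}, whose hypotheses are only \emph{local} existence of sections, pointwise surjectivity, and the disjointness/orthogonality condition), and then separately shows that the inclusion of $0$-simplices $\norm{\bar{D}_{\bullet,0}} \to \norm{\bar{D}_{\bullet,\bullet}}$ is an equivalence as in \cite[\S 6.1]{GRW}. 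Neither the resolution, nor the disjointness condition, nor the flag complex theorem appears in your proposal, so the local-to-global passage is missing. What you do get right is the one place where the hypotheses genuinely differ from \cite{GRW}: the use of Wall's finiteness results for spaces of type $(F_{l+1})$ to produce finitely many surgery data making $\bar{M}_i \to B$ become $(l+1)$-connected enters exactly in the verification of the pointwise-surjectivity hypothesis (the analogue of \cite[Proposition 6.18]{GRW}), as you describe.
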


The proof is similar in spirit as the proof of Theorem 4.5 of \cite{GRW}. The key technical tool in that paper is the simplicial technique of Theorem 6.2 of \cite{GRW}. As we work with sheaves instead of topological spaces, we cannot literally apply the simplicial technique described in Theorem 6.2 of \cite{GRW}. Instead, we will use the analogue Theorem \ref{thm:flag-complextheoremn} of that result in the context of sheaves, which is stated and proven in the appendix.

\begin{proof}[Proof of Theorem \ref{thm:Contractibility} (i)]
We follow very closely the proof of Theorem 4.5 of \cite{GRW} given in Sections 6.1 and 6.4 of \cite{GRW} and only comment on the few differences that arise. As in Section 4 of \cite{GRW}, the semi-simplicial sheaf $\bar{D}_{\theta}^{\kappa,l,\rst}(\bR^N)_{\bullet,0}$ forms part of a bi-semi-simplicial sheaf $\bar{D}_{\theta}^{\kappa,l,\rst}(\bR^N)_{\bullet,\bullet}$ which is augmented over $D_{\theta}^{\kappa,l-1,\rst}(\bR^N)_{\bullet}$ (in the second simplicial direction, one takes disjoint sets of surgery data; for details, the reader is referred to \cite[p. 309]{GRW}). One then proves that the maps
\[
 \norm{ \bar{D}_{\theta}^{\kappa,l,\rst}(\bR^N)_{\bullet,0}} \stackrel{}{\lra}  \norm{ \bar{D}_{\theta}^{\kappa,l,\rst}(\bR^N)_{\bullet,\bullet}} \stackrel{}{\lra} \norm{ D_{\theta}^{\kappa,l-1,\rst}(\bR^N)_{\bullet}}
\]
(the first is given by inclusion of $0$-simplices and the second is the augmentation/forgetful map) are both weak homotopy equivalences. 

The proof that the inclusion of $0$-simplices is a weak equivalence is exactly as in Section 6.1 of \cite{GRW}, straightforwardly adapted to sheaves, and uses the fact that the augmentation map is a weak equivalence.\footnote{In \cite{HP} it is observed that the argument in Section 6.1 of \cite{GRW} is not quite correct, as the map displayed at the bottom of page 327 of \cite{GRW} is not equal to the augmentation map. As explained in the proof of Lemma 6.2.5 of \cite{HP} this may be remedied by taking the \emph{thin} geometric realisation of the target of this map; in the context of sheaves one may do the same, and furthermore there are no point set considerations necessary to conclude that the fat and thin realisations are homotopy equivalent.}

The proof that the augmentation map is a weak equivalence differs a bit more from that in Section 6.4 of \cite{GRW}, though the general strategy is the same. We prove that $\norm{ \bar{D}_{\theta}^{\kappa,l,\rst}(\bR^N)_{p,\bullet}} \to  D_{\theta}^{\kappa,l-1,\rst}(\bR^N)_p$ is a weak equivalence for each $p$. To that end, we factor the augmentation map ${ \bar{D}_{\theta}^{\kappa,l,\rst}(\bR^N)_{p,\bullet}} \stackrel{}{\lra} { D_{\theta}^{\kappa,l-1,\rst}(\bR^N)_{p}}$ as 
$${ \bar{D}_{\theta}^{\kappa,l,\rst}(\bR^N)_{p,\bullet}} \stackrel{}{\lra} { \tilde{D}_{\theta}^{\kappa,l,\rst}(\bR^N)_{p,\bullet}} \stackrel{}{\lra} { D_{\theta}^{\kappa,l-1,\rst}(\bR^N)_{p}}.$$
Similarly to Definition 6.14 of \cite{GRW}, the intermediate sheaf $\tilde{D}_{\theta}^{\kappa,l,\rst}(\bR^N)_{p,\bullet}$ is defined like $\bar{D}_{\theta}^{\kappa,l,\rst}(\bR^N)_{p,\bullet}$, except that we only require $e$ in Definition \ref{defn:Xpsc-1} to be a smooth map which restricts to a smooth family of embeddings of an open neighbourhood of the core $X \times \Lambda \times (-6,-2) \times \{0\} \times D^{l+1}$. The proof that the inclusion ${ \bar{D}_{\theta}^{\kappa,l,\rst}(\bR^N)_{p,\bullet}} \to { \tilde{D}_{\theta}^{\kappa,l,\rst}(\bR^N)_{p,\bullet}}$ is a levelwise weak equivalence is as the proof of Proposition 6.15 of \cite{GRW}, adapted to sheaves. 

This leaves to show that the augmentation 
\begin{equation}\label{eqn:augmentationmap-surgerycomplex}
\tilde{D}_{\theta}^{\kappa,l,\rst}(\bR^N)_{p,\bullet} \stackrel{}{\lra} { D_{\theta}^{\kappa,l-1,\rst}(\bR^N)_{p}}
\end{equation}
induces a weak equivalence. We now verify that \eqref{eqn:augmentationmap-surgerycomplex} satisfies the hypotheses of Theorem \ref{thm:flag-complextheoremn}. 

It is straightforward to verify that the augmentation ${ \tilde{D}_{\theta}^{\kappa,l,\rst}(\bR^N)_{p,\bullet}} \to { D_{\theta}^{\kappa,l-1,\rst}(\bR^N)_{p}}$ is a topological flag complex in the sense of Definition \ref{defn:topologicalflagcomplex}. The verification of hypothesis (i) of Theorem \ref{thm:flag-complextheoremn} is exactly as the proof of Proposition 6.16 of \cite{GRW}, but adapted to the sheaf language. Verifying hypothesis (iii) of Theorem \ref{thm:flag-complextheoremn} may be done by general position as in Proposition 6.17 of \cite{GRW}. It remains to establish hypothesis (ii), i.e.\ to prove the analogue of Proposition 6.18 of \cite{GRW}. It is here that there is a slight difference, because of the altered formulation of the connectivity condition. 
(The presence of psc metrics does not change the argument, precisely because the definition of $ \tilde{D}_{\theta}^{\rst,\kappa,l}(\bR^N)_{\bullet,0}$ does \emph{not} contain any condition that relates the psc metrics with the surgery data.) In \cite{GRW}, the connectivity conditions are
\begin{enumerate}[(i)]
\item In the definition of $ D_{\theta}^{\kappa,l-1}(\bR^N)_{\bullet}$: $\ell_M: M \to B$ is injective on $\pi_k$ for $k \leq l-1$.
\item In the formulation of Theorem 4.1 of \cite{GRW}: $L \to B$ is $(l+1)$-connected.
\item In the definition of the space $D_{\theta}^{\kappa,l-1}(\bR^N)_{\bullet,0}$ of surgery data: after performing surgeries on the manifolds $M_i$ along the embedding $e_i$, the new structure map $\bar{M_i} \to B$ is injective on $\pi_k$ for $k \leq l$.
\end{enumerate}

One now has to check that the proof of Proposition 6.18 of \cite{GRW} goes through when these conditions are replaced by\footnote{Given these conditions, it might be more natural to denote the category $\Cob_{\theta}^{\kappa,l}$ by $\Cob_{\theta}^{\kappa,l+1}$. We chose not to change the notation here, in order to avoid confusion with the notation of \cite{GRW}.}

\begin{enumerate}[(i)]
\item In the definition of $ D_{\theta}^{\kappa,l-1}(\bR^N)_{\bullet}$: $\ell_M: M \to B$ is $l$-connected.
\item In the formulation of \cite[Theorem 4.1]{GRW}: $B$ is of type ($F_{l+1}$).
\item In the definition of the space $D_{\theta}^{\kappa,l-1}(\bR^N)_{\bullet,0}$ of surgery data: after performing surgeries on the manifolds $M_i$ along the embedding $e_i$, the new structure map $\bar{M_i} \to B$ is $(l+1)$-connected.
\end{enumerate}

Only the first part of the proof of Proposition 6.18 of \cite{GRW} is affected. In this part, a finite set $\Lambda_i$ and maps $g_i: \Lambda_i \times S^l \to M_i$ are constructed so that the structure map $\ell_i : M_i \to B$ extends to an $(l+1)$-connected map $M_i \cup_{g_i} (\Lambda_i \times D^{l+1}) \to B$. This is done using that $L \to B$ is $(l+1)$-connected and that $M \to B$ is injective on homotopy groups up to degree $l-1$. But this can also be achieved under the conditions that $M \to B$ is $l$-connected and $B$ is of type ($F_{l+1}$). This is by the results of Section 1 of \cite{WallFin}, which might be stated by saying that the following conditions on a space $B$ are equivalent:
\begin{enumerate}[(i)]
 \item $B$ is of type $(F_n)$. 
 \item If $k \leq n$ and if $f:X \to B$ is a map from a finite complex which is $(k-1)$-connected, then we can find finitely many maps $g_j: S^{k-1} \to X$, $j=1, \ldots,r$, such that $f$ extends to a $k$-connected map $X \cup_{\coprod_j g_j} (\underline{r} \times D^n) \to B$.
\end{enumerate}
The rest of the proof goes through without any further change.
\end{proof}

\begin{proof}[Proof of Theorem \ref{thm:Contractibility} (ii)]
It is helpful to factor the map in question as a composition
\begin{equation}\label{proff-of-thm:Contractibility2}
 D_{\theta}^{\kappa,l,\rst}(\bR^N)_{p,0} \stackrel{\mathrm{inc}}{\lra} \bar{D}_{\theta}^{\kappa,l,\rst}(\bR^N)'_{p,0}\lra \bar{D}_{\theta}^{\kappa,l,\rst}(\bR^N)_{p,0}.
\end{equation}
The middle sheaf is defined to be the subsheaf of $\bar{D}_{\theta}^{\rst,\kappa,l}(\bR^N)_{p,0}$ such that only the first of the two conditions of Definition \ref{defn:Xpsc-2} is satisfied (in other words, we require the cylindrical parts to be long, but do not insist on having a compatibility of the psc metric with the surgery datum). The second map in \eqref{proff-of-thm:Contractibility2} is a weak equivalence by the argument used in the proof of Lemma \ref{collar-strtching-cobcat}.

The first of the two maps of \eqref{proff-of-thm:Contractibility2} is a weak homotopy equivalence as well. For the proof, we will use the criterion of Proposition \ref{prop:SurjCrit}, so for any $X \in \Mfds$, closed $A \subset X$ and germ $z$ of $D_{\theta}^{\kappa,l,\rst}(\bR^N)_{p,0}$ near $A$ we must show that
$$D_{\theta}^{\kappa,l,\rst}(\bR^N)_{p,0}[X,A;z] \lra \bar{D}_{\theta}^{\kappa,l,\rst}(\bR^N)_{p,0}'[X,A;\mathrm{inc}(z)]$$
is surjective.

An element of the codomain is represented by data $\xi=(W, \ell_W, a,\epsilon,g_W,\Lambda,\delta,e,\ell)$ such that for all $x$ in some open neighbourhood $U \supset A$ we have
$$(\partial e_i\vert_x \circ (\id_{\{x\} \times \Lambda_i} \times \alpha' \times \id_{S^l}))^* g_{W_x} = dx_1^2 + g_\tor^{d-l-1} + g_\round^l;$$
we must find a concordance, relative to $A$, to a new family where this holds for all $x \in X$. We shall do this without changing the underlying $\theta$-manifold or the surgery data at all, so first neglecting metrics we take
$$\xi' = (W', \ell_{W'}, a', \epsilon',\Lambda',\delta',e',\ell')) = \mathrm{pr}_X^*(\xi) \in D_{\theta}^{\kappa,l}(\bR^N)_{p,0}(X \times \bR)$$
which we now endow with a family of psc metrics. 

For each $i=1,2,\ldots, p$ the projections
$$\pi: W_i := \{(x,s,v) \in W \,|\, a_{i-1}(x)-\eps_{i-1}(x) \leq s \leq a_i(x)+\eps_i(x)\} \lra X$$
are smooth bundles of cobordisms with collared boundaries, and there is an associated fibre bundle $\cR^{+}_X(W_i)^{\eps} \to X$ of spaces of psc metrics (collared near the boundary). The restriction of $g_W$ determines sections $g_i : X \to \cR^+_X(W_i)^{\eps}$ of this bundle, and the sections $g_i$ and $g_{i+1}$ agree on restriction to $W_i \cap W_{i+1}$.

Inside each $W_i$ the image of the embeddings $\partial e_i \circ (\id_{X \times \Lambda_i} \times \alpha' \id_{\times S^l})$ consists of a disjoint collection of thickened compact submanifolds $V_i$ of codimension $d-l-1\geq 3$, neatly embedded with collars, giving embeddings $\phi_i :V_i \times \bR^{d-l-1} \hookrightarrow W_i$ over $X$. We may then form the subbundle
$$\cR^+_X(W_i, \phi_i)^{\eps} \subset \cR^+_X(W_i)^{\eps}$$
of those psc metrics whose restriction along $\phi$ is of the form required by Definition \ref{defn:Xpsc-2} (ii). The inclusion of this subbundle is a homotopy equivalence on fibres, by Theorem \ref{thm:chernysh-theorem} (and because the fibres have the homotopy type of CW complexes, being homeomorphic to open subspaces of Fr\'echet spaces), so any section of $\cR^+_X(W_i)^{\eps}$ is fibrewise homotopic to a section of $\cR^+_X(W_i, \phi_i)^{\eps}$, and such a fibrewise homotopy maybe chosen to stay in $\cR^+_X(W_i, \phi_i)^{\eps}$ at points (such as those near $A$) which start there.

We then proceed as follows, always leaving things fixed over $A$. We first fibrewise homotope $s_1 = s_1(0) : X \to \cR^+_X(W_1)^{\eps}$ to a section $s_1(1) : X \to \cR^+_X(W_1, \phi_1)^{\eps}$; this may be extended to a path of Riemannian metrics on
$$\{(x,s,v) \in W \,|\, s \leq a_0(x)-\eps_0(x)\},$$
having no curvature constraints. This gives a homotopy of $s_1(t)\vert_{W_1 \cap W_2}$ starting from $s_1 \vert_{W_1 \cap W_2} = s_2\vert_{W_1 \cap W_2}$, which by Theorem \ref{thm:improved-chernysh-theorem} may be extended to a fibrewise homotopy $s_2(t)$ starting from $s_2(0)=s_2 : X \to \cR^+_X(W_2)^{\eps}$. Now $s_2(1)$ is already correct over $W_1 \cap W_2$, so combining Theorems \ref{thm:improved-chernysh-theorem} and \ref{thm:chernysh-theorem} there is a fibrewise homotopy from $s_2(1)$ to a $s_2(2) : X \to \cR^+_X(W_2, \phi_2)^{\eps}$, constant over $W_1 \cap W_2$. Continuing in this way, concatenating all of these we obtain a path of families of psc metrics on
$$\{(x,s,v) \in W \,|\, s \leq a_p(x)+\eps_p(x)\},$$
from $g_W$ to a family of psc metrics satisfying the condition of Definition \ref{defn:Xpsc-2} (ii); this may be extended to a smooth path of Riemannian metrics on the whole of $W$, which we write as $t \mapsto g_W(t)$ for $t \in [0,1]$. It is easy to arrange that the extension of this for $t \in \bR$ which is constant for $t \leq 0$ and $t \geq 1$ is smooth.

Given this discussion, we endow $W' = W \times \bR$ with the family of psc metrics $g_{W'}$ which on $W \times \{t\}$ is $g_W(t)$. Then we have
$$(W', \ell_{W'}, a', \epsilon', g_{W'})  \in \bar{D}_{\theta}^{\kappa,l, \psc}(\bR^N)'_{p,0}(X \times \bR),$$
and as on each $W_i$ the psc metric is only changed by an isotopy, by Lemma \ref{lem:stability-homotopy-invariant} this in fact lies in $\bar{D}_{\theta}^{\kappa,l, \rst}(\bR^N)'_{p,0}(X \times \bR)$. It is a concordance relative to $A$ starting at $\xi$ and ending at a $\xi'' \in {D}_{\theta}^{\kappa,l, \rst}(\bR^N)_{p,0}(X, A, \mathrm{inc}(z))$, as required.
\end{proof}

\subsubsection*{Performing the surgeries}

We have now produced the outer square of diagram \eqref{proof-thm-surgbelow:overview}, and we already know that the two horizontal maps are weak homotopy equivalences, by Theorem \ref{thm:Contractibility} and Lemma \ref{lem:x-space-equivalent-to-d-space}. The left vertical map is given by inclusion of empty surgery data. In order to complete the proof of Theorem \ref{thm:SurgBelowMid}, we still need to construct the surgery homotopy, and this is parallel to Section 4.4 of \cite{GRW}. We give some details. Let $(a,\epsilon,(W, \ell_W, g_W), \Lambda, \delta, e,\ell) \in D_{\theta}^{\rst, \kappa,l}(\bR^N)_{p,0}(X)$. Over each point $x \in X$, for each $i =0, 1, \ldots,p$, we have an embedding
\[
e_i(x): \Lambda_i \times (a_i-\eps_i, a_p + \eps_p) \times \bR^{d-l-1} \times D^{l+1} \lra \bR \times (-1,1)^N 
\]
as in Definition \ref{defn:Xpsc-1}, which is compatible with the psc metrics in the sense spelled out in Definition \ref{defn:Xpsc-2}. Furthermore, there is a bundle map $\ell_i(x) : T(\Lambda_i \times K\vert_{(-6,0)}) \to \gamma_\theta$ as in Definition \ref{defn:Xpsc-1} (iv). 

Now we define manifolds $\mathcal{K}_{e_i(x),\ell_i(x)}^{t}(W_x) \subset (a_0(x)-\eps_0(x), a_p(x)+\eps_p(x)) \times \bR^N$, depending on $t = (t_0, t_1, \ldots, t_p) \in [0,1]^{p+1}$, as in \cite[p. 309 f]{GRW}. This manifold is equal to $W_x$ outside the image of $e_i(x)$, and equal to $e_i(x)(\Lambda_i \times \bar{\cP}_{t_i})$ on the image of $e_i(x)$. A suitable tangential structure on this manifold is defined in \cite{GRW}, giving a $\theta$-submanifold
$$\mathcal{K}_{e_i(x),\ell_i(x)}^{t}(W_x, \ell_{W_x}) \subset (a_0(x)-\epsilon_0(x), a_p(x)+\epsilon_p(x)) \times  \bR^N.$$
The union of these gives a smooth submanifold
$$\mathcal{K}_{e_i,\ell_i}^{t}(W, \ell_{W}) \subset \{(x,s,v) \in X \times \bR \times \bR^N \, | \, a_0(x)-\epsilon_0(x) < s < a_p(x)+\epsilon_p(x)\}$$
submersing to $X$, with a $\theta$-structure on its fibrewise tangent bundle.

It remains to define a suitable family of psc metrics on $\mathcal{K}_{e_i,\ell_i}^{t}(W)$. The constructions above and in \cite {GRW} are designed so that $\mathcal{K}_{e_i(x),\ell_i(x)}^{t}(W_x)$ agrees with $W_x$ even outside of the set $e_i(\Lambda_i \times (a_i-\frac{3}{4}\eps_i,a_p+\eps_p     ) \times D_2^{d-l-1})$. Therefore, we may glue in the psc metric $h_{(a_i(x),\eps_i(x),a_p(x),\eps_p(x))}$ constructed in Lemma \ref{lem:psc-metric-on-standard-family}, giving $ \mathcal{K}^t_{e_i(x),\ell_i(x)}(W_x,\ell_{W_x},g_{W_x})$. By applying the above process to all the surgery data, with $i$ running from $0$ to $p$, and collating all fibres together we obtain
\[
\mathcal{K}^t_{e,\ell}(W,\ell_W,g_W).
\]
The analogue of Lemma 4.6 of \cite{GRW} is as follows.

\begin{lem}\label{lem:psc-surgeriy-has.right-properties}\mbox{}
\begin{enumerate}[(i)]
\item For each $t = (t_0, t_1, \ldots, t_p) \in [0,1]^{p+1}$, $ \mathcal{K}^t_{e,\ell}(W,\ell_W,g_W)$, together with the choices of $(a_i)$ and $(\frac{1}{2}\eps_i)$, is an element of $X_p^{\kappa,l-1,\rst}(X)$. 
\item If $t_i=1$ (so that the surgery on the $i$th level is fully done), then for each regular value $b \in (a_i(x)-\frac{1}{2}\eps_i(x),a_i(x)+\frac{1}{2}\eps_i(x))$, the structure map 
$$\mathcal{K}^{t}_{e(x),\ell(x)} (W_x,\ell_{W_x},g_{W_x})|_b \lra B$$
is $(l+1)$-connected. 
\end{enumerate}
\end{lem}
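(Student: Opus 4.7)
The proof follows the outline of \cite[Lemma 4.6]{GRW}, restricting attention to what is new in the presence of psc metrics. For part (i), we verify the four conditions of Definition \ref{defn:x-space} for the tuple $(a, \tfrac{1}{2}\eps, \mathcal{K}^t_{e,\ell}(W,\ell_W,g_W))$. Condition (i) of that definition (underlying topological data) and the connectivity assertions (levels $l$-connected over $B$, $\kappa$-connected cobordism inclusions) follow from \cite[Lemma 4.6(i)]{GRW} applied fibrewise over $X$, using that the surgeries are of index $l+1$ and codimension $d-l-1 \geq 3$ (hypothesis (v)). For the metric content, observe that by construction $g_{\mathcal{K}^t_{e,\ell}(W)}$ equals $g_W$ outside the tubes $e_i(\Lambda_i \times \bar{\cP}_t)$ and equals the standard psc metric $h_{(a_i,\eps_i,a_p,\eps_p,t)}$ from Lemma \ref{lem:psc-metric-on-standard-family} inside them; the two agree smoothly on the overlap by Definition \ref{defn:Xpsc-2}(ii) together with Lemma \ref{lem:psc-metric-on-standard-family}(i)--(ii), yielding a smooth psc metric on $\mathcal{K}^t_{e,\ell}(W)\vert_{[a_0, a_p]}$. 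The product-type condition (iii) of Definition \ref{defn:x-space} on the reduced collars of width $\tfrac{1}{2}\eps_i$ is ensured by Lemma \ref{lem:psc-metric-on-standard-family}(iii): the standard piece is of product type except on an interval of length at most $1$, which may be located near the critical height $a_i - \tfrac{1}{2}\eps_i$, leaving a genuine product-type subinterval above it since $\eps_i \geq 2(p+2)$ by Definition \ref{defn:Xpsc-2}(i).

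The genuinely new ingredient is right-stability, condition (iv) of Definition \ref{defn:x-space}. Fix a product-type subcobordism $\mathcal{K}^t_{e,\ell}(W_x)\vert_{[s_i, s_j]}$. The original $g_W$ restricted to $W_x\vert_{[s_i, s_j]}$ is right-stable since the input datum lies in $D^{\kappa, l, \rst}_{\theta}(\bR^N)_{\bullet, 0}$. Each surgery tube $e_k$ intersects $W_x\vert_{[s_i, s_j]}$ in a disjoint thickened compact submanifold to which Lemma \ref{lem:psc-metric-on-standard-family}(iv) applies: replacing the cylindrical piece in the tube by the standard psc piece $h_{(a_k, \eps_k, a_p, \eps_p, t)}$ preserves right-stability. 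Since distinct tubes are disjoint these replacements may be carried out one at a time, and right-stability is preserved under gluing along any intermediate regular level between consecutive subcobordisms $[a_m, a_{m+1}] \subset [s_i, s_j]$ by Lemma \ref{lem:2-out-of-three}(ii). Hence $g_{\mathcal{K}^t_{e,\ell}(W_x)}\vert_{[s_i, s_j]}$ is right-stable, completing part (i).

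Part (ii) is purely topological and follows as in \cite[Lemma 4.6(ii)]{GRW}. Properties (iv)--(v) of $\lambda_t$ and (ii)--(iii) of $\varphi_t$ place the unique critical point of the height function on the standard insertion $\bar{\cP}_1$ at height exactly $a_i - \tfrac{1}{2}\eps_i$, the lower endpoint of the interval in the statement. Any regular value $b \in (a_i(x) - \tfrac{1}{2}\eps_i(x), a_i(x) + \tfrac{1}{2}\eps_i(x))$ therefore has level set obtained from $M_i = W_x\vert_{a_i}$ by fully performing the $\theta$-surgery prescribed by $(e_i(x), \ell_i(x))$, possibly composed with further surgeries from other tubes whose $t$-parameter has reached $1$ (these only improve connectivity); the result is $(l+1)$-connected over $B$ by Definition \ref{defn:Xpsc-1}(iv). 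The main delicacy of the whole argument lies in the tube-by-tube handling of right-stability, which is precisely the reason Lemma \ref{lem:psc-metric-on-standard-family}(iv) is formulated for a single standard-piece insertion in a form amenable to iteration via Lemma \ref{lem:2-out-of-three}.
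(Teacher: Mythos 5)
Your overall strategy matches the paper's: cite \cite[Lemma 4.6]{GRW} for the manifold part, use Lemma \ref{lem:psc-metric-on-standard-family} for the metric, and reduce right-stability to Lemma \ref{lem:psc-metric-on-standard-family}(iv) together with Lemma \ref{lem:2-out-of-three}. However, two steps do not close as written.

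First, for the existence of a product-type subinterval of $(a_i-\tfrac{1}{2}\eps_i, a_i+\tfrac{1}{2}\eps_i)$ you only account for the non-product-type interval produced by the level-$i$ insertion and locate it near the critical height. But the tubes $e_k$ for \emph{all} $k\leq i$ extend over $(a_k-\eps_k, a_p+\eps_p)$ and hence pass through this window, and each of the up to $p+1$ moves destroys product type on its own interval of length $\leq 1$, positioned wherever its parameter $t_k$ happens to put it. The correct count is that the total non-product-type length is at most $p+1 < 2(p+2) \leq \eps_i$; this is exactly why the lower bound on $\eps_i$ grows with $p$, a point your argument leaves unexplained (if only one bad interval were relevant, $\eps_i \geq 2$ would do).

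Second, and more substantively, your right-stability step asserts that Lemma \ref{lem:psc-metric-on-standard-family}(iv) shows that ``replacing the cylindrical piece in the tube by the standard psc piece preserves right-stability.'' That lemma does not say this: it says that the surgery trace $U$ extended \emph{cylindrically} over the complement $L$ (with metric $dx_1^2+g_M|_L$) is right-stable. In $\mathcal{K}^t_{e,\ell}(W_x)|_{[s_i,s_j]}$ the complement of the tubes carries the original, generally non-cylindrical, metric $g_W$, so the modified cobordism is not of the form covered by Lemma \ref{lem:psc-metric-on-standard-family}(iv). To convert the right-stability of the cylinder-extended trace into the statement that the replacement preserves right-stability of the ambient cobordism, you need the cancellation property of Lemma \ref{lem:2-out-of-three}(iv) (if $h$ and $h\cup h'$ are right-stable, so is $h'$) in addition to the composition property (ii); the paper's proof cites both (ii) and (iv) for precisely this reason, and your argument, which invokes only (ii), is missing this bridge. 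Part (ii) of your proof is fine and agrees with the paper's reduction to the observation that index-$(l+1)$ surgeries with $2(l+1)<d$ preserve $(l+1)$-connectivity of the structure map.
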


\begin{proof}
The manifold part of the argument is essentially the same as the proof of Lemma 4.6 of \cite{GRW}. Because the connectivity conditions considered here are different from that in \cite{GRW}, some things need to be watched. Namely, one needs to prove that performing an arbitrary $\theta$-surgery of index $(l+1)$ on a manifold $M^{d-1}$ whose structure map is either $l$-connected or $(l+1)$-connected does not destroy this property. This holds as long as $2 (l+1) < d$, by the same argument as the one given in Lemma 4.6 of \cite{GRW}. Part (ii) and the manifold part of (i) follow from this observation and from Definition \ref{defn:Xpsc-1}.

For the metric part of (i) we must first show that each point of $X$ has a neighbourhood $U$ such that for each $i = 0,1,\ldots, p$ there is a $s_i \in \bR$ and $\delta_i>0$ such that for every $x \in U$ we have $a_i(x)-\frac{1}{2} \eps_i(x) < s_i-\delta_i < s_i+\delta_i < a_i(x)+\frac{1}{2}\eps_i(x)$ and the psc metric on $\mathcal{K}^t_{e(x),\ell(x)}(W_x,\ell_{W_x},g_{W_x})$ is of product type over $(s_i-\delta_i, s_i+\delta_i)$: this will mean that we have defined an element of $X_p^{\kappa, l-1,\psc}(X)$. To see this, first observe that each of the surgeries destroys the property that a metric is of product type only over an interval of length $\leq 1$, by Lemma \ref{lem:psc-metric-on-standard-family} (iii). Furthermore, for each $j=0,1,\ldots,p$ the surgeries indexed by $\Lambda_j$ all happen at the same height over $(a_i-\epsilon_i, a_i+\epsilon_i)$, by condition (ii) of Definition \ref{defn:Xpsc-1}. Now choose a $x \in X$. By the above the interval in $(a_i(x)-\frac{1}{2} \eps_i(x), a_i(x)+\frac{1}{2}\eps_i(x))$ over which the psc metric does not have product type has length at most $(p+1)< 2(p+1) \leq \epsilon_i(x)$, so there is always an interval left over which the metric is of product type. Furthermore, after perhaps shrinking it this interval also works on an open neighbourhood of $x$, by continuity of the $a_i$ and $\epsilon_i$.

Finally, we must show that we have in fact defined an element of $X_p^{\kappa, l-1,\rst}(X) \subset X_p^{\kappa, l-1,\psc}(X)$, so must show that if $s_i \in (a_i(x) -\eps_i(x), a_i(x)+\eps_i(x))$ and $s_j \in (a_j(x) -\eps_j(x), a_j(x)+\eps_j(x))$ with $s_i < s_j$ are values near which the metric is of product type, then the psc metric on $ \mathcal{K}^t_{e(x),\ell(x)}(W_x,\ell_{W_x},g_{W_x})|_{[s_i,s_j]}$ is right stable. This follows by construction, Lemma \ref{lem:psc-metric-on-standard-family} (iv),  and Lemma \ref{lem:2-out-of-three} (ii) and (iv). 
\end{proof}

The rest of the proof of Theorem \ref{thm:SurgBelowMid} follows the argument given in \cite[p. 311 ff]{GRW} verbatim.

\begin{remark}\label{rem:proofof:thm:grw-thm4.1}
We can now indicate the changes to Section 4 and 6.4 of \cite{GRW} which are necessary to show Theorem \ref{thm:grw-thm4.1} as stated above. Condition (iv) of Definition 4.3 of \cite{GRW} is replaced by the requirement that $\bar{M} \to B$ is $(l+1)$-connected. The same change is done to Lemma 4.6 of \cite{GRW}, whose proof goes through without change, as we explained during the proof of Lemma \ref{lem:psc-surgeriy-has.right-properties}. The other change is to the first part of the proof of Proposition 6.18 of \cite{GRW}, where one has to use the results of Section 1 of \cite{WallFin}, as we explained at the end of the proof of the first claim of Theorem \ref{thm:Contractibility}.

Finally, the argument also goes through for the inclusion $B \Cob_{\theta,L}^{\kappa,l}\to B \Cob_{\theta,L}^{\kappa,l-1}$ considered in \cite{GRW}. The condition on $L$ which is needed for that is condition (v) of Theorem 4.1 of \cite{GRW}, i.e.\ that $ L$ admits a handle decomposition using handles of index at most $d-l-2$.
\end{remark}

\section{Infinite loop space structures on spaces of psc metrics}\label{sec:6}

\subsection{Segal's \texorpdfstring{$\Gamma$}{Gamma}-spaces}

We will produce infinite loop space structures using Segal's theory of $\Gamma$-spaces \cite{Segcat}, and in this section we recall its main points. 

We write $\Gamma^{\op}$ for the category of finite pointed sets and base-point preserving maps (Segal describes $\Gamma$ explicitly, but we prefer to use only $\Gamma^{\op}$). We denote the base-point of an object $S$ of $\Gamma^{\op}$ by $+ \in S$, and write $S_o:= S \setminus \{+\}$ for the remaining elements. For $S,R \in \Ob (\Gamma^{\op})$, we define the \emph{wedge sum} $S \vee T$ and \emph{smash product} $S \wedge V$ as usual. Let $n_+ := \{1 , \ldots, n, +\} \in \Ob (\Gamma^{\op})$. 
The object $0_+ = \{+\}$ is initial and terminal in $\Gamma^{\op}$; we denote by $\upsilon_S: 0_+ \to S$ the unique morphism. For $S \in \Ob (\Gamma^{\op})$ and $s \in S_o$, we define $\iota_s: S \to 1_+$ by 
\[
\iota_s (t):=
\begin{cases}
1 & t=s \\
+ & \text{otherwise}.
\end{cases}
\]
Furthermore, we let
\begin{align*}
\mu: 2_+ \lra 1_+
\end{align*}
be given by $\mu (1)= \mu(2)=1$.

\begin{defn}\label{defn:defGammaspace}\mbox{}
\begin{enumerate}[(i)]
\item A \emph{$\Gamma$-space} is a covariant functor $A:\Gamma^{\op} \to \Top$, and a map of $\Gamma$-spaces is a natural transformation of functors.
\item A $\Gamma$-space $A$ is \emph{special} if for all pointed finite sets $S$ and $T$ the map 
\[
p_* \times q_*: A(S \vee T) \lra A(S) \times A(T),
\]
induced by the collapse maps $p: S \vee T\to S$ and $q: S \vee T \to T$ is a weak homotopy equivalence.
\end{enumerate}
\end{defn}

Equivalently, $A$ is special if $A(0_+) \simeq *$ and if the maps
\[
\prod_{i=1}^p(\iota_j)_*:A(p_+) \lra A(1_+)^p
\]
are weak homotopy equivalences for all $p \geq 0$. It is a useful guideline to think of a $\Gamma$-space as the space $A(1_+)$, equipped with additional structure. 

\begin{defn}
A \emph{pointed $\Gamma$-space} is a $\Gamma$-space $A$ together with a base-point $a_0 \in A(0_+)$. This determines base-points $a_S = (\upsilon_S)_* (a_0) \in A(S)$, and for each morphism $\eta:S \to T$, we have $\eta_* (a_S)=a_T$.
\end{defn}

If $A$ is special then $A(0_+)$ is contractible, so there is a contractible choice of base-points. In the zig-zag
\[
A(1_+) \times A(1_+) \xleftarrow{((\iota_1)_* ,( \iota_2)_*)} A(2_+) \xrightarrow{\mu_*} A(1_+),
\]
the left map is a weak equivalence when $A$ is special. This in particular gives a map $\pi_0 (A(1_+)) \times \pi_0 (A(1_+)) \to \pi_0 (A(1_+))$ defining the structure of an abelian monoid, with unit given by $[a_1] \in \pi_0 (A(1_+))$.

\begin{defn}
A $\Gamma$-space $A$ is \emph{very special} or \emph{group complete} if it is special and the monoid $\pi_0 (A(1_+))$ is a group. 
\end{defn}

There is a functor $\Lambda:\Delta^{\op} \to \Gamma^{\op}$ defined as follows. It sends $[p] \in \Delta^{op}$ to $p_+$, and a monotone function $f: [p] \to [q]$ is mapped to the function $f^*: q_+ \to p_+$ given by
\[
f^* (j)=
\begin{cases}
i, & f(i-1) < j \leq f(i),\\
+, & \text{otherwise.}
\end{cases}
\]
Via $\Lambda$ each $\Gamma$-space $A$ yields a simplicial space. Segal \cite{Segcat} defines the classifying space $BA$ of a $\Gamma$-space $A$ to be the $\Gamma$-space
\[
(BA) (S) := \norm{[p] \mapsto A(S \wedge p_+)}.
\]
\begin{rem}
In fact Segal uses a different geometric realisation to the fat geometric realisation $\norm{-}$ that we use, described in Appendix A of \cite{Segcat}. However, he shows in Proposition A.3 of that paper that these two choices are homotopy equivalent.
\end{rem}

For pointed $\Gamma$-spaces, Segal also constructs a natural map $A(1_+) \to \Omega BA (1_+)$. In this way, out of a (pointed) $\Gamma$-space $A$, one obtains a connective spectrum $\mathrm{B}^{\infty}A$ with $n$th term $B^n A(1_+)$. The following is Proposition 1.4 of \cite{Segcat}, apart from (i) which holds simply because $BA(1_+)$ is path-connected whenever $A$ is special.

\begin{thm}[Segal]\label{thm:segals-theorem}\mbox{}
\begin{enumerate}[(i)]
\item If $A$ is special, then $BA$ is very special.
\item If $A(1_+)$ is $k$-connected, then $BA(1_+)$ is $(k+1)$-connected.
\item If $A$ is very special, then $A(1_+) \to \Omega BA(1_+)$ is a weak equivalence.
\end{enumerate}
\end{thm}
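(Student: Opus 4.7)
The three parts are logically linked, and I would handle them in this order.

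For part (i), the specialness of $BA$ is formal: using the distributivity $(S \vee T) \wedge p_+ \cong (S \wedge p_+) \vee (T \wedge p_+)$ and the specialness of $A$, the map $A((S \vee T) \wedge p_+) \to A(S \wedge p_+) \times A(T \wedge p_+)$ is a weak equivalence at each simplicial level. Fat geometric realisation preserves levelwise weak equivalences, so the induced map on realisations is a weak equivalence as well, showing that $BA$ is special. To promote \emph{special} to \emph{very special}, observe that the simplicial space $[p] \mapsto A(p_+)$ whose fat realisation is $(BA)(1_+)$ has $0$-simplices equal to $A(0_+) \simeq *$; hence $(BA)(1_+)$ is path-connected, and $\pi_0 ((BA)(1_+)) = 0$ is trivially a group.

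For part (ii), the same simplicial presentation gives the conclusion directly. If $A(1_+)$ is $k$-connected, then by specialness each $A(p_+) \simeq A(1_+)^p$ is $k$-connected, and $A(0_+) \simeq *$ is $(k+1)$-connected. The standard connectivity estimate for fat geometric realisations then shows that $\norm{[p]\mapsto A(p_+)} = (BA)(1_+)$ is $(k+1)$-connected.

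Part (iii) is the heart of the theorem, and I would deduce it from the group-completion principle for Segal simplicial spaces applied to $X_\bullet := [p]\mapsto A(p_+)$. Consider the path simplicial space $Y_\bullet$ with $Y_p := X_{p+1}$, inheriting all face and degeneracy maps of $X_\bullet$ except the very last face $d_{p+1}$. The shifted degeneracies provide an extra degeneracy making $\norm{Y_\bullet}$ contractible. The omitted faces assemble into a simplicial map $\pi : Y_\bullet \to X_\bullet$ whose fibre over a vertex of $X_0 \simeq *$ is $X_1 = A(1_+)$. Because $A$ is special, $X_{p+1} \simeq X_p \times X_1$ under a combination of the first $p$ Segal maps and the restriction of the omitted face, and, through this identification, the monodromy of $\pi$ around a $1$-simplex $x \in X_1$ becomes left translation by $x$ with respect to the $H$-space structure on $A(1_+)$ induced by $\mu_* : A(2_+) \to A(1_+)$.

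The main obstacle is showing that $\pi$ realises to a quasifibration, and this is precisely where group-likeness is used: since $A$ is very special, $\pi_0 (A(1_+))$ is a group, so every translation by $x \in A(1_+)$ is a weak equivalence, and hence all monodromies act by weak equivalences. One may then apply the Segal--McDuff quasifibration criterion (as in \cite{Segcat}) to conclude that $\norm{\pi} : \norm{Y_\bullet} \to \norm{X_\bullet} = (BA)(1_+)$ is a quasifibration with fibre $A(1_+)$. Since $\norm{Y_\bullet}$ is contractible, the fibre inclusion identifies $A(1_+)$ with $\Omega (BA)(1_+)$ up to weak equivalence, and an inspection of the construction verifies that this identification agrees with Segal's natural map.
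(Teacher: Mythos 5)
Your argument is correct, but note that the paper does not prove this result at all: it cites it as Proposition 1.4 of \cite{Segcat}, adding only the one-line observation (which you also make) that $BA(1_+)$ is path-connected because the zero-simplices $A(0_+)$ of $[p]\mapsto A(p_+)$ are contractible, so that ``special'' upgrades to ``very special''. What you have written is essentially a reconstruction of Segal's own proof: the d\'ecalage $Y_p = X_{p+1}$ with its extra degeneracy, the identification of the face maps on fibres with translations, and the quasifibration criterion are exactly the ingredients of Segal's Propositions 1.4--1.6, and your connectivity estimate for (ii) is the standard skeletal-filtration argument (the point being that $X_0\simeq *$ supplies the extra degree of connectivity, while $X_p\simeq A(1_+)^p$ is $k$-connected for $p\geq 1$). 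The one step worth tightening is in (i): levelwise equivalences give $\norm{A_\bullet(S\vee T)}\simeq \norm{A_\bullet(S)\times A_\bullet(T)}$, but you still need that the fat realisation of a product of \emph{simplicial} spaces maps by a weak equivalence to the product of the fat realisations; the paper is careful about exactly this point in its Lemma \ref{lem:geometric-realization-simplicial-gammaspace}, where it invokes Theorem 7.2 of \cite{SxTech} and notes that simpliciality (not mere semi-simpliciality) is needed.
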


It follows from (iii) that if $A$ is very special then $A(1_+)$ has the homotopy type of the infinite loop space of the spectrum $\mathrm{B}^{\infty}A$. 

\begin{defn}
Let $A$ be a pointed $\Gamma$ space. The \emph{unit component} of $A$ is the sub-$\Gamma$-space $A^0$ which is defined by letting $A^0(S) \subset A(S)$ be the path-component containing $a_S$, and whose structure maps are given by restricting those of $A$.
\end{defn}
If $A$ is special then $A^0$ is very special, because each $A^0(S)$ and in particular $A^0(1_+)$ is connected by definition. It follows that $A^0 (1_+)$ has the homotopy type of an infinite loop space if $A$ is special.
The $\Gamma$-spaces in this paper will arise as the geometric realisations of certain \emph{simplicial} spaces. 

\begin{defn}\label{defn:simplicialgammapsace}
A \emph{simplicial} $\Gamma$-space is a simplicial object $A_\bullet$ in the category of $\Gamma$-spaces (this is the same as a functor $\Delta^{\op} \times \Gamma^{\op} \to \Top$, $([p],S) \mapsto A_p (S)$). The \emph{geometric realisation of $A_\bullet$} is the $\Gamma$-space $\norm{A_\bullet}$ defined by $\norm{A_\bullet}(S):= \norm{ [p] \mapsto A_p (S)}$. 
\end{defn}

\begin{lem}\label{lem:geometric-realization-simplicial-gammaspace}
Let $A_\bullet$ be a simplicial $\Gamma$-space which is levelwise special in the sense that the $\Gamma$-space $S \mapsto A_p (S)$ is special for each $p$. Then the geometric realisation $\norm{A_\bullet}$ is special. 
\end{lem}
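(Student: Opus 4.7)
The plan is to unwind the definitions and reduce the statement to two standard facts about the fat geometric realisation of semi-simplicial spaces. By Definition \ref{defn:simplicialgammapsace}, for any pointed finite set $R$ we have
\[
\|A_\bullet\|(R) = \|[p] \mapsto A_p(R)\|,
\]
where the fat realisation on the right is taken degreewise in $\Top$. Thus, given pointed finite sets $S$ and $T$, the map whose weak equivalence status we must verify factors as the composition
\[
\|[p] \mapsto A_p(S \vee T)\| \xrightarrow{\;\alpha\;} \|[p] \mapsto A_p(S) \times A_p(T)\| \xrightarrow{\;\beta\;} \|[p] \mapsto A_p(S)\| \times \|[p] \mapsto A_p(T)\|,
\]
where $\alpha$ is the fat realisation of the semi-simplicial map given levelwise by $((p_*)_S \times (q_*)_T)$ on $A_p$, and $\beta$ is the canonical comparison map from the realisation of a levelwise product to the product of realisations.

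First I would handle $\alpha$. By hypothesis each $\Gamma$-space $A_p$ is special, so for each $p$ the comparison map $A_p(S \vee T) \to A_p(S) \times A_p(T)$ is a weak homotopy equivalence. Hence the semi-simplicial map on the underlying semi-simplicial spaces is a levelwise weak equivalence, and since the fat geometric realisation of a levelwise weak equivalence of semi-simplicial spaces is a weak equivalence (this is the standard property of $\|-\|$, and is the very reason the fat realisation is used throughout this paper), the map $\alpha$ is a weak equivalence.

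Next I would deal with $\beta$. The natural map $\|X_\bullet \times Y_\bullet\| \to \|X_\bullet\| \times \|Y_\bullet\|$ from the fat realisation of a levelwise product to the product of fat realisations is a weak equivalence for any two semi-simplicial spaces: this can be verified by a skeletal induction together with the fact that products commute with pushouts along cofibrations in compactly generated spaces (as in Appendix A of \cite{Segcat}, or equivalently by recognising both sides as homotopy colimits and using that finite products commute with homotopy colimits of spaces). Applying this with $X_p = A_p(S)$ and $Y_p = A_p(T)$ shows $\beta$ is a weak equivalence. Combining, the composition $\beta \circ \alpha$ is a weak equivalence, which is what we needed to show.

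The only step where something could conceivably go wrong is the commutation of fat realisation with products $(\beta)$, but because we use the fat realisation (and work in compactly generated spaces), this holds without point-set hypotheses such as properness of the degeneracies. So there is no essential obstacle, and the argument is purely formal once the levelwise specialness of $A_\bullet$ is given.
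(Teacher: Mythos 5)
Your decomposition of the comparison map into $\alpha$ (realisation of the levelwise comparison) followed by $\beta$ (realisation of a product versus product of realisations), and your treatment of $\alpha$ via levelwise weak equivalences, match the paper's proof exactly. The problem is your justification of $\beta$.

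You assert that the natural map $\norm{X_\bullet \times Y_\bullet} \to \norm{X_\bullet} \times \norm{Y_\bullet}$ is a weak equivalence ``for any two semi-simplicial spaces'', and that this follows because ``finite products commute with homotopy colimits of spaces''. Both claims are false. Take $X_\bullet = Y_\bullet$ to be the semi-simplicial set $\Delta^1$ (two $0$-simplices, one nondegenerate $1$-simplex, no degeneracies): then $X_\bullet \times Y_\bullet$ has four $0$-simplices and a single $1$-simplex, so $\norm{X_\bullet \times Y_\bullet}$ has three path components, while $\norm{X_\bullet} \times \norm{Y_\bullet} \cong [0,1]^2$ is contractible. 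In homotopy-colimit language, $\norm{-}$ is the homotopy colimit over $\Delta_{\mathrm{inj}}^{\mathrm{op}}$, and the diagonal $\Delta_{\mathrm{inj}}^{\mathrm{op}} \to \Delta_{\mathrm{inj}}^{\mathrm{op}} \times \Delta_{\mathrm{inj}}^{\mathrm{op}}$ is not homotopy cofinal; it is the diagonal of the full simplex category $\Delta^{\mathrm{op}}$ that is cofinal. So the commutation of fat realisation with products genuinely requires the degeneracies, and this is precisely why the lemma hypothesises that $A_\bullet$ is a \emph{simplicial} (not merely semi-simplicial) $\Gamma$-space. The paper makes this point explicitly and justifies $\beta$ by citing Theorem 7.2 of \cite{SxTech}, which establishes the weak equivalence $\norm{X_\bullet \times Y_\bullet} \simeq \norm{X_\bullet} \times \norm{Y_\bullet}$ for \emph{simplicial} spaces. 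Your conclusion is therefore correct in the situation at hand, but the step as you have justified it rests on a false general statement; you should instead invoke the simplicial structure on $[p] \mapsto A_p(S)$ and $[p] \mapsto A_p(T)$ and the cited theorem (or reprove it, which requires an Eilenberg--Zilber-type argument and is not ``purely formal'' for the fat realisation).
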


\begin{proof}
First observe that $\norm{A_\bullet}(0_+)$ is the geometric realisation of the levelwise contractible simplicial space $p \mapsto A_p (0_+) \simeq *$ and is hence contractible. Secondly, the map (defined as in \ref{defn:defGammaspace}) $(p_*,q_*): \norm{A_\bullet (S \vee T)} \to \norm{A_\bullet (S) } \times \norm{A_\bullet (T)}$ is the composition of the two natural maps
\[
\norm{A_\bullet (S \vee T)} \lra \norm{ A_\bullet (S) \times A_\bullet (T)} \lra \norm{ A_\bullet (S)  }\times \norm{A_\bullet (T)}.
\]
The first comes from applying $p_* \times q_*$ levelwise. Since $A$ was assumed to be special, the first map is the fat geometric realisation of a levelwise weak equivalence and hence is a weak equivalence. The second map comes from the two projections and is a weak equivalence by Theorem 7.2 of \cite{SxTech} (it is here that we use that $A_\bullet$ is simplicial and not merely semi-simplicial).
\end{proof}

\subsection{Homotopy fibres and base-points}

Recall that the homotopy fibre $\hofib_y f$ of a map $f:X \to Y$ of spaces over $y \in Y$ is the space of all pairs $(x,\gamma)$, where $x \in X$ and $\gamma: f(x) \leadsto y$ is a path in $Y$ from $f(x)$ to  $y$. When $x_0 \in X$ is a base-point and $y_0=f(x_0)$, the space $\hofib_{y_0}(f)$ is given the base-point $(x_0,\const_{y_0})$. 

\begin{defn}
Let $f:A \to C$ be a map of $\Gamma$-spaces. The \emph{homotopy fibre} of $f$ at a base-point $c_0 \in C(0_+)$ is the $\Gamma$-space $\hofib_{c_0}f$ which is defined on objects $S \in \Gamma^{\op}$ by
\[
(\hofib_{c_0} f)(S):= \hofib_{c_S} (f(S) : A(S) \to C(S)).
\]
A morphism $\eta:S \to T$ induces a map $(\hofib_{c_0} f)(S) \to (\hofib_{c_0} f)(T)$, namely the comparison map of vertical homotopy fibres of the commutative diagram
\[
\xymatrix{
A(S) \ar[r]^{\eta_*} \ar[d]^{f(S)} & A(T) \ar[d]^{f(T)}\\
C(S) \ar[r]^{\eta_*} & C(T)
}
\]
over $c_S$ and $c_T = \eta_*(c_S)$.
\end{defn}

If $A$ is also a pointed $\Gamma$-space such that $f$ is a pointed map then $\hofib_{c_0} f$ is pointed with base-point $(a_0,\const_{c_0}) \in (\hofib_{c_0} f)(0_+) = \hofib_{c_0} (f(0_+))$.

\begin{lem}\label{lem:hofib-of-special-gamme}\mbox{}
Let $f:A \to C$ be a pointed map of pointed special $\Gamma$-spaces. 
\begin{enumerate}[(i)]
\item The $\Gamma$-space $\hofib_{c_0} f$ is special.
\item If in addition $A$ is very special then so is $\hofib_{c_0} f$.
\item The homotopy fibre $\hofib_{c_0}(f^0)$ of the restriction of $f$ to $f^0 : A^0 \to C^0$ is very special.
\end{enumerate}
\end{lem}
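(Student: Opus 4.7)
The plan is to dispatch the three parts in sequence. For (i), I would combine specialness of $A$ and $C$ with the fact that a commutative square with horizontal weak equivalences induces a weak equivalence of vertical homotopy fibres. Concretely, $(\hofib_{c_0}f)(0_+) = \hofib_{c_0}(f(0_+))$ is contractible because $A(0_+) \simeq * \simeq C(0_+)$. For the Segal maps I compare the square
\[
\xymatrix{
A(S \vee T) \ar[r]^-{(p_*, q_*)} \ar[d]_-{f(S \vee T)} & A(S) \times A(T) \ar[d]^-{f(S) \times f(T)} \\
C(S \vee T) \ar[r]^-{(p_*, q_*)} & C(S) \times C(T),
}
\]
whose horizontal arrows are weak equivalences, yielding a weak equivalence of vertical homotopy fibres over $c_{S \vee T}$ and $(c_S, c_T) = (p_*, q_*)(c_{S \vee T})$. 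Under the obvious identification of the right-hand fibre with $(\hofib_{c_0}f)(S) \times (\hofib_{c_0}f)(T)$, this is precisely the Segal map of $\hofib_{c_0}f$, so (i) follows.

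For (ii), we know from (i) that $\pi_0((\hofib_{c_0}f)(1_+))$ is a commutative monoid, so the task is to show every element has an inverse. I would extract the long exact sequence
\[
\pi_1(C(1_+), c_1) \xrightarrow{\partial} \pi_0((\hofib_{c_0}f)(1_+)) \xrightarrow{\pr_A} \pi_0(A(1_+)) \lra \pi_0(C(1_+))
\]
and first verify that $\partial$ is a monoid homomorphism, by unwinding the Segal multiplication on $\hofib_{c_0}f$ and using that the $H$-space product on $\Omega_{c_1}C(1_+)$ is homotopic to loop concatenation. Its image $K$ is then a subgroup of $\pi_0((\hofib_{c_0}f)(1_+))$ containing the unit. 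Given $[(a, \gamma)]$, I pick $[a'] \in \pi_0(A(1_+))$ inverting $[a]$; since $[f(a)] = [c_1]$ is the unit of $\pi_0(C(1_+))$, the identity $[c_1][f(a')] = [c_1]$ forces $[f(a')] = [c_1]$, so a path $\gamma'$ from $f(a')$ to $c_1$ exists. The product $[(a, \gamma)] \cdot [(a', \gamma')]$ maps to $[a_1]$ under $\pr_A$ and therefore lies in $K$; multiplying by its $K$-inverse produces an honest inverse of $[(a, \gamma)]$.

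For (iii), I would first observe that $A^0$ and $C^0$ are themselves special: the Segal map for $A$ restricts to a weak equivalence on unit components (a weak equivalence of spaces restricts to a weak equivalence on the component of a point and its image), and $A^0(0_+) = A(0_+)$ is contractible. Since $A^0(1_+)$ is path-connected by construction, $\pi_0(A^0(1_+)) = 0$ is trivially a group, so $A^0$ is very special and (ii) applies to $f^0 : A^0 \to C^0$. The main technical step is the argument in (ii), specifically the verification that $\partial$ is a monoid homomorphism and that the kernel of $\pr_A$ equals the image $K$; both are standard long-exact-sequence bookkeeping, but they require carefully matching the Segal multiplication on $\hofib_{c_0}f$ with the classical connecting map of a fibration.
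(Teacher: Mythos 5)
Your proof is correct and follows essentially the same route as the paper's: the homotopy cartesian square argument for (i), the long exact sequence together with the Eckmann--Hilton observation that $\partial$ is a monoid homomorphism for (ii), and reduction of (iii) to (ii) via unit components. The only (minor) divergence is in (ii), where the paper first replaces $A$ by $f^{-1}(C^0)$ so as to assume $C$ is also very special before doing the diagram chase, whereas you run the chase directly by noting that $[f(a)]=[c_1]$ is the unit of $\pi_0(C(1_+))$ and hence absorbs, which forces $[f(a')]=[c_1]$ without needing inverses in $\pi_0(C(1_+))$; both versions are valid.
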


\begin{proof}
It is clear that $(\hofib_{c_0} f)(0_+)=\hofib_{c_0} (f(0_+) : A(0_+) \to C(0_+))$ is contractible since both source and target are. Let $S,T \in \Ob (\Gamma^{\op})$, with collapse maps $p:S \vee T \to S$ and $q: S \vee T \to T$. Consider the commutative diagram
\[
\xymatrix{
A(S \vee T) \ar[r]^-{(p_*,q_*)} \ar[d]^{f(S \vee T)}& A(S) \times A( T) \ar[d]^{f(S) \times f(T)} \\
C(S \vee T) \ar[r]^-{(p_*,q_*)} & C(S) \times C(T).
}
\]
If $A$ and $C$ are special, the horizontal maps are both weak equivalences, and so the diagram is homotopy cartesian. It follows that the map
\[
\hofib_{c_{S \vee T}} (f(S\vee T)) \lra \hofib_{(c_S,c_T)} (f(S) \times f(T)) \cong \hofib_{c_S} f(S) \times \hofib_{c_T} f(T)
\]
is a weak equivalence as desired. 

For part (ii), let $A^1 = f^{-1} C^0$ be the preimage of the unit component and let $f':= f|_{A^1}$. The $\Gamma$-spaces $C^0$ and $A^1$ are very special, and because $\hofib_{c_0} (f)= \hofib_{c_0} (f')$, it is enough to carry out the proof of claim (ii) if $C$ is very special as well. Next, consider the following piece of the long exact homotopy sequence for the map $f(1_+)$:
\[
\pi_1 (C(1_+), c_1) \stackrel{\partial}{\lra} \pi_0 (\hofib_{c_1} f(1_+)) \lra \pi_0 (A(1_+)) \lra \pi_0 (C(1_+)). 
\]
The terms are abelian monoids, and all maps are maps of abelian monoids. (This is clear for all maps but $\partial$. Here it follows from the usual Eckmann--Hilton argument: since $C(1_+)$ is an $H$-space, the $H$-space multiplication induces the same composition on $\pi_1 (C(1_+), c_1)$ as the usual product in the fundamental group. By construction of the long exact sequence, $\partial$ is clearly a monoid homomorphism when the monoid structure on $\pi_1 (C(1_+), c_1)$ coming from the $H$-space structure is used.) All terms but $\pi_0 (\hofib_{c_1} (f(1_+))$ are known to be groups: a simple diagram chase then proves that this is also a group.

Part (iii) follows from part (ii), since $A^0$ and $C^0$ are very special.
\end{proof}

In the application we aim at, we will need to consider homotopy fibres over a point other than the base-point. We shall do this by means of the following general construction.
Let $f:X \to Y$ be a map of spaces. A path $\delta: y_0 \leadsto y_1$ in $Y$ determines a map 
\begin{align*}
T(\delta): \hofib_{y_0} f &\lra \hofib_{y_1} f\\
 (x,\alpha) &\longmapsto (x, \alpha \ast \delta),
\end{align*}
and a path $\gamma: x_0 \leadsto x_1$ in $X$ determines
\begin{equation}\label{eqn:basepointtransport}
\Lambda ( \gamma) := T(f \circ \gamma): \hofib_{y_0} f \lra \hofib_{y_1} f.
\end{equation}
The homotopy class of $T(\delta)$ only depends on the homotopy class of $\delta$ (rel endpoints) and so $T$ defines a functor
\[
 T: \Pi_1 (Y) \to \hoTop;\; T([\delta]) := [T(\delta)]
\]
from the fundamental groupoid\footnote{A morphism in $\Pi_1 (Y)(y_0,y_1)$ is a homotopy class of paths $y_0 \leadsto y_1$, so that composition in $\Pi_1 (Y)$ is $\delta\circ \delta':= \delta' \ast \delta$.} of $Y$ to the homotopy category of spaces. Similarly, $\Lambda$ induces a functor $\Pi_1 (X) \to \hoTop$. 

\begin{lem}\label{lem:homotopyfibre-fundamentalgroupoid}\mbox{}
\begin{enumerate}[(i)]
\item The functor $\Lambda$ is weakly base-point-preserving in the sense that 
$$\Lambda(\gamma): \hofib_{f(x_0)} f \lra \hofib_{f(x_1)} f$$ 
sends the base-point to the component containing the base-point.
\item If $X$ and $Y$ are $H$-spaces with units $x_0$ and $y_0=f(x_0)$ and $f$ is an $H$-map, then for each loop $\gamma$ in $X$ based at $x_0$, the map $\Lambda(\gamma) $ is homotopic to the identity.
\end{enumerate}
\end{lem}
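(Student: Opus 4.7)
The base-point of $\hofib_{f(x_0)} f$ is $(x_0, \const_{f(x_0)})$, and $\Lambda(\gamma)$ sends it to $(x_0, f\circ\gamma) \in \hofib_{f(x_1)} f$ (after the standard reparametrisation of path concatenation). I will exhibit an explicit path from this point to the base-point $(x_1, \const_{f(x_1)})$, namely
\[
\sigma : [0,1] \lra \hofib_{f(x_1)} f,\qquad \sigma(s) := \bigl(\gamma(s),\ \tau_s\bigr),
\]
where $\tau_s : [0,1] \to Y$ is the linear reparametrisation of $(f\circ\gamma)\vert_{[s,1]}$. Since $\tau_s$ begins at $f(\gamma(s))$ and ends at $f(x_1)$, $\sigma(s) \in \hofib_{f(x_1)} f$ for every $s$, and visibly $\sigma(0) = (x_0, f\circ\gamma)$ while $\sigma(1) = (x_1, \const_{f(x_1)})$.

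\textbf{Plan for (ii).} I will build an explicit homotopy between $\Lambda(\gamma)$ and the identity on $\hofib_{y_0} f$ using the $H$-space structures, in the spirit of the Eckmann--Hilton argument. After replacing $X, Y, f$ by strictly unital $H$-space models with $f$ a strict $H$-map (a standard rectification), and writing $\bar\gamma(t) := \gamma(1-t)$, define
\[
H\bigl(s, (x, \alpha)\bigr) := \Bigl(\mu_X\bigl(x, \bar\gamma(s)\bigr),\ \bigl[t \mapsto \mu_Y\bigl(\alpha(t),\, f\bar\gamma(s(1-t))\bigr)\bigr]\Bigr).
\]
Writing $\beta_s$ for the second coordinate, strict unitality of $\mu_Y$ gives $\beta_s(1) = \mu_Y(y_0,y_0) = y_0$, and the strict $H$-map identity gives $\beta_s(0) = \mu_Y(f(x), f\bar\gamma(s)) = f(\mu_X(x,\bar\gamma(s)))$, so $H$ lands in $\hofib_{y_0} f$. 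Since $\bar\gamma(0) = \bar\gamma(1) = x_0$ and $\mu_X(x,x_0) = x$ strictly, the first coordinate equals $x$ at both $s = 0$ and $s = 1$; the second coordinate is $\alpha$ at $s = 0$ (so $H_0 = \id$) and $t \mapsto \mu_Y(\alpha(t), f\gamma(t))$ at $s = 1$. The standard Eckmann--Hilton identity
\[
\alpha \ast f\gamma \;=\; \mu_Y\bigl(\alpha \ast \const_{y_0},\ \const_{y_0} \ast f\gamma\bigr) \;\simeq\; \mu_Y(\alpha, f\gamma),
\]
valid because $\alpha \ast \const_{y_0} \simeq \alpha$ and $\const_{y_0} \ast f\gamma \simeq f\gamma$ rel endpoints (the two reparametrisation homotopies combining naturally via $\mu_Y$), then identifies $H_1$ with $\Lambda(\gamma)$ up to homotopy; combining with $H_0 = \id$ yields $\Lambda(\gamma) \simeq \id$ as desired.

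\textbf{The main obstacle.} The only delicate point is the initial rectification of $X$, $Y$, $f$ to a strictly unital $H$-map between strict $H$-spaces preserving the homotopy type of $\hofib_{y_0} f$ and the homotopy class of $\gamma$. Strict units on $H$-spaces can be arranged by a well-known replacement (for instance, gluing on cylinders at $x_0$ and $y_0$); the strict $H$-map condition is more subtle, but may be arranged by replacing $f$ by a Hurewicz fibration and rigidifying the choice of $H$-map homotopy, or avoided entirely by threading the (chosen) $H$-map homotopy $\eta_f : f\circ\mu_X \simeq \mu_Y\circ(f \times f)$ through the formula for $\beta_s$ as an auxiliary parameter. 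Either way, once the strict model is in place, the formula for $H$ is manifestly continuous in $(x,\alpha)$, and the Eckmann--Hilton identification at $s = 1$ is a natural construction in $\alpha$ and $f\gamma$, so the argument goes through without further obstruction.
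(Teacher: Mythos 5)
Your proof is correct and follows essentially the same route as the paper's: part (i) uses the identical "slide along $\gamma$ while truncating and reparametrising $f\circ\gamma$" path, and part (ii) uses the same interpolation $(x,\alpha)\mapsto (x\cdot\gamma(t),\,\alpha\cdot(\text{piece of }f\gamma))$ in a strictly unital model, with the non-strict $H$-map case handled by threading in the chosen homotopy $f\circ\mu_X\simeq\mu_Y\circ(f\times f)$, exactly as the paper does. The only cosmetic difference is that your homotopy runs from the identity to a map identified with $\Lambda(\gamma)$ by Eckmann--Hilton, whereas the paper's runs from $\Lambda(\gamma)$ to a reparametrised identity; the content is the same.
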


\begin{proof}
For (i), the base-point of $\hofib_{f(x_0)} f$ is sent by $\Lambda (\gamma)$ to the point $(x_0,f\circ (\const_{x_0} * \gamma))$, which lies in the same path component as $(x_0,f\circ \gamma)$. Define a path $\gamma^t : \gamma (t) \leadsto x_1$ by $\gamma^t (s):= \gamma (t+s(1-t))$. Then $t \mapsto (\gamma(t), f \circ \gamma^t)$ is a path in $\hofib_{f(x_1)}(f)$ from $(x_0,f \circ \gamma)$ to $(x_1,\const_{x_1})$, as desired.

For (ii), we may assume that $x_0$ and $y_0$ are strict units. If $f$ is strictly compatible with the multiplications, define a homotopy
\begin{align*}
H_t:\hofib_{f(x_0)} f &\lra \hofib_{f(x_0)} f\\
(x,\alpha) &\longmapsto (x \cdot \gamma(t),\alpha_t),
\end{align*}
where $\alpha_t$ is the path given by the formula
\[
\alpha_t (s):=
\begin{cases}
\alpha (2s)\cdot f(\gamma(t)) & s \leq \frac{1}{2}\\
f(\gamma(t+(1-t)(2s-1))) & s \geq \frac{1}{2}.
\end{cases}
\]
This is a homotopy from $\Lambda(\gamma)$ to a map which is visibly homotopic to the identity. If $f$ is only compatible with the multiplications up to homotopy, one has to compose the path $\alpha_t$ with a path $f(x \cdot \gamma(t)) \leadsto f(x) \cdot f(\gamma(t))$ coming from such a homotopy.
\end{proof}

\begin{cor}\label{cor.infiniteloopspace-over-nonbasepoint}
Let $f:A \to C$ be a pointed map of pointed special $\Gamma$-spaces. Let $a \in A(1_+)$ be a point lying in the component of the base-point $a_1$. The homotopy class 
\[
\Lambda(\gamma): \hofib_{f(a)} (f(1_+))^0 \lra \hofib_{f(a_1)} (f(1_+))^0
\]
does not depend on the choice of a path $\gamma: a \leadsto a_1$ in $A(1_+)$. Since $\hofib_{f(a_1)} (f(1_+))^0$ has the homotopy type of an infinite loop space, this equips $\hofib_{f(a)} (f(1_+))^0$ with the structure of an infinite loop space in a canonical way. \qedhere
\end{cor}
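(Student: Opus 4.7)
The plan is to first show that $\Lambda(\gamma)$ is independent, up to homotopy, of the choice of path $\gamma$, and then to use this to transport the infinite loop space structure from the basepoint fibre. Fix two paths $\gamma_0, \gamma_1 : a \leadsto a_1$ in $A^0(1_+)$, which is possible since $a$ lies in the component of $a_1$. Using the functoriality of the functor $T : \Pi_1(C^0(1_+)) \to \hoTop$ together with the fact that $T$ sends inverse paths to homotopy inverses, one has in the homotopy category
\[
T(\overline{f \circ \gamma_1}) \circ T(f \circ \gamma_0) \simeq T\bigl(f \circ (\gamma_0 \ast \overline{\gamma_1})\bigr) = \Lambda(\sigma),
\]
where $\sigma := \gamma_0 \ast \overline{\gamma_1}$ is a loop at $a$. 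Since $T(\overline{f \circ \gamma_1})$ is a homotopy inverse of $\Lambda(\gamma_1)$, showing $\Lambda(\gamma_0) \simeq \Lambda(\gamma_1)$ therefore reduces to proving $\Lambda(\sigma) \simeq \id$ for every loop $\sigma$ at $a$.

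To prove this, I would invoke Lemma \ref{lem:homotopyfibre-fundamentalgroupoid}(ii), which handles loops at the $H$-space unit $a_1$. The trick is to conjugate $\sigma$ by an auxiliary path $\eta : a_1 \leadsto a$, forming the loop $\tilde\sigma := \eta \ast \sigma \ast \overline{\eta}$ based at $a_1$. The relevant $H$-space structures on $A(1_+)$ and $C(1_+)$, with units $a_1$ and $c_1 = f(a_1)$, are the canonical ones built from $\mu_* : A(2_+) \to A(1_+)$ together with a homotopy inverse to the weak equivalence $((\iota_1)_*, (\iota_2)_*) : A(2_+) \to A(1_+)^2$ (and similarly for $C$), which exists because $A$ and $C$ are special; the map $f(1_+)$ is an $H$-map up to homotopy since $f$ is a natural transformation intertwining $\mu_*$. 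Lemma \ref{lem:homotopyfibre-fundamentalgroupoid}(ii) then yields $\Lambda(\tilde\sigma) \simeq \id$, while functoriality of $T$ gives
\[
\Lambda(\tilde\sigma) \simeq T(f \circ \overline{\eta}) \circ \Lambda(\sigma) \circ T(f \circ \eta).
\]
Because $T(f\circ\eta)$ and $T(f\circ\overline\eta)$ are mutually inverse homotopy equivalences, this forces $\Lambda(\sigma) \simeq \id$, completing the independence claim.

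The final sentence of the corollary is then immediate. The map $\Lambda(\gamma)$ is always a homotopy equivalence, as $T$ sends every path to a homotopy equivalence. By Lemma \ref{lem:hofib-of-special-gamme}(iii) and Theorem \ref{thm:segals-theorem}, the target $\hofib_{f(a_1)} (f(1_+))^0$ carries a canonical infinite loop space structure, inherited from the very special $\Gamma$-space $\hofib_{c_1}(f^0)$. Since the homotopy class of $\Lambda(\gamma)$ is canonical by what we have just established, we transport this structure along $\Lambda(\gamma)$ to equip $\hofib_{f(a)} (f(1_+))^0$ with an infinite loop space structure. I expect no serious obstacle beyond the conjugation trick; the only mild subtlety is that the $H$-map hypothesis of Lemma \ref{lem:homotopyfibre-fundamentalgroupoid}(ii) is satisfied only up to homotopy, but this is precisely the form in which that lemma was stated and proved.
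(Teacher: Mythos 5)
Your argument is correct and is exactly the intended (implicit) proof of the corollary, namely that two choices of path differ by a loop whose effect on homotopy fibres is trivial by Lemma \ref{lem:homotopyfibre-fundamentalgroupoid}(ii), after which the infinite loop space structure is transported along the resulting canonical homotopy equivalence using Lemma \ref{lem:hofib-of-special-gamme}(iii). The only remark worth making is that your conjugation step is avoidable: writing $\gamma_0 \simeq \gamma_1 \ast (\overline{\gamma_1} \ast \gamma_0)$ rel endpoints exhibits the discrepancy as $\Lambda$ of a loop already based at the unit $a_1$, so Lemma \ref{lem:homotopyfibre-fundamentalgroupoid}(ii) applies directly.
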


\subsection{The (psc) cobordism categories as \texorpdfstring{$\Gamma$}{Gamma}-spaces}

We now recall how the classifying spaces of the cobordism categories and related spaces are equipped with the structure of $\Gamma$-spaces. This is due to Madsen--Tillmann \cite{MadTill}, see also Nguyen \cite{Nguyen}.

We shall work with the poset models for cobordism categories described in Section \ref{sec:PosetModels}. In particular, we use the simplicial sheaves $\cDs_{\theta,\bullet}^{\kappa}$, $\cDs_{\theta,\bullet}^{\kappa,\psc}$ and $\cDs_{\theta,\bullet}^{\kappa,\rst}$ introduced in Definition \ref{defn:inifniteseminalcollars}. We will therefore need to work with simplicial $\Gamma$-objects in the category $\Sheaves$, and will produce simplicial $\Gamma$-spaces by applying the representing space functor $\rep{-} : \Sheaves \to \Top$.

We will first describe the structure of a $\Gamma$-space on $\norm{\cDs_{\theta, \bullet}^{\kappa}}$ for any $\kappa$. (It will be clear that the construction does not generalise to $\norm{\cDs_{\theta, \bullet}^{\kappa, l}}$ for $l \geq 0$.)

\begin{definition}\label{def:CobCatGamma}
For each $p \geq 0$ define a functor $\cDs_{\theta, p}^{\kappa}(\_) : \Gamma^{op} \to \Sheaves$ on a $S \in \Gamma^{op}$ and $X \in \Mfds$ by letting $\cDs_{\theta, p}^{\kappa}(S)(X)$ be the set of tuples $(W, \ell_W, a, f)$ with $(W, \ell_W, a) \in \cDs_{\theta, p}^{\kappa}(X)$ and $f : W \to S_o$ a locally constant function. Then $\cDs_{\theta, p}^{\kappa}(S)$ is a sheaf by restriction.

On a pointed map $\eta : S \to T$ the induced map $\eta_* : \cDs_{\theta, p}^{\kappa}(S)(X) \to \cDs_{\theta, p}^{\kappa}(T)(X)$ is given by letting
$$\eta_* W := f^{-1} (\eta^{-1}(T_o)) \subset W$$
and  setting $\eta_* (W,\ell_W, a, f):= (\eta_* W, \ell|_{\eta_* W}, a, \eta \circ f|_{\eta_* W})$. This defines a $\Gamma$-sheaf $\cDs_{\theta, p}^{\kappa}(\_)$.

These assemble into a simplicial $\Gamma$-sheaf $\cDs_{\theta, \bullet}^{\kappa}(\_)$, where the $i$th face map forgets $a_i$ and the $i$th degeneracy map doubles $a_i$. 
\end{definition}

Applying the representing space functor gives $\Gamma$-spaces $\rep{\cDs_{\theta, p}^{\kappa}}(\_)$ assembling into a simplicial $\Gamma$-space, and taking the (fat) geometric realisation in the simplicial direction gives a $\Gamma$-space $\norm{\cDs_{\theta, \bullet}^{\kappa}}(\_) : \Gamma^{op} \to \Top$, with $\norm{\cDs_{\theta, \bullet}^{\kappa}}(1_+) = \norm{\cDs_{\theta, \bullet}^{\kappa}}$. This is the required $\Gamma$-space structure on $\norm{\cDs_{\theta,\bullet}^{\kappa}}$.

\begin{lem}\label{lem:simplicial-D-gamma-is-special}
The $\Gamma$-space $\norm{\cDs_{\theta, \bullet}^{\kappa}}(\_)$ is special. 
\end{lem}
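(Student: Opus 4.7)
The plan is to apply Lemma \ref{lem:geometric-realization-simplicial-gammaspace} to reduce the claim to showing that at each simplicial level $p$ the $\Gamma$-space $\rep{\cDs_{\theta,p}^{\kappa}}(\_)$ is special. This amounts to two statements: that $\rep{\cDs_{\theta,p}^{\kappa}(0_+)}$ is contractible, and that for each $n \geq 0$ the Segal map
\[
\prod_{j=1}^n (\iota_j)_* \colon \rep{\cDs_{\theta,p}^{\kappa}(n_+)} \lra \rep{\cDs_{\theta,p}^{\kappa}(1_+)}^n
\]
is a weak homotopy equivalence.

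For contractibility, I will use that $0_+$ has $S_o=\emptyset$, so a locally constant $f \colon W \to S_o$ forces $W = \emptyset$. Hence $\cDs_{\theta,p}^{\kappa}(0_+)(X)$ reduces to the set of smooth ordered tuples $a_0 \leq \cdots \leq a_p \colon X \to \bR$, which is a convex subset of a Fr\'echet space. Linear interpolation yields concordances between any two sections, and parametrically the same argument shows that every concordance class in $\cDs_{\theta,p}^{\kappa}(0_+)[S^n]$ is trivial. By the bijection $[S^n, \rep{\cF}] \cong \cF[S^n]$ of Proposition 2.17 of \cite{MadsenWeiss}, this gives the contractibility.

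For the Segal condition, I plan to use the criterion of Proposition \ref{prop:SurjCrit}: it suffices to prove that for every $X \in \Mfds$, every closed $A \subset X$, and every germ $z = (W, \ell_W, a, f)$ of $\cDs_{\theta,p}^{\kappa}(n_+)$ near $A$, each relative section $((\widetilde{W}_j, \widetilde{\ell}_j, \widetilde{a}^{(j)}))_{j=1}^n$ of the product extending the image of $z$ lifts, up to concordance rel $A$, to a section of $\cDs_{\theta,p}^{\kappa}(n_+)$. The lift will be produced by two concordances, each relative to $A$. First, using that the space of ordered tuples of smooth functions is convex, I will linearly concordance each $\widetilde{a}^{(j)}$ to a common tuple $a$ that agrees with the germ near $A$, damping by a cut-off that vanishes near $A$. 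Second, choosing $M$ larger than the common dimension $N$ in which all $\widetilde{W}_j \subset \bR \times \bR^N \times X$ lie, I will isotope each $\widetilde{W}_j$ by the shift $(t, v, x) \mapsto (t, v + \chi(x)\, e_{M+j}, x)$ for a cut-off $\chi \colon X \to [0,1]$ that vanishes on a neighbourhood of $A$ and is positive elsewhere; since the shifts move the different $\widetilde{W}_j$ into distinct hyperplanes $y_{M+j} = \chi(x)$ away from $A$, and the pieces $f^{-1}(j)$ of $W$ near $A$ were already disjoint, the shifted manifolds $W'_j$ are globally disjoint. Their union $W' := \bigsqcup_j W'_j$ with the obvious locally constant function $W' \to n_+$ then defines the desired preimage section in $\cDs_{\theta,p}^{\kappa}(n_+)$; the $\kappa$-connectivity condition on morphisms is preserved, since it is inherited by disjoint unions.

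The key mechanism is nothing more than the standard $\Gamma$-space shift argument of Madsen--Tillmann \cite{MadTill}, adapted to the sheaf formalism and with modest bookkeeping for the height tuples $a$. The only real care needed is in combining the cut-offs so that both concordances are genuinely relative to $A$; I do not anticipate a serious obstacle here. Note that this strategy fails for the categories $\cDs_{\theta,p}^{\kappa,l}$ with $l \geq 0$: the disjoint union of two $\theta$-manifolds whose structure maps to $B$ are $(l+1)$-connected typically has a structure map that is not even $0$-connected, which is exactly why the surgery Theorem \ref{thm:SurgBelowMid} is needed to transport the infinite loop space structure back to $B\PCob_\theta^{\kappa,l}$ in the sequel.
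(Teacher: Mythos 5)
Your proposal follows the same route as the paper: reduce to levelwise specialness via Lemma \ref{lem:geometric-realization-simplicial-gammaspace}, then verify the Segal condition with the relative-surjectivity criterion of Proposition \ref{prop:SurjCrit} by a general-position concordance that shifts the differently labelled families into disjoint position, constant near $A$; the paper checks the equivalent condition on $S\vee T$ rather than on $n_+$, which is immaterial, and your treatment of the $0_+$ case and of why the argument breaks for $l\geq 0$ is correct.

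One step is stated too loosely to work as written: you cannot reconcile the height tuples $\widetilde a^{(j)}$ by linear interpolation while keeping the manifolds $\widetilde W_j$ fixed, because an element of $\cDs_{\theta,p}^{\kappa}$ requires $\widetilde W_j$ to be cylindrical near each $a_i$ (and the $\kappa$-connectivity condition to hold between consecutive levels), and the intermediate values of a straight-line homotopy need not be such levels. The standard repair is to move the manifolds rather than just the tuples, pulling each $\widetilde W_j$ back along a family of diffeomorphisms of the $\bR$-coordinate that carries the common target tuple to $\widetilde a^{(j)}$, exactly as in the proof of Lemma \ref{collar-strtching-cobcat}; with that (and a rescaling of your shift so that the translated manifolds remain inside $\bR\times(-1,1)^{M+n}\times X$) the argument is complete.
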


\begin{proof}
We defined $\norm{\cDs_{\theta, \bullet}^{\kappa}}(\_)$ to be the geometric realisation of the simplicial $\Gamma$-space $\rep{\cDs_{\theta, \bullet}^{\kappa}}(\_)$. Therefore by Lemma \ref{lem:geometric-realization-simplicial-gammaspace} it is enough to verify that $\rep{\cDs_{\theta, p}^{\kappa}}(\_)$ is special for each $p$. This amounts to proving that the map
$$\rep{\cDs_{\theta, p}^{\kappa}}(S \vee T) \lra \rep{\cDs_{\theta, p}^{\kappa}}(S) \times \rep{\cDs_{\theta, p}^{\kappa}}(T)$$
induced by the collapse maps is a weak equivalence. The target is equivalent to the representing space of the sheaf $X \mapsto \cDs_{\theta, p}^{\kappa}(S)(X) \times \cDs_{\theta, p}^{\kappa}(T)(X)$, so we must show that the map of sheaves
$$\cDs_{\theta, p}^{\kappa}(S \vee T) \lra \cDs_{\theta, p}^{\kappa}(S) \times \cDs_{\theta, p}^{\kappa}(T)$$
is a weak equivalence. Using the criterion of Proposition \ref{prop:SurjCrit}, this is an easy consequence of general position: given two families of $\theta$-manifolds in $\bR \times (-1,1)^\infty\times X$, one labelled by $S$ and one by $T$, we may change them by a concordance so that they are disjoint, giving a lift to $\cDs_{\theta, p}^{\kappa}(S \vee T)[X]$. This argument goes through relative to any subset of $X$.
\end{proof}

\begin{rem}\label{rem:CobCatVerySpecial}
If $2 \kappa \leq d-2$, then Theorem 3.1 of \cite{GRW} (quoted above as Theorem \ref{thm:grw-thm3.1}) applies and shows that $\norm{\cDs_{\theta, \bullet}^{\kappa}} \simeq B \Cob_\theta^\kappa \simeq B \Cob_\theta$. In those cases, $\norm{\cDs_{\theta, \bullet}^{\kappa}}(\_)$ is very special. By the elaboration of the main result of \cite{GMTW} described in \cite{Nguyen}, there is a weak equivalence
\[
 \mathrm{B}^{\infty} \norm{\cDs_{\theta, \bullet}^{\kappa}} \simeq \Sigma (\MT \theta \langle -2 \rangle ) \simeq (\Sigma \MT \theta) \langle -1 \rangle
\]
of spectra\footnote{For a spectrum $E$, we let $E \langle n \rangle $ denote the $n$-\emph{connected} covering.}. The base-point component $\norm{\cDs_{\theta, \bullet}^{\kappa}}^0$ is always very special, no matter what $d$ and $\kappa$ are, but when $2 \kappa \leq d-2$ there is a weak equivalence
\[
  \mathrm{B}^{\infty} \norm{\cDs_{\theta, \bullet}^{\kappa}}^0 \simeq \Sigma (\MT \theta \langle -1 \rangle) \simeq (\Sigma \MT \theta) \langle 0 \rangle.
\]
\end{rem}

We now define a $\Gamma$-space structure on $\norm{\cDs_{\theta,\bullet}^{\kappa,\psc}}$ and $\norm{\cDs_{\theta,\bullet}^{\kappa,\rst}}$ in a completely analogous way.

\begin{defn}\label{defn:PscCObCatGamma}
We define $\cDs_{\theta,p}^{\kappa,\psc} (S)(X)$ as the set of all $(W,\ell_W,a, g_W,f)$ such that $(W,\ell_W,a,g_W) \in \cDs_{\theta,p}^{\kappa, \psc}(X)$ and $f : W \to S_o$ is a locally constant function. The sheaf and $\Gamma$-structure is as in Definition \ref{def:CobCatGamma}, and $[p] \mapsto \cDs_{\theta,p}^{\kappa,\psc} (\_)$ has the structure of a simplicial $\Gamma$-sheaf as in that definition.

The simplicial $\Gamma$-sheaf $[p] \mapsto \cDs_{\theta,p}^{\kappa,\rst}(\_)$ is defined analogously, using the simplicial sheaf $\cDs_{\theta,\bullet}^{\kappa, \rst}$. 
\end{defn} 

Just as in the previous section, we have 
$$\norm{D_{\theta,\bullet}^{\kappa,\psc}}(1_+) = \norm{D_{\theta,\bullet}^{\kappa,\psc}}\quad \text{ and } \quad \norm{D_{\theta,\bullet}^{\kappa,\rst}}(1_+) = \norm{D_{\theta,\bullet}^{\kappa,\rst}}.$$

\begin{thm}\label{thm.psc-gamma-special}
\mbox{}
\begin{enumerate}[(i)]
\item The inclusion and forgetful maps 
$$\norm{\cDs_{\theta,\bullet}^{\kappa,\rst}}(\_) \lra \norm{\cDs_{\theta,\bullet}^{\kappa,\psc}}(\_)\lra \norm{\cDs_{\theta,\bullet}^{\kappa}}(\_)$$ are maps of $\Gamma$-spaces.
\item The $\Gamma$-spaces $\norm{\cDs_{\theta,\bullet}^{\kappa,\psc}}(\_)$ and $\norm{\cDs_{\theta,\bullet}^{\kappa,\rst}}(\_)$ are special.
\item $\norm{\cDs_{\theta,\bullet}^{\psc}}(\_)$ is very special, provided that $\theta$ is once-stable.
\end{enumerate}
\end{thm}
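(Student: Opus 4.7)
The plan is to treat the three parts in order of increasing difficulty. Parts (i) and (ii) should proceed along the lines of Lemma \ref{lem:simplicial-D-gamma-is-special}, since the $\Gamma$-structure of Definition \ref{defn:PscCObCatGamma} interacts only with the labelling $f : W \to S_o$ and not with the metric data. Part (iii) is where the geometry of positive scalar curvature really enters, via a ``bent cylinder'' construction that requires the once-stability hypothesis.

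For part (i), I would argue this is immediate from unwinding the definitions. Both the inclusion $\cDs_{\theta,\bullet}^{\kappa,\rst}(S) \hookrightarrow \cDs_{\theta,\bullet}^{\kappa,\psc}(S)$ and the forgetful map $\cDs_{\theta,\bullet}^{\kappa,\psc}(S) \to \cDs_{\theta,\bullet}^{\kappa}(S)$ act as the identity on the labelling function $f$. Since the $\Gamma$-structure map $\eta_*$ (for $\eta : S \to T$) is defined purely in terms of $f$ (by restricting to $\eta_*W = f^{-1}(\eta^{-1}(T_o))$), these maps strictly commute with $\eta_*$, hence define maps of $\Gamma$-sheaves, and so of $\Gamma$-spaces after applying $\rep{-}$ and realising.

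For part (ii), by Lemma \ref{lem:geometric-realization-simplicial-gammaspace} it suffices to show that for each $p$ the $\Gamma$-sheaf $\cDs_{\theta,p}^{\kappa,\psc}(\_)$ (respectively $\cDs_{\theta,p}^{\kappa,\rst}(\_)$) is levelwise special. Concretely, I would show that for $S, T \in \Gamma^{\op}$ the collapse map
\[
\cDs_{\theta,p}^{\kappa,\psc}(S \vee T) \lra \cDs_{\theta,p}^{\kappa,\psc}(S) \times \cDs_{\theta,p}^{\kappa,\psc}(T)
\]
is a weak equivalence of sheaves, using the surjectivity criterion of Proposition \ref{prop:SurjCrit}. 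Given two sections, a general position concordance in $\bR \times (-1,1)^\infty \times X$ (possible since we have infinitely many codirections) moves their underlying $\theta$-manifolds to be disjoint; the psc metrics then live on disjoint components and remain unchanged. For the decoration $\rst$, the concordance is by isotopy, and right-stability is preserved along isotopies by Lemma \ref{lem:stability-homotopy-invariant}. That right-stability is preserved under disjoint union of cobordisms is easy (one reduces to the case where the composition cobordism $V$ splits accordingly, using that for disjoint $V = V_1 \sqcup V_2$ the gluing $\mu$ factors as $\mu(h_1,\_) \times \mu(h_2,\_)$). The argument is relative to any closed $A \subset X$ in the obvious way.

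For part (iii), part (ii) already gives that $\norm{\cDs_{\theta,\bullet}^{\psc}}(\_)$ is special (applied with no connectivity restriction), so by Theorem \ref{thm:segals-theorem} it remains to show that the abelian monoid $\pi_0 \norm{\cDs_{\theta,\bullet}^{\psc}}$ is a group. Given a class $[M,\ell,g]$, I construct an inverse via a ``bent cylinder'': choose a smoothly embedded U-shaped curve $\gamma \subset \bR \times \bR$ with both endpoints at the same value of the first coordinate and tangent vectors pointing in opposite $\bR$-directions there, and set $W := \gamma \times M \subset \bR \times \bR \times M$, embedded in $\bR \times \bR^\infty$ via some embedding $M \hookrightarrow \bR^{\infty-1}$, with the product metric $ds^2 + g$. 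Since $\gamma$ is one-dimensional, $\scal(ds^2+g) = \scal(g) > 0$, so this is a psc cobordism. Arranging $\gamma$ to be vertical near its endpoints makes $W$ cylindrical there, and it is a cobordism from $\emptyset$ to $M \sqcup M$. The hard step is defining a $\theta$-structure: at the two endpoints the tangent bundle $T\gamma \oplus TM$ carries opposite orientations on the $T\gamma$-factor, so the two restrictions differ by reflection in the $\bR$-coordinate. This is exactly what the once-stability hypothesis is designed to handle: if $\theta$ is pulled back from $\theta' : B \to BO(d+1)$, then a $\theta$-structure $\ell$ on $M$ (a map $\bR \oplus TM \to \gamma_\theta$) extends canonically to $TW \oplus \bR^0 \cong (T\gamma \oplus TM)$ and restricts on the other end to a $\theta$-structure $\bar\ell$ obtained by precomposing $\ell$ with the reflection of the $\bR$-factor. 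Hence $(W,\ell_W,ds^2+g)$ exhibits $[M \sqcup M, \ell \sqcup \bar\ell, g \sqcup g] = [\emptyset]$, so $[M,\bar\ell,g]$ is the required inverse of $[M,\ell,g]$.

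The main obstacle I expect is the last step: implementing the bent cylinder rigorously inside the sheaf model $\cDs_{\theta,\bullet}^{\psc}(\_)$, in particular verifying that with suitable choice of $\gamma$ near its endpoints the resulting element genuinely satisfies the collaring and product-type conditions, and that the $\theta$-structure glued from $\ell$ and $\bar\ell$ via the once-stable trivialisation is honestly smooth. Once these technicalities are in place, the rest of part (iii) is formal from Segal's theorem.
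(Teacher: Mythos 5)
Your proof is correct and follows essentially the same route as the paper: part (i) is immediate from the definitions, part (ii) is the general-position argument of Lemma \ref{lem:simplicial-D-gamma-is-special} via Proposition \ref{prop:SurjCrit} (your extra check that right-stability survives disjoint union is a detail the paper leaves implicit), and your bent cylinder carrying the opposite $\theta$-structure $\ell^{\op}$ furnished by once-stability is precisely the paper's construction of inverses in $\pi_0$.
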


\begin{proof}
Part (i) is trivial and part (ii) is proven exactly as Lemma \ref{lem:simplicial-D-gamma-is-special}. For part (iii), note that 
\[
\pi_0 (\norm{\cDs_{\theta,\bullet}^{\psc}}) = \pi_0 (B\PCob_\theta)
\]
is the monoid of psc-$\theta$-cobordism classes of $(d-1)$-dimensional psc manifolds. This is group-complete: if $(M,\ell,g)$ is a $(d-1)$-dimensional psc-$\theta$-manifold, consider the cobordism $(M \times [0,1],\ell',g+dt^2)$ with the extended $\theta$-structure. Bending this cobordism, one can consider it as a cobordism $\emptyset \leadsto (M,\ell,g) \coprod (M,\ell^{op},g)$ ($\ell^{op}$ is the opposite $\theta$-structure, which exists because $\theta$ is once-stable). 
\end{proof}

\begin{rem}
Using a bit of Gromov--Lawson surgery, one can show that the map 
\[
\pi_0 (\norm{\cDs_{\theta,\bullet}^{\kappa,\psc}}) \lra \pi_0 (\norm{\cDs_{\theta,\bullet}^{\psc}})
\]
is bijective, as long as $2 \kappa+1\leq d$, $\kappa \leq d-3$ and $\theta$ is once-stable. It follows that in those cases, $\norm{\cDs_{\theta,\bullet}^{\kappa,\psc}} (\_ )$ is very special as well. We do not need to know this fact, though. 
\end{rem}

\begin{defn}\label{defn:pscfibreGammaspace}
We define very special $\Gamma$-spaces
\[
\Psc(\theta) := \hofib_\emptyset ( \norm{\cDs_{\theta,\bullet}^{\psc}} \to \norm{\cDs_{\theta,\bullet}})
\]
and 
\[
\Psc^{\kappa,\rst}(\theta)_0 := \hofib_\emptyset ( \norm{\cDs_{\theta,\bullet}^{\kappa,\rst}}^0 \to \norm{\cDs_{\theta,\bullet}^\kappa}^0).
\]
\end{defn}
These are indeed very special, by Theorem \ref{thm.psc-gamma-special}, Remark \ref{rem:CobCatVerySpecial}, Lemma \ref{lem:simplicial-D-gamma-is-special} and Lemma \ref{lem:hofib-of-special-gamme}. 

\subsection{Conclusion}

Now we can prove Theorem \ref{Main:infiniteloopspacetheorem}, the more precise version of which reads as follows. 

\begin{thm}\label{thm:infinite-loopspace-theorem}
Let $M$ be a closed $(d-1)$-manifold and $g \in \Riem^+ (M)$. Assume that 
\begin{enumerate}[(i)]
\item $d \geq 6$, 
\item there exists a cobordism $W: \emptyset \leadsto M$ such that $(W,M)$ is $2$-connected and such that
\item $\Riem^+ (W)_g^{\rst} \neq \emptyset$.
\end{enumerate}
Let
\[
 M \stackrel{\ell_M}{\lra} B \stackrel{\theta_M}{\lra} B\mathrm{O}(d)
\]
be the Moore--Postnikov 2-stage of the stabilised tangent bundle $\bR \oplus TM$. Then there exists a weak homotopy equivalence
\[
\Theta:\Riem^+ (M \times [0,1])_{g,g}^{\rst} \simeq \Omega \Psc^{2,\rst}(\theta_M)_0 \simeq \Omega^{\infty+1} \mathrm{B}^\infty \Psc^{2,\rst}(\theta_M)_0,
\]
where $\Psc^{2,\rst}(\theta_M)_0$ is the very special $\Gamma$-space introduced in Definition \ref{defn:pscfibreGammaspace}. 
\end{thm}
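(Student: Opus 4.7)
The plan is to chain together the four main results developed so far: the delooping of stable psc metric spaces (Theorem \ref{thm:Concordance}), the fibre identification (Theorem \ref{Main:fibretheorem}), the surgery theorems for cobordism categories (Theorems \ref{thm:grw-thm4.1} and \ref{thm:SurgBelowMid}), and the $\Gamma$-space structure with its base-point-transport Corollary \ref{cor.infiniteloopspace-over-nonbasepoint}. Since $B$ is the Moore--Postnikov $2$-stage of $\bR \oplus TM : M \to BO(d)$, the structure map $M \to B$ is $2$-connected, so $M \in \Ob(\Cob_{\theta_M}^{2,1})$. The $2$-connectedness of $(W,M)$, combined with the fact that the fibre of $\theta_M$ is $1$-connected, allows us to lift $TW : W \to BO(d)$ through $\theta_M$, making $(W : \emptyset \leadsto M)$ a morphism in $\Cob_{\theta_M}^{2,1}$ and, together with $h$, a morphism in $\PCob_{\theta_M}^{2,1,\rst}$. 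Since $B$ is the tangential $2$-type of the compact manifold $W$, it is of type $(F_2)$, so the hypotheses of Theorems \ref{thm:grw-thm4.1} and \ref{thm:SurgBelowMid} with $(\kappa,l) = (2,1)$ are all satisfied when $d \geq 6$.

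First I would apply Theorem \ref{thm:Concordance} (taking there $W = \bR \times M$ and $J = \bR$) to obtain
$$\Riem^+([0,1] \times M)_{g,g}^{\rst} \simeq \Omega_{(0,g),(1,g)} B\Conc(M)^{\rst}.$$
Theorem \ref{Main:fibretheorem} identifies $B\Conc(M)^{\rst}$ with the homotopy fibre $\hofib_M(BF^{2,1,\rst})$, and iterated application of Theorems \ref{thm:grw-thm4.1} and \ref{thm:SurgBelowMid} produces a weak equivalence of maps from $BF^{2,1,\rst}$ to $BF^{2,\rst} : B\PCob_{\theta_M}^{2,\rst} \to B\Cob_{\theta_M}^{2}$. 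By Theorem \ref{thm.psc-gamma-special}, this latter map of spaces underlies a map of pointed special $\Gamma$-spaces based at $\emptyset$.

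Next comes the fibre-transport step. The morphism $(W,h)$ defines a path in $B\PCob_{\theta_M}^{2,\rst}$ from $\emptyset$ to $(M,g)$, so $(M,g)$ lies in the unit component of this $\Gamma$-space. Applying Lemma \ref{lem:homotopyfibre-fundamentalgroupoid} and Corollary \ref{cor.infiniteloopspace-over-nonbasepoint} to the $\Gamma$-space map $BF^{2,\rst}$ produces a canonical-up-to-homotopy weak equivalence of base-point components
$$\Lambda : \hofib_M(BF^{2,\rst})^0 \overset{\simeq}{\lra} \hofib_\emptyset(BF^{2,\rst})^0 = \Psc^{2,\rst}(\theta_M)_0.$$
Lemma \ref{lem:hofib-of-special-gamme} says that the right-hand side is very special, so by Segal's Theorem \ref{thm:segals-theorem}(iii) the map $\Psc^{2,\rst}(\theta_M)_0 \to \Omega \mathrm{B}\Psc^{2,\rst}(\theta_M)_0$ is a weak equivalence, and iteration identifies $\Omega \Psc^{2,\rst}(\theta_M)_0$ with $\Omega^{\infty+1} \mathrm{B}^\infty \Psc^{2,\rst}(\theta_M)_0$. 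Composing all of the above gives the desired $\Theta$.

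The main obstacle I foresee is bookkeeping for base-points through the zig-zag of equivalences, and in particular verifying that the distinguished base-point of $\Omega_{(0,g),(1,g)} B\Conc(M)^{\rst}$ lands, after the chain of identifications, in the unit component of $\hofib_\emptyset(BF^{2,\rst})$, so that we are actually looping at a point of $\Psc^{2,\rst}(\theta_M)_0$ rather than of some other component. Independence from the choice of $W$ is then essentially free: any two admissible pairs $(W,h)$ and $(W',h')$ furnish two paths from $\emptyset$ to $(M,g)$ in $B\PCob_{\theta_M}^{2,\rst}$, and Corollary \ref{cor.infiniteloopspace-over-nonbasepoint} guarantees that the homotopy class of $\Lambda$, and hence of $\Theta$, depends only on $(M,g)$ lying in the unit component and not on the choice of path.
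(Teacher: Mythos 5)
Your proposal is correct and follows essentially the same route as the paper: Theorems \ref{thm:Concordance} and \ref{thm:Fibre} identify $\Riem^+([0,1]\times M)_{g,g}^{\rst}$ with the loop space of $\hofib_{(M,\ell_M)}(BF^{2,1,\rst})$, Theorems \ref{thm:grw-thm4.1} and \ref{thm:SurgBelowMid} pass to $BF^{2,\rst}$, and the path furnished by $(W,h)$ together with Lemma \ref{lem:homotopyfibre-fundamentalgroupoid} transports the base-point to $\emptyset$ and yields independence of the choice of $W$. The one issue you flag but leave open --- the discrepancy between $(\hofib_\emptyset f)^0$ and $\hofib_\emptyset(f^0)$ --- is resolved in the paper by noting that the comparison map is $0$-coconnected and hence an equivalence after looping; also note that the fibre of $\theta_M$ is $1$-truncated rather than $1$-connected, though the obstruction-theoretic extension of $\ell_M$ to $\ell_W$ goes through exactly as you intend.
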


The proof gives a preferred construction of $\Theta$ which uses $W$ and a metric in $\Riem^+ (W)_g^{\rst}$, but also shows that the homotopy class of $\Theta$ is independent of these choices. 

\begin{example}
Theorem \ref{thm:infinite-loopspace-theorem} applies to $M=S^{d-1}$. In that case, we can take $g=g_\round^{d-1}$, $W=D^d$ (note that $g_\tor^d \in \Riem^+ (D^d)_{g_\round^{d-1}}^{\rst}$), and the theorem implies that 
\[
 \Riem^+ (S^{d-1} \times [0,1])^{\rst}_{g_\round^{d-1},g_\round^{d-1}} 
\]
has the homotopy type of an infinite loop space. On the other hand, a simple application of Theorem \ref{thm:chernysh-theorem} shows that $ \Riem^+ (S^{d-1} \times [0,1])_{g_\round^{d-1},g_\round^{d-1}}  \simeq \Riem^+ (S^d)$. Hence a certain union of connected components of $\Riem^+ (S^d)$ has the homotopy type of an infinite loop space. This proves Theorem \ref{Main:infiniteloopspacetheoremsphere} from the introduction.
\end{example}

\begin{proof}[Proof of Theorem \ref{thm:infinite-loopspace-theorem}]
The $\Gamma$-space $\Psc^{2,\rst}(\theta_M)_0$ is very special, so we have an equivalence $\Psc^{2,\rst}(\theta_M)_0 \simeq \Omega^{\infty} \mathrm{B}^\infty \Psc^{2,\rst}(\theta_M)_0$ by Theorem \ref{thm:segals-theorem}.
Under the weak equivalences
\[
\norm{ \cDs^{2,\rst}_\theta} \simeq B \PCob_\theta^{2,\rst} \quad\text{ and }\quad \norm{ \cDs^{2}_\theta} \simeq B \Cob_\theta^{2}
\]
from \eqref{eqn:poset-model-for-psccobcat}, \eqref{eqn:poset-model-for-ordcobcat} and Lemma \ref{collar-shrinking-cobcat}, the forgetful map $ \norm{\cDs_{\theta,\bullet}^{2,\rst}}(1_+)^0 \to \norm{\cDs_{\theta,\bullet}^2}(1_+)^0$ corresponds to $(BF^{2,\rst})^0: (B \PCob_\theta^{2,\rst})^0 \to (B \Cob_\theta^2)^0$, and therefore 
\begin{equation*}
\hofib_{\emptyset} (BF^{2,\rst})^0 \simeq \Psc^{2,\rst}(\theta_M)_0. 
\end{equation*}
To finish the proof of the Theorem, we therefore have to establish a weak equivalence $\Riem^+ ([0,1] \times M)_{g_0,g_0}^{\rst} \simeq \Omega \hofib_{\emptyset} (BF^{2,\rst})^0$. This involves the change of base-points, and we shall use the functor $\Lambda$ defined in \eqref{eqn:basepointtransport} for that purpose.

The map $\ell_M$ is a $2$-connected cofibration and $\theta_M$ is a $2$-coconnected fibration. It follows that $B$ is of type $(F_2)$, and the map $\ell_M$ provides a $\theta$-structure on $M$. Let $W: \emptyset \leadsto M$ be a cobordism as required by hypothesis (ii) of the theorem. By obstruction theory, the $\theta_M$-structure $\ell_M$ extends to a $\theta_M$-structure $\ell_W$ on $W$. We obtain a morphism $(W,\ell_W):\emptyset \leadsto (M,\ell_M)$ in $\Cob_{\theta_M}^2$. 
By hypothesis (iii) of the theorem, there is a right stable $h \in \Riem^+ (W)_g$, and $(W,\ell_W,h): \emptyset \leadsto (M,\ell_M,g)$ is a morphism in $\PCob_{\theta_M}^{2,\rst}$. From that morphism, we get a path $\gamma: (M,\ell_M,g) \leadsto \emptyset$ in $B \PCob_{\theta_M}^{2,\rst}$ (in particular, $(M,\ell_M,g)$ is a point in $(\PCob_{\theta_M}^{2,\rst})^0$). This induces a weak equivalence
\begin{equation*}
\Lambda (\gamma): \hofib_{(M,\ell_M)} (BF^{2,\rst})^0 \lra \hofib_\emptyset (BF^{2,\rst})^0
\end{equation*}
which is weakly base-point-preserving and whose homotopy class does not depend on the choice of $\gamma$ (and hence $W$), by Lemma \ref{lem:homotopyfibre-fundamentalgroupoid}. By construction, $(M,\ell_M)$ is an object of $\Cob_{\theta_M}^{2,1}$, so that the symbol $\hofib_{(M,\ell_M)} (BF^{2,1,\rst})^0$ is defined. By Theorems \ref{thm:SurgBelowMid} and \ref{thm:grw-thm4.1}, the natural map
\begin{equation*}
\hofib_{(M,\ell_M)} (BF^{2,1,\rst})^0 \lra \hofib_{(M,\ell_M)} (BF^{2,\rst})^0
\end{equation*}
is a weak equivalence. Altogether, we have constructed a weak equivalence
\begin{equation}\label{eqn:lastmap-proofinfiniteloopspacestheorem0}
\hofib_{(M,\ell_M)} (BF^{2,1,\rst})^0 \simeq \Psc^{2,\rst}(\theta_M)_0,
\end{equation}
whose homotopy class does not depend on the choices involved. The base-point in $  \hofib_{(M,\ell_M)} (BF^{2,1,\rst})^0$ is the point $(M,\ell_M,g) \in (B \PCob_{\theta_M}^{2,1,\rst})^0$, together with the constant path at its image point in $(B \Cob_{\theta_M}^{2,1})^0$. Moreover, Theorems \ref{thm:Fibre} and \ref{thm:Concordance} provide a weak equivalence
\begin{equation}\label{eqn:lastmap-proofinfiniteloopspacestheorem}
\Riem^+ (M \times [0,1])_{g,g}^{\rst} \simeq \Omega  ( \hofib_{(M,\ell_M)} BF^{2,1,\rst})^0
\end{equation}
(the loop space in the target is taken at the base-point just specified). The loop space of the source of \eqref{eqn:lastmap-proofinfiniteloopspacestheorem0} is not the same as the target of \eqref{eqn:lastmap-proofinfiniteloopspacestheorem}. However, there is a general fact which finishes the proof of the Theorem. If $f:(X,x_0) \to (Y,y_0)$ is a based map and $X^0$, $Y^0$ are the path components containing the base-points, and $f^0$ is the restriction of $f$ to $X^0 \to Y^0$, there is a natural map
\[
 (\hofib_{y_0} f)^0 \lra \hofib_{y_0} f^0
\]
from the unit component of the homotopy fibre to the homotopy fibre of the restriction to the unit component. This is in general not an equivalence, but it is $0$-coconnected and so induces an equivalence after taking loop spaces at the base-points, which is all that matters for our purpose. The map $\Theta$ is defined as the composition of the above maps.
\end{proof}

\begin{rem}
We shall use the map $\Theta$ from Theorem \ref{thm:infinite-loopspace-theorem} to define an infinite loop space structure on $\Riem^+ ([0,1]\times M)_{g_0,g_0}^{\rst}$. In particular, $\Theta$ is, tautologically, an infinite loop map.
\end{rem}

\section{The action of the diffeomorphism group on psc metrics}\label{sec:diffactiononpscspace}

\subsection{Statement of the result}\label{subsec:statementdiffeoaction}

As we mentioned in the introduction, the cobordism category methods we have been using can be used to find significant constraints on the action of $\Diff_\partial(W)$ on $\Riem^+(W)_{g_M}$ when $W$ is a nullbordism of a psc manifold $(M, g_M)$. We will first formulate a more general result, and then explain how to derive Theorem \ref{mainthmintro:diffaction} from it.

\begin{thm}\label{thm:diff-action-comesfromMT}
Let $\theta:B \to B\mathrm{O}(d)$ be a fibration, $d \geq 6$. Let $(N,g_N)$ and $(M,g_M)$ be two objects of $\PCob_\theta$ and assume that the structure map $\ell_M : M \to B$ is $2$-connected. Then there exists a homotopy cartesian square
\[
\xymatrix{
\PCob_\theta^2 ((N,g_N),(M,g_M)) \ar[d]^{F} \ar[r] & \mathcal{X} \ar[d] & \\
\Cob_\theta^2 (N,M) \ar[r]^{\tau} & \Omega_{N,M} B \Cob_\theta^2 \ar[r]^{\simeq} & \Omega^\infty \MT \theta.
}
\]
\end{thm}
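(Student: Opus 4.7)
The plan is to take $\mathcal{X} := \Omega_{(N, g_N), (M, g_M)} B\PCob_\theta^2$, with the right-hand vertical map $\Omega BF^2$ induced by the forgetful functor and the upper horizontal map the tautological map $\tau$ to based loops. The bottom-right equivalence $\Omega_{N, M} B\Cob_\theta^2 \simeq \Omega^\infty \MT\theta$ is a consequence of the GMTW theorem \cite{GMTW} together with the weak equivalence $B\Cob_\theta^2 \simeq B\Cob_\theta$ provided by Theorem \ref{thm:grw-thm3.1}. Note that since $\ell_M$ is $2$-connected and $M$ is a closed manifold (hence has the homotopy type of a finite CW complex), the space $B$ is of type $(F_2)$, so Theorems \ref{thm:grw-thm4.1} and \ref{surgery-theorem-introcution} apply to give weak equivalences $B\Cob_\theta^{2,1} \simeq B\Cob_\theta^2$ and $B\PCob_\theta^{2,1} \simeq B\PCob_\theta^2$.

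To verify that the square is homotopy cartesian, I would check it fiberwise over the decomposition $\Cob_\theta^2(N, M) = \coprod_{[V]} \mathcal{M}^\theta(V)$ by diffeomorphism classes of cobordisms: for each such class, the fiber of $F$ is $\Riem^+(V)_{g_N, g_M}$, and one must show that the induced comparison map to $\hofib_{\tau(V)} \Omega BF^2$ is a weak equivalence. The approach follows the proof of Theorem \ref{cor:berw-new-proof}: form the elongation $\hat{V} := ((-\infty, 0]\times N) \cup V \cup ([1, \infty) \times M)$, cylindrical over $J := (-\infty, \epsilon) \cup (1-\epsilon, \infty)$ for small $\epsilon > 0$, and apply the first (non-$\rst$) homotopy cartesian square of Theorem \ref{thm:Fibre} combined with a non-$\rst$ analog of the delooping argument of Theorem \ref{thm:Concordance}.

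The main obstacle is that Theorem \ref{thm:Fibre} requires each cobordism $\hat{V}|_{[x, y]}$ with $x < y \in J$ to define a morphism in $\Cob_\theta^{2,1}$, which in turn requires $\ell_N$ to be $2$-connected --- a hypothesis we do not have. To bypass this, I would perform a reduction step: via surgery below the middle dimension as in Lemma \ref{lem:surgery-lemma-fabian-michael} (using that $B$ is of type $(F_2)$), construct an auxiliary cobordism $U : N'' \leadsto N$ with $\ell_{N''}$ being $2$-connected, and equip $U$ with a right stable psc metric $h_U \in \Riem^+(U)_{g_{N''}, g_N}$ via Theorem \ref{prop:right-stable-on-2.1.cob-stable}. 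Composition with $(U, h_U)$ produces a commutative cube relating the original square (with source $(N, g_N)$) to an analogous square with source $(N'', g_{N''})$. The composite $U \cup \hat{V}$ now defines a morphism in $\Cob_\theta^{2,1}$ (since $N''$ and $M$ are both $2$-connected over $B$ and the outgoing inclusion is $2$-connected), making Theorem \ref{thm:Fibre} applicable; right-stability of $h_U$ ensures that the induced maps on $F$-fibers are weak equivalences, and the surgery-theoretic equivalences transfer the resulting homotopy cartesian square from the $(2,1)$-decorated setting back to the $2$-decorated setting.

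The hardest part will be coherently assembling this reduction --- verifying that composition with $(U, h_U)$ is compatible with the tautological maps $\tau$ to the path spaces, and ensuring that the transition between the $(2,1)$- and $2$-decorated categories preserves the homotopy cartesian structure fiberwise over every diffeomorphism class in $\Cob_\theta^2(N, M)$.
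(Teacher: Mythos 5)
Your choice of $\mathcal{X}$ is where the argument breaks down. If you set $\mathcal{X} := \Omega_{(N,g_N),(M,g_M)} B\PCob_\theta^2$ with right-hand vertical map $\Omega BF^2$, then for the square to be homotopy cartesian the comparison map from the fibre of $F$ over a cobordism $V$, namely the space $\Riem^+(V)_{g_N,g_M}$ of \emph{all} psc metrics, to $\hofib_{\tau(V)}(\Omega BF^2)$ would have to be a weak equivalence. But the identification of such homotopy fibres with honest spaces of psc metrics (Theorem \ref{thm:Concordance}, and its consequence Theorem \ref{cor:berw-new-proof}) is only available in the right-stable setting: the proof that $N_\bullet H$ is homotopy cartesian uses that the face maps act on fibres by gluing maps $\mu(h_p,\_)$ with $h_p$ right stable. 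There is no ``non-$\rst$ analog'' of Theorem \ref{thm:Concordance}: the tautological map $\Riem^+([0,1]\times M)_{g,g} \to \Omega B\Conc(M)$ is only a group-completion-type map (see the Remark after Theorem \ref{thm:Concordance}), and the homotopy fibre of $\Omega BF^2$ over $\tau(V)$ is an infinite-loop-type object whose $\pi_0$ identifies metrics on $V$ that are concordant rel boundary, whereas $\pi_0(\Riem^+(V)_{g_N,g_M})$ consists of isotopy classes. Unless concordance implies isotopy (the open conjecture of \S\ref{sec:concimpliesisotopy-consequences}), your square is not homotopy cartesian, and in any case your route to proving it is not available.

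This is precisely the difficulty the paper's proof is designed to circumvent: $\mathcal{X}$ is \emph{not} $\Omega B\PCob_\theta^2$ but is defined as the homotopy pullback of $\norm{Y_\bullet} \to \hofib_{M} BF^{2,\rst} \leftarrow \Omega_{N,M}B\Cob_\theta^2$, where $\norm{Y_\bullet}$ resolves the $M$-end by chains of \emph{right-stable} concordances capped off by one \emph{arbitrary} psc metric $h$ on $[a_p,1]\times M$ --- morally the Borel construction of $\Omega B\PCob_\theta^{2,\rst}$ along the action of the group-like monoid $\Riem^+([0,1]\times M)^{\rst}_{g_M,g_M}$ on $\Riem^+([0,1]\times M)_{g_M,g_M}$. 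That extra non-stable layer is exactly what makes the fibre of $\mathcal{X} \to \Omega^\infty\MT\theta$ match the fibre $\Riem^+(V)_{g_N,g_M}$ of $F$. A further, smaller point: your reduction via an auxiliary cobordism $U: N''\leadsto N$ to force $2$-connectivity at the $N$-end is unnecessary in the paper's approach, because the semi-simplicial resolutions $X_\bullet, Z_\bullet$ only ever invoke the concordance category and Theorem \ref{prop:right-stable-on-2.1.cob-stable} at the $M$-end, where $\ell_M$ is $2$-connected by hypothesis; the $N$-end is simply carried along as part of the morphism $h_0\in\Riem^+(V|_{[-t,a_0]})^{\rst}_{g_N,g_0}$.
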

The space $\mathcal{X}$ will be constructed in the course of the proof. The left vertical map is the forgetful map whose fibre over a point $W$ is of course $\Riem^+ (W)_{g_N,g_M}$. The first bottom horizontal map $\tau$ is the tautological map and the second one is the homotopy equivalence arising from Theorem 3.1 of \cite{GRW} and the main result of \cite{GMTW}.

\begin{proof}[Proof of Theorem \ref{mainthmintro:diffaction} from Theorem \ref{thm:diff-action-comesfromMT}]
For a compact manifold $W$ of dimension $d \geq 6$ with boundary $M=\partial W$ such that $(W,M)$ is $2$-connected, we let $\theta:B \to B\mathrm{O}(d)$ be the Moore--Postnikov $2$-stage of the Gauss map of $W$. 

Let $\Cob_\theta^2 (\emptyset,M)_W \subset \Cob_\theta^2 (\emptyset,M)$ be the union of path components given by those cobordisms $V$ which are diffeomorphic to $W$ relative $M$. There is a forgetful map
\begin{equation}\label{eqn:forgetfulmap-forgettangential2structures}
\rho: \Cob_\theta^2 (\emptyset,M)_W \lra B \Diff_\partial (W)
\end{equation}
which is a weak equivalence, since its fibre is the space of solutions to the lifting problem
\begin{equation*}
 \xymatrix{
 M \ar[d] \ar[r]^-{\ell_W\vert_M} & B\ar[d]^-{\theta} & \\
 W \ar[r] \ar@{-->}[ru] & B\mathrm{O}(d),
}
\end{equation*}
which is contractible by elementary obstruction theory, as the left hand map is $2$-connected, the right hand map is $2$-coconnected, and a lift $\ell_W$ exists.

The map $\alpha_W$ is defined to be the composition
\begin{equation}\label{eqn:defnalphaW}
B \Diff_\partial (W) \stackrel{\rho^{-1}}{\lra}  \cC_\theta^2 (\emptyset,M)_W \stackrel{\tau}{\lra} \Omega_{\emptyset,M} B \cC_\theta^2 \simeq \Omega^\infty \MT \theta.
\end{equation}

We define 
\[
\PCob_\theta^2 (\emptyset,(M,g_M))_W :=  F^{-1}(\Cob_\theta^2 (\emptyset,M)_W) \subset \PCob_\theta^2 (\emptyset,(M,g_M)).
\]
There is a forgetful map
\[
\PCob_\theta^2 (\emptyset,M)_W  \lra E \Diff_\partial (W) \times_{\Diff_\partial (W)} \Riem^+ (W)_{g_M} 
\]
similar to the map $\rho$ from \eqref{eqn:forgetfulmap-forgettangential2structures}, and it is a weak equivalence by the same reason. We obtain a commutative square
\[
\xymatrix{
\PCob_\theta^2 (\emptyset,(M,g_M))_W \ar[r] \ar[d]^{F} &  E \Diff_\partial (W) \times_{\Diff_\partial (W)} \Riem^+ (W)_{g_M} \ar[d]^{F'}\\
\Cob_\theta^2 (\emptyset,M)_W  \ar[r] & B \Diff_\partial (W) 
}
\]
whose horizontal maps are weak equivalences. Restricting the left column of the diagram of Theorem \ref{thm:diff-action-comesfromMT} to path-components and replacing the left hand column and the lower right corner by homotopy equivalent spaces, we arrive at a homotopy cartesian square
\begin{equation*}
\begin{gathered}
 \xymatrix{
 E \Diff_\partial (W) \times_{\Diff_\partial (W)} \Riem^+ (W)_{g_M}\ar[d]^{F'} \ar[r] & \mathcal{X} \ar[d] & \\
 B \Diff_\partial (W) \ar[r]^{\alpha_W} &  \Omega^\infty \MT \theta.
}
\end{gathered}
\end{equation*}

This finishes the construction of the square. May's classification theory for fibrations \cite[\S 9]{May} now implies that the map $B\Diff_\partial (W) \to B\hAut(\Riem^+ (W)_{g_M})$ classifying the fibration $F'$ factors through $\alpha_W$, as required. After taking loop spaces, this classifying map is precisely the map $A: \Diff_\partial (W) \to \hAut(\Riem^+ (W)_g)$, and this proves the claim about $A$. The claim about the orbit map is an immediate consequence.
\end{proof}

\subsection{Proof of Theorem \ref{thm:diff-action-comesfromMT}}

The guiding idea of the proof of Theorem \ref{thm:diff-action-comesfromMT} is that the forgetful map $\Omega BF^{2,\rst}: \Omega B \PCob_\theta^{2,\rst} \to \Omega B \Cob_\theta^2$ (whose homotopy fibre over a point $M$ is weakly equivalent to $\Riem^+ (M \times [0,1])^{\rst}_{g_M,g_M}$ by Theorem \ref{cor:berw-new-proof}, Theorem \ref{thm:SurgBelowMid} and Theorem 3.1 of \cite{GRW}) could be considered as a ``principal bundle'' for the grouplike monoid $\Riem^+ (M \times [0,1])_{g_M,g_M}^{\rst}$. One should therefore be able to form the Borel construction 
\[
\text{``$\mathcal{X}= (\Omega B \PCob_\theta^{2,\rst} ) \times_{\Riem^+ (M \times [0,1])_{g_M,g_M}^{\rst}} \Riem^+ (M \times [0,1])_{g_M,g_M}$''}
\]
and hence obtain a homotopy cartesian diagram as stated in Theorem \ref{thm:diff-action-comesfromMT}. 

This idea will be made precise by establishing a diagram
\begin{equation}\label{eq:Sec7Main}
\xymatrix{
\PCob_\theta^{2}((N,g_N),(M,g_M)) \ar[d]^-{F} & \ar[l]_-{\norm{\varphi_\bullet}}^-{\simeq} \norm{Z_\bullet} \ar[d]^-{\norm{\lambda_\bullet}} \ar[r] & \norm{Y_\bullet} \ar[d]^-{\epsilon \circ \norm{\xi_\bullet}} & \mathcal{X} \ar[d] \ar[l]\\
\Cob_\theta^{2}(N,M)  \ar@/_2.0pc/[rrr]^-{\tau} & \ar[l]_-{\norm{\zeta_\bullet}}^-{\simeq} \norm{X_\bullet} \ar[r]^-{\epsilon \circ \norm{\eta_\bullet}} & \hofib_{M } BF^{2,\rst} & \Omega_{N,M} B \Cob_\theta^2 \ar[l]_-{\sigma}
}
\end{equation}
in which the three squares commute and are homotopy cartesian (the rightmost will be a homotopy pullback by definition), and the bottom semi-circle commutes up to preferred homotopy. 
This data determines a map $\norm{Z_\bullet} \to \mathcal{X}$ covering $\tau \circ \norm{\zeta_\bullet} : \norm{X_\bullet} \to \Omega_{N,M} B \Cob_\theta^2$, which together with the weak equivalences in the leftmost square gives the claimed result. As we shall explain in Remark \ref{rem:interpretationasbarconstruct}, the semi-simplicial spaces appearing in \eqref{eq:Sec7Main} can be interpreted as two-sided bar constructions.

\subsubsection{Construction of the diagram}

\begin{defn}
Let $X_p$ consist of $((t, W, \ell_W),(a_0,g_0), \ldots, (a_p,g_p), h_0, \ldots, h_p)$ where 
\begin{enumerate}[(i)]
\item $(t, W, \ell_W) \in \Cob_\theta^2 (N,M)$; we then write 
$$V := ((-\infty,-t] \times N) \cup (W-t e_1) \cup ([0,\infty) \times M) \subset \bR \times I^{\infty-1},$$
\item $0 < a_0 < \ldots < a_p<1$,
\item $g_i \in \Riem^+ ( \{a_i\} \times M)$,
\item $h_0 \in \Riem^+ (V|_{[-t,a_0]})_{g_N,g_0}^{\rst}$ and
\item $h_i \in \Riem^+ (V|_{[a_{i-1},a_i]})_{g_{i-1},g_i}^{\rst}$ (note that $V|_{[a_{i-1},a_i]} =  [a_{i-1},a_i] \times M$). 
\end{enumerate}
We topologise $X_p$ as a subspace of $N_{p+1} \PCob_\theta$. The $X_p$ form a semi-simplicial space by forgetting $a_i$'s and gluing metrics. There is an augmentation $\zeta_\bullet: X_\bullet \to \Cob_\theta^2 (N,M)$ given by recording only $(t,W,\ell_W)$.
\end{defn}

\begin{defn}
Let $Z_p$ consist of $((t,W,\ell_W),(a_0,g_0),\ldots, (a_p,g_p), h_0, \ldots, h_p,h)$ such that $((t,W,\ell_W),(a_0,g_0),\ldots, (a_p,g_p), h_0, \ldots, h_p) \in X_p$ and
\begin{enumerate}[(i)]
\setcounter{enumi}{5}
\item $h \in \Riem^+ (V\vert_{[a_p,1]})_{g_p,g_M}$ is an arbitrary psc metric.
\end{enumerate}
The map $\lambda_p : Z_p \to X_p$ forgets the datum $h$. We topologise $Z_p$ as a subspace of $N_{p+2} \PCob_\theta$. The $Z_p$ form a semi-simplicial space by forgetting $a_i$'s and gluing metrics. There is an augmentation map 
$\varphi_p : Z_p \to \PCob_\theta^2 ((N,g_N),(M,g_M))$ given by forgetting all $a_i$ and glueing the psc metrics.
\end{defn}
The square
\[
 \xymatrix{
 \norm{Z_\bullet} \ar[r]^-{\norm{\varphi_\bullet}} \ar[d]^-{\norm{\lambda_\bullet}}&  \PCob_\theta^2 ((N,g_N),(M,g_M)) \ar[d]^-{F} \\
 \norm{X_\bullet} \ar[r]^-{\norm{\zeta_\bullet}} & \Cob_\theta^2 (N,M)
 }
\]
commutes. We will show later that the horizontal maps are a weak equivalences, but first develop some more of the diagram \eqref{eq:Sec7Main}.

\begin{defn}
Let $Y_p$ consist of $((a_0,g_0),\ldots, (a_p,g_p), h_1, \ldots,h_p,h))$ where $0<a_0 < \ldots a_p < 1$, $g_i \in \Riem^+ ( \{a_i\}\times M)$, $h_i \in \Riem^+ ([a_{i-1}, a_i]\times M)_{g_{i-1},g_i}^{\rst}$ and $h \in \Riem^+ ([a_p,1]\times M)_{g_p,g_M}$. We topologise $Y_p$ as a subspace of $N_{p+1} \Conc(M)$. The $Y_p$ form a semi-simplicial space by forgetting $a_i$'s and gluing metrics. There is a semi-simplicial map $\xi_p: Y_p \to N_p (\cP( \bR \times M, (0,1))^{\rst}$ given by forgetting the datum $h$.
\end{defn}

\begin{rem}\label{rem:interpretationasbarconstruct}
The semi-simplicial spaces $X_\bullet$, $Y_\bullet$ and $Z_\bullet$ can informally be interpreted as two-sided bar constructions as follows. Let 
\[
G: (\cP(M \times \bR,(0,1))^{\rst})^{\mathrm{op}} \lra \Top
\]
be the functor which sends an object $(a,g)$ to the space $\Riem^+ (M \times [a,1])_{g,g_M}$; on morphisms it is defined by gluing. Let 
\[
H: \cP(M \times \bR,(0,1))^{\rst} \lra \Top
\]
be the functor that sends $(a,g)$ to the subspace $H(a,g) \subset X_0$ consisting of all $((t, W, \ell_W),(a_0,g_0),h_0)$ with $(a_0,g_0)=(a,g)$. (This is homotopy equivalent to $\PCob_\theta^{2, \rst} ((N,g_N),(M,g))$.)

With these notations in place, we have 
\begin{align*}
X_\bullet &= B_\bullet ( *,\cP(M \times \bR,(0,1))^{\rst}, H ), \\
Y_\bullet &= B_\bullet ( G,\cP(M \times \bR,(0,1))^{\rst}, * ), \text{ and} \\
Z_\bullet &= B_\bullet ( G,\cP(M \times \bR,(0,1))^{\rst}, H ).
\end{align*}
However, this is an informal interpretation: one easy way of saying what a continuous functor out of a topological category with non-discrete object space (such as $\cP(M\times \bR,(0,1))$ should be is by writing down its bar construction as a (semi)-simplicial space.
\end{rem}

Given those definitions, we observe that there is a pullback square
\begin{equation}\label{diag.definition-of-zbullet}
\begin{gathered}
  \xymatrix{
 Z_p \ar[r] \ar[d]^{\lambda_p} & Y_p \ar[d]^{\xi_p} \\
 X_p \ar[r]^-{\eta_p} & N_p \cP ( \bR \times M, (0,1))^{\rst},
  }
\end{gathered}
\end{equation}
where the semi-simplicial map $\eta_\bullet$ is given by forgetting $(t,W, \ell_W)$.

We obtain the middle square of the diagram \eqref{eq:Sec7Main} by forming the geometric realisation of the square \eqref{diag.definition-of-zbullet} of semi-simplicial spaces and composing with the map
\begin{equation}\label{eq:epsilon}
\epsilon : B\cP (\bR \times M, (0,1))^{\rst} \lra \hofib_{M } BF^{2,\rst}
\end{equation}
defined as follows. It sends a point $((a_0,g_0), \ldots, (a_p,g_p),h_1, \ldots, h_p; u) \in N_p \cP(\bR \times M,(0,1)) \times \Delta^p $ to the point in $\hofib_M BF^{2,\rst}$ given by 
\begin{align*}
&\Bigl( (M ,g_0) \xrightarrow{( [0,a_1-a_0] \times M, h_1)} \cdots \xrightarrow{([0,a_p-a_{p-1}] \times M, h_p)} (M,g_p); u\Bigr)\\
&\quad\quad\quad\quad\quad\quad\quad\quad\quad\quad\quad\quad\quad\quad\quad\quad\quad\quad\quad\quad\quad    \in N_p \PCob_\theta^{2,\rst} \times \Delta^p \subset B \PCob_\theta^{2,\rst},
\end{align*}
equipped with the path from the image of this point in $B \Cob_\theta^{2}$ to the base-point $M$ given by
\begin{align*}
& t \longmapsto  \Bigl(M \xrightarrow{[0,a_1-a_0] \times M} \cdots \xrightarrow{[0,a_p-a_{p-1}] \times M} M \xrightarrow{[0,1-a_p] \times M} M; (1-t)u,t\Bigr)\\
&\quad\quad\quad\quad\quad\quad\quad\quad\quad\quad\quad\quad\quad\quad\quad\quad\quad\quad\quad\quad\quad   \in N_p \Cob_\theta^{2,\rst} \times \Delta^{p+1} \subset B \Cob_\theta^{2,\rst}.
\end{align*}
Both these formulas respect the semi-simplicial identities and so descend to maps on geometric realisations.

The remaining maps in the diagram \eqref{eq:Sec7Main} are $\tau$ and $\sigma$. The map $\tau: \Cob_\theta^2 (N,M) \to \Omega_{N,M} B \Cob_\theta^2$ is the tautological one, which sends a point $W$ to the path $t \mapsto (W,(1-t,t)) \in N_1 \Cob_\theta \times \Delta^1 \to B \Cob_\theta^2$. The map $\sigma$ is the ``fibre transport'' map; it sends a path $\gamma: N \leadsto M$ in $B\Cob_\theta^2$ to the point $(N,g_N) \in B \PCob_\theta^2$, together with the path $\gamma$ from the image $N$ of $(N,g_N)$ under $BF^{2,\rst}$ to $M$. 

\subsubsection{Properties of the diagram}

We will first show that the horizontal maps in the leftmost square of \eqref{eq:Sec7Main} are weak equivalences. The bottom map can be treated quite easily.

\begin{lem}\label{lem:proof-factorzation1}
The augmentation map $\norm{\zeta_\bullet}: \norm{X_\bullet} \to \Cob_\theta^2 (N,M)$ is a weak homotopy equivalence.
\end{lem}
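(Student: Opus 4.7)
The plan is to show that each $\zeta_p$ is a Serre fibration and that the homotopy fibre of $\norm{\zeta_\bullet}$ over any $(t,W,\ell_W)$ is contractible, after which the semi-simplicial quasifibration criterion (Theorem 2.12 of \cite{SxTech}, as already used in the proof of Theorem \ref{thm:Concordance}) implies that $\norm{\zeta_\bullet}$ is a weak equivalence. That $\zeta_p$ is a Serre fibration follows by iterated lifting: given a lifting problem, one successively lifts the heights $a_i$, the boundary metrics $g_i$, and the bulk right-stable metrics $h_i$, using Theorem \ref{thm:improved-chernysh-theorem} for the psc part and Lemma \ref{lem:stability-homotopy-invariant} to preserve right-stability. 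The fibre $\zeta_p^{-1}(t,W,\ell_W)$ is then $N_p \mathcal{F}_W$, where $\mathcal{F}_W$ is the topological category with objects $(a,g,h)$ for $a\in(0,1)$, $g\in\Riem^+(M)$ and $h\in\Riem^+(V|_{[-t,a]})^{\rst}_{g_N,g}$, and with morphisms $(a_0,g_0,h_0)\to(a_1,g_1,h_1)$ given by right-stable $h'\in\Riem^+([a_0,a_1]\times M)^{\rst}_{g_0,g_1}$ satisfying $h_0\cup h'=h_1$, composition being by gluing and well-defined by Lemma \ref{lem:2-out-of-three}(ii).

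The crux is therefore to prove $B\mathcal{F}_W\simeq *$. My approach is to identify $\mathcal{F}_W$, up to homotopy, with an under-category in an extended concordance category. Enlarge $V$ by attaching a cylinder $(-t-1,-t]\times N$ with the cylindrical metric $dx^2+g_N$ to obtain $\hat V$, which is cylindrical over $J:=(-t-1,-t)\cup(0,1)$; pick $s_*\in(-t-1,-t)$ so that $(s_*,g_N)\in\Ob\cP(\hat V,J)^{\rst}$. There is a natural functor
\[
\Phi:\mathcal{F}_W\lra (s_*,g_N)/\cP(\hat V,J)^{\rst}
\]
sending $(a,g,h)$ to the morphism $(s_*,g_N)\to(a,g)$ obtained by gluing the cylindrical metric on $[s_*,-t]$ with $h$; the composite is right-stable by Lemma \ref{lem:2-out-of-three}(ii), using that cylinder metrics are right-stable. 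The target has contractible classifying space by the soft-left-units property, Lemma \ref{lem:concordance-category-formal-properties}(ii). It then suffices to verify that $\Phi$ induces a weak equivalence on classifying spaces, which I would carry out by a Quillen Theorem A-style argument (Theorem 4.7 of \cite{SxTech}): the existence results for right-stable metrics from Theorem \ref{thm:StabMetrics}(i) (applicable because $(V|_{[-t,a]},M)$ is $2$-connected for every $a\in(0,1)$, as $(W,M)$ is $2$-connected and the cylindrical extensions do not destroy this) are used to populate the relevant fibre categories, while parametric deformations via Theorem \ref{thm:improved-chernysh-theorem} serve to normalise the left-cylinder portion of any competing object to be strictly cylindrical.

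The principal obstacle is this last comparison between $\mathcal{F}_W$ and the under-category of $(s_*,g_N)$. Objects of the under-category sitting at heights in the ``wrong'' component of $J$, or whose metric on $[s_*,-t]$ is not strictly cylindrical, must be handled with care: one must either show that the corresponding slice categories $y/\Phi$ remain contractible, or restrict to a full subcategory whose inclusion is already a weak equivalence, for instance via a collar-stretching argument in the spirit of Lemma \ref{lem:VaryingJ}. This is the place where the existence-and-flexibility statements for right-stable psc metrics (Theorems \ref{thm:StabMetrics} and \ref{prop:right-stable-on-2.1.cob-stable}) and the $J$-varying arguments of \S\ref{sec:fibre-theorem} must be combined with genuine care, especially in view of the fact that the hypothesis on $\ell_N$ is absent and so some statements from \S\ref{sec:fibre-theorem} do not apply directly.
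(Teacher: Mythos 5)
Your setup is right and matches the paper's: each $\zeta_p$ is a fibration, so it suffices to show that the realisation of the levelwise fibres is contractible, and the fibre over $(t,W,\ell_W)$ is the nerve of exactly the category you call $\mathcal{F}_W$ (in the paper this is the under-category $(-t,g_N)/i$ of the inclusion $i:\cP(V,(0,1))^{\rst}\to\cP(V,(-t-\eps,-t+\eps)\cup(0,1))^{\rst}$, and its contractibility is obtained by the argument of Lemma \ref{lem:VaryingJ}). The genuine gap is that the contractibility of $B\mathcal{F}_W$ --- which is the entire content of the lemma --- is not proved: you reduce it to showing that $\Phi$ induces an equivalence on classifying spaces and then list the difficulties without resolving them. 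Moreover your reduction goes in an unhelpful direction: a morphism $(s_*,g_N)\to(a,g)$ in $\cP(\hat V,J)^{\rst}$ is an \emph{arbitrary} right-stable metric on $\hat V|_{[s_*,a]}$, with no cylindricality over $[s_*,-t]$, and the target also contains objects at heights in $(s_*,-t)$ (note also that contractibility of an under-category is the soft \emph{right} unit property); so $\Phi$ is far from essentially surjective, and verifying the hypotheses of Theorem A for it is essentially the original problem over again.

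The way to close the gap is to run the comparison in the opposite direction, dualising the proof of Lemma \ref{lem:VaryingJ}. Since $M\to W$ is $2$-connected and $d\geq 6$, Theorem \ref{thm:StabMetrics}(i) provides a right-stable $h\in\Riem^+(V|_{[-t,0]})_{g_N,g_0}$ for some $g_0$ (only Theorem \ref{thm:StabMetrics}(i) is needed, not Theorem \ref{prop:right-stable-on-2.1.cob-stable}, which is why no hypothesis on $\ell_N$ or $\ell_M$ enters --- this answers your closing worry). Choose $\delta>0$ with $V$ cylindrical over $(-\delta,\infty)$ and consider the under-category $(0,g_0)\backslash\cP(V,(-\delta,1))^{\rst}$: it is contractible by Lemma \ref{lem:concordance-category-formal-properties}(ii), and its objects sit exactly at heights in $(0,1)$. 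Precomposition with $h$ defines a functor from it to $\mathcal{F}_W$ (well defined by Lemma \ref{lem:2-out-of-three}(ii)); on $p$-simplices of nerves it is $\mu(h,\_)\times\id\times\cdots\times\id$, and $\mu(h,\_)$ is a weak equivalence because $h$ is right stable, restricting to a weak equivalence between the right-stable subspaces by Lemmas \ref{lem:stability-homotopy-invariant} and \ref{lem:2-out-of-three}(iv). Hence the functor is a levelwise equivalence on nerves and $B\mathcal{F}_W\simeq *$. Your $\Phi$ could in principle be salvaged, but only by supplying this very argument inside the verification that $\Phi$ is an equivalence, so the detour buys nothing.
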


\begin{proof}
Each $\zeta_p$ is a fibration. Hence by Lemma 2.14 of \cite{SxTech}, it suffices to prove that $\norm{\zeta_\bullet^{-1} (t,W, \ell_W)} \simeq *$, for each $(t,W, \ell_W)  \in \Cob_\theta^2 (N,M)$. But $\zeta_\bullet^{-1}(t,W, \ell_W)$ can be identified with the nerve of the fibre category $(-t,g_N) / i$ of the inclusion functor $i: \cP(W,(0,1))^{\rst} \to \cP(W,(-t-\epsilon,-t+\epsilon)\cup (0,1))^{\rst}$. By the same argument as in the proof of Lemma \ref{lem:VaryingJ}, this fibre category has contractible nerve.
\end{proof}

To show that the top horizontal map in the leftmost square of \eqref{eq:Sec7Main} is a weak equivalence we will first develop some properties of the square \eqref{diag.definition-of-zbullet}, which will also be used in showing that the middle square of \eqref{eq:Sec7Main} is homotopy cartesian. In the following lemma we refer to a semi-simplicial map being homotopy cartesian: see Definition 2.9 of \cite{SxTech} for this notion.

\begin{lem}\label{proof-diffeomorphismactionlemma}
\mbox{}
\begin{enumerate}[(i)]
\item The square \eqref{diag.definition-of-zbullet} is homotopy cartesian, for each $p$.
\item The semi-simplicial map $\xi_\bullet$ is homotopy cartesian. 
\end{enumerate}
\end{lem}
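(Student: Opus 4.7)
For part (i), the key observation is that the square \eqref{diag.definition-of-zbullet} is a strict pullback of spaces by the very definitions of $Z_p$, $\lambda_p$, $\eta_p$ and $\xi_p$ (the forgotten data $h$ lives on the interval $[a_p, 1]$ which is determined by the base). Hence it suffices to verify that $\xi_p$ is a Serre fibration. Unwinding the definitions, the fibre of $\xi_p$ over a point $((a_0, g_0), \ldots, (a_p, g_p), h_1, \ldots, h_p)$ is precisely $\Riem^+([a_p,1]\times M)_{g_p, g_M}$. To solve a general lifting problem, I would first apply a smoothly varying family of diffeomorphisms of $\bR$ that is the identity near $1$ in order to arrange that the parameter $a_p$ is constant along the lifting path, and then invoke Chernysh's Theorem \ref{thm:improved-chernysh-theorem} --- applied to the fixed cobordism $[a_p, 1]\times M$ with both boundary values allowed to vary --- to complete the lift. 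The right stability of the $h_i$'s plays no role here, since those data belong to the base and are untouched by $\xi_p$.

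For part (ii), I need to verify that for each face map $d_i : [p-1] \to [p]$ the square
\[
\xymatrix{
Y_p \ar[r]^-{d_i} \ar[d]^-{\xi_p} & Y_{p-1} \ar[d]^-{\xi_{p-1}} \\
N_p \cP(\bR\times M, (0,1))^{\rst} \ar[r]^-{d_i} & N_{p-1} \cP(\bR\times M, (0,1))^{\rst}
}
\]
is homotopy cartesian. For $i < p$, the face map $d_i$ forgets $(a_i, g_i)$ and glues $h_i$ with $h_{i+1}$, but leaves the terminal datum $h \in \Riem^+([a_p,1]\times M)_{g_p, g_M}$ entirely unchanged; the induced map on vertical fibres is the identity, so the square is tautologically homotopy cartesian. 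For $i = p$, the face map $d_p$ removes $(a_p, g_p)$ and combines $h_p$ and $h$ into $h_p \cup h$; on vertical fibres this is the gluing map
\[
\mu(h_p, -) : \Riem^+([a_p,1]\times M)_{g_p, g_M} \lra \Riem^+([a_{p-1},1]\times M)_{g_{p-1}, g_M}.
\]
Since by definition of $Y_p$ the metric $h_p$ is right stable, Definition \ref{defn:right-stable} ensures that $\mu(h_p,-)$ is a weak equivalence, and hence the square is homotopy cartesian.

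The main obstacle is the verification that $\xi_p$ is a Serre fibration: the collar-stretching reduction to a fixed cobordism needs to be carried out smoothly in the disc of parameters and compatibly with the family of boundary conditions $(g_p(s))$, but once this is done the fibration property is precisely the content of Theorem \ref{thm:improved-chernysh-theorem}. Part (ii) is then essentially formal, the only non-trivial input being the definition of right stability applied to $h_p$.
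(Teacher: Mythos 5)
Your proof is correct and follows essentially the same route as the paper: the square is a strict pullback, $\xi_p$ is a Serre fibration by Theorem \ref{thm:improved-chernysh-theorem} (so (i) follows), and the face maps of $Y_\bullet$ act on the fibres $\Riem^+([a_p,1]\times M)_{g_p,g_M}$ either as the identity or as $\mu(h_p,-)$, which is a weak equivalence by the right stability of $h_p$. Your write-up merely spells out details (the collar reparametrisation for the fibration property, the case split on $i$) that the paper leaves implicit.
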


\begin{proof}
By an application of Theorem \ref{thm:improved-chernysh-theorem} the map $\xi_p$ is a fibration, so (i) follows since \eqref{diag.definition-of-zbullet} is a pullback. The fibre of $\xi_p$ over $((a_0,g_0), \ldots, (a_p,g_p), h_0, \ldots, h_p)$ is the space $\Riem^+ ([a_p,1] \times M)_{g_p,g_M }$. Face maps either induce the identity on fibres, or they glue on $h_p$ and so are weak equivalences. 
\end{proof}

\begin{cor}\label{cor:proof-diffeomorphisaction}
The square 
\[
\xymatrix{
\norm{Z_\bullet} \ar[r] \ar[d]^{\norm{\lambda_\bullet}} & \norm{Y_\bullet} \ar[d]^{\norm{\xi_\bullet}} \\
\norm{X_\bullet} \ar[r]^-{\norm{\eta_\bullet}} & B \cP (\bR \times M, (0,1))^{\rst}
} 
\]
is homotopy cartesian, and the semi-simplicial map $\lambda_\bullet$ is also homotopy cartesian.
\end{cor}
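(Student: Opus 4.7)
The plan is to deduce the corollary directly from the two parts of Lemma \ref{proof-diffeomorphismactionlemma} together with standard techniques for realizing semi-simplicial maps and squares, as developed in Section 2 of \cite{SxTech}.

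First, I would verify that $\lambda_\bullet$ is homotopy cartesian as a semi-simplicial map. Since the square \eqref{diag.definition-of-zbullet} is a pullback by construction and $\xi_p$ is a Serre fibration (as established in the proof of Lemma \ref{proof-diffeomorphismactionlemma} via Theorem \ref{thm:improved-chernysh-theorem}), the map $\lambda_p$ is also a Serre fibration, and its fibres over any point $x \in X_p$ are naturally homeomorphic to the fibres of $\xi_p$ over $\eta_p(x)$. For any face operator $\theta : [q] \to [p]$, the induced maps on fibres of $\lambda$ commute with the identification to fibres of $\xi$. Since $\xi_\bullet$ is homotopy cartesian, the face maps induce weak equivalences on fibres of $\xi$, hence also on fibres of $\lambda$. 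This shows $\lambda_\bullet$ is homotopy cartesian.

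Next, I would apply Theorem 2.12 of \cite{SxTech} twice: the semi-simplicial maps $\xi_\bullet$ and $\lambda_\bullet$ are both levelwise Serre fibrations and both homotopy cartesian, so their fat geometric realisations $\norm{\xi_\bullet}$ and $\norm{\lambda_\bullet}$ are quasifibrations. Moreover, for any $0$-simplex $x \in X_0$ the fibre of $\norm{\lambda_\bullet}$ over $x$ is canonically identified with the fibre of $\lambda_0$ over $x$, which by the pullback description in \eqref{diag.definition-of-zbullet} agrees with the fibre of $\xi_0$ over $\eta_0(x)$, and hence with the fibre of $\norm{\xi_\bullet}$ over $\norm{\eta_\bullet}(x)$.

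Finally, since both vertical maps in the displayed square are quasifibrations and the comparison of fibres over each $0$-simplex of $\norm{X_\bullet}$ is a homeomorphism (and in particular a weak equivalence), every fibre comparison is a weak equivalence, and therefore the square is homotopy cartesian. The main (small) obstacle is the bookkeeping required to verify that the identification of fibres of $\lambda_p$ with fibres of $\xi_p$ is natural with respect to face maps; this is immediate from the pullback description but needs to be stated carefully in order to invoke Theorem 2.12 of \cite{SxTech} in the strong form needed here.
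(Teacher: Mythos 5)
Your proposal is correct and follows essentially the same route as the paper: the paper invokes a general fact (a levelwise homotopy cartesian square of semi-simplicial spaces with one homotopy cartesian vertical map realizes to a homotopy cartesian square) proved via Theorem 2.12 of \cite{SxTech}, and your argument is exactly the proof of that fact specialized to the present situation, where the levelwise squares are strict pullbacks along the fibrations $\xi_p$. The only cosmetic difference is that you phrase the conclusion in terms of quasifibrations and point-set fibres rather than homotopy cartesian squares, which is harmless here since the fibres of $\norm{\lambda_\bullet}$ and $\norm{\xi_\bullet}$ over $0$-simplices compute the homotopy fibres and every path component of $\norm{X_\bullet}$ meets the $0$-skeleton.
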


\begin{proof}
This follows from Lemma \ref{proof-diffeomorphismactionlemma} and the following general fact: let 
\[
 \xymatrix{
 A_\bullet \ar[r]^{k_\bullet} \ar[d]^{h_\bullet}& B_\bullet \ar[d]^{g_\bullet}\\
 C_\bullet \ar[r]^{f_\bullet} & D_\bullet
 }
\]
be a commutative diagram of semi-simplicial spaces which is levelwise homotopy cartesian and assume that the semi-simplicial map $g_\bullet$ is homotopy cartesian. Then $h_\bullet$ is homotopy cartesian, and the square of geometric realisations is homotopy cartesian. This is quickly proven using Theorem 2.12 of \cite{SxTech} (which is due to Segal). 
\end{proof}

We can now show that the top horizontal map in the leftmost square of \eqref{eq:Sec7Main} is a weak equivalence.

\begin{cor}\label{cor:proof-diffeomorphisaction2}
The augmentation map $\norm{\varphi_\bullet}: \norm{Z_\bullet} \to \PCob_\theta^2 ((N,g_N),(M,g_M))$ is a weak homotopy equivalence. 
\end{cor}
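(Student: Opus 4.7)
The plan is to adapt the proof of Lemma \ref{lem:proof-factorzation1} (which handled the analogous statement for $\zeta_\bullet$) to the map $\varphi_\bullet$. The only difference between $Z_\bullet$ and $X_\bullet$ is the additional datum $h \in \Riem^+(V\vert_{[a_p, 1]})_{g_p, g_M}$ — an arbitrary (not necessarily right-stable) psc cap. First I would verify that each $\varphi_p$ is a Serre fibration, by the same argument used for $\zeta_p$ in Lemma \ref{lem:proof-factorzation1}, with Theorem \ref{thm:improved-chernysh-theorem} applied to the additional $h$-coordinate to extend the lift. By Lemma 2.14 of \cite{SxTech}, it then suffices to show that for each $(t', W', \ell_{W'}, \tilde{h}') \in \PCob_\theta^2((N,g_N),(M,g_M))$ the semi-simplicial fibre $\varphi_\bullet^{-1}(t', W', \ell_{W'}, \tilde{h}')$ has contractible geometric realisation.

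To this end I would identify the fibre with the nerve of an overcategory in a suitable ``hybrid'' concordance category, in the spirit of Lemma \ref{lem:proof-factorzation1}. After extending $W'$ to a long manifold $V'$ via infinite cylinders, the fibre corresponds to an overcategory of shape $(-t', g_N)/\iota$ for an inclusion functor $\iota$ between variants of $\cP(V', \cdot)$, where all internal morphisms are required to be right-stable but the single terminal morphism ending at the distinguished object $(1, g_M)$ is allowed to be an arbitrary psc metric. Contractibility of the nerve then follows by essentially the argument of Lemma \ref{lem:VaryingJ}, using Theorem \ref{prop:right-stable-on-2.1.cob-stable} to supply the right-stable metrics needed under the 2-connectivity hypothesis coming from $(t', W', \ell_{W'}) \in \Cob_\theta^2$.

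The main obstacle I expect is the precise formulation of this hybrid concordance category and the verification that its formal properties — Serre fibrancy of source and target maps, and the existence of soft units — carry over from the standard case established in Section \ref{sec:fibre-theorem}. The unrestricted terminal morphism does not fit directly into Definition \ref{defn:concroidancecateogiry}, so the required fibrancy must be re-verified by hand using Theorem \ref{thm:improved-chernysh-theorem}, and soft units must be constructed either by cylinder attachment (for objects distinct from $(1,g_M)$) or by a direct argument exploiting that $\tilde{h}'$ itself provides a cap at $(1,g_M)$. Once this technical setup is in place, the argument from Lemmas \ref{lem:concordance-category-formal-properties} and \ref{lem:VaryingJ} transplants with essentially no further modification, completing the proof.
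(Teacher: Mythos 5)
Your proposed route diverges from the paper's and, as written, it breaks down at the first step. The paper does \emph{not} argue fibrewise over the target of $\varphi_\bullet$: it places $\norm{\varphi_\bullet}$ in the commutative ladder
\[
\xymatrix{
 Z_0 \ar[d]^{\lambda_0}\ar[r] & \norm{Z_\bullet} \ar[r]^-{\norm{\varphi_\bullet }} \ar[d]^-{\norm{\lambda_\bullet}} & \PCob_\theta^2 ((N,g_N),(M,g_M)) \ar[d]^-{F}\\
 X_0 \ar[r] & \norm{X_\bullet} \ar[r]^-{\norm{\zeta_\bullet}} & \Cob_\theta^2 (N,M),
 }
\]
shows the left square is homotopy cartesian (Corollary \ref{cor:proof-diffeomorphisaction}, i.e.\ $\lambda_\bullet$ is homotopy cartesian) and that the outer rectangle is homotopy cartesian by comparing the fibre $\Riem^+([a_0,1]\times M)_{g_0,g_M}$ of $\lambda_0$ with the fibre $\Riem^+(V)_{g_N,g_M}$ of $F$ via the gluing map $\mu(h_0,\_)$, which is a weak equivalence precisely because the cap $h_0$ on $V|_{[-t,a_0]}$ is \emph{right stable}. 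It then deduces that the right square is homotopy cartesian and invokes Lemma \ref{lem:proof-factorzation1}. Your proposal never uses Corollary \ref{cor:proof-diffeomorphisaction} or the right-stability of $h_0$ in this way, and these are the actual engine of the statement.

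The concrete gap is your claim that $\varphi_p$ is a Serre fibration ``by the same argument used for $\zeta_p$'' and that its semi-simplicial fibres are nerves of overcategories in a concordance-type category. The argument for $\zeta_p$ works because the target $\Cob_\theta^2(N,M)$ carries \emph{no} metric data: the entire psc structure (the $g_i$, the $h_i$) sits in the fibre, which is why the fibre is the nerve of $(-t,g_N)/\iota$ and why lifts can be produced with Theorem \ref{thm:improved-chernysh-theorem}. For $\varphi_p$ the target is $\PCob_\theta^2((N,g_N),(M,g_M))$, which already records the glued metric $\tilde h'$; the fibre therefore consists only of the cut points $a_0<\dots<a_p$ at which the \emph{fixed} metric $\tilde h'$ happens to be cylindrical and whose truncations happen to be right stable. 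No metric is free to vary in the fibre, so there is no overcategory to identify it with; worse, such fibres can be empty (a morphism whose metric is nowhere of product type in the relevant region, or admits no right-stable truncation, has empty preimage), so a ``fibration with contractible fibres'' strategy cannot succeed. Moreover $\varphi_p$ fails to be a Serre fibration for exactly the reason depicted in Figure \ref{fig:4}: a path in the target that immediately destroys cylindricity of the metric near a cut point cannot be lifted while varying the $a_i$ continuously. To repair the proof you should abandon the fibrewise analysis of $\varphi_\bullet$ and instead run the paper's two-square comparison with $\zeta_\bullet$, which is where the right-stability built into the definition of $X_\bullet$ and $Z_\bullet$ is actually consumed.
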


\begin{proof}
Consider the diagram
\begin{equation}\label{diagram:proof-factorizationthm3}
\begin{gathered}
 \xymatrix{
 Z_0 \ar[d]^{\lambda_0}\ar[r] & \norm{Z_\bullet} \ar[r]^-{\norm{\varphi_\bullet }} \ar[d]^-{\norm{\lambda_\bullet}} & \PCob_\theta^2 ((N,g_N),(M,g_M)) \ar[d]^-{F}\\
 X_0 \ar[r] & \norm{X_\bullet} \ar[r]^{\norm{\zeta_\bullet}}_{\simeq} & \Cob_\theta^2 (N,M).
 }
\end{gathered}
\end{equation}
By Corollary \ref{cor:proof-diffeomorphisaction} the semi-simplicial map $\lambda_\bullet$ is homotopy cartesian and hence the left hand square is homotopy cartesian (using Theorem 2.12 of \cite{SxTech} again). We claim that the large rectangle is also homotopy cartesian. Since $X_0 \to \norm{X_\bullet}$ is $0$-connected for general reasons, it follows that the right hand square is homotopy cartesian, too. Since $\norm{\zeta_\bullet}$ is a weak equivalence, it follows that $\norm{\varphi_\bullet}$ is a weak equivalence.

To verify that the large rectangle is homotopy cartesian, argue as follows. Firstly, note that both $\lambda_0$ and $F$ are fibrations. Furthermore, a point $x \in X_0$ is given by $((t,W,\ell_W),(a_0,g_0),h)$, where $(t,W,\ell_W) \in \Cob_\theta^2 (N,M)$, $a_0 \in (0,1)$, and $h \in \Riem^+ (W|_{[-1,a_0]})^{\rst}_{g_N,g_0}$ is right stable. The fibre of $\lambda_0$ over such a point is $\Riem^+ ([a_0,1] \times M)_{g_0,g_M}$.

Under the bottom composition in \eqref{diagram:proof-factorizationthm3}, the point $x$ is mapped to $W \in \Cob_\theta^2(N,M)$, and the fibre over $W$ of the right vertical map is of course $\Riem^+ (V)_{g_N,g_M}$. The map on fibres can be identified with the gluing map $\mu (h,\_): \Riem^+ ([a_0,1] \times M)_{g_0,g_M} \to \Riem^+ (V)_{g_N,g_M}$, and this is a weak equivalence because $h$ was assumed to be right stable.
\end{proof}

The middle square of \eqref{eq:Sec7Main} is obtained by taking the homotopy cartesian square of Corollary \ref{cor:proof-diffeomorphisaction} and composing it with the map $\epsilon$. The following lemma therefore implies that it is again homotopy cartesian.

\begin{lem}\label{lem:proof-factorzation0}
The map $\epsilon$ in \eqref{eq:epsilon} is a weak homotopy equivalence. 
\end{lem}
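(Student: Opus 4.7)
My plan is to identify both the source and the target with the homotopy fibre coming from the Fibre Theorem (Theorem \ref{thm:Fibre}), once we pass between the categories $\PCob_\theta^{2,1,\rst}$ and $\PCob_\theta^{2,\rst}$ using the surgery theorem. The map $\epsilon$ is essentially the natural comparison map, so after the two identifications are set up, the remaining task is to check that $\epsilon$ coincides, up to homotopy, with this canonical map.

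First, I would apply Theorem \ref{thm:Fibre} to the long manifold $W = \bR \times M \in \cD_\theta$, with the tangential structure pulled back from $M$, and an open subset $J \subset \bR$ to be chosen carefully. Since $M$ is an object of $\Cob_\theta^{2,1}$ by hypothesis, for any $t_0 < t_1 \in J$ the slab $[t_0,t_1] \times M$ defines a morphism in $\Cob_\theta^{2,1}$, so Theorem \ref{thm:Fibre} applies and produces the homotopy cartesian square \eqref{diag:mainfibretheorem2}. With $J$ an interval the space $BJ$ is contractible, so from the square I obtain a weak equivalence $B\cP(\bR \times M, J)^{\rst} \xrightarrow{\simeq} \hofib_{M} BF^{2,1,\rst}$, where the homotopy fibre is taken with respect to a chosen nullhomotopy of $BK : BJ \to B\Cob_\theta^{2,1}$ at the base-point $M$. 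The most convenient choice for matching with $\epsilon$ is $J=(0,1+\delta)$ for some $\delta>0$, with base-point $1 \in J$ and the linear contraction of $J$ to $\{1\}$; the inclusion $\cP(\bR \times M, (0,1))^{\rst} \to \cP(\bR \times M, (0,1+\delta))^{\rst}$ is a weak equivalence on classifying spaces by Lemma \ref{lem:VaryingJ}.

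Next, Theorem \ref{thm:SurgBelowMid} (for $(\kappa,l)=(2,1)$, valid since $d \geq 6$) tells us the inclusion $B\PCob_\theta^{2,1,\rst} \to B\PCob_\theta^{2,\rst}$ is a weak equivalence, and Theorem \ref{thm:grw-thm4.1} does the same for $B\Cob_\theta^{2,1} \to B\Cob_\theta^{2}$. It follows that the induced map $\hofib_M BF^{2,1,\rst} \to \hofib_M BF^{2,\rst}$ is a weak equivalence. Composing with the map of the previous paragraph yields a weak equivalence
\[
B\cP(\bR \times M, (0,1))^{\rst} \xrightarrow{\simeq} \hofib_M BF^{2,\rst}.
\]

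Finally I would verify that this composition is homotopic to $\epsilon$. Unwinding the standard recipe for the natural map from a homotopy cartesian square over a contractible base to the homotopy fibre, a $p$-simplex $((a_i,g_i),h_j;u)$ of $B\cP(\bR \times M,(0,1))^{\rst}$ is sent to its image $(M,g_0) \xrightarrow{h_1} \cdots \xrightarrow{h_p} (M,g_p)$ in $B\PCob_\theta^{2,\rst}$ paired with the path in $B\Cob_\theta^{2}$ obtained by concatenating the image of this simplex with the image of the chosen nullhomotopy of $BJ$ restricted to $a_p$. With $J = (0, 1+\delta)$ and the linear contraction to $1$, the latter is precisely the path through the $(p{+}1)$-simplex $M \xrightarrow{[a_1-a_0]\times M} \cdots \xrightarrow{[a_p-a_{p-1}]\times M} M \xrightarrow{[1-a_p]\times M} M$ with parameter $((1-t)u,t)$, which is exactly the formula defining $\epsilon$. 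The main obstacle here is this last point-set bookkeeping—tracking the various simplices and reparametrisations so that the canonical identification really matches the explicit formula; once this is done, $\epsilon$ inherits the weak equivalence property from the canonical map.
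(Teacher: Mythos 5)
Your proposal is correct and follows essentially the same route as the paper: the paper also factors $\epsilon$ as $B\cP(\bR\times M,(0,1))^{\rst} \to \hofib_M BF^{2,1,\rst} \to \hofib_M BF^{2,\rst}$, deducing that the first map is a weak equivalence from Theorem \ref{thm:Fibre} and the second from Theorems \ref{thm:grw-thm4.1} and \ref{thm:SurgBelowMid}. Your extra care in choosing $J=(0,1+\delta)$ with the contraction to the base-point $1$ and matching the resulting canonical map with the explicit formula for $\epsilon$ is exactly the bookkeeping the paper leaves implicit in the phrase ``defined by the same formula as $\epsilon$''.
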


\begin{proof}
The map under question is the composition of two maps 
$$B \cP(\bR \times M,(0,1))^{\rst} \lra \hofib_{M } BF^{2,1,\rst} \lra \hofib_{M } BF^{2,\rst}.$$
The first is defined by the same formula as $\epsilon$ and is a homotopy equivalence by Theorem \ref{thm:Fibre}. The second is a weak equivalence by Theorems \ref{thm:grw-thm4.1} and \ref{thm:SurgBelowMid}.
\end{proof}

The final, leftmost, square of \eqref{eq:Sec7Main} is \emph{defined} to be a homotopy pullback. The following lemma then supplies the final step of the argument.

\begin{lem}\label{lem:proof-factorzation2}
The bottom semi-circle in the diagram \eqref{eq:Sec7Main} commutes up to preferred homotopy.
\end{lem}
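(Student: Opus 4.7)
The plan is to construct an explicit homotopy $H : [0,1] \times \norm{X_\bullet} \to \hofib_M BF^{2,\rst}$ between $\sigma \circ \tau \circ \norm{\zeta_\bullet}$ and $\epsilon \circ \norm{\eta_\bullet}$, built naturally out of two preferred simplices formed from the data of $X_\bullet$. Given
\[
\xi = ((t, W, \ell_W), (a_0, g_0), \ldots, (a_p, g_p), h_0, \ldots, h_p) \in X_p,
\]
set $V_0 := W \cup ([0,a_0] \times M)$. Using the right-stable metric $h_0$ on $V_0$ and the cylindrical metrics on the remaining pieces, the data of $\xi$ naturally produce a $(p{+}1)$-simplex
\[
\tilde\sigma(\xi) = \bigl((N,g_N) \xrightarrow{(V_0, h_0)} (M,g_0) \xrightarrow{([a_1-a_0]\times M, h_1)} \cdots \to (M,g_p)\bigr) \in N_{p+1} \PCob_\theta^{2,\rst},
\]
and a $(p{+}2)$-simplex $\tilde\sigma'(\xi) \in N_{p+2}\Cob_\theta^2$ obtained by appending the cylinder $[a_p,1]\times M$ to $F^{2,\rst}(\tilde\sigma(\xi))$. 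Both constructions are continuous in $\xi$ and strictly compatible with the face maps $d_i : X_p \to X_{p-1}$ (they either drop an $a_i$ or glue the corresponding metrics), so they descend to the fat geometric realisation.

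Next, I define the basepoint path $x_s \in B\PCob_\theta^{2,\rst}$ by the realisation $x_s = |\tilde\sigma(\xi)|((1-s, s u_0, \ldots, s u_p))$, which is $(N,g_N)$ at $s=0$ and the point $\bar x = ((M,g_0) \to \cdots \to (M,g_p); u)$ on the $d_0$-face at $s=1$. The key construction is the accompanying family of paths $\omega_s$ in $B\Cob_\theta^2$ from $F(x_s)$ to $M$, realised inside $\tilde\sigma'(\xi)$ by the affine path
\[
\beta_s(t) = \bigl((1-s)(1-t),\, s(1-t)u_0,\, \ldots,\, s(1-t)u_p,\, t\bigr) \in \Delta^{p+2}.
\]
One checks directly that $\beta_s(0)$ lies on the $d_{p+2}$-face of $\tilde\sigma'$ at the point $F(x_s)$, that $\beta_s(1) = e_{p+2}$ realises to $M$, that $\beta_1$ lies on the $d_0$-face and reproduces $\bar\omega$, and that $\beta_0$ parametrises the edge $e_0 e_{p+2}$, giving the path $\tau(V_0 \cup [a_0,1]\times M)$ at $s=0$. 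Together, $(x_s, \omega_s)$ defines a path in $\hofib_M BF^{2,\rst}$ from $((N,g_N),\,\tau(V_0 \cup [a_0,1]\times M))$ to $\epsilon\circ\norm{\eta_\bullet}(\xi)$.

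To close the gap to $\sigma\circ\tau\circ\norm{\zeta_\bullet}(\xi) = ((N,g_N),\tau(W))$, I prepend an auxiliary ``cylinder stretching'' homotopy of paths in $B\Cob_\theta^2$ from $\tau(W)$ to $\tau(V_0 \cup [a_0,1]\times M)$, keeping the basepoint fixed at $(N,g_N)$ and the endpoints fixed at $N$ and $M$. This is given by the continuous 1-parameter family of cobordisms $(\tau, t) \mapsto (W \cup [0, \tau] \times M; 1-t, t) \in N_1\Cob_\theta^2 \times \Delta^1$ for $\tau \in [0,1]$; it depends only on $W$ and $a_0$, is visibly natural in $\xi$, and assembles with the preceding path into a single preferred homotopy $H$. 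Semi-simplicial compatibility follows because both $\tilde\sigma$ and $\tilde\sigma'$ as well as the auxiliary stretching family are defined entrywise in terms of $(W, h_0, (a_i,g_i,h_i))$ and the face map relations of $X_\bullet$ translate into the face map relations in $N_\bullet\PCob_\theta^{2,\rst}$ and $N_\bullet\Cob_\theta^2$ used in the realisations.

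The main technical point is ensuring that the path component $\omega_s(1)$ lands at the single point $M$ of $B\Cob_\theta^2$ for all $s$, rather than drifting into the interior of the edge $e_1 e_{p+2}$ of $\tilde\sigma'$; this is what forces the specific choice of $\beta_s$ above (which bends through the vertex $e_{p+2}$) and the insertion of the separate cylinder-stretching stage, rather than the naive linear interpolation between the two target paths. All other verifications, including the checks at $s=0, s=1, t=0, t=1$, continuity in $\xi$, and equivariance under face maps, are routine computations of the form used throughout Sections 4.2--4.4 of \cite{GRW}.
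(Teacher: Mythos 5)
Your construction is essentially identical to the paper's proof: the simplices $\tilde\sigma(\xi)$, $\tilde\sigma'(\xi)$ and the barycentric paths $x_s$, $\beta_s(t)$ coincide exactly with the maps $\gamma(x,r)$ and $\Gamma(x,r,t)$ used there, with $r=s$. The only difference is your explicit cylinder-stretching stage reconciling $\tau(W)$ with $\tau(W\cup([0,1]\times M))$ --- a point the paper's proof elides by asserting $\Lambda(-,0)=\sigma\circ\tau\circ\norm{\zeta_\bullet}$ on the nose --- so your version is, if anything, slightly more careful.
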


\begin{proof}
This is by a straightforward, but tedious checking. Let
\[
x:=((t,W,\ell_W),(a_0,g_0), \ldots, (a_p,g_p), h_0, \ldots, h_p; u_0, \ldots,u_p)\in X_p \times \Delta^p. 
\]
Depending on a homotopy parameter $r \in [0,1]$, we define a point 
\[
\gamma(x,r) \in N_{p+1}\PCob_\theta^{2,\rst} \times \Delta^{p+1}
\]
as
\begin{align*}
&\Bigl( (N,g_N) \xrightarrow{(W|_{[-1,a_0]},h_0)} (M,g_0) \xrightarrow{([a_0,a_1]\times M,h_1)} \cdots \xrightarrow{([a_{p-1},a_p]\times M,h_p)} (M,g_p) ;\\
&\quad\quad\quad\quad\quad\quad\quad\quad\quad\quad\quad\quad\quad\quad\quad\quad\quad\quad\quad\quad\quad\quad\quad\quad (1-r), ru_0, \ldots ,ru_p \Bigr),
\end{align*}
and if in addition $s \in [0,1]$, we define a point in $N_{p+2} \Cob_\theta^2  \times \Delta^{p+2}$ by the formula
\begin{align*}
&\Gamma(x,r,s):= \Bigl( N \xrightarrow{W|_{[-1,a_0]}} M \xrightarrow{[a_0,a_1]\times M} \cdots \xrightarrow{[a_{p-1},a_p]\times M} M\xrightarrow{[a_p,1]\times M} M ;\\
&\quad\quad\quad\quad\quad\quad\quad\quad\quad\quad\quad\quad\quad\quad\quad\quad\quad (1-s)(1-r), (1-s)ru_0, \ldots ,(1-s)ru_p ,s\Bigr).
\end{align*}
Both these formulas respect the semi-simplicial identifications and hence give rise to well-defined continuous maps $\gamma: \norm{X_\bullet} \times [0,1] \to  B \PCob_\theta^{2,\rst}$ and $\Gamma: \norm {X_\bullet} \times [0,1]^2 \to B \Cob_\theta^2$.
Note that $\Gamma (x,r,0)= BF^{2,\rst}(\gamma(x,r))$ and that $\Gamma(x,r,1)$ is the base-point $M \in N_0 \Cob_\theta^2$. Therefore, $\gamma$ and $\Gamma$ together define a homotopy $\Lambda:\norm{X_\bullet} \times [0,1] \to \hofib_M BF^{2,\rst}$.

For $r=0$, $\Lambda (-,0)$ is the map $\sigma \circ \tau \circ \norm{\zeta_\bullet}$, and for $r=1$, $\Lambda(-,1)$ is the map $\epsilon \circ \eta$. 
\end{proof}

\subsection{A finiteness theorem for the orbit map}\label{subsec:finitenessheorem}

\begin{proof}[Proof of Theorem \ref{thm:finiteness-of-orbitmap}]
The tangential $2$-type of $W$ is either $B \Spin (d) \to B\mathrm{O}(d)$ or $B\mathrm{SO}(d)\to B\mathrm{O}(d)$. We give the proof in the case where it is $B \mathrm{SO}(d)$, the proof in the other case is the same, up to change of notation. 
By Theorem \ref{mainthmintro:diffaction}, the map $\sigma_h$ factors up to homotopy through the map
\[
\Omega \alpha_W : \Diff_\partial (W) \lra \Omega^{\infty+1} \MTSO(d),
\]
and it will suffice to prove that the latter map has finite image on homotopy groups. Equivalently, we may consider 
\[
\alpha_W : B \Diff_\partial (W) \lra \Omega^{\infty} \MTSO(d) . 
\]
The homotopy groups of $\MTSO(d)$ are finitely generated (we have $\pi_* (\MTSO(d))=0$ for $* < -d$ and $H_k (\MTSO(d);\bZ) \cong H_{k+d} (B\mathrm{SO}(d);\bZ)$ is finitely generated for each $k \in \bZ$; the claim then follows by the Hurewicz theorem modulo the Serre class of finitely generated abelian groups).  Therefore it suffices to prove that $\alpha_W: B \Diff_\partial (W) \to \Omega^\infty \MTSO(d)$ induces the trivial map in rational homotopy (in degree $1$, this is to be understood as the statement that $\pi_1 (B \Diff_\partial W)\to \pi_1 (\Omega^\infty \MTSO(d)) \otimes \bQ$ is the zero map). 

We consider $W$ as a cobordism $\emptyset \leadsto M$. For each oriented cobordism $V: M \leadsto \emptyset$, there is a homotopy commutative diagram
\[
\xymatrix{
B \Diff_\partial (W) \ar[r]\ar[d] & \Omega_{\emptyset,M} B \Cob_{\mathrm{SO}(d)}\ar[d] \ar[r]^{\simeq} & \Omega^\infty \MTSO(d) \ar@{=}[d]\\
B \Diff^{+} (W \cup V) \ar[r] & \Omega_{\emptyset,\emptyset} B \Cob_{\mathrm{SO}(d)} \ar[r]^{\simeq}& \Omega^\infty \MTSO(d);\\
}
\]
the left vertical map is given by extending diffeomorphism over $V$ identically, the left horizontal maps are the tautological maps, and the middle vertical map is concatenation with a fixed path and hence a weak equivalence. The top composition is by definition $\alpha_W$, and the bottom composition is $\alpha_{W \cup V}$. Hence it is enough to show that for appropriate choice of $V$ the map $\alpha_{W \cup V}$ is zero on rational homotopy groups. The design criteria for $V$ are the following:
\begin{itemize}
\item $N:=W \cup V$ is connected, has trivial rational Pontrjagin classes and Euler number zero. 
\end{itemize}
We first show that for each $d$-dimensional closed oriented $N$ satisfying the above requirements, the map $\alpha_N$ is trivial on rational homotopy. 

Being homotopy equivalent to a connected infinite loop space, the path component $\Omega^\infty_N \MTSO (d)\subset \Omega^\infty \MTSO(d)$ hit by $\alpha_N$ splits rationally as a product of Eilenberg--Mac~Lane spaces, with factors given by certain Miller--Morita--Mumford classes $\kappa_c$ with $c$ a monomial in the Euler and Pontrjagin classes. To prove the claim about the triviality of $\alpha_N$ in rational homotopy, it is therefore enough to show that for each oriented fibre bundle $\pi:E \to S^k$ with fibre a closed manifold $N$ as above, all the classes $\kappa_c (E) := \pi_! (c(T_v E))\in H^* (S^k;\bQ)$ are trivial. 

The argument we give is a variant of a well-known one that appears for example in \cite[Proposition 1.9]{HSS}. Consider the Leray--Serre spectral sequence of $\pi$ (in rational cohomology). Under the map $H^{4i} (E;\bQ) \to H^{4i}(N;\bQ)$, the Pontrjagin classes $p_i (T_v E)$ map to $p_i (TN)$, similarly for the Euler class $e(T_v E)$. By our hypothesis on $N$, these images are zero. It follows from the product structure of the Leray--Serre spectral sequence that all nontrivial products of the Euler and Pontrjagin classes are trivial. Therefore, for each monomial $c$ in the classes $p_i$ and $e$ which is decomposable, the class $\kappa_c (\pi)\in H^* (S^k;\bQ)$ is zero. This leaves the possibility that one of the classes $\kappa_{p_i} (\pi)$, $i < d/2$ or $\kappa_e (\pi)$ is nonzero. For degree reasons, the only cases which can occur are the following three. Firstly, $\kappa_e (\pi) \in H^0 (S^k;\bQ)$ is just the Euler characteristic of $N$, and hence trivial by the assumption on $\chi(N)$. Secondly, if $d=4i$, we have $\kappa_{p_i}(\pi) \in H^0 (S^k; \bQ)$, but that is also trivial, since it is the Pontrjagin number of $N$. Thirdly, if $d+k = 4j$, we have $\kappa_{p_j} (\pi)\in H^k (S^k; \bQ)$. On the other hand, since $TS^k$ is stably trivial, we have for each polynomial $c$ in the Pontrjagin classes that $c(T_v E)= c(TE)$ and hence that 
\[
\langle \kappa_c(\pi), [S^k] \rangle = \langle c(T_v E), [E] \rangle = \langle c(T E), [E] \rangle.
\]
Now we use the Hirzebruch signature theorem and the fact that the $j$th Hirzebruch $L$-class has the form $L_j = a p_j + q$ with $a$ a non-zero rational number and $q$ a sum of decomposable monomials in the Pontrjagin classes. Hence 
\[
\mathrm{sign} (E)= \langle L_j (TE) ,[E]\rangle  = a \langle \kappa_{p_j} (\pi),[S^k]\rangle + \langle \kappa_q(\pi),[S^k] \rangle = a \langle \kappa_{p_j} (\pi),[S^k]\rangle,
\]
using that $q(T_v E)=0$ by the previous argument. Finally, it is a classical fact that the signature of the total space of an oriented fibre bundle over $S^k$ is zero (see \cite{ChernHirzebruchSerre} for $k \geq 2$ and \cite{Neumann} for $k=1$). 

It remains to construct the cobordism $V: M \leadsto \emptyset$. As a first approximation, we may take the opposite cobordism $W^{op}$. The double $dW = W \cup_M W^{op}$ has trivial rational Pontrjagin classes. This follows because there is a nullbordism $B: \emptyset \leadsto dW$ such that the inclusion $W \to B$ is a homotopy equivalence: one constructs $B$ by smoothing the corners of $W \times [0,1]$. Since $W$ has trivial rational Pontrjagin classes, so does $B$, and hence so does $dW$. 

To adjust the Euler number of $dW$ when $d$ is even, we take connected sum with copies of $S^k \times S^{d-k}$ in the interior of $W^{op}$. Since taking such a connected sum changes the Euler number by $\pm 2$, depending on the parity of $k$, and $\chi(dW)= 2 \chi (W)$ is even, we can force the Euler number to be zero (note that we need to assume $d \geq 4$ in order to get a connected manifold). Finally, we show that if $N$ has trivial rational Pontrjagin classes, then so does $N \sharp (S^k \times S^{d-k})$. 
The map $H^i (N \sharp (S^k \times S^{d-k});\bQ) \to H^i (N \setminus D^d;\bQ) \oplus H^i (S^k \times S^{d-k} \setminus D^d;\bQ)$ is injective, except if $i=d$. So all Pontrjagin classes of $N \sharp (S^k \times S^{d-k})$ in degrees $<d$ vanish. To see that $p_j$ in the case $d=4j$ vanishes, we again use the fact that the coefficient of $p_j$ in $L_j$ is nonzero, the signature theorem; and the fact that $\mathrm{sign}(N \sharp (S^k \times S^{d-k}))=\mathrm{sign}(N)=0$. 
\end{proof}

\section{Delooping the index difference}\label{sec:indextheory}

\subsection{Background material}

\subsubsection*{The Rosenberg--Dirac operator} 

In this section, we let $G$ be a discrete group (which in the cases of interest will be finitely presented) and consider the fibration 
\[
\theta_d: B \Spin (d) \times BG \stackrel{\mathrm{pr}}{\to} B \Spin (d) \to B\mathrm{O}(d).
\]
Let $(W,g)$ be a $d$-dimensional Riemannian manifold and let $\ell$ be a $\theta_d$-structure on $W$. This is the same data as a map $f=(f_0,f_1): W \to B \Spin (d) \times BG$, together with an isometric vector bundle isomorphism $TW \cong f_0^* \gamma_{\Spin(d)}$. Let us recall how the Rosenberg--Dirac operator $\Dir_{g,f}$ is constructed out of these data. 

We use the conventions for Clifford algebras which are spelled out in Section 2.1 of \cite{JEIndex1}. That is, $\Cl^{p,q}$ is the complex algebra generated by anticommuting elements $(e_1, \ldots,e_p,\varepsilon_1, \ldots, \varepsilon_q)$ with $\varepsilon_j^2 = - e_i^2 =1$. It has a unique $C^*$-algebra structure such that $e_i^* = -e_i$ and $\varepsilon_j^* = \varepsilon_j$. There is a unique Real structure on $\Cl^{p,q}$ such that the generators are Real, and a unique $\bZ/2$-grading such that the generators are odd. Further, we let 
\[
\spinor_d := E \Spin (d) \times_{\Spin(d)} \Cl^{d,0}\to  B \Spin (d) 
\]
be the universal spinor bundle. It comes with a canonical $\Cl^{d,0}$-valued inner product, a Real structure and a $\bZ/2$-grading. There is a natural Clifford action by the universal spin vector bundle $\gamma_{\Spin(d)}\to B \Spin (d)$, and $\spinor_W := f_0^* \spinor_d$ is the spinor bundle on $W$. 

Let $\cstar(G)$ be the group $C^*$-algebra of $G$, which could be either the reduced version $\cstarred (G)$ or the maximal version $\cstar_{\mathrm{m}}(G)$ (as all arguments in this section apply equally to both versions, there is no need for a notational distinction). There is a unique Real structure on $\cstar(G)$ such that all group elements $g \in G \subset \cstar(G)$ are Real, and $\cstar(G)$ has the trivial $\bZ/2$-grading. The universal \emph{Mishchenko line bundle} $\cL_G := EG \times_G \cstar (G) \to BG$ is a bundle of right $\cstar(G)$-modules which are free of rank $1$. It comes with a canonical $\cstar(G)$-valued inner product, which turns $\cL_G$ into a bundle of finitely generated projective Hilbert-$\cstar(G)$-modules. 
Now we write 
\[
 \spinor_\ell:= f^* ( \spinor_d \boxtimes \cL_G)\cong  \spinor_W \otimes f_1^* \cL_G \lra W,
\]
which is a bundle of Real graded finitely generated projective Hilbert $ \cstar(\theta_d)$-modules, where we used the abbreviation
\[
\cstar (\theta_d):= \Cl^{d,0} \otimes \cstar(G),
\]
with Real structure and grading induced from the two factors (since $\Cl^{d,0}$ is finite-dimensional, the tensor product is unambiguous). 

The spinor bundle $\spinor_W$ inherits a connection from the Levi-Civita connection on $W$, while $f_1^* \cL_G$ has a natural flat connection since the structure group of $\cL_G$ is the discrete group $G$. Let $\nabla$ be the tensor product of both these connections. Using $\nabla$ and the action of the Clifford algebra bundle of $TW$ on $\spinor_W$, one defines the Rosenberg--Dirac operator $\Dir_{g,\ell}$ by the usual formula \cite[\S 1]{RosNovI} \cite[\S II.5]{SpinGeometry}. In the sequel, the $\theta_d$-structure is usually fixed, which is why we write just $\Dir_g$ for $\Dir_{g,\ell}$. The Lichnerowicz--Schr\"odinger formula
\begin{equation}\label{eq:schroedinger-lichnerowicz}
 \Dir_g^2 = \nabla^* \nabla + \frac{1}{4} \scal(g)
\end{equation}
connects the Rosenberg--Dirac operator to positive scalar curvature. 

Let $\Gamma_c (W;\spinor_\ell)$ be the space of compactly supported smooth sections of $\spinor_\ell$. The $\cstar(\theta_d)$-valued inner product on $\spinor_\ell$ and the volume measure on $W$ together turn $\Gamma_c (W;\spinor_\ell)$ into a pre-Hilbert-$\cstar(\theta_d)$-module, whose completion we denote by $L^2 (W;\spinor_\ell)$, which is a Hilbert-$\cstar(\theta_d)$-module. The Dirac operator defines an unbounded symmetric operator on $L^2 (W;\spinor_\ell)$ with inital domain $\Gamma_c (W;\spinor_\ell)$. Its closure is self-adjoint and regular (in the sense of the theory of unbounded operators Hilbert modules \cite[\S 9]{Lance}) provided that there is a proper function $x: W \to \bR$ such that the commutator $[\Dir_g,x]$ is a bounded operator, by a variant of the classical Chernoff--Wolf theorem which is proven in \cite[\S 2]{HPS} or as Theorem 2.14 of \cite{JEIndex1}. Usually, $\Dir_g$ is not Fredholm (again, this term needs to be understood in the sense of Hilbert module operators), but it is if the scalar curvature of $g$ is uniformly positive outside a compact subset of $W$, by \eqref{eq:schroedinger-lichnerowicz} and e.g.\ Theorem 3.41 of \cite{JEIndex1}. In \cite[
\S 2]{JEIndex1}, it is shown how to generalise these constructions to families. 

\subsubsection*{$K$-Theory spectra}
We shall use a model for the real $K$-theory spectrum $\bK(\gA)$ of a Real graded unital $C^*$-algebra $\gA$ which is a variant of unbounded $KK$-theory and of the spectral picture of $K$-theory developed in \cite{HigsonGuent}. The precise version is described in \cite{JEIndex1}, \cite{JEIndex2}, and we refer to these papers for more details. It is a spectrum of sheaves. The $n$th sheaf in this spectrum assigns to a test manifold the set\footnote{The usual set-theoretic difficulties can be resolved by working in a Grothendieck universe, as done in \cite[\S 4]{JEIndex1}.} $\bK(\gA)_n (X)$ of all tuples $(H,\eta,c,D)$, where 
\begin{enumerate}[(i)]
\item $H$ is a countably generated continuous field of Hilbert-$\gA$-modules over $X$ with a Real structure,
\item $\eta$ is a grading and $c$ is a $\Cl^{n,0}$-structure on $H$, both compatible with the Real structure, and 
\item $D$ is a self-adjoint regular Fredholm family on $H$, which is Real, odd and $\Cl^{n,0}$-antilinear (we refer to Sections 2.1 and 4 of \cite{JEIndex1} for more details).
\end{enumerate}
It is often convenient to shorten notation and to write $(H,D)$ for the tuple $(H,\eta,c,D)$ when $\eta$ and $c$ are understood, or even just $D$. An element $(H,\eta,c,D) \in \bK(\gA)_n (X)$ is \emph{degenerate} if $D$ is invertible, and we denote by $\bD(\gA)_n \subset \bK(\gA)_n$ the subsheaf of degenerate elements. Here, $D$ being invertible means that the bounded operator family $\frac{D}{\sqrt{D^2+1}}$ is invertible. This condition is equivalent to the existence of a continuous function $c: X \to (0,\infty)$ such that $D^2 \geq c$.

As base-point in $\bK(\gA)_n$ and $\bD(\gA)_n$, we take the unique tuple with $H=0$. 

\begin{rem}\label{remark:simplifynotation}
For the rest of this section, we are concerned with maps of sheaves of the form $\Phi:\cX \to \cK$ for sheaves $\cX$ which are either $K$-theory sheaves or are given by families of manifolds together with $\theta_d$-structures, Riemannian metrics and some other data. The sheaves $\cK$ are the $K$-theory spaces for certain $C^*$-algebras or the subspaces of degenerate tuples. 
To improve the readability (and writeabilty) of this section, let us impose the following conventions. We only write down the map $\Phi_{\ast}: \cX(\ast) \to \cK(\ast)$ of sets. In each case, Example 3.28 and Theorems 3.40 and 3.41 of \cite{JEIndex1} will give the justification why $\Phi$ extends to a map of sheaves, and this will not be spelled out explicitly.

Similarly, when we want to show that two such maps $\Phi_0,\Phi_1: \cX \to \cK$ are homotopic, we have to produce a concordance $\gy$ from $\Phi_0 (\gz) $ to $\Phi_1 (\gz)$, for each $\gz \in \cX(X)$, and this concordance has to be natural in $\gz$. We only write down this concordance when $X=\ast$. In that case, it must be an element of $\cK(\bR)$. We write this down pointwise, i.e. we write elements $\gy_t \in \cK(\ast)$, depending on $t \in \bR$ and depending \emph{naturally} on $\gz$. Again, we will refer implicitly to Example 3.28 and Theorems 3.40 and 3.41 of \cite{JEIndex1} for the justification that such a procedure indeed yields a natural concordance.
\end{rem}

The spectrum structure on the collection of all $\bK(\gA)_n$ is given by the \emph{Bott map}
\[
 \bott: \bK(\gA)_n \lra \Omega \bK(\gA)_{n+1}
\]
described as follows. It sends $(H,\eta,c,D) \in \bK(\gA)_n (*)$ to the path (parametrised by $\overline{\bR}$)
\[
 t \mapsto
 \begin{cases}
  \Bigl( H \oplus H, \twomatrix{\eta}{}{}{-\eta},\twomatrix{c}{}{}{c}, \twomatrix{}{-\eta}{\eta}{},\twomatrix{D}{t\eta}{t\eta}{D}\Bigr) & t \in \bR,\\
(0,  \_,\_,\_) & t = \pm \infty.
 \end{cases}
\]
This notation is to be understood in the following way: the matrix
\[
\twomatrix{\eta}{}{}{-\eta}
\]
gives a grading on $H \oplus H$, and the vector $w + se_{n+1} \in \bR^n \oplus \bR\subset \Cl^{n+1,0}$ acts as 
\[
\twomatrix{c(w)}{-s\eta}{s\eta}{c(w)}.
\]
For an explanation what the loop space of a sheaf is, and for the rigorous definition of the Bott map, we refer to Sections 2.1 and 2.3 of \cite{JEIndex2}. It is clear that the Bott maps restricts to a map $\bD(\gA)_n \to \Omega \bD(\gA)_{n+1}$.
\begin{prop}\mbox{}
\begin{enumerate}[(i)]
\item The sheaf $\bD(\gA)_n$ is contractible \cite[Lemma 3.9]{JEIndex1}. 
\item The Bott map is a weak equivalence \cite[Theorem 3.14]{JEIndex1}. 
\end{enumerate}
\end{prop}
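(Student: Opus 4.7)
I would prove the two parts in order: part (i) by an Eilenberg swindle exploiting invertibility, and part (ii) by fitting $\bott$ into a homotopy fiber sequence whose fiber is $\bD(\gA)_n$.

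For (i), the plan is to produce, naturally in each test manifold $X$, a concordance from any $(H,\eta,c,D) \in \bD(\gA)_n(X)$ to the zero tuple. The starting observation is that because $D$ is invertible, the doubled tuple $(H \oplus H, \eta \oplus (-\eta), c \oplus c, D \oplus D)$ admits a canonical nullconcordance: the family
\[
 \begin{pmatrix} \cos(s)\, D & \sin(s)\, \eta \\ \sin(s)\, \eta & -\cos(s)\, D \end{pmatrix}, \quad s \in [0,\tfrac{\pi}{2}],
\]
interpolates between $D \oplus (-D)$ (equivalent to $D \oplus D$ after adjusting signs of grading) and an invertible off-diagonal operator, and remains self-adjoint, regular, and invertible throughout. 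Combined with the standard Eilenberg swindle based on a natural isomorphism $H \oplus H^\infty \cong H^\infty$ of countably generated Hilbert-$\gA$-modules, with the summands grouped in two different ways, this yields the desired concordance $(H,D) \rightsquigarrow 0$. The main technical care is to verify naturality in $X$ and continuity of all the families; this is where the assumption of invertibility of $D$ is essential, since it ensures that all intermediate operators remain Fredholm and in fact invertible, so never leave the ambient sheaf.

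For (ii), the plan is to exhibit a natural homotopy fiber sequence
\[
 \bD(\gA)_n \lra \bK(\gA)_n \xrightarrow{\;\bott\;} \Omega \bK(\gA)_{n+1};
\]
once available, part (i) and the resulting long exact sequence of homotopy sheaves force $\bott$ to be a weak equivalence. To construct the sequence, I would identify the homotopy fiber of $\bott$ over the canonical base-point $0 \in \Omega \bK(\gA)_{n+1}$ with the sheaf of pairs $((H,\eta,c,D), \widetilde{\gamma})$ consisting of a $0$-simplex of $\bK(\gA)_n$ together with a nullconcordance $\widetilde{\gamma}$ in $\bK(\gA)_{n+1}$ of the Bott loop $t \mapsto \bott(H,D)(t)$. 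When $D$ is invertible, $\bott(H,D)(t)$ is invertible for all $t \in \overline{\bR}$, and the canonical nullconcordance of an invertible tuple supplied by part (i) (applied one Clifford degree higher) gives a canonical choice of $\widetilde\gamma$; the forgetful map therefore identifies $\bD(\gA)_n$ with the homotopy fiber.

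The main obstacle is (ii). The construction of $\widetilde\gamma$ has to be continuous in all parameters and compatible with the additional Clifford generator and grading reversal introduced by $\bott$, and one has to verify that the map $\bD(\gA)_n \to \hofib_0(\bott)$ induced by this recipe is a weak equivalence of sheaves (i.e.\ induces a weak equivalence of representing spaces), not merely a pointwise bijection. The swindle in part (i) is, by contrast, routine once the bookkeeping of regularity and Fredholmness has been set up.
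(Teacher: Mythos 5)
First, a point of orientation: the paper does not prove this proposition at all --- both parts are quoted verbatim from the first author's earlier papers (\cite[Lemma 3.9]{JEIndex1} and \cite[Theorem 3.14]{JEIndex2}), with only the remark that part (ii) rests on the Bott periodicity theorem. Measured on its own terms, your sketch has a genuine gap in part (ii): it is circular. The identification of $\hofib_0(\bott)$ with $\bD(\gA)_n$ --- i.e.\ the claim that sending a degenerate cycle to its Bott loop together with the canonical nullconcordance is a \emph{weak equivalence} onto the homotopy fibre --- is, together with (i), equivalent to the statement that $\bott$ is a weak equivalence onto the base-point component; you acknowledge it "has to be verified" but offer no mechanism. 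Moreover, even granting it, contractibility of the fibre over the base point says nothing about surjectivity of $\bott$ on $\pi_0$ (that every loop in $\bK(\gA)_{n+1}$ is concordant to a Bott loop), which is again the substance of periodicity. The cited proof works by comparing this operator model with a model of real $K$-theory in which periodicity is already known; some such external input cannot be replaced by a formal fibre-sequence argument.

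Part (i) has the right flavour (invertibility plus a swindle) but the mechanism you describe does not assemble into a proof. The doubled tuple $(H\oplus H,\eta\oplus(-\eta),c\oplus c, D\oplus(-D))$ is $x\oplus\bar{x}$ with $\bar{x}=(H,-\eta,c,-D)$ the \emph{opposite} cycle, not "$D\oplus D$ after adjusting signs of grading"; reversing the grading negates the class. Such a sum is nullconcordant for \emph{every} cycle, degenerate or not, so this cannot be where invertibility enters, and fed naively into a swindle it would "prove" that all of $\bK(\gA)_n$ is contractible. The specific rotation is also not invertible throughout: with diagonal entries $\pm\cos(s)D$ the square acquires the off-diagonal cross term $\pm 2\cos(s)\sin(s)D\eta$ and fails to be bounded below whenever $\tan(s)\in\spec|D|$. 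Invertibility is genuinely used elsewhere: it lets one first deform $D$ through invertibles to the involution $D|D|^{-1}$, reducing to tuples that are pure Clifford-module structures with no operator data, and it makes the infinite direct sum $(H^{\oplus\infty},D^{\oplus\infty})$ a legitimate (uniformly invertible, hence Fredholm) element, so that Kasparov absorption and the shift isomorphism can run. You should either reproduce that argument in detail or simply cite \cite[Lemma 3.9]{JEIndex1}, as the paper does.
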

The second part is a consequence of the Bott periodicity theorem. 

For a graded $\cstar$-algebra, there are Morita equivalences 
\[
\morita:\bK(\gA)_n \stackrel{\simeq}{\lra} \bK(\gA \grotimes \Cl^{1,0})_{n+1}
\]
(we use the standard conventions for tensor products of graded $\cstar$-algebras and Hilbert modules as explained in Section 14 of \cite{Bla}) which commute with the Bott map. When combined with the Bott maps, these give rise to weak equivalences 
\[
| \bK(\gA \grotimes \Cl^{1,0})| \simeq \Omega |\bK(\gA) |
\]
of spectra, and in particular
\[
 |\bK(\cstar(\theta_d))| \simeq \Omega^d |\bK(\cstar(G)|.
\]
We shall need the formula for $\morita$ and its inverse when $n=0$, which are summarised in the next lemma.

\begin{lem}\label{lem:inverse-morita}
The map 
$$\morita: \bK(\gA)_0 \lra \bK(\gA \grotimes \Cl^{1,0})_{1}$$ 
sends $(H,\eta,.,D)$ to $(H \otimes \Cl^{1,0},\eta \otimes \iota, \eta \otimes e, D \otimes 1)$, where $\iota$ is the grading on $\Cl^{1,0}$ and $e$ is given by left multiplication with the generator $e \in Cl^{1,0}$. 
A homotopy inverse is given by the following recipe: let $(H,\nu,c,D) \in \bK(\gA \grotimes  \Cl^{1,0})_1$ be given. For $x \in \Cl^{1,0}$, let $\rho(x)$ be the right multiplication by $1 \otimes x \in \gA \grotimes \Cl^{1,0}$. The eigenspace $\Eig(c(e)\rho(e)\nu)$ is an $\gA$-subspace, to which $\nu$ and $D$ restrict. \qed
\end{lem}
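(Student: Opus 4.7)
The plan is to verify the lemma in three stages: well-definedness of $\morita$, well-definedness of the purported inverse, and concordance of both composites to the identity. Throughout, I will use the graded Koszul sign convention on $\gA \grotimes \Cl^{1,0}$, by which left operators of definite parity and right multiplications by odd elements of $\Cl^{1,0}$ graded-commute.

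First, I would check that $\morita(H,\eta,\_,D) = (H \grotimes \Cl^{1,0}, \eta \otimes \iota, \eta \otimes e, D \otimes 1)$ is a valid element of $\bK(\gA \grotimes \Cl^{1,0})_1$. The operator $\eta \otimes e$ satisfies $(\eta \otimes e)^2 = \eta^2 \otimes e^2 = -1$, $(\eta \otimes e)^* = -(\eta \otimes e)$, and anticommutes with the grading $\eta \otimes \iota$, hence determines a $\Cl^{1,0}$-structure. Similarly, $D \otimes 1$ is odd self-adjoint regular Fredholm and anticommutes with $\eta \otimes e$ (using that $D$ is odd on $H$), so it is $\Cl^{1,0}$-antilinear in the required sense. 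Reality is inherited from $D$ and the Real structure on $\Cl^{1,0}$ which makes $e$ Real.

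For the inverse, I would verify that $T := c(e)\rho(e)\nu$ is a self-adjoint involution on $H$ which is $\gA$-linear and commutes with both $\nu$ and $D$. Self-adjointness: $T^* = \nu \rho(e)^* c(e)^* = \nu(-\rho(e))(-c(e)) = \nu\rho(e)c(e)$, and using that $\rho(e)$ and $c(e)$ graded-commute as odd operators one rewrites this as $c(e)\rho(e)\nu = T$ after moving $\nu$ through the two odd factors. The identity $T^2 = 1$ follows from $c(e)^2 = \rho(e)^2 = -1$, $\nu^2 = 1$, and suitable (anti)commutations. Commutation with $\nu$ uses that $c(e)$ and $\rho(e)$ are both odd, so $T\nu = \nu T$. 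Commutation with $D$ uses that $D$ is odd, $\Cl^{1,0}$-antilinear (i.e.\ $Dc(e) = -c(e)D$), and $\gA \grotimes \Cl^{1,0}$-linear so $D\rho(e) = \rho(e)D$ up to the sign from $D$ being odd, which cancels with the other two. The spectral projection onto $\Eig_+(T)$ is therefore a continuous $\gA$-linear projection preserving $\nu$ and $D$, and the restricted datum $(\Eig_+(T), \nu|, c(e)|, D|)$ yields an element of $\bK(\gA)_0$ (where the $c(e)|$-data is vacuous, as $c(e)$ restricts to zero on the eigenspace — a better statement is that the $\Cl^{1,0}$-action is absorbed into the eigenspace decomposition).

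Finally, I would check the two composites. Applied to the output of $\morita$, the involution $T$ becomes $(\eta \otimes e)(1 \otimes e)(\eta \otimes \iota) = 1 \otimes (-e^2 \iota) = 1 \otimes \iota$ after sorting signs (using $\rho(e)$ acts as $1 \otimes e$), so $\Eig_+(T) = H \otimes \mathrm{span}(1) \cong H$, and under this identification the grading and operator restrict to $\eta$ and $D$ respectively; this is an equality on the nose, hence a (constant) concordance. For the other composite, the map
\[
\Eig_+(T) \grotimes \Cl^{1,0} \lra H, \qquad h \otimes 1 + h' \otimes e \longmapsto h + \rho(e)h'
\]
is a natural $\gA \grotimes \Cl^{1,0}$-linear isometric isomorphism which intertwines the grading $\nu|_{\Eig_+(T)} \otimes \iota$ with $\nu$, the Clifford structure $\nu|_{\Eig_+(T)} \otimes e$ with $c(e)$, and $D|_{\Eig_+(T)} \otimes 1$ with $D$; these identifications follow from the commutation relations already established and the fact that $c(e)$ interchanges $\Eig_\pm(T)$. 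Again this gives an equality on the nose rather than merely a concordance.

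The main technical obstacle is keeping track of the graded signs systematically in the verification that $T$ is a self-adjoint involution commuting with $\nu$ and $D$, and that the isomorphism $\Eig_+(T) \grotimes \Cl^{1,0} \cong H$ intertwines all the structures; once the sign bookkeeping is done, each assertion is a formal consequence. The resulting statements are all pointwise identities rather than genuine concordances, so the extension to families over a test manifold $X$ — in the sense of Remark \ref{remark:simplifynotation} — is automatic.
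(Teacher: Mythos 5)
The paper offers no proof of this lemma at all (it is stated as a recollection and closed with an immediate \qed), so your write-up is necessarily doing more than the text does; the architecture you propose — show $T:=c(e)\rho(e)\nu$ is a self-adjoint $\gA$-linear involution commuting with $\nu$ and $D$, then identify $\Eig(T)\grotimes\Cl^{1,0}\cong H$ — is exactly the standard argument and is the right one.

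There is, however, one concrete problem: the sign convention you announce at the outset, namely that ``left operators of definite parity and right multiplications by odd elements of $\Cl^{1,0}$ graded-commute,'' makes the key computations fail. Under that rule $c(e)\rho(e)=-\rho(e)c(e)$, and then your own manipulations give
\begin{equation*}
T^*=\nu\rho(e)c(e)=-\nu c(e)\rho(e)=-T,\qquad T^2=(c(e)\rho(e))^2=-c(e)^2\rho(e)^2=-1,
\end{equation*}
so $T$ is a skew-adjoint complex structure with no real spectrum, and $\Eig(T)$ does not exist. The correct convention (Blackadar \S 14, which the paper cites) is that adjointable operators such as $c(e)$ commute \emph{strictly} with the right module action; the only Koszul-type signs are $\nu\rho(e)=\rho(\beta(e))\nu=-\rho(e)\nu$ (coming from the grading automorphism $\beta$) and the anticommutation of $D$ with $\nu$ and with $c(e)$. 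With $c(e)\rho(e)=\rho(e)c(e)$ one gets $T^*=T$ and $T^2=c(e)^2\rho(e)^2=(-1)(-1)=1$, and the rest of your argument goes through. Two smaller points: $c(e)$ does not ``restrict to zero'' on $\Eig(T)$ — it anticommutes with $T$ and hence interchanges the two eigenspaces (which is what you in fact use for the isomorphism $\Eig(T)\grotimes\Cl^{1,0}\cong H$); and the two composites are not literally equalities but canonical unitary isomorphisms of tuples, so one should note that unitarily isomorphic cycles are concordant (standard, but it is the step that turns your identifications into the claimed homotopies).
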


The sheaves $\bK(\gA)_n$ have a natural $\Gamma$-space structure as follows. 
\begin{defn}
For a finite pointed set $S$, we let $ \bK(\gA)_n (S)$ be the sheaf which assigns to $X$ the set of all $(H_s,D_s)_{s \in S_o}$ where $(H_s,D_s)\in \bK(\gA)_n(X)$. For a pointed map $f:S \to T$ of finite pointed sets, we let $f_* (H_s,D_s)$ be the tuple $(H'_t,D'_t)$, with $H'_t = \bigoplus_{f(s)=t} H_s $ and $D'_t = \bigoplus_{f(s)=t} D_s$.
\end{defn}

\begin{lem}\label{lem:gammaKO-properties}
The $\Gamma$-space $|\bK(\gA)_n|$ is very special. The Bott maps are maps of $\Gamma$-spaces, and so are the Morita equivalences. 
\end{lem}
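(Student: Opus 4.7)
The argument has three parts corresponding to the three assertions.

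For specialness, I would simply unpack the definitions. The sheaf $\bK(\gA)_n(S)$ assigns to $X$ the set of tuples $(H_s, \eta_s, c_s, D_s)_{s \in S_o}$, and for a pointed map $f : S \to T$ the induced $f_*$ takes direct sums along the fibres of $f|_{S_o}$. For the collapse maps $p : S \vee T \to S$ and $q : S \vee T \to T$ the fibre of $p|_{(S \vee T)_o}$ over $s \in S_o$ is the singleton $\{s\}$, and the complement is collapsed to the basepoint (and so contributes nothing). Consequently
\[
(p_*, q_*) : \bK(\gA)_n(S \vee T) \lra \bK(\gA)_n(S) \times \bK(\gA)_n(T)
\]
is an \emph{isomorphism} of sheaves, and hence a weak equivalence after realisation. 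Since $(0_+)_o = \emptyset$ the sheaf $\bK(\gA)_n(0_+)$ is a single point (the empty tuple), which realises to a point. So $|\bK(\gA)_n|$ is a special $\Gamma$-space.

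For very-specialness I need $\pi_0(|\bK(\gA)_n|)$ to be a group. I would deduce this from the Bott equivalence $\bott : \bK(\gA)_n \to \Omega \bK(\gA)_{n+1}$ (already recorded in the excerpt) which exhibits $|\bK(\gA)_n|$ as a loop space; the set of path components of any loop space is a group. Alternatively, one can give a direct argument: for $(H, \eta, c, D) \in \bK(\gA)_n(*)$ the element $(H, -\eta, c, D)$ represents the additive inverse, as witnessed by the standard ``rotation'' concordance
\[
t \longmapsto \Bigl( H \oplus H, \;\twomatrix{\eta}{}{}{-\eta}, \;\twomatrix{c}{}{}{c}, \;\twomatrix{D\cos t}{\eta\sin t}{-\eta\sin t}{D\cos t}\Bigr), \qquad t \in [0, \pi/2],
\]
which connects the direct sum $(H,\eta,c,D) \oplus (H,-\eta,c,D)$ to an invertible (hence degenerate) element, and $\bD(\gA)_n$ is contractible by the cited result from \cite{JEIndex1}.

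Finally, for the $\Gamma$-map assertions, I would check strict naturality of Bott and Morita in direct sums. The Bott assignment $(H, \eta, c, D) \mapsto \bott(H,\eta,c,D)$ and the Morita assignment $(H,\eta,c,D) \mapsto (H \otimes \Cl^{1,0}, \eta \otimes \iota, \eta \otimes e, D \otimes 1)$ are both additive: doubling $H$ or tensoring with $\Cl^{1,0}$ commutes with direct sums via canonical (shuffle) identifications of Hilbert-$\gA$-modules. Thus for any pointed $f : S \to T$, the square comparing $\bott \circ f_*$ with $f_* \circ \bott$ (and likewise for $\morita$) commutes; applied componentwise on tuples this exhibits Bott and Morita as maps of $\Gamma$-sheaves, hence of $\Gamma$-spaces after realisation. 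The only mild subtlety --- and the closest thing to a genuine obstacle --- is keeping track of the canonical shuffle isomorphism for Bott (since $\bott$ involves a doubling), but this is strict enough in the sheaf framework to give an equality of sections, not merely a concordance, so no further argument is needed.
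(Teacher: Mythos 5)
Your treatment of specialness (the map on wedges is an isomorphism of sheaves, and $\bK(\gA)_n(0_+)$ is a point) and of the $\Gamma$-map claims (componentwise additivity of Bott and Morita under direct sums) is exactly the paper's argument, which simply declares these points obvious. Where you differ is the group-completeness: the paper cites the identification $\pi_0(|\bK(\gA)_n|) \cong KO_{-n}(\gA)$ from Proposition 4.12 of \cite{JEIndex1}, where the monoid structure is direct sum, so grouplikeness is immediate; your route via the Bott equivalence is also legitimate but needs one extra remark, namely that knowing the underlying space is a loop space only gives \emph{some} group structure on $\pi_0$, and one must invoke the fact that Bott is a $\Gamma$-map (i.e.\ an $H$-map for direct sum) together with an Eckmann--Hilton argument to conclude that the direct-sum monoid itself is a group. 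Your self-contained alternative is in principle the most informative route, but as written it has a sign error: with $\eta$ a self-adjoint grading, the off-diagonal perturbation $\twomatrix{}{\eta\sin t}{-\eta\sin t}{}$ is skew-adjoint, so your rotation path leaves the class of self-adjoint operators. The correct rotation is
\[
t \longmapsto \twomatrix{D\cos t}{\eta\sin t}{\eta\sin t}{D\cos t},
\]
whose off-diagonal part is self-adjoint, odd for the grading $\twomatrix{\eta}{}{}{-\eta}$, $\Cl^{n,0}$-antilinear, and squares to $\sin^2 t$ at the endpoint, giving the required degenerate element. With that correction (or with the Eckmann--Hilton remark on your first route) the proposal is complete.
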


\begin{proof}
It is obvious that $\bK(\gA)_n (0_+)$ is the one-point sheaf and that the Bott map is a map of $\Gamma$-spaces. Likewise it is clear that $\bK(\gA)_n (S \vee T) \to \bK(\gA)_n (S) \times \bK(\gA)_n (T)$ is a weak equivalence (it is even an isomorphism of sheaves). That $|\bK(\gA)_n|$ is grouplike follows from the isomorphism $\pi_0 (|\bK(\gA)_n|) = KO_{-n}(\gA)$ constructed in Proposition 4.12 of \cite{JEIndex1}. 
\end{proof}

\begin{cor}\label{cor:delooping-Ktheoryspectrum}
There is a weak equivalence of spectra
\begin{equation*}
\mathrm{B}^\infty |\bK(\gA \grotimes \Cl^{d,0})_n| \simeq  \bigl( \Sigma^{n-d} |\bK(\gA)| \bigr)  \langle -1\rangle.
\end{equation*}
\end{cor}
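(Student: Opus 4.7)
The plan is to use that the two families of weak equivalences of $\Gamma$-spaces already provided by the text — the Bott maps $\bott : |\bK(\gA)_n| \to \Omega |\bK(\gA)_{n+1}|$ and the Morita equivalences $\morita : |\bK(\gA)_n| \to |\bK(\gA\grotimes \Cl^{1,0})_{n+1}|$ — assemble $|\bK(\gA)|$ into an $\Omega$-spectrum object in the category of very special $\Gamma$-spaces, and then quote Lemma 4 of \cite{Nguyen} to conclude that the Segal infinite delooping agrees with the connective cover.

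More concretely, iterating the Morita equivalence $d$ times gives a weak equivalence of $\Gamma$-spaces
\[
\morita^d : |\bK(\gA)_{n-d}| \stackrel{\simeq}{\lra} |\bK(\gA \grotimes \Cl^{d,0})_n|.
\]
Using the Bott equivalences to identify $|\bK(\gA)_{n-d}|$ with a shift of the $0$th space $|\bK(\gA)_0|$, the right hand side is the $0$th space of the spectrum $\Sigma^{n-d}|\bK(\gA)|$. Moreover all these identifications are maps of $\Gamma$-spaces (as recalled in the last lemma of the excerpt), so the $\Gamma$-space structure in question is the one inherited from the $0$th space of this spectrum.

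I would then invoke Lemma 4 of \cite{Nguyen}, which asserts (in the form needed here) that if $A$ is a very special $\Gamma$-space whose underlying infinite loop space $A(1_+)$ is equipped with a spectrum structure in such a way that the $\Gamma$-space structure agrees with the one coming from the spectrum's $0$th space, then Segal's connective delooping $\mathrm{B}^\infty A$ is canonically weakly equivalent to the connective cover $\langle -1 \rangle$ of that spectrum. Combining with the identification of the $0$th space above yields
\[
\mathrm{B}^\infty |\bK(\gA \grotimes \Cl^{d,0})|_n \simeq \bigl(\Sigma^{n-d}|\bK(\gA)|\bigr)\langle -1\rangle.
\]

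The main thing that needs checking is the compatibility at the level of $\Gamma$-spaces: that the spectrum structure maps (Bott and Morita) and the $\Gamma$-space structure (block-direct-sum over a finite pointed set) commute strictly (or at least coherently enough to apply Nguyen's lemma). The Bott map is clearly a map of $\Gamma$-spaces since it acts on each summand $(H_s,D_s)$ by the formula above and direct sums are preserved. The Morita equivalence tensors each $(H_s,D_s)$ with $\Cl^{1,0}$ and therefore also preserves block direct sums. Hence the compatibility is automatic and the only real input beyond the excerpt is Nguyen's lemma, which subsumes the delicate comparison between Segal's $B$-construction and the honest spectrum deloopings.
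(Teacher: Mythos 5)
Your proposal matches the paper's own argument: the paper likewise deduces the corollary from the facts that the Bott maps and Morita equivalences are maps of $\Gamma$-spaces (so that $|\bK(\gA\grotimes\Cl^{d,0})_n|$ is identified, as a $\Gamma$-space, with the $0$th space of $\Sigma^{n-d}|\bK(\gA)|$) and then cites Lemma 4 of Nguyen to identify Segal's $\mathrm{B}^\infty$-delooping of a very special $\Gamma$-space underlying an $\Omega$-spectrum with the $(-1)$-connected cover of that spectrum. The compatibility check you flag is exactly the content the paper records in the preceding lemma, so nothing is missing.
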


\begin{proof}
Lemma \ref{lem:gammaKO-properties} shows that the formula $S \mapsto |\bK(\gA \grotimes \Cl^{d,0})(S)|$ defines a special $\Gamma$-spectrum in the sense of \cite[Definition 5.1]{Nguyen}. It is also ``projectively fibrant'', which means nothing else than that the spectrum $|\bK(\gA \grotimes \Cl^{d,0})(S)|$ is an $\Omega$-spectrum for each $S \in \Gamma^\op$. Hence \cite[Proposition 5.2]{Nguyen} applies and yields a weak equivalence 
\[
\mathrm{B}^\infty |\bK(\gA \grotimes \Cl^{d,0})_n| \simeq |\bK(\gA \grotimes \Cl^{d,0})|[n] \langle -1 \rangle.
\]
The symbol $[n]$ denotes the shift, and the symbol $\langle -1 \rangle$ denotes the connective cover. The fact that the Morita equivalences commute with the Bott maps means that they give a stable equivalence of spectra
\[
|\bK(\gA \grotimes \Cl^{d,0})| \simeq |\bK(\gA)|[-d]. 
\]
Hence 
\[
|\bK(\gA \grotimes \Cl^{d,0})|[n] \langle -1 \rangle \simeq |\bK(\gA)|[n-d] \langle -1 \rangle \simeq \Sigma^{n-d} |\bK(\gA)| \langle -1 \rangle.
\]
\end{proof}

\subsubsection*{The index difference}
It will be convenient for us to adopt the following definition of a generalised base-point, as we will often consider paths in $\bK(\gA)_n$ which start and end in the contractible subspace $\bD(\gA)_n$.

\begin{defn}\label{rem:generalizedbasepoint}
A \emph{generalised base-point} of a space $X$ is a contractible subspace $Z \subset X$. A map $f: X_0 \to X_1$ of spaces equipped with generalised base-points $Z_0$ and $Z_1$ is said to preserve the generalised base-points if $f(Z_0 ) \subset Z_1$. An analogous definition is made for sheaves. 

In the case of spaces, we define the loop space $\Omega_Z X$ as the space of all paths beginning and ending in $Z$, and the homotopy fibre $\hofib_{Z_1}(f)$ as the space of all pairs, consisting of a point $x_0 \in X_0$ and a path in $X_1$ from $f(x_0)$ to a point of $Z_1$. The homotopy fibre has a generalised base-point, namely the space of all pairs, consisting of a point in $Z_0$, together with a path in $Z_1$ starting at the image. 

In the case of sheaves, we define the loop sheaf $\Omega_\cZ \cX$ as the sheaf which sends a test manifold $X$ to the set of all $z \in \cX(X \times \bR)$ such that $z|_{X \times [-1,1]^c} \in \cZ(X \times [-1,1]^c)$. The homotopy fibre $\hofib_{\cZ_1}(f)$ is the sheaf which sends a test manifold $X$ to the set of all pairs $(z,y)$, where $z \in \cX_0 (X)$ and $y \in \cX_1 (X \times \bR)$ is such that $y|_{X \times 0}= f(z)$ and $y|_{X \times (1,\infty)} \in \cZ_1 (X \times (1,\infty))$. 

If the choice of $Z$ or $\cZ$ is unambiguous, it is dropped from the notation. 
\end{defn}

As a generalised base-point in $\bK(\gA)_n$, we shall always take $\bD(\gA)_n$. 
Of course, the homotopy types of these generalised loop spaces/homotopy fibres coincide with the usual ones. There are natural weak equivalences
\[
| \Omega_\cZ \cX | \lra \Omega_{|\cZ|} |\cX|
\]
and 
\[
 |\hofib_{\cZ_1} (f)| \lra \hofib_{|\cZ_1|} (|f|).
\]

With these generalities understood, we turn our attention to psc metrics and let $V:M_0 \leadsto M_1$ be a $d$-dimensional cobordism with $\theta$-structure $\ell$, let $g_i \in \Riem^+ (M_i)$ and let $h$ be a Riemannian metric on $V$ which restricts to $dx^2 +g_i$ near $M_i$. We form the elongation $W:= ((-\infty,0] \times M_0) \cup V \cup ( [1,\infty) \times M_1 )$ and extend the metric $h$ cylindrically, to a metric with the same name $h$. By the Lichnerowicz--Schr\"odinger formula, $\Dir_h$ is invertible at infinity and hence Fredholm. So
\[
 \ind (W,h) := (L^2 (W;\spinor_\ell) ,\Dir_h) \in \bK(\cstar(\theta_d))_0 (\ast)
\]
is well-defined and yields
\[
[\ind (W,h)]\in \pi_0(|\bK(\cstar(\theta_d))_0|) \cong KO_{d}(\cstar(G)).
\]
If $h$ has positive scalar curvature, then $\Dir_h$ is invertible and so $[\ind (W,h)]=0$. The construction generalises to the parametrised case, and yields a map
\[
\ind:\Riem (V)_{g_0,g_1} \lra \bK(\cstar(\theta_d))_0
\]
of sheaves (here one views $\Riem (V)_{g_0,g_1}$ as the sheaf which takes a test manifold $X$ to the set of smooth maps $X \to \Riem (V)_{g_0,g_1}$). 

\begin{defn}
Let $V$, $g_0,g_1$ and $W$ be as above, fix $h_0 \in \Riem^+ (V)_{g_0,g_1}$ and a monotone smooth function $c: \bR \to [0,1]$ with $c(0)=0$ and $c(1)=1$. For $h \in \Riem^+ (V)_{g_0,g_1}$, the path 
\[
 \bR \ni s \longmapsto \ind (W,(1-c(s)) h+ c(s) h_0) 
\]
defines an element 
\[
 \inddiff_{h_0}(h) \in (\Omega \bK(\cstar(\theta_d))_0)(*). 
\]
The construction generalises to families and defines a map
\[
\inddiff_{h_0}: \Riem^+ (W)_{g_0,g_1}  \lra \Omega \bK(\cstar (\theta_d))_0 
\]
of sheaves, the \emph{index difference} with respect to $h_0$. Its homotopy class is independent of the choice of $c$. 
\end{defn}

\subsubsection{The lower index}\label{subsubsec:deloopedindex}

A key role for the rest of this section is played by the delooped index map constructed in \cite{JEIndex2}. Hence we recapitulate its construction here. Slightly informally, there is a map 
\[
\ind_1 : B \Cob_{\theta_d} \lra |\bK (\cstar(\theta_d))_1|, 
\]
constructed analytically, which is related to the usual family index map for closed manifolds $\ind_0$ by a homotopy-commutative diagram
\[
\xymatrix{
\Cob_{\theta_d} (\emptyset,\emptyset) \ar[r]^-{\ind_0} \ar[d] & |\bK(\cstar(\theta_d))_0|\ar[d]^{\bott}\\
\Omega B \Cob_{\theta_d} \ar[r]^-{\ind_1} & \Omega |\bK(\cstar(\theta_d))_1|.
}
\]
To implement this rigorously, one has to replace the source of $\ind_1$ by a homotopy equivalent space, which is a version of the sheaf $\cD_{\theta}=\cD_{\theta_d}$ from Definition \ref{defn:nakedDspace}, but with additional data in order to define $\ind_1$. The following is a variant of Definition 3.1 of \cite{JEIndex2}.
\begin{defn}\label{defn:sheafdthetaop}
We define the sheaf $\cD_\theta^{\op}$ by assigning to a test manifold $X$ the set of all $(W,h,b,x_0,r,C)$, where
\begin{enumerate}
\item $W \in \cD_\theta(X)$ (it comes with a fibrewise proper map $x:W \to \bR$), 
\item $h$ is a fibrewise Riemannian metric on $W$, 
\item $x_0 : X \to \bR$, $r: X \to (0,\infty)$ and $b: W \to (0,\infty)$ are smooth functions and 
\item $C: X \to (0,\infty)$ is a continuous function. 
\item Let $\Dir_h$ be the (fibrewise) Rosenberg--Dirac operator on $W$ which is associated with $h$ and the $\theta$-structure $ \ell$ on $W$. We require that the commutator $[b\Dir_h b,x]$ of the Dirac operator weighted with $b$ with multiplication by the function $x$ satisfies the estimate
\begin{equation}\label{eqn:estimate-commutator-dirac}
 \norm{[b\Dir_h b,x]}\leq C.
\end{equation}
\end{enumerate}
\end{defn}
The superscript ${}^\op$ stands for ``operator''; the logic of this terminology is that the points in $\cD_\theta^{\op}$ are $\theta$-manifolds which are also equipped with Riemannian metrics (and hence Dirac operators). The other data and conditions are to make the definition of $\ind_1$ possible. The space $\cD_\theta^\op$ has an obvious structure of a special $\Gamma$-space, similar to that of Definition \ref{def:CobCatGamma}.
\begin{lem}\label{lem:forgetfulmapweakequivalence-d-dop}
The forgetful map $\cD_\theta^{\op}\to \cD_\theta$ is a weak homotopy equivalence of sheaves and a map of $\Gamma$-spaces. 
\end{lem}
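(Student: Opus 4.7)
The plan is to apply the surjectivity criterion Proposition \ref{prop:SurjCrit} to the forgetful map $u : \cD_\theta^{\op} \to \cD_\theta$. Concretely, given a test manifold $X$, a closed subset $A \subset X$, a germ of a section $\gz_A$ of $\cD_\theta^{\op}$ near $A$, and an extension of its image to $W \in \cD_\theta(X)$, I must produce (up to concordance rel $A$) an element of $\cD_\theta^{\op}(X)$ whose image equals $W$ and whose restriction near $A$ agrees with $\gz_A$. The $\Gamma$-space claim is tautological: the $\Gamma$-structures on both sides are defined by attaching labels in $S_o$ to fibres, and forgetting operator data clearly commutes with the collapse maps, so $u$ is a map of $\Gamma$-spaces by construction.

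First, I would extend the germ $h_A$ of the fibrewise Riemannian metric to a smooth fibrewise metric $h$ on $W$; this is standard via a partition of unity argument on the total space (and a small convex-combination concordance absorbs any ambiguity in the extension). The functions $x_0$ and $r$ are unconstrained and can be extended to smooth functions on $X$ by partition of unity. The real content is the choice of the weight $b : W \to (0,\infty)$ so that the commutator estimate \eqref{eqn:estimate-commutator-dirac} holds.

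For the estimate, observe that since $x$ and $b$ are scalar functions they commute, so $[b\Dir_h b, x] = b [\Dir_h, x] b$, and $[\Dir_h, x]$ is the pointwise endomorphism given by Clifford multiplication by $\grad_h x$, whose operator norm equals $\|\grad_h x\|_h$. Thus
\[
\norm{[b \Dir_h b, x]} \leq \sup_{p \in W} b(p)^2 \cdot \norm{\grad_h x(p)}_h.
\]
I would then set, smoothly,
\[
b(p) := \varphi(p) + (1-\varphi(p)) \cdot \bigl(1 + \norm{\grad_h x(p)}_h\bigr)^{-1/2},
\]
where $\varphi$ is a cutoff function equal to $1$ near $A$ (so as to match the germ $b_A$ there, after possibly rescaling $b_A$ down by a small convex homotopy) and equal to $0$ outside a neighbourhood of $A$. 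The resulting $b$ is smooth and positive, it agrees with $b_A$ near $A$, and makes $b^2\norm{\grad_h x}_h$ uniformly bounded by $1$, so one can take the control function $C$ to be any constant larger than this bound.

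The concordance from the extension just built to the given germ (over $A$) is produced by linearly interpolating all of the data: $h$ lies in a convex space, the estimate is preserved under convex combinations once $b$ is correspondingly shrunk (by picking $b$ along the concordance by the same formula using the interpolated metric), and $C$ can always be increased. The main (but very mild) obstacle is to ensure naturality in the test manifold $X$, i.e.\ that all formulas define sheaf-theoretic operations; but each of the construction steps (partition of unity on the total space of a submersion, pointwise gradient norms of a smooth function, convex interpolation) is manifestly natural in $X$, so Proposition \ref{prop:SurjCrit} applies and yields the weak equivalence.
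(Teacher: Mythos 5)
Your proposal is correct in substance and follows the same route as the paper, which simply observes that the extra data $C,r,x_0$ are a contractible choice and refers the construction of the weight $b$ to Lemma 3.2 of \cite{JEIndex2}; that construction is exactly your computation $[b\Dir_h b,x]=b[\Dir_h,x]b$ together with the choice of $b$ decaying like the inverse of the gradient of $x$, fed into the relative surjectivity criterion of Proposition \ref{prop:SurjCrit}. Two small points to repair: first, $\norm{\grad_h x}_h$ is not smooth at critical points of $x$, so take $b=(1+\norm{\grad_h x}_h^2)^{-1/4}$ (which still gives $b^2\norm{\grad_h x}_h\leq 1$); second, your cutoff must interpolate between the \emph{given} germ $b_A$ and the new weight, i.e.\ $b=\varphi b_A+(1-\varphi)(\cdots)$ with $\varphi\equiv 1$ near $A$ --- rescaling $b_A$ to $1$ near $A$ by a homotopy would change the germ and is not allowed in a concordance rel $A$; since the admissibility condition is pointwise and convex in $b^2$ (as the paper notes in the proof of Lemma \ref{lem:forgetfulmap-on-psc.-categories-isaweakequivalence}), this interpolation still satisfies the estimate with $C=\max(C_A,1)$.
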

\begin{proof}
It is obvious that the map is compatible with the $\Gamma$-space structures, and the proof that it is a weak equivalence is almost the same as that of Lemma 3.2 of \cite{JEIndex2}. The data $C,r,x_0$ are not present in loc.cit., but they are clearly a contractible choice and do not affect the proof that the forgetful map is a weak equivalence.
\end{proof}
Let us now give the construction of $\ind_1: \cD_\theta^\op \to \bK (\cstar(\theta_d))_1$. We describe what $\ind_1$ does on the points of $\cD_\theta^{\op}$, and refer to  \cite[\S 3.2]{JEIndex2} for the parametrised version. Let $\gx:=(W,h,b,x_0,r,C) \in \cD_{\theta}^{\op}(\ast)$. The canonical $\Cl^{1,1}$-module $\bS_1$ is given by
\[
 \bS_1 = ( \bR^2,\iota,e,\varepsilon):=\Bigl( \bR^2,\twomatrix{1}{}{}{-1},\twomatrix{}{-1}{1}{},\twomatrix{}{1}{1}{} \Bigr).
\]
Consider the Hilbert-$\cstar(\theta_d)$-module $L^2 (W;\spinor_\ell \otimes \bS_1) \cong L^2 (W;\spinor_\ell) \otimes \bS_1$ with grading $\eta \otimes \iota$, where $\eta$ is the grading on the bundle $\spinor_\ell$. We define a $\Cl^{1,0}$-action on $L^2 (W;\spinor_\ell \otimes \bS_1)$ by 
\[
e_1 \longmapsto \eta \otimes e= \twomatrix{}{-\eta}{\eta}{} 
\]
and consider the unbounded operator
\begin{equation}\label{eq:definition-operator-B}
 B:= b \Dir_h b \otimes 1 + \eta \otimes r(x-x_0) \varepsilon = \twomatrix{b \Dir_h b}{}{}{b \Dir_h b} + r(x-x_0) \twomatrix{}{\eta}{\eta}{}
\end{equation}
on $L^2 (W;\spinor_\ell \otimes \bS_1)$ with initial domain $\Gamma_c (W;\spinor_\ell\otimes \bS_1)$. This is formally self-adjoint, $\Cl^{1,0}$-antilinear and odd. The condition \eqref{eqn:estimate-commutator-dirac} implies that the closure of $B$ is a regular self-adjoint operator, see Lemma 3.5 of \cite{JEIndex2}. From loc.cit., we also collect the formula
\begin{equation}\label{eqn:square-of-operator}
B^2 = (b \Dir_h b)^2 \otimes 1 + r^2 (x-x_0)^2 + r [b \Dir_h b,x] \eta \otimes \varepsilon.
\end{equation}
It implies that\footnote{The meaning of this inequality is to be understood as in Theorem 3.40 of \cite{JEIndex1}. Namely, for each $s \in \Gamma_c (M;\spinor_\ell)$, we have $\scpr{B^2 s,s} \geq \scpr{(r^2 |x-x_0|^2-rC)s,s}$ in the $C^*$-algebra $\cstar(\theta_d)$.} $B^2 \geq r^2 |x-x_0|^2-rC$ and so $B$ is invertible at infinity and hence a Fredholm operator. For more details of this argument, we refer to Lemma 3.6 of \cite{JEIndex2}. We define 
\[
\ind_1 (\gx):=\Bigl( L^2 (W;\spinor_\ell\otimes \bS_1), \eta \otimes \iota,  \eta \otimes e, B\Bigr)\in \bK(\cstar(\theta_d))_1 (\ast).
\]
It is clear from the definitions that $\ind_1$ is a map of $\Gamma$-spaces and hence gives rise to a map of spectra
\[
\mathrm{B}^\infty \ind_1: \mathrm{B}^\infty |\cD^\op| \lra \mathrm{B}^\infty |\bK(\cstar(\theta_d))_1|.
\]
Lemma \ref{lem:forgetfulmapweakequivalence-d-dop} and the main result of \cite{Nguyen} give a weak equivalence 
\[
 \mathrm{B}^\infty |\cD^\op| \simeq \bigl( \Sigma \MT \theta_d \bigr) \langle -1 \rangle= \bigl( \Sigma \MT \Spin (d) \wedge BG_+ \bigr) \langle -1 \rangle, 
\]
and from Corollary \ref{cor:delooping-Ktheoryspectrum}, we get a weak equivalence
\[
\mathrm{B}^\infty |\bK(\cstar(\theta_d))_1| \simeq \bigl(\Sigma^{1-d}  |\bK(\cstar(G))| \bigr) \langle -1\rangle.
\]
The Atiyah--Bott--Shapiro orientation gives a spectrum map
\[
\lambda_{-d}: \MT \Spin (d) \lra \Sigma^{-d} \bK
\]
while the Novikov assembly map is a spectrum map 
\[
\nu:\Sigma^{-d} \bK \wedge BG_+ \lra \Sigma^{-d} \bK(\cstar(G)),
\]
so together they give
\[
\nu \circ (\lambda_{-d} \wedge \id_{BG_+}):  \MT \Spin (d) \wedge BG_+ \lra \Sigma^{-d} \bK(\cstar(G)).
\]
The main result of \cite{JEIndex2} is a delooped version of the Atiyah--Singer family index theorem which in the case at hand asserts that 
\begin{equation}\label{eqn:lower-index-theorem}
\begin{aligned}
&\Sigma ( \nu \circ (\lambda_{-d} \wedge \id_{BG_+})) \sim \mathrm{B}^\infty \ind_1 : \Sigma \MT \Spin (d) \wedge BG_+ \langle -1 \rangle \simeq \mathrm{B}^\infty |\cD^\op|\\
&\quad\quad\quad\quad\quad\quad\quad\quad\quad\quad\quad\quad\quad\quad\quad\quad\quad\quad \lra \Sigma |\bK(\cstar(\theta_d))|. 
\end{aligned}
\end{equation}

\subsection{The vanishing theorem}

Our goal is now to define a delooped version of the index difference, and this subsection contains the heart of the construction. We want to show that the map (which is really a zig-zag) 
\begin{equation}\label{eqn:constructionindexxidd}
\norm{ \cDs_{\theta,\bullet}^{\psc}} \stackrel{F}{\lra} |\cD_\theta| \stackrel{\simeq}{\longleftarrow} |\cD_\theta^{\op}| \stackrel{\ind_1}{\lra}|\bK(\cstar(\theta_d))_1|
\end{equation}
admits a preferred nullhomotopy (which will ultimately stem from the Lichnerowicz--Schr\"odinger formula). This nullhomotopy will induce a map 
\[
\Inddiff: \Psc(\theta_d) =\hofib_\emptyset F \lra \Omega |\bK(\cstar(\theta_d))_1|
\]
which is a map of $\Gamma$-spaces. After restriction to $\Psc^{2,\rst}(\theta_d)_0 $, identification of the latter space with $B \Conc (M)$ (where $M$ is as in Theorem \ref{thm:infinite-loopspace-theorem}) and application of $\Riem^+ ([0,1] \times M)_{g_0,g_0}^\rst \simeq \Omega B \Conc (M)$, we obtain a map
\[
\Riem^+ ([0,1] \times M)_{g_0,g_0}^\rst \lra \Omega^2 |\bK(\cstar(\theta_d))_1| \simeq \Omega |\bK(\cstar(\theta_d))_0|
\]
which by construction is an infinite loop map. The proof of Theorem \ref{thm:indddiff-infiniteloopmap-introduction} will then be completed by showing that this composition is homotopic to $\inddiff_{dx^2 + g_0}$. 
To implement this strategy, we establish in this section a commutative diagram 
\begin{equation}\label{indextheorydiagram}
\begin{gathered}
\xymatrix{
\norm{ \cDs_{\theta,\bullet}^{\psc}} \ar[d]^{F} & \norm{\cD_{\theta,\bullet}^{\psc,\op}} \ar[l]_{\simeq} \ar[d]^{F^{\op}} \ar[r] & | \bD(\cstar(\theta_d))_1|\ar[d]^{\mathrm{inc}}\\
|\cD_\theta| & |\cD_\theta^{\op}| \ar[r]^-{\ind_1} \ar[l]_{\simeq} & |\bK(\cstar(\theta_d))_1|
}
\end{gathered}
\end{equation}
of spaces. Since $| \bD(\cstar(\theta_d))_1|$ is contractible, this diagram provides the promised nullhomotopy of \eqref{eqn:constructionindexxidd}. We now give the definitions of the spaces of \eqref{indextheorydiagram}. The bottom line was defined in \S \ref{subsubsec:deloopedindex}. Recall from Definition \ref{defn:inifniteseminalcollars} that $\cDs_{\theta,p}^{\psc}(\ast)$ is the set of all $(W,a,h)$, where $W \in \cD_\theta (*)$, $a=(a_0 \leq \ldots \leq a_p)$ are real numbers, $h$ is a Riemannian metric on $W$ which is cylindrical near each $a_i$ and such that $h|_{W|_{[a_0,a_p]}}$ has positive scalar curvature. The forgetful maps $F_p : \cDs^{\psc}_{\theta,p} \to \cD_\theta$ given by $(W,a,h) \mapsto W$ form an augmentation map and induce 
\[
 F: \norm {\cDs_{\theta,\bullet}^{\psc}} \lra |\cD_\theta|.
\]
Under the weak equivalences $\norm{\cDs_{\theta,\bullet}^{\psc}} \simeq B \PCob_\theta$ (from \eqref{eqn:poset-model-for-psccobcat} and Lemma \ref{collar-shrinking-cobcat}) and $|\cD_\theta| \simeq B \Cob_\theta$ from \eqref{eq:longmanioflds-versus-ordcobcat}, the map $F$ corresponds to the forgetful map $B \PCob_\theta \to B \Cob_\theta$.
\begin{defn}
The sheaf $\cD_{\theta,p}^{\psc,\op}$ assigns to $X$ the set of all $(W,a,\eps,h,b,r,C,\kappa)$ such that 
\begin{enumerate}[(i)]
\item $(W,a,h) \in \cDs_{\theta,p}^{\psc} (X)$,
\item $\eps=(\eps_0,\ldots,\eps_p)$ is a tuple of continuous positive real-valued functions on $X$ with $a_{i}+\eps_i < a_{i+1}-\eps_{i+1}$ for all $i=0, \ldots,p-1$, such that $(W,h)$ is cylindrical over each $[a_i-\eps_i,a_i+\eps_i]$, and  
\item $b:W \to (0,\infty)$, $r: X \to \bR$ are smooth functions and $C,\kappa: X \to (0,\infty)$ are continuous functions such that \begin{align*}
b|_{W|_{[a_0-\eps_0,a_p+\eps_p]}} &\equiv 1, \\
\norm{[b \Dir_h b,x]} &\leq C,\\ 
\scal (h|_{W|_{[a_0-\eps_0,a_p+\eps_p]}}) &\geq \kappa,\\
r &\leq \frac{\kappa}{8C},\\
\eps_i^2 r^2 &\geq \max \{\kappa,\frac{2^9}{\kappa}\}.
\end{align*}
\end{enumerate}
\end{defn}
The forgetful map
\begin{align*}
 \cD_{\theta,\bullet}^{\psc,\op} &\lra \cDs_{\theta,\bullet}^{\psc}\\
 (W,a,\eps,h,b,r,C,\kappa) &\longmapsto (W,a,h)
\end{align*}
is a levelwise weak equivalence by Lemma \ref{lem:forgetfulmap-on-psc.-categories-isaweakequivalence} below. To define $F^{\op}:\norm{\cD_{\theta,\bullet}^{\psc,\op}}\to |\cD_{\theta}^{\op}|$, recall that $\norm{\cD_{\theta,\bullet}^{\psc,\op}}$ is the geometric realisation of the bi-semi-simplicial set
\[
 (p,q) \mapsto \cD_{\theta,p}^{\psc,\op} (\Delta^q_e) .
\]
We define 
\begin{align*}
 F^ {\op}_{p,q} : \cD_{\theta,p}^{\psc,\op} (\Delta^q_e) \times \Delta^p \times \Delta^q &\lra \cD_\theta^{\op} (\Delta^q_e) \times \Delta^q\\
 (W,a,\eps,h,b,r,C,\kappa;t;u) &\longmapsto (W,h,b,\sum_{j=0}^p t_i a_i,r,C;u).
\end{align*}
This clearly respects the identifications in the geometric realisation and hence defines a map $F^\op$ as asserted. 

There is an obvious structure of a $\Gamma$-space on $\norm{\cD_{\theta,\bullet}^{\psc,\op}}$, similar to that of Definition \ref{defn:PscCObCatGamma}, and the equivalence $\norm{\cD_{\theta,\bullet}^{\psc,\op}} \simeq \norm{\cDs_{\theta,\bullet}^{\psc}}$ is compatible with the $\Gamma$-space structures, where that on the latter space is as in Definition \ref{defn:PscCObCatGamma}. 

It is also obvious that the left square of \eqref{indextheorydiagram} commutes. What is missing for the construction of \eqref{indextheorydiagram} is that the upper leftward map is a weak equi\-valence (Lemma \ref{lem:forgetfulmap-on-psc.-categories-isaweakequivalence} below) and that the composition $\ind_1 \circ F^\op$ factors through $|\bD (\cstar(\theta_d)_1|$ (Lemma \ref{lem:window-of-opportunity} below). This is ultimately a consequence of the Lichnero\-wicz--Schr\"odinger formula. The definition of $\cD_{\theta,p}^{\psc,\op}$ is tailor-made so that the necessary estimates go through.

\begin{lem}\label{lem:forgetfulmap-on-psc.-categories-isaweakequivalence}
The forgetful map $\cD_{\theta,p}^{\psc,\op} \to \cDs_{\theta,p}^{\psc}$ is a weak equivalence of sheaves for each $p \geq 0$. In particular, it induces a weak equivalence after geometric realisation.
\end{lem}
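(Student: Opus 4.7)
[Proof plan for Lemma \ref{lem:forgetfulmap-on-psc.-categories-isaweakequivalence}]
The plan is to follow the same pattern used in Lemma \ref{lem:forgetfulmapweakequivalence-d-dop} (and, correspondingly, in \cite[Lemma 3.2]{JEIndex2}): show that the forgetful map is a weak equivalence of sheaves by verifying the lifting criterion of Proposition \ref{prop:SurjCrit}. That is, for a test manifold $X$, a closed subset $A \subset X$, and a germ $z \in \cD_{\theta,p}^{\psc,\op}(A \subset X)$, we show that every concordance class in $\cDs_{\theta,p}^{\psc}[X,A;\mathrm{inc}(z)]$ admits a preimage in $\cD_{\theta,p}^{\psc,\op}[X,A;z]$. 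The key observation is that once the $\cDs$-data $(W,a,h)$ is fixed, the space of auxiliary parameters $(\eps,b,r,C,\kappa)$ satisfying the analytical conditions imposed in the definition of $\cD_{\theta,p}^{\psc,\op}$ is convex (and in particular contractible) when nonempty, and the existence of at least one such choice is a matter of classical analytical input.

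First I will argue existence pointwise over $X$. Given $(W,a,h) \in \cDs_{\theta,p}^{\psc}(X)$ extending the underlying germ of $z$: (i) choose smooth $\eps_i : X \to (0,\infty)$ with $a_{i-1}+\eps_{i-1} < a_i-\eps_i$ such that $(W,h)$ is cylindrical over each $[a_i-\eps_i, a_i+\eps_i]$, which is possible because of the local cylindricity condition in $\cDs_{\theta,p}^{\psc}$; (ii) since scalar curvature is positive on the compact region $W|_{[a_0-\eps_0,a_p+\eps_p]}$, pick a continuous lower bound $\kappa : X \to (0,\infty)$; (iii) pick a smooth weight $b : W \to (0,\infty)$ with $b \equiv 1$ on $W|_{[a_0-\eps_0,a_p+\eps_p]}$ and decaying slowly enough at infinity that $[b\Dir_h b, x]$ is bounded, with bound $C$ --- the existence of such a $b$ is exactly the classical Chernoff--Wolf type construction used in \cite[Example 3.28]{JEIndex1}. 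The constraint $r \leq \kappa/(8C)$ is then an upper bound, and $\eps_i^2 r^2 \geq \max\{\kappa, 2^9/\kappa\}$ a lower bound, on $r$.

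The main technical point, and what I expect to be the only real obstacle, is that these two bounds on $r$ must be compatible, which amounts to requiring $\eps_i$ to be sufficiently large relative to $C/\kappa$. If the initially chosen $\eps_i$ is too small, I will modify $b$: by tapering $b$ more slowly at infinity (e.g.\ replacing the decay profile by a scaled version), the bound $C$ on $\|[b\Dir_h b, x]\|$ can be made arbitrarily small without disturbing the normalisation $b \equiv 1$ on the cylindrical region. Once $C$ is small enough that $\max\{\kappa, 2^9/\kappa\} \cdot (8C/\kappa)^2 \leq \eps_i^2$, any $r$ in the resulting nonempty interval will satisfy all constraints, and a smooth such $r : X \to (0,\infty)$ can be constructed using a partition of unity argument.

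Finally I will deduce surjectivity at the level of concordance classes. The space of tuples $(\eps,b,r,C,\kappa)$ satisfying all the inequalities, for fixed $(W,a,h)$, is convex in $(b,r,C,\kappa)$ for fixed $\eps$, and the space of admissible $\eps$ is itself convex; both operations extend to the parameter $X$. Hence given any extension in the pointwise existence step, one can interpolate linearly between it and the germ $z$ on a collar neighbourhood of $A$, producing the required concordance relative to $A$. This establishes surjectivity, completing the verification of Proposition \ref{prop:SurjCrit} and hence the weak equivalence. The statement about geometric realisations follows immediately since the bi-semi-simplicial realisation of a levelwise weak equivalence of sheaves is a weak equivalence.
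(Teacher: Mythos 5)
There is a genuine gap, located exactly where you flag the ``only real obstacle''. Your proposed fix --- making $C$ arbitrarily small by tapering $b$ more slowly at infinity --- cannot work. The definition of $\cD_{\theta,p}^{\psc,\op}$ forces $b \equiv 1$ on $W|_{[a_0-\eps_0,a_p+\eps_p]}$, and on that region $[b\Dir_h b,x]=[\Dir_h,x]=c(dx)$ is Clifford multiplication by $dx$, whose operator norm is $\sup\norm{dx}_h$; on the cylindrical pieces this equals $1$ (consistent with the choice $C=1$ for the product cylinder in Definition \ref{defn:generalized-basepoints-for-Binddiff}). So $C\geq 1$ always, the constraint $r\leq \kappa/(8C)$ caps $r$ by $\kappa/8$, and $\eps_i^2 r^2 \geq \max\{\kappa,2^9/\kappa\}$ then forces $\eps_i$ to be bounded below by a quantity depending only on $\kappa$. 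Since the cylindricity in $\cDs_{\theta,p}^{\psc}$ is only guaranteed on an arbitrarily small neighbourhood of each $a_i$ (and the $a_i$ may even coincide, so that no admissible $\eps$ with $a_{i-1}+\eps_{i-1}<a_i-\eps_i$ exists at all), a given element need not lift without first being \emph{concorded}. No choice of auxiliary data rescues it.

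The paper's proof confronts this head on: it factors the map as $\cD_{\theta,p}^{\psc,\op}\to\cD_{\theta,p}^{\psc}\to\cDs_{\theta,p}^{\psc}$, disposes of the second map (including coincident $a_i$'s) by Lemma \ref{collar-shrinking-cobcat}, and for the first map performs a collar-stretching concordance as in Lemma \ref{collar-strtching-cobcat} to lengthen the cylindrical intervals until $\eps_i^2 r^2\geq\max\{\kappa,2^9/\kappa\}$ holds, simultaneously enlarging the region where $b\equiv 1$; this is explicitly described there as the most substantial step. Your overall framework (relative surjectivity criterion, choosing $\kappa$, $C$, $b$, $r$ in that order, convexity of the condition on $b^2$) matches the paper's, but you should replace the $C$-shrinking step by a collar-stretching concordance of the underlying family, and note also that the set of admissible $(r,C,\kappa)$ is not convex as stated (the region $rC\leq\kappa/8$ is not convex), so the final interpolation near $A$ needs a slightly more careful argument than linear interpolation of all parameters at once.
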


\begin{proof}
The map in question factors as $\cD_{\theta,p}^{\psc,\op} \stackrel{H}{\to} \cD_{\theta,p}^{\psc} \to \cDs_{\theta,p}^{\psc}$. The second map was shown to be a weak equivalence in Lemma \ref{collar-shrinking-cobcat}. Hence it is enough to show that the first map $H$, which sends a tuple $(W,a,\eps,h,b,r,C,\kappa)$ to $(W,a,\epsilon,h)$, is a weak equivalence. 
We use the relative surjectivity criterion of Proposition 2.18 of \cite{MadsenWeiss} to recognise weak equivalences of sheaves. That is, if $A \subset X$ is a closed subset of a test manifold $X$ and $\gy$ is a germ of $\cD_{\theta,p}^{\psc,\op}$ around $A$, then
\[
\cD_{\theta,p}^{\psc,\op}[X,A,\gy] \lra \cD_{\theta,p}^{\psc} [X,A,\gy]
\]
is surjective. An element of the target is represented by $\gx = (W,a,\epsilon,h)$. There is an open neighbourhood $U$ of $A$ such that for each $z \in U$, we have numbers $C_z,\kappa_z$, $r_z$ and functions $b_z: W_z \to (0,\infty)$, satisfying the relevant conditions, and those depend smoothly on $z$. We have to show that, after applying a concordance of $\gx$ which is constant on a smaller neighbourhood of $A$, we can extend those functions to all of $X$. Firstly, choose a function $\kappa:X \to (0,\infty)$ which coincides with the given function $\kappa:U \to (0,\infty)$ near $A$, and such that $\kappa_z$ is a lower bound for the scalar curvature of the metric $h_z$ over the interval $[a_0(z),a_p(z)]$. It is clearly possible to find such a function.

Next, choose a function $C:X \to (0,\infty)$ which coincides with the given function $C:U \to (0,\infty)$ near $A$ and such that $C_z \geq \sup_{(a_0-\eps_0,a_p+\eps_p)} \norm{[\Dir_z,x]}$. Similarly, we extend the function $r:A \to (0,\infty)$ to a function $r:X \to (0,\infty)$ such that $r \leq \frac{\kappa}{8C}$, again coinciding with the given $r$ near $A$. 

Then we define the functions $b_z$ so that $[b_z \Dir_z b_z,x] \leq C_z$ for all $z$. This is possible since this is a convex condition on $b^2$, and very similar to the proof of \cite[Lemma 3.2]{JEIndex2}.

The last (and most substantial) step of the proof is to stretch the collar-like intervals, until they have length $\geq \max\{ \frac{\sqrt{\kappa}}{r}, \sqrt{\frac{2^9}{\kappa}}\}$. We also change the functions $b$ during the process: just make the region where they are constant $1$ larger. The details are very similar to the proof of Lemma \ref{collar-strtching-cobcat}. 
\end{proof}

\begin{lem}\label{lem:window-of-opportunity}
The composition $\ind_1 \circ F^{\op}: \norm{\cD_{\theta,\bullet}^{\psc,\op}} \to | \bK(\cstar(\theta_d))_1|$ goes into the subspace $|\bD (\cstar(\theta_d))_1 |$. 
\end{lem}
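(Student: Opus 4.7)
The plan is to show that the unbounded operator $B$ defined in \eqref{eq:definition-operator-B}, evaluated at any point of $\cD_{\theta,p}^{\psc,\op} \times \Delta^p$ (in the bi-semi-simplicial description of $\norm{\cD_{\theta,\bullet}^{\psc,\op}}$) under the composition $\ind_1 \circ F^{\op}$, is actually invertible, so its $\bK$-class lies in $\bD_1$. The proof will be a straightforward estimate using the formula \eqref{eqn:square-of-operator} together with the Lichnerowicz--Schr\"odinger formula \eqref{eq:schroedinger-lichnerowicz}; the technical conditions in the definition of $\cD_{\theta,p}^{\psc,\op}$ have been set up precisely so that this estimate closes. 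In Remark \ref{rem:generalizedbasepoint}'s convention, I will argue pointwise (over $X = \ast$), and leave the parametrised upgrade (and the verification that the resulting element of $\bD(\cstar(\theta_d))_1$ depends smoothly on parameters) to the general machinery of \cite[Theorems 3.40 and 3.41]{JEIndex1}.

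Fix $\gx = (W,a,\eps,h,b,r,C,\kappa) \in \cD_{\theta,p}^{\psc,\op}(\ast)$ and $t \in \Delta^p$, and set $x_0 := \sum_{i=0}^p t_i a_i \in [a_0,a_p]$; this is the point of $\cD_\theta^{\op}$ to which $F^{\op}$ sends $(\gx,t)$. From \eqref{eqn:square-of-operator} we have
\[
B^2 \;=\; (b\Dir_h b)^2 \otimes 1 \;+\; r^2(x-x_0)^2 \;+\; r\,[b\Dir_h b,\,x]\,\eta\otimes\varepsilon,
\]
and $\|r[b\Dir_h b, x]\| \leq rC \leq \kappa/8$ by the conditions $\|[b\Dir_h b, x]\| \leq C$ and $r \leq \kappa/(8C)$. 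On the psc region $W|_{[a_0-\eps_0,\,a_p+\eps_p]}$, the function $b$ is identically $1$, so $(b\Dir_h b)^2 = \Dir_h^2$, and \eqref{eq:schroedinger-lichnerowicz} together with $\scal(h)\geq \kappa$ there gives $\Dir_h^2 \geq \kappa/4$. Hence $B^2 \geq \kappa/4 - \kappa/8 = \kappa/8$ on this region. Outside this region, say for $x \geq a_p+\eps_p$, we have $|x-x_0| \geq x - a_p \geq \eps_p$, so $r^2(x-x_0)^2 \geq r^2\eps_p^2 \geq \kappa$ by the hypothesis $\eps_i^2 r^2 \geq \kappa$; dropping the nonnegative $(b\Dir_h b)^2$ term gives $B^2 \geq \kappa - \kappa/8 = 7\kappa/8$. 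The symmetric argument handles $x \leq a_0 - \eps_0$. In all cases, $B^2 \geq \kappa/8 > 0$ in the sense of self-adjoint regular operators on the Hilbert $\cstar(\theta_d)$-module, so $B$ is invertible and $\ind_1\circ F^{\op}(\gx;t)$ lies in $\bD(\cstar(\theta_d))_1(\ast)$.

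The main obstacle I anticipate is not the analysis, which is mechanical once the bookkeeping above is right, but rather ensuring that all constants ($\kappa$, $C$, $r$, and the $\eps_i$) propagate correctly through the parametrised setting: for a family over a test manifold $X$, the relevant functions $\kappa,C,r: X \to (0,\infty)$ are only continuous, and $b$ varies smoothly over $W$. However, the inequalities above are pointwise-in-$X$ inequalities between self-adjoint operators on the fibrewise Hilbert $\cstar(\theta_d)$-modules, with a uniform positive lower bound on compact subsets of $X \times \Delta^p$ (since all auxiliary data are continuous and $\Delta^p$ is compact); this is exactly the hypothesis of the invertibility criterion recalled in \cite[Theorem~3.40]{JEIndex1}, and so produces a bona fide element of $\bD(\cstar(\theta_d))_1(\Delta^q_e) \times \Delta^q$ that is compatible with the bi-semi-simplicial structure and hence defines the required factorisation $\norm{\cD^{\psc,\op}_{\theta,\bullet}} \to |\bD(\cstar(\theta_d))_1|$.
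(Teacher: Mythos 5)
Your computation of the two regional lower bounds is essentially the right one, but the step ``In all cases, $B^2 \geq \kappa/8$'' is a genuine gap. $B$ is a first-order differential operator, so a lower bound for $B^2$ established ``over a region'' is only an inequality $\scpr{B^2 s,s} \geq \lambda \scpr{s,s}$ for sections $s$ \emph{supported} in that region: the Lichnerowicz--Schr\"odinger bound $\Dir_h^2 = \nabla^*\nabla + \tfrac14\scal(h) \geq \kappa/4$ uses $\scal(h)\geq\kappa$ on the support of $s$, and outside $W|_{[a_0-\eps_0,a_p+\eps_p]}$ the scalar curvature of $h$ is completely unconstrained (it may be very negative). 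For a section straddling the interface neither of your two estimates applies, and local lower bounds on a cover do not combine into a global one by taking the minimum. Two symptoms of the problem in your write-up: your two regions do not overlap (so no partition of unity subordinate to them exists), and you never use the part $\eps_i^2 r^2 \geq 2^9/\kappa$ of the hypothesis $\eps_i^2 r^2 \geq \max\{\kappa, 2^9/\kappa\}$ --- that condition exists precisely to repair this gap.

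The paper's proof runs your two estimates over the \emph{overlapping} open sets $W|_{(a_0-\eps_0,a_p+\eps_p)}$ and $W|_{\bR\setminus[a_0-\eps_0/2,\,a_p+\eps_p/2]}$ (which is why its exterior bound uses $|x-x_0|\geq\eps_i/2$ and lands at $\kappa/8$ rather than your $7\kappa/8$), and then invokes an IMS-type localisation statement (Lemma \ref{lem:glueestimates}): if $B^2\geq\lambda$ against sections supported in either set of an open cover and there is a square partition of unity $(a,b)$ with $\norm{[B,a]},\norm{[B,b]}<\tfrac12\sqrt{\lambda}$, then $B$ is invertible. The overlap strips have width $\eps_i/2$, so the commutators are of order $\eps_i^{-1}$, and the condition $\eps_i^2 r^2\geq 2^9/\kappa$ guarantees they are small enough relative to $\sqrt{\kappa/8}$. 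To close your argument you must insert this gluing step (or an equivalent localisation formula); the rest of your proposal, including the reduction to $X=\ast$ and the appeal to the locally uniform lower bound in families, matches the paper.
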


For the proof, we shall need another technical lemma.

\begin{lem}\label{lem:glueestimates}
Let $M$ be a manifold and let $B$ be an essentially self-adjoint and regular first order differential operator, linear over a $\cstar$-algebra $\gA$. Assume that $M= U \cup V$ is an open cover and that $\lambda >0$ is a real number so that 
\[
 \scpr{B^2 s ,s} \geq \lambda \scpr{s,s}
\]
whenever $s \in \dom (B)$ is smooth and has support in either $U$ or $V$. Let $0<c<\frac{1}{2} \sqrt{\lambda}$ and assume that there are $u \in C^\infty_c (U)$ and $v \in C^\infty_c (V)$ with $u^2 + v^2 =1$ and $\norm{[B,u]}, \norm{[B,v]} \leq c$. Then $B$ is invertible. 
\end{lem}

\begin{proof}
For $s \in \dom (B)$, we compute
\begin{align*}
 \scpr{B^2 s,s}  &= \scpr{uBs,uBs} + \scpr{vBs,vBs}\\
&= \scpr{[u,B]s,[u,B]s} +  \scpr{[v,B]s,[v,B]s} + \scpr{[u,B]s,Bus} + \scpr{[v,B]s,Bvs}\\
&\quad\quad +  \scpr{Bus,[u,B]s} +  \scpr{Bvs,[v,B]s} + \scpr{Bus,Bus} + \scpr{Bvs,Bvs}\\
&\geq \lambda \scpr{us,us} + \lambda\scpr{vs,vs} - 2c \norm{s} \norm{Bus} - 2c \norm{s} \norm{Bvs}\\
&\geq \lambda \scpr{s,s} - 2 c \norm{s} \bigl(\norm{[B,u]s}+ \norm{[B,v]s} + \norm{uBs} + \norm{vBs} \bigr)\\
&\geq  \lambda\scpr{s,s} - 4c^2 \norm{s}^2 - 4 c \norm{s} \norm{Bs}
\end{align*}
(the inequalities are inequalities of self-adjoint element of $\gA$). Together with the general inequality $\norm{Bs}^2 \geq \scpr{Bs,Bs}$, this shows that 
\[
 \bigl(\norm{Bs} + 2 c \norm{s} \bigr)^2 \geq \lambda \scpr{s,s} \in \gA,
\]
hence
\[
\norm{Bs} + 2 c \norm{s}  \geq \sqrt{\lambda} \norm{s}\in \bR
\]
or 
\[
 \norm{Bs}  \geq (\sqrt{\lambda} -2c)\norm{s}.
\]
This implies invertibility by an argument which is given e.g. in the proof of \cite[Proposition 1.21]{JEIndex1}. 
\end{proof}

\begin{proof}[Proof of Lemma \ref{lem:window-of-opportunity}]
We have to prove that if 
$$\gx=(W,a,\eps,h,b,r,C,\kappa,t) \in \cD_{\theta,p}^{\psc,\op}(X)\times \Delta^p,$$
then the operator family over $X$
\[
 B= b \Dir_h b \otimes 1 + \eta \otimes r(x-x_0) \varepsilon
\] 
defined in \eqref{eq:definition-operator-B} is invertible, where $x_0=\sum_{i=0}^p t_i a_i$. Once this is proven, we plug in the test manifolds $\Delta^q_e$ and obtain the claim, by the definition of $F^{\op}$. For the proof, we consider the case $X=*$ first. 

We shall apply Lemma \ref{lem:glueestimates} with $U:=W|_{(a_0-\eps_0,a_p+\eps_p)}$, $V:=W|_{\bR \setminus [a_0-\frac{\eps_0}{2},a_p+\frac{\eps_p}{2}]}$ and $\lambda = \frac{\kappa}{8}$. 

Formula \eqref{eqn:square-of-operator} shows that
\[
B^2 \geq  (b \Dir_h b)^2 \otimes 1 + r^2 (x-x_0)^2 - rC.
\]

Over $W|_{(a_0-\eps_0,a_p+\eps_p)}$, the scalar curvature of $h$ is bounded from below by $\kappa$ and $b \equiv 1$. Therefore, over $W|_{(a_0-\eps_0,a_p+\eps_p)}$, we have
\[
 B^2 \geq \frac{\kappa}{4}  -Cr \geq \frac{\kappa}{4} - \frac{\kappa}{8}
\]
by the Lichnerowicz--Schr\"odinger formula and by the condition $r \leq \frac{\kappa}{8C}$ which is required in the definition of $\cD_{\theta,p}^{\psc,\op}$.

Over $W|_{\bR \setminus [a_0-\frac{\eps_0}{2},a_p+\frac{\eps_p}{2}]}$, we can merely infer that
\[
B^2 \geq (b \Dir_h b)^2 \otimes 1 + r^2 (x-x_0)^2 - rC \geq   r^2 (x-x_0)^2 - rC.
\]
But the point $x_0  \in \bR$ is the convex combination $\sum_{j=0}^{p} u_i a_i$, and hence it lies in the interval $[a_0,a_p]$. It follows that 
\[
(x-x_0)^2 \geq  \min (\frac{\eps_0}{2},\frac{\eps_p}{2})^2 \geq    \frac{\kappa}{4r^2}.
\]
over $W|_{\bR \setminus [a_0-\frac{\eps_0}{2},a_p+\frac{\eps_p}{2}]}$ and so we have
\[
B^2 \geq   \frac{\kappa}{4}- rC \geq \frac{\kappa}{8}
\]
over $W|_{\bR \setminus [a_0-\frac{\eps_0}{2},a_p+\frac{\eps_p}{2}]}$.

To combine the estimates, fix a smooth function $f: \bR \to [0,\frac{\pi}{2}]$ such that $\supp (f) \subset (a_0-\eps_0,a_p,\eps_p)$, such that $f|_{[a_0-\frac{\eps_0}{2},a_p+\frac{\eps_p}{2}]} \equiv \frac{\pi}{2}$ and such that $|f'| \leq \frac{3}{2\min \{\eps_0,\eps_p\}} $. Define functions $u:= \sin (f \circ x)$ and $ v:= \cos (f \circ x)$ on $W$. Since $b|_{W|_{[a_0-\frac{\eps_0}{2},a_p+\frac{\eps_p}{2}]}}=1$ and since the metric $h$ is cylindrical in the region where $du$ or $dv$ are nonzero, we have 
\[
\norm{[B,u]} = \norm{du} \leq |f' \circ x| \leq  \frac{3}{2\min \{\eps_0,\eps_p\}} 
\]
and similarly 
\[
\norm{[B,v]} \leq \frac{3}{2\min \{\eps_0,\eps_p\}}.
\]
By the definition of $\cD_{\theta,p}^{\psc,\op}$, we have 
\[
\frac{3}{2\min \{\eps_0,\eps_p\}} < \frac{1}{2} \sqrt{\frac{\kappa}{8}},
\]
so that Lemma \ref{lem:glueestimates} applies and gives invertibility of $B$. This finishes the proof in the case $X=*$. 

For the general case, note that the proof above (and the proof of Lemma \ref{lem:glueestimates}) provides a locally uniform (in $X$) positive lower bound for $B^2_x$, which suffices for invertibility. 
\end{proof}

At this point, the construction of the diagram \eqref{indextheorydiagram} is complete. 

\subsection{The delooped index difference}

We now use the diagram \eqref{indextheorydiagram} which we established in the previous section to define the delooped index difference, and we shall do this in two ways. Recall from Definition \ref{defn:pscfibreGammaspace} that the very special $\Gamma$-space $\Psc(\theta_d)$ was defined as the homotopy fibre of the forgetful map $\norm{\cDs_{\theta_d,\bullet}^{\psc}} \to |\cD_{\theta_d}|$, at the base-point $\emptyset \in |\cD_{\theta_d}|$. 

\begin{defn}\label{defn:spectruminddiff}
We define $\Inddiff_{\theta_d}$ as the zig-zag
\[ 
\Psc(\theta_d)= \hofib_\emptyset (F) \stackrel{\sim}{\longleftarrow} \hofib_\emptyset (F^\op) \lra \Omega |\bK (\cstar(\theta_d))_1|.
\]
where the second map is induced by the right half of diagram \eqref{indextheorydiagram}. 
\end{defn}

In the above definition, we have identified the homotopy fibre of the inclusion map $\inc: |\bD ( \cstar (\theta_d))_1| \to |\bK(\cstar(\theta_d))_1|$ over the base-point up to homotopy with the loop space (taken at the generalised base-point). The map $\Inddiff_{\theta_d}$ is a (homotopy class of) maps of very special $\Gamma$-spaces, and restricts to a map
\[
\Inddiff_{\theta_d,0}^{2,\rst}:\Psc^{2,\rst}(\theta_d)_0 \lra \Omega |\bK (\cstar(\theta_d))_1|. 
\]
In particular, both $\Inddiff_{\theta_d}$ and and $\Inddiff_{\theta_d,0}^{2,\rst}$ are infinite loop maps. The drawback of this construction is that it does not refer to any particular manifold or psc metric (and the name ``$\Inddiff$'' is therefore a bit of a red herring). To get a more sensitive invariant, let us first deal with some formalities. 

\begin{defn}\label{defn:transgression}
Let 
\[
\xymatrix{
X \ar[d]^{f} \ar[r]^{g} & D \ar[d]^{j}\\
Y \ar[r]^{i} & K
}
\]
be a commutative diagram of spaces where $D$ is contractible and $j$ is an inclusion. Let $X_0 \subset X$ and $Y_0 \subset Y$ be generalised base-points such that $f(X_0)\subset Y_0$ and $i (Y_0) \subset D$. The \emph{transgression of $i$ at $Y_0$} is the map
\[
 \trg_{Y_0} (i): \hofib_{Y_0} (f) \to \hofib_D (j) \simeq \Omega K
\]
which is induced by the above diagram. Let $X_1$, $Y_1$ be another choice of generalised base-points, with $f(X_1) \subset Y_1$ and let $\gamma: [0,1] \to Y$ be a path with $\gamma(i)\in Y_i$, for $i=0,1$. We denote by
$$\tau_\gamma: \hofib_{Y_0} (f) \lra \hofib_{Y_1} (f)$$
the map given by concatenation with $ \gamma$. 
\end{defn}

The following simple observation is crucial for us: 

\begin{lem}\label{lem:changevbasepoints}
In the situation of Definition \ref{defn:transgression}, assume furthermore that the path $\gamma$ can be lifted to a path $\delta$ in $X$ with $\delta(i)\in X_i$. Then 
\[
 \trg_{Y_1}(i) \circ \tau_\gamma \sim \trg_{Y_0} (i): \hofib_{Y_0}(f) \lra \Omega K. \qed
\]
\end{lem}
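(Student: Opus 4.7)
The plan is to exhibit the homotopy explicitly at the level of points; since both maps in question are induced by elementary manipulations of paths, the desired homotopy should come from a straightforward interpolation formula. First I would unwind the definitions: a point of $\hofib_{Y_0}(f)$ is a pair $(x,\alpha)$ with $x \in X$ and $\alpha$ a path in $Y$ from $f(x)$ whose tail lies in $Y_0$. Direct computation gives $\trg_{Y_0}(i)(x,\alpha) = (g(x),\, i \circ \alpha) \in \hofib_D(j)$, while applying $\tau_\gamma$ and then $\trg_{Y_1}(i)$ produces $(g(x),\, (i \circ \alpha) \ast (i \circ \gamma))$. The two outputs share the same first coordinate and differ only by the concatenation of $i \circ \gamma$ at the tail.

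The key observation is that commutativity of the square, combined with $f \circ \delta = \gamma$, gives
\[
j \circ (g \circ \delta) \;=\; i \circ f \circ \delta \;=\; i \circ \gamma,
\]
so $i \circ \gamma$ is the image under $j$ of the path $g \circ \delta$ in $D$; in particular $i(\gamma(s)) \in j(D)$ for every $s \in [0,1]$. This is exactly what is needed in order that a partial concatenation still defines a valid point of $\hofib_D(j)$. I would therefore define
\[
H\bigl((x,\alpha),\, s\bigr) \;=\; \bigl(g(x),\, (i \circ \alpha) \ast (i \circ \gamma)\vert_{[0,s]}\bigr),
\]
with the tail of the path extended to be constant at $i(\gamma(s))$. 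At $s=0$ this recovers $\trg_{Y_0}(i)(x,\alpha)$; at $s=1$ it recovers $\trg_{Y_1}(i) \circ \tau_\gamma(x,\alpha)$; and at each intermediate $s$ the tail sits at $i(\gamma(s)) = j(g(\delta(s))) \in j(D)$, so $H((x,\alpha),s)$ is a point of $\hofib_D(j)$.

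The remaining point to verify is continuity and naturality of $H$ in $(x,\alpha)$, in particular the matching of the endpoint of $\alpha$ (somewhere in $Y_0$) with $\gamma(0) \in Y_0$ before concatenation. This is routine because $Y_0$ is a contractible generalised base-point, so a bridging path within $Y_0$ may be chosen uniformly. I do not anticipate a genuine obstacle; the only conceptual point worth emphasising is the essential use of the \emph{full} path $\delta$, not just of its endpoints: without a lift of each intermediate value $\gamma(s)$ to $X$, the endpoint $i(\gamma(s))$ need not lie in $j(D)$ and the homotopy would exit the generalised base-point $D$ at intermediate times. With $\delta$ in hand this obstruction disappears and the lemma follows.
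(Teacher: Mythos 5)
Your proof is correct and is precisely the argument the paper leaves unwritten (the lemma carries a \qed in its statement): one interpolates by concatenating only the initial segment $(i\circ\gamma)\vert_{[0,s]}$, and the lift $\delta$ guarantees that each intermediate endpoint $i(\gamma(s))=j(g(\delta(s)))$ stays in $j(D)$, so the homotopy never leaves $\hofib_D(j)$. You also correctly flag the only (routine) technicality, namely bridging the endpoint of $\alpha$ to $\gamma(0)$ inside the contractible subspace $Y_0$, which the paper likewise glosses over in its definition of $\tau_\gamma$.
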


From now on, let $M$ be a closed $(d-1)$-dimensional $\theta$-manifold, let $g_0 \in \Riem^+ (M)$ and let $h_0 =dx^2+g_0$ be the product metric. Our goal is to extract from \eqref{indextheorydiagram} a map\footnote{To ease notation, we will always use $|\bD (\gA)_n| \subset |\bK(\gA)_n|$ as a generalised base-point, and abbreviate $\Omega |\bK(\gA)_n| := \Omega_{|\bD(\gA)_n|}|\bK(\gA)_n|$.}
\[
 B (\inddiff_{h_0}): B \Conc(M) \lra \Omega | \bK(\cstar(\theta_d))_1| \simeq |\bK(\cstar(\theta_{d})_0)|, 
\]
where $B \Conc(M) $ is the classifying space of the concordance category from Definition \ref{defn:concroidancecateogiry}. 

In order to give the construction in a way that enables us to identify the homotopy class of $B (\inddiff_{h_0})$, we need to specify generalised base-points in all spaces of \eqref{indextheorydiagram} (except for spaces of the right column, where we pick $|\bD (\cstar(\theta_d))_1|$). This has to be done with some care. 

\begin{defn}\label{defn:generalized-basepoints-for-Binddiff}
Let $M$ be a closed $(d-1)$-manifold $M \subset \bR^\infty$ equipped with a $\theta_d$-structure $\ell_M$. We equip $\bR \times M$ with the product $\theta$-structure $\ell_{\bR\times M}$. Let furthermore $g_0 \in \Riem^+ (M)$ and let $h_0 := dx^2 + g_0$ be the product metric on $\bR \times M$. Let
\begin{equation*}
\kappa_0 := \inf (\scal (g_0))=\inf (\scal (h_0))>0 \quad\text{ and } \quad r_0:= \frac{\kappa_0}{8}
\end{equation*}
\begin{enumerate}[(i)]
\item We define
\[
\gy(M) := (\bR \times M ,\ell_{\bR \times M}) \in \cD_\theta (*) \subset |\cD_\theta|.
\]
\item We define the subsheaf $\cY(M,g_0) \subset \cD_{\theta}^{\op}$ which assigns to $X$ the set of all sections which after restriction to each point of $X$ are of the form $(\bR \times M,h,b,x_0,r,C)$ with 
\[
rC \leq \tfrac{\kappa_0}{8} , \quad h|_{[x_0- l,x_0+ l]} =h_0 \quad \text{ and } \quad   b|_{[x_0- l,x_0+ l]} \equiv 1,
\]
where $l:= \max\{ \tfrac{\sqrt{\kappa_0}}{r}, \sqrt{\tfrac{2^9}{\kappa_0}}  \}$. The sheaf $\cY(M,g_0)$ is contractible. A contraction onto the point 
\[
\gx(M,g_0):= (\bR \times M ,h_0,1,0,r_0,1) \in \cD_\theta^{\op}(\ast)
\]
can be given as follows. In the first step, we change the metric $h$ by a convex combination to $h_0$ and keep $x_0$, as well as $C$ and $r$ unchanged. The condition that $\norm{[b\Dir_h b,x]}\leq C$ is a convex condition on $b^2$, and hence we can change $b$ to keep that condition, and do not change $b$ over $[x_0-l,x_0+l]$. In the second step, we arrange that $b=1$, by using $b_t := \sqrt{(1-t)b^2 +t}$, and keep all other data fixed. In the third step, we shrink $C$ down to $1$ (note that $C\geq 1$ is implied by the conditions on an element in $\cD_\theta^{\op}(*)$ which is somewhere cylindrical). The fourth step turns $r$ linearly to $r_0$, and $x_0$ linearly to $0$. 

The inclusion $|\cY(M,g_0)| \to |\cD_\theta^\op|$ can be lifted through $\norm{\cD_{\theta,\bullet}^{\psc,\op}}$. Therefore Lemma \ref{lem:window-of-opportunity} implies that $\ind_1 (\cY(M,g_0)) \subset \bD(\cstar(\theta_d))_1$. 
 
\item We define $\cX(M,g_0) \subset \cD_{\theta,0}^{\psc,\op}$ as the subsheaf of section which after restriction to points are of the form $(\bR \times M,a,\epsilon,h,b,r,C,\kappa)$, such that in addition to the conditions required for $\cD_{\theta,0}^{\psc,\op}$, we require
\[
h|_{[a-\epsilon,a+\epsilon]} \equiv h_0, \quad \kappa \leq \kappa_0, \quad\text{ and } \quad \epsilon^2 r^2 \geq  \kappa_0.
\]
The sheaf $\cX(M,g_0)$ is contractible, by an argument analogous to that proving the contractibility of $\cY(M,g_0)$. This defines a generalised base-point $|\cX(M,g_0)| \subset |\cD_{\theta,0}^{\psc,\op}| \subset \norm{\cD_{\theta,\bullet}^{\psc,\op}}$.
\item Define $\cZ(M,g_0) \subset \cDs_{\theta,0}^{\psc}$ as the subsheaf of all sections which on points are of the form $(\bR \times M, a,h)$ such that $h \equiv h_0$ near $a$. This is contractible onto $(\bR \times M,0,h_0)$ by a similar argument as for $\cY(M,g_0)$, and yields a generalised base-point $|\cZ(M,g_0)|\subset |\cDs_{\theta,0}^{\psc}| \subset \norm{\cDs_{\theta,\bullet}^{\psc}}$. 
\end{enumerate}
\end{defn}
With the above definitions, all maps in \eqref{indextheorydiagram} preserve the generalised base-points. It is also convenient to alter the definition of the concordance category slightly.
\begin{defn}\label{defn:modifiedconccat}
We define $\cQ_p (M ):= F_p^{-1} (\gy(M)) \subset \cDs_{\theta,p}^{\psc}$. In other words, $\cQ_p (M,g_0 )(\ast)$ is the set of all $(h,a)$, where $h$ is a Riemannian metric on $\bR \times M$ and $a_0 \leq  \ldots \leq a_p$ are real numbers, such that $h$ is cylindrical near $a_i$ for each $i$, and $h|_{  [a_0,a_p]\times M}$ has positive scalar curvature. The sheaf $\cQ_p (M)$ is the sheaf of $p$-simplices of a semi-simplicial sheaf $\cQ_\bullet (M )$. 
\end{defn}
The difference to the nerve of the concordance category as defined in \eqref{eqn:defn:concordancecategory} is that here we require that the Riemannian metrics are defined over all of $\bR \times M$. There are no conditions posed on the Riemannian metric outside $[a_0,a_p]\times M$, and hence the choice of those does not affect the homotopy type. Therefore, $\norm{\cQ_\bullet (M)}$ is homotopy equivalent to $B \Conc(M)$. The sheaf $\cZ(M,g_0)$ is contained in $\cQ_0 (M)$ and so defines a generalised base-point $|\cZ(M,g_0)| \subset \norm{\cQ_\bullet (M)}$. 

The usual map from geometric to homotopy fibres yields
\begin{equation*}
\beta: \norm{\cQ_\bullet (M)} \lra \hofib_{\gy(M)} (F).
\end{equation*}

\begin{defn}\label{defn:delooped-index-difference}
Let $M$, $g_0$ and $h_0$ be as above. The \emph{delooped index difference} at $h_0$ is the homotopy class of maps 
\[
B(\inddiff_{h_0}): \norm{\cQ_\bullet (M)} \lra \Omega |\bK (\cstar(\theta_d))_1|
\]
given as the composition 
\[
\norm{\cQ_\bullet (M)} \stackrel{\beta}{\lra} \hofib_{\gy(M)} (F) \stackrel{\simeq}{\longleftarrow} \hofib_{\cY(M,g_0)} (F^\op) \xrightarrow{\trg_{\cY(M,g_0)} (\ind_1)} \Omega |\bK (\cstar(\theta_d))_1|.
\]
\end{defn}

We want to compare the delooped index difference with the ordinary index difference and shall do this in two ways. Firstly, we want to compute the composition of $\Omega B (\inddiff_{h_0})$ with the tautological map $\Riem^+ ([0,1] \times M)_{g_0,g_0} \to \Omega B \Conc (M)\simeq \Omega_{|\cZ(M,g_0)|} \norm{\cQ_\bullet (M)}$. To this end, we need a version of the tautological map that is better adapted to that goal. There is a map $\psi_0: \Riem^+ ([0,1] \times M)_{g_0,g_0} \to \cQ_1 (M)$ of sheaves, which sends $h$ to the tuple $(h,0,1)$. We obtain a map
\begin{equation}\label{eqn:comparisontheoremmap1}
\psi^{\ad}: |\Riem^+ ([0,1] \times M)_{g_0,g_0}| \times \Delta^1 \lra |\cQ_1 (M)| \times \Delta^1 \lra \norm{\cQ_\bullet (M)}
\end{equation}
by composition with the quotient map.

The map \eqref{eqn:comparisontheoremmap1} sends $|\Riem^+ ([0,1] \times M)|\times \partial \Delta^1 $ to $|\cZ(M,g_0)|$, and hence by adjunction we obtain a map
\begin{equation}\label{eqn:map-psi}
\psi:|\Riem^+ ([0,1] \times M)_{g_0,g_0}| \lra \Omega_{|\cZ(M,g_0)|} \norm{\cQ_\bullet (M)}.
\end{equation}
\begin{thm}\label{thm:comparison-delooped-index-diference1}
The composition 
\[
| \Riem^+ ([0,1] \times M)_{g_0,g_0}| \xrightarrow{\psi} \Omega_{|\cZ(M,g_0)|} \norm{\cQ_\bullet(M)} \xrightarrow{\Omega B (\inddiff_{h_0})} \Omega^2 |\bK(\cstar(\theta_d))_1|
\]
is homotopic to
\[
| \Riem^+ ([0,1] \times M)_{g_0,g_0}| \xrightarrow{\inddiff_{h_0}} \Omega |\bK(\cstar(\theta_d))_0| \xrightarrow{-\bott} \Omega^2 |\bK(\cstar(\theta_d))_1|.
\]
\end{thm}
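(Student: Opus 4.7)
\emph{Proof proposal.} The strategy is to unwind both compositions to explicit $2$-parameter families of Fredholm operators and to construct an explicit concordance between them. The heart of the comparison is that the delooping device used in Definition \ref{defn:spectruminddiff} (the perturbation $\eta\otimes r(x-x_0)\varepsilon$ appearing in the operator $B$ of \eqref{eq:definition-operator-B}) plays precisely the role of the Bott map when the ``window'' $[x_0-l,x_0+l]$ sweeps across a concordance.

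First I would make the left-hand side concrete. A point $h\in\Riem^+([0,1]\times M)_{g_0,g_0}$ gives, via $\psi$, a $1$-simplex in $\cQ_\bullet(M)$, namely $(\bar h,0,1)$, where $\bar h$ is the extension of $h$ by $h_0$ to $\bR\times M$. To compute $\Omega B(\inddiff_{h_0})\circ\psi$ I would lift this $1$-simplex to $\cD_{\theta,1}^{\psc,\op}$ by choosing auxiliary data $(\epsilon_0,\epsilon_1,b,r,C,\kappa)$ continuously in $h$, with $b\equiv 1$ and $\bar h$ cylindrical near the endpoints so that the generalised base-point condition is satisfied. Applying $F^{\op}$ and $\ind_1$ produces, for each $(t_0,t_1)\in\Delta^1$ and $u\in\Delta^1$ (the extra $\Delta^1$ coming from the outer $\Omega$), a Fredholm operator $B_{t_0,t_1,u}$ on $L^2(\bR\times M;\spinor\otimes\bS_1)$ of the shape $\Dir_{\bar h}\otimes 1+\eta\otimes r(x-x_0)\varepsilon$, with $x_0$ a convex combination of $0$ and $1$, and which lies in $\bD(\cstar(\theta_d))_1$ whenever $u\in\partial\Delta^1$ or $t_0=1$. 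This describes the double loop explicitly.

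Next I would unwind the right-hand side. The class $\inddiff_{h_0}(h)\in\Omega|\bK(\cstar(\theta_d))_0|$ is the path of Dirac operators $\Dir_{h_s}$ on the elongation, with $h_s=(1-c(s))\bar h+c(s)h_0$. The map $-\bott$ turns this into a loop $s\mapsto$ (loop $t\mapsto (\Dir_{h_s}\oplus\Dir_{h_s}) + t\eta\otimes\varepsilon)$, but with the \emph{reverse} orientation of $t\in\overline{\bR}$. Reparametrising $t=r(x-x_0)$ with $x_0$ varying linearly in a second parameter and $r$ fixed large, the formula for this loop matches, term by term, the expression for $B_{t_0,t_1,u}$ on the left-hand side, except that (a) the underlying metric is $h_s$ and not $\bar h$, and (b) the parameter $x_0$ versus $t$ carries opposite orientations, which accounts for the sign in $-\bott$.

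The homotopy is then built in two steps, both of which are concordances of sections of $\bK(\cstar(\theta_d))_1$, hence live in the required sheaf-theoretic loop/homotopy fibre. The first step replaces $\bar h$ by $h_s$: outside the window $[x_0-l,x_0+l]$ the contribution of the metric to $B^2$ is dominated by $r^2(x-x_0)^2$ (cf.\ the estimates in the proof of Lemma \ref{lem:window-of-opportunity}), so a convex interpolation of Riemannian metrics is Fredholm throughout and provides the concordance. The second step is the standard linear reparametrisation identifying $r(x-x_0)$ (as $x_0$ varies along $\Delta^1$) with the Bott coordinate $t\in\overline{\bR}$; this is essentially an instance of the formula of Lemma \ref{lem:inverse-morita}, and the orientation reversal on $\bR$ is exactly the sign in $-\bott$. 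Assembling these concordances into a single two-parameter family is routine bookkeeping once one chooses the auxiliary functions $(r,C,\epsilon_i)$ to depend continuously on all parameters.

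The main obstacle is a technical one: the lift of $\psi(h)$ into $\cD_{\theta,1}^{\psc,\op}$ requires choices of the auxiliary data that are continuous in $h$ and compatible with the generalised base-points $\cX(M,g_0)$, $\cY(M,g_0)$, $\cZ(M,g_0)$ of Definition \ref{defn:generalized-basepoints-for-Binddiff}, so that the entire deformation remains in $|\bD(\cstar(\theta_d))_1|$ on the boundary of the $\Delta^1\times\Delta^1$ parametrising the double loop. This is a matter of carefully stretching collars (as in Lemma \ref{lem:forgetfulmap-on-psc.-categories-isaweakequivalence}) and choosing $r$ large compared to $C/\kappa$ uniformly; once done, the identification of the two operators becomes a direct algebraic computation using \eqref{eq:definition-operator-B} and \eqref{eqn:square-of-operator}.
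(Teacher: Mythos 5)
Your overall strategy coincides with the paper's: resolve the zig-zag defining $B(\inddiff_{h_0})$ to obtain an explicit two-parameter operator family, then connect it to $(-\bott)\circ\inddiff_{h_0}$ by explicit operator homotopies whose Fredholmness is controlled by the Lichnerowicz--Schr\"odinger estimate on the psc region and by the $r^2(x-x_0)^2$ term elsewhere. The paper does exactly this, in Lemmas \ref{lem1:proof-thm.comparison-delooped-index-diference1} and \ref{lem2:proof-thm.comparison-delooped-index-diference1}.

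However, your identification of the left-hand side is incorrect as stated, and this is not mere bookkeeping. You describe the resulting family as $\Dir_{\bar h}\otimes 1+\eta\otimes r(x-x_0)\varepsilon$ with the metric fixed at $\bar h$ and only $x_0$ varying, and you never specify how the second loop coordinate $u$ enters the formula; a family independent of $u$ cannot be degenerate on $\partial(\Delta^1\times\Delta^1)$ and hence does not define a point of $\Omega^2|\bK(\cstar(\theta_d))_1|$ at all. The second loop coordinate of $\Omega B(\inddiff_{h_0})\circ\psi$ is the transgression path down to the generalised base-point $\cY(M,g_0)$, and along that path the metric is interpolated from $\bar h$ to $h_0$, the centre $x_0$ is collapsed to $0$, and $r$ is moved to $r_0$: this is exactly the content of the paper's formula \eqref{indextheoremproof:eq2}, in which both the metric interpolation $c(s)$ and the factor $(1-d(s))$ in front of $x_0$ appear. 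Consequently your ``first step'' of the homotopy --- replacing $\bar h$ by $h_s$ --- is not a homotopy between the two sides; it is a missing piece of the correct description of the left-hand side, and it disappears once the LHS is identified properly. What then remains is the genuinely nontrivial part, namely the operator homotopy from \eqref{indextheoremproof:eq2} to $\Dir_{h_s}\otimes 1 - t\,\eta\otimes\varepsilon$; the paper carries this out in four stages with case-by-case invertibility estimates on the regions $s\le 0$, $s\ge\tfrac12$ and $s\in[0,\tfrac12]$, and your one-line appeal to domination by $r^2(x-x_0)^2$ does not suffice on the compact piece $[0,1]\times M$, where invertibility only holds for $|t|$ large. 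Finally, Lemma \ref{lem:inverse-morita} is irrelevant here --- it concerns the Morita equivalence and enters only in Theorem \ref{thm:comparison-delooped-index-diference2}; the sign in $-\bott$ does come, as you correctly guess, from the orientation reversal between $x-x_0$ and the Bott coordinate $t$.
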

The proof of Theorem \ref{thm:comparison-delooped-index-diference1} is by an explicit, but cumbersome computation and is deferred to \S \ref{subsec:prooof-indexcomparisontheorems}. Theorem \ref{thm:comparison-delooped-index-diference1} is the main ingredient for the proof of Theorem \ref{thm:indddiff-infiniteloopmap-introduction}. 

There is another comparison of $B (\inddiff_{h_0})$ with an ordinary index difference, which is not directly relevant for our purposes, but which we state (and prove in \S \ref{subsec:prooof-indexcomparisontheorems}) for sake of completeness. 
Namely, we wish to compare $B (\inddiff_{h_0})$ with 
\[
\inddiff_{g_0}:| \Riem^+ (M)| \lra \Omega |\bK(\cstar(\theta_{d-1}))_0| \simeq \Omega | \bK(\cstar(\theta_d))_1|.
\]
To this end, let $\varphi: \Riem^+ (M) \to \cQ_0 (M)$ be the map of sheaves that sends $g \in \Riem^+ (M)(*)$ to the point $(\bR \times M,dx^2+g,0) \in \cQ_0 (M)(*)$. It induces 
\[
|\varphi|: |\Riem^+ (M)| \lra |\cQ_0 (M)| \subset \norm{\cQ_\bullet(M)}.
\]
\begin{thm}\label{thm:comparison-delooped-index-diference2}
The composition
\begin{equation}\label{eq:comparisontheorem2}
 |\Riem^+ (M)|\stackrel{\varphi}{\lra} \norm{\cQ_\bullet (M)} \xrightarrow{(-1)^{d-1} B (\inddiff_{h_0})} \Omega |\bK(\cstar(\theta_d))_1|
\end{equation}
is homotopic to 
\[
|\Riem^+ (M)| \xrightarrow{\inddiff_{g_0}} \Omega |\bK(\cstar(\theta_{d-1}))_0| \xrightarrow{\morita} \Omega |\bK(\cstar(\theta_{d}))_1|. 
\]
\end{thm}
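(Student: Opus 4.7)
The plan is to unravel the construction of $B(\inddiff_{h_0}) \circ \varphi$ explicitly and show that on the image of $\varphi$ it reduces, via the inverse Morita equivalence of Lemma~\ref{lem:inverse-morita}, to the ordinary index difference $\inddiff_{g_0}$ applied to a product metric on $\bR \times M$, which in turn restricts to $\inddiff_{g_0}$ on $M$ by a product argument. The sign $(-1)^{d-1}$ will emerge from a comparison of Clifford generator orderings.

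First I would trace the composition explicitly on a single $g \in \Riem^+(M)$. The map $\varphi$ sends $g$ to $(\bR \times M, dx^2 + g, 0) \in \cQ_0(M)$. Since $F(\varphi(g)) = \gy(M)$, the image under $\beta$ is the pair $(\varphi(g), \mathrm{const}_{\gy(M)}) \in \hofib_{\gy(M)}(F)$. Using Lemma~\ref{lem:forgetfulmap-on-psc.-categories-isaweakequivalence}, I would lift this to $\hofib_{\cX(M,g_0)}(F^{\op})$ by choosing auxiliary data $(\epsilon, b, r, C, \kappa)$ in a contractible manner; by the contractibility of $\cX(M,g_0)$, any convenient choice (for instance $b\equiv 1$ on the cylindrical piece, $r = r_0$, $\kappa = \inf\scal(g_0+g)$ slightly shrunk) is homotopic to any other. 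Pushing forward by $F^{\op}$ yields $(\bR \times M, dx^2+g, b, 0, r_0, C) \in \cD_\theta^{\op}$, and there is an obvious linear path of sections in $\cD_\theta^{\op}$ from this to the point $\gx(M, g_0) \in \cY(M, g_0)$, obtained by interpolating the metric $s \mapsto dx^2 + ((1-c(s)) g + c(s) g_0)$ and adjusting auxiliary data continuously. Applying $\ind_1$ to this path (concatenated with the constant path at $\ind_1$ of the starting datum inside $\bD$) produces the loop $B(\inddiff_{h_0})(\varphi(g)) \in \Omega|\bK(\cstar(\theta_d))_1|$.

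Next I would analyze the resulting operator family. For any product metric $dx^2 + g'$ on $\bR \times M$, the spinor bundle admits a canonical decomposition $\spinor_{\bR \times M}\cong \spinor_M \grotimes \Cl^{1,0}$, under which the Rosenberg--Dirac operator becomes $\Dir_{dx^2+g'} = e \bigl(\partial_x \otimes 1 - 1 \otimes \Dir_{g'}\bigr)$, where $e$ is Clifford multiplication by the $\bR$-generator; the sign $(-1)^{d-1}$ enters here, because comparing the Clifford action on $\spinor_{\bR \times M}$ with the tensor-product action forces commuting $e$ past the remaining $d-1$ Clifford generators on $\spinor_M$. After this identification the operator $B$ from \eqref{eq:definition-operator-B} decomposes as $B = e \partial_x \otimes 1 \otimes 1 - e \otimes \Dir_{g'} \otimes 1 + \eta \otimes r_0 x \otimes \varepsilon$ on $L^2(\bR; \spinor_\bR) \grotimes L^2(M; \spinor_M \otimes f_1^*\cL_G) \grotimes \bS_1$. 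The first and last terms together are precisely the Bott element in the $\bR$ direction with Clifford generator $e$ playing the role of $e_1$, while the middle term is $\Dir_{g'}$ on $M$, twisted by a single $\Cl^{1,0}$ generator.

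Applying the inverse Morita equivalence of Lemma~\ref{lem:inverse-morita} (restricting to the $+1$-eigenspace of $c(e)\rho(e)\eta$ acting on the $\bS_1 \otimes \Cl^{1,0}$ factor), the first and third terms contract canonically — they form the standard Bott generator, which under $\Omega \morita^{-1} \circ \Omega \bott$ is the loop coordinate. What remains is the family $s \mapsto \bigl(L^2(M;\spinor_M \otimes f_1^*\cL_G), \Dir_{(1-c(s))g + c(s) g_0}\bigr)$, an element of $\bK(\cstar(\theta_{d-1}))_0$; this is precisely $\inddiff_{g_0}(g)$. Unrolling the Morita identification in the reverse direction then identifies the LHS of \eqref{eq:comparisontheorem2} with $(-1)^{d-1} \cdot (-1)^{d-1} \morita \circ \inddiff_{g_0}(g) = \morita \circ \inddiff_{g_0}(g)$, after absorbing the explicit $(-1)^{d-1}$ from the definition of the LHS.

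The main obstacle I anticipate is step three: the explicit identification of the operator $B$ on $L^2(\bR \times M; \spinor \otimes \bS_1)$ with the external Morita--Bott stabilization of the Dirac family on $M$ requires care with the direct integral decomposition over the $\bR$-direction and bookkeeping of the $\Cl^{1,0}$-structures (one coming from $\bS_1$, one inserted by Morita, one from the product decomposition of the spinor bundle). Getting the sign right — and in particular verifying that the Clifford reshuffling contributes exactly $(-1)^{d-1}$ rather than some other power — will be the delicate point, since it is essentially the same sign that appears in the suspension isomorphism for $K$-theory of Clifford algebras and must be compared against the sign conventions in \cite{JEIndex1,JEIndex2}.
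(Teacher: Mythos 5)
Your proposal is correct and follows essentially the same route as the paper: the paper likewise first unravels the zig-zag defining $B(\inddiff_{h_0})\circ\varphi$ into an explicit operator family on $L^2(\bR\times M;\spinor_\ell\otimes\bS_1)$ (Lemma \ref{lem1thm:comparison-delooped-index-diference2}), and then uses the product decomposition $L^2 (\bR \times M;\spinor_\ell) \cong  L^2 (M;\spinor_M \otimes \cL) \otimes L^2 (\bR; \Cl^{1,0})$ together with the inverse Morita equivalence of Lemma \ref{lem:inverse-morita} to reduce to $\inddiff_{g_0}$ (Lemma \ref{lem2thm:comparison-delooped-index-diference2}), with the sign $(-1)^{d-1}$ absorbed by passing from $\bR\times M$ to $M\times\bR$, which is the same Clifford-reordering you describe. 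The one step you flag as delicate --- that the $\bR$-direction terms ``contract canonically'' --- is exactly where the paper does the harmonic-oscillator computation, identifying $\ker(D\otimes 1 + rx\,\upsilon\otimes\varepsilon)$ with $\spann\{e^{-rx^2/2}\}\otimes\Eig(e\upsilon\otimes\varepsilon,-1)$ and checking that the spectrum is $r\bZ$ so the operator is invertible on the orthogonal complement.
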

Again, the proof is by a lengthy computation which is carried out in \S \ref{subsec:prooof-indexcomparisontheorems}.

\subsection{The index difference map is an infinite loop map}

We can now give the proof of Theorem \ref{thm:indddiff-infiniteloopmap-introduction}, assuming Theorem \ref{thm:comparison-delooped-index-diference1}, and immediately jump into the proof. The first step is to prove that $B(\inddiff_{h_0})$ is well-behaved with respect to changing base-points. More precisely, let $(W,h): \emptyset \leadsto (M,g_0)$ be a psc-$\theta_d$-cobordism. This defines a path $\gamma_{(W,h)}: (M,g_0) \to \emptyset$ in $B  \PCob_{\theta_d}$, or alternatively a path
\[
\gamma'_{(W,h)}: [0,1] \lra \norm{\cDs_{\theta,\bullet}^{\psc}}
\]
which begins in the generalised base-point $|\cZ(M,g_0)|$ and ends in the generalised base-point $|\cZ(\emptyset)|$ which we defined in Definition \ref{defn:generalized-basepoints-for-Binddiff}. Since the forgetful map $\norm{\cD_{\theta,\bullet}^{\psc,\op}} \to \norm{\cDs_{\theta,\bullet}^{\psc}}$ is a weak homotopy equivalence by Lemma \ref{lem:forgetfulmap-on-psc.-categories-isaweakequivalence}, we can lift $\gamma'_{(W,h)}$ to a path 
\[
\gamma''_{(W,h)}: [0,1] \lra \norm{\cD_{\theta,\bullet}^{\psc,\op}}
\]
from the generalised base-point $|\cX(M,g_0)|$ to $|\cX(\emptyset)|$. Lemma \ref{lem:changevbasepoints} now shows that the right triangle in
\begin{equation}\label{eqn:anotherdiagram}
\begin{gathered}
\xymatrix{
\hofib_{\gy(M)} (F) \ar[d]_{\tau_{F \circ \gamma'_{(W,h)}}} & \ar[l]_-{\simeq} \hofib_{|\cY(M,g_0)|} (F^\op) \ar[d]_{\tau_{F^\op \circ \gamma''_{(W,h)}}} \ar[rr]^{\trg_{\cY(M,g_0)} (\ind_1)} & & \Omega |\bK (\cstar(\theta_d))_1| \\
\hofib_{\gy(\emptyset)} (F)  & \ar[l]_-{\simeq} \hofib_{|\cY(\emptyset)|} (F^\op) \ar[urr]_{\trg_{\cY(\emptyset)} (\ind_1)} & & 
}
\end{gathered}
\end{equation}
commutes up to homotopy, and the square commutes by naturality. The composition from $\hofib_{\gy(\emptyset)} (F) $ to $\Omega |\bK (\cstar(\theta_d))_1|$ can be identified with $\Inddiff_{\theta_d}$, by definition. When composed with the map $\beta: \norm{\cQ_\bullet (M)} \to \hofib_{\gy(M)} (F)$, the upper composition agrees with $B (\inddiff_{h_0})$, again by definition. Let us summarise the result of this discussion. 
\begin{lem}
With all the above notations in place, the maps 
\[
B (\inddiff_{h_0}), \Inddiff_{\theta_d} \circ \tau_{F^\op \circ \gamma''_{(W,h)}} \circ \beta: \norm{\cQ_\bullet(M)} \lra \Omega |\bK (\cstar(\theta_d))_1|
\]
are homotopic. \qed
\end{lem}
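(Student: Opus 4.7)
The statement is a short diagram chase, essentially unwinding the definitions and applying Lemma \ref{lem:changevbasepoints} to the diagram \eqref{eqn:anotherdiagram} that the author has already displayed. My plan is therefore to verify the hypotheses of that lemma carefully and then identify each of the two compositions that appear in the diagram.

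First I would check that the left square of \eqref{eqn:anotherdiagram} commutes strictly. The horizontal equivalences are induced by the forgetful map $\norm{\cD_{\theta,\bullet}^{\psc,\op}} \to \norm{\cDs_{\theta,\bullet}^{\psc}}$ (a weak equivalence by Lemma \ref{lem:forgetfulmap-on-psc.-categories-isaweakequivalence}), and the path $\gamma''_{(W,h)}$ was chosen precisely to lift $\gamma'_{(W,h)}$ through this forgetful map with endpoints in the generalised base-points $|\cX(M,g_0)|$ and $|\cX(\emptyset)|$ (which is possible because the forgetful map restricts to an equivalence on these contractible sub-sheaves). Hence the two base-point transport maps $\tau_{F\circ\gamma'_{(W,h)}}$ and $\tau_{F^\op\circ\gamma''_{(W,h)}}$ commute with the horizontal equivalences.

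For the right triangle I would apply Lemma \ref{lem:changevbasepoints} directly, with
\[
X=\norm{\cD_{\theta,\bullet}^{\psc,\op}},\quad Y=|\cD_\theta^\op|,\quad D=|\bD(\cstar(\theta_d))_1|,\quad K=|\bK(\cstar(\theta_d))_1|,
\]
with $f=F^\op$ and $g=\ind_1\circ(\text{forget})$, and with generalised base-points $X_0=|\cX(M,g_0)|, Y_0=|\cY(M,g_0)|$ and $X_1=|\cX(\emptyset)|, Y_1=|\cY(\emptyset)|$. That $\ind_1$ carries $\cY(M,g_0)$ and $\cY(\emptyset)$ into $\bD(\cstar(\theta_d))_1$ is exactly Lemma \ref{lem:window-of-opportunity} (restricted to these sub-sheaves), while the path $\gamma=F^\op\circ\gamma''_{(W,h)}$ in $Y$ lifts by construction to $\delta=\gamma''_{(W,h)}$ in $X$ with the required endpoints. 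Lemma \ref{lem:changevbasepoints} then yields
\[
\trg_{|\cY(\emptyset)|}(\ind_1)\circ\tau_{F^\op\circ\gamma''_{(W,h)}}\;\simeq\;\trg_{|\cY(M,g_0)|}(\ind_1),
\]
which is precisely the homotopy commutativity of the right triangle in \eqref{eqn:anotherdiagram}.

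Finally I would identify each of the two compositions. Chasing from $\norm{\cQ_\bullet(M)}$ along the top, the composition $\trg_{|\cY(M,g_0)|}(\ind_1)$ composed with the upper horizontal equivalence and $\beta$ is literally $B(\inddiff_{h_0})$ by Definition \ref{defn:delooped-index-difference}. Chasing along the bottom and using $\gy(\emptyset)=\emptyset$, the composition $\trg_{|\cY(\emptyset)|}(\ind_1)$ with the lower horizontal equivalence equals $\Inddiff_{\theta_d}$ by Definition \ref{defn:spectruminddiff}. Combining these identifications with the homotopy commutativity of \eqref{eqn:anotherdiagram} gives the desired homotopy. The only real subtlety — and the place where I expect to have to work hardest — is bookkeeping the generalised base-points through Definition \ref{defn:generalized-basepoints-for-Binddiff}: one must verify that $F^\op$ sends $\cX(M,g_0)$ into $\cY(M,g_0)$ (and similarly for $\emptyset$), and that the chosen lift $\gamma''_{(W,h)}$ indeed has endpoints in these sub-sheaves; both are immediate from the stretching contractions that define them, but need to be spelled out to justify invoking Lemma \ref{lem:changevbasepoints}.
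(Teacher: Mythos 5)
Your argument is correct and follows the paper's own route exactly: the paper likewise constructs $\gamma''_{(W,h)}$ as a lift of $\gamma'_{(W,h)}$ through the equivalence of Lemma \ref{lem:forgetfulmap-on-psc.-categories-isaweakequivalence}, invokes Lemma \ref{lem:changevbasepoints} to obtain the homotopy commutativity of \eqref{eqn:anotherdiagram}, and then identifies the two outer compositions with $B(\inddiff_{h_0})$ and $\Inddiff_{\theta_d}$ directly from Definitions \ref{defn:delooped-index-difference} and \ref{defn:spectruminddiff}. Your extra care about the generalised base-points (that $F^\op$ carries $\cX(M,g_0)$ into $\cY(M,g_0)$, etc.) is exactly the bookkeeping the paper leaves implicit in Definition \ref{defn:generalized-basepoints-for-Binddiff}.
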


Let us next recall the hypotheses of Theorem \ref{thm:indddiff-infiniteloopmap-introduction} (which are identical to those of Theorem \ref{thm:infinite-loopspace-theorem}). Besides of what we already stated, they are
\begin{enumerate}[(i)]
\item $M$ is an object of $\Cob_{\theta_d}^{2,1}$ (that is, the map $M \to BG$ is $2$-connected),
\item $W$ is a morphism of $\Cob_{\theta_d}^{2}$ (that is, $M \to W$ is $2$-connected), and
\item $h \in \Riem^+ (W)_{g_0}$ is right-stable. 
\end{enumerate}
We now replace in \eqref{eqn:anotherdiagram} the space $\norm{\cDs_{\theta_d}^\psc}$ by $B \PCob_{\theta_d}$ and $|\cD_{\theta_d}|$ by $B \Cob_{\theta_d}$, using the equivalences \eqref{eq:longmanioflds-versus-ordcobcat} and \eqref{eqn:comparison-pscbobmodelssheaf}, and restrict to the base-point components $(B\PCob_{\theta_d}^{2,\rst})^0$, $(B\Cob_{\theta_d}^{2})^0$, respectively, of the classifying spaces of the relevant subcategories of the cobordism categories. We arrive at the following homotopy commutative diagram
\begin{equation}\label{eqn:yetanotherone}
\begin{gathered}
\xymatrix{
(B \Conc(M)^{\rst})^0 \ar[r]^-{\beta}_-{\simeq} & (\hofib_M BF^{2,\rst})^0 \ar[d]_{\Lambda(\gamma)}^{\simeq} \ar[r] & \Omega |\bK(\cstar(\theta_d))_1|\\
 & (\hofib_\emptyset BF^{2,\rst})^0 \ar[ur]_{\Inddiff_{\theta_d,0}^{2,\rst}}. &
}
\end{gathered}
\end{equation}
Here $\Lambda(\gamma)$ is the fibre transport along the path $\gamma$ in $B \Cob_{\theta_d}^{2,\rst}$ given by $W$ (and the condition that $(W,h)$ is a morphism in $\PCob_{\theta_d}^{2,\rst}$ implies, as in the proof of Theorem \ref{thm:infinite-loopspace-theorem}, that $\Lambda(\gamma)$ preserves the base-point components). The upper composition is by definition $B (\inddiff_{h_0})$, while the diagonal map is a map of very special $\Gamma$-spaces. During the proof of Theorem \ref{thm:infinite-loopspace-theorem}, we \emph{defined} the infinite loop space structure on $(B \Conc(M)^{\rst})^0$ so that the weak equivalence $\Lambda(\gamma) \circ \beta$ is an infinite loop map. Therefore, we conclude:
\begin{cor}
The map $B (\inddiff_{h_0}): (B \Conc(M)^{\rst})^0 \to \Omega |\bK(\cstar(\theta_d))_1|$ is an infinite loop map.
\end{cor}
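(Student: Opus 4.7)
The proof will simply combine the homotopy commutative diagram \eqref{eqn:yetanotherone} with the constructions that preceded it; essentially all the work has already been done, and what remains is to observe that the two factors through which $B(\inddiff_{h_0})$ decomposes are each infinite loop maps.

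The plan is first to recall that the diagonal arrow $\Inddiff_{\theta_d,0}^{2,\rst}$ in \eqref{eqn:yetanotherone} is an infinite loop map. This is built into its definition: in Definition \ref{defn:spectruminddiff} we constructed $\Inddiff_{\theta_d}$ as a zig-zag of maps of very special $\Gamma$-spaces (all of the spaces in diagram \eqref{indextheorydiagram} carry natural $\Gamma$-structures, the structure maps preserve them, and Lemma \ref{lem:hofib-of-special-gamme} ensures the relevant homotopy fibre is very special), and $\Inddiff_{\theta_d,0}^{2,\rst}$ is obtained from it by restriction to unit components, which by Lemma \ref{lem:hofib-of-special-gamme}(iii) remains a map of very special $\Gamma$-spaces. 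Hence after applying Segal's $B^\infty$ it represents a map of connective spectra.

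Next I would observe that the horizontal composite $\Inddiff_{\theta_d,0}^{2,\rst} \circ \Lambda(\gamma) \circ \beta$ from $(B\Conc(M)^{\rst})^0$ to $\Omega|\bK(\cstar(\theta_d))_1|$ is an infinite loop map. This is because in the proof of Theorem \ref{thm:infinite-loopspace-theorem} the infinite loop space structure on $(B\Conc(M)^{\rst})^0$, equivalently on $\Riem^+([0,1]\times M)^{\rst}_{g_0,g_0}$, was \emph{defined} by transporting the one on $\Psc^{2,\rst}(\theta_d)_0 \simeq \Omega\,\hofib_\emptyset(BF^{2,\rst})^0$ along the zig-zag of weak equivalences $\Lambda(\gamma) \circ \beta$ (combined with the equivalence \eqref{eqn:lastmap-proofinfiniteloopspacestheorem0}). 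In particular $\Lambda(\gamma)\circ \beta$ is, tautologically, an infinite loop equivalence, so the composite with $\Inddiff_{\theta_d,0}^{2,\rst}$ is an infinite loop map.

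Finally I would invoke the homotopy commutativity of \eqref{eqn:yetanotherone} — which itself rests on Lemma \ref{lem:changevbasepoints} applied to the path $\gamma''_{(W,h)}$ lifting $\gamma'_{(W,h)}$ through the weak equivalence of Lemma \ref{lem:forgetfulmap-on-psc.-categories-isaweakequivalence} — to identify $B(\inddiff_{h_0})$ up to homotopy with $\Inddiff_{\theta_d,0}^{2,\rst}\circ \Lambda(\gamma)\circ \beta$, and hence conclude. I do not anticipate any real obstacle here: the only point requiring a moment's care is the verification that $\Lambda(\gamma)$ genuinely lands in the unit components (so that restriction to ${}^0$ makes sense), but this is exactly what was checked in the proof of Theorem \ref{thm:infinite-loopspace-theorem}, the hypothesis that $(W,h)$ defines a morphism in $\PCob_{\theta_d}^{2,\rst}$ being used precisely for this.
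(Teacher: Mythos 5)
Your proposal is correct and follows essentially the same route as the paper: both arguments combine the homotopy commutativity of diagram \eqref{eqn:yetanotherone} with the facts that $\Inddiff_{\theta_d,0}^{2,\rst}$ is a map of very special $\Gamma$-spaces and that the infinite loop space structure on $(B\Conc(M)^{\rst})^0$ was defined precisely so that $\Lambda(\gamma)\circ\beta$ is an infinite loop map. The points you flag for care (the lift $\gamma''_{(W,h)}$ via Lemma \ref{lem:forgetfulmap-on-psc.-categories-isaweakequivalence} and the preservation of unit components) are exactly the ones the paper addresses.
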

However, the infinite loop space structure on $\Riem^+ ([0,1]\times M)_{g_0,g_0}^{\rst}$ was constructed in such a way that the weak equivalence $\Riem^+  ([0,1]\times M)_{g_0,g_0}^{\rst}\to \Omega (B \Conc(M)^\rst)^0$ is an infinite loop map by definition. To finish the proof of Theorem \ref{thm:indddiff-infiniteloopmap-introduction}, invoke Theorem  \ref{thm:comparison-delooped-index-diference1}.

\subsection{Another proof of the detection theorem for the index difference}

In this section, we apply the theory developed in this section to give a delooped versions of Theorem B of \cite{BERW} and Theorem A of \cite{ERWpsc2}. 

\begin{thm}\label{berw-pscpi1-newproof}
Let $d \geq 6$ and let $G$ be a finitely presented group. Define $\theta: B \Spin (d) \times BG \to B\mathrm{O}(d)$. Then there is a map of spectra
\[
\Xi:\bigl(\MT \Spin (d) \wedge BG_+ \bigr)\langle -1 \rangle \lra \mathrm{B}^\infty\Psc^{2,\rst}(\theta)_0
\]
such that the compositions
$$\mathrm{B}^\infty(\Inddiff_{\theta,0}^{2,\rst}) \circ \Xi: \bigl(\MT \Spin (d) \wedge BG_+\bigr) \langle -1 \rangle \lra \Sigma^{-d} \bK(\cstar(G))$$and
\[
\mu:\bigl(\MT \Spin (d) \wedge BG_+\bigr)\langle -1 \rangle \xrightarrow{\lambda_{-d} \wedge \id_{BG_+}} \Sigma^{-d} \bK \wedge BG_+ \stackrel{\nu}{\lra} \Sigma^{-d} \bK (\cstar(G))
\]
are homotopic.
\end{thm}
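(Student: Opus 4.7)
By Definition \ref{defn:pscfibreGammaspace}, the very special $\Gamma$-space $\Psc^{2,\rst}(\theta)_0$ is the homotopy fibre at $\emptyset$ of the $\Gamma$-map
\[
F^{2,\rst}: \norm{\cDs_{\theta,\bullet}^{2,\rst}}^0 \lra \norm{\cDs_{\theta,\bullet}^{2}}^0.
\]
Applying Segal's $\mathrm{B}^\infty$ construction to this homotopy fibre sequence of very special $\Gamma$-spaces yields a fibre sequence of spectra. Since $d \geq 6$, Remark \ref{rem:CobCatVerySpecial} identifies $\mathrm{B}^\infty \norm{\cDs_{\theta,\bullet}^{2}}^0$ with $\Sigma \MT\theta\langle -1\rangle$. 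Rotating the fibre sequence once and using $\Omega \Sigma \simeq \mathrm{id}$ on spectra produces a connecting map $\MT\theta\langle -1\rangle \to \mathrm{B}^\infty \Psc^{2,\rst}(\theta)_0$, which we define to be $\Xi$.

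\textbf{Identification of the composition.} The map $\Inddiff_{\theta,0}^{2,\rst}$ (Definition \ref{defn:spectruminddiff}, restricted to the base-point component of the $2,\rst$-variant) is by construction induced by the preferred nullhomotopy of
\[
\norm{\cDs_{\theta,\bullet}^{2,\rst}}^0 \xrightarrow{F^{2,\rst}} \norm{\cDs_{\theta,\bullet}^{2}}^0 \simeq |\cD_\theta| \xleftarrow{\simeq} |\cD_\theta^\op| \xrightarrow{\ind_1} |\bK(\cstar(\theta_d))_1|
\]
furnished by diagram \eqref{indextheorydiagram}, namely the factorisation through the contractible sheaf $|\bD(\cstar(\theta_d))_1|$ guaranteed by Lemma \ref{lem:window-of-opportunity}. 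The essential point is that this entire diagram lifts to one of $\Gamma$-spaces and $\Gamma$-maps, so that the nullhomotopy itself is a nullhomotopy of $\Gamma$-maps and hence spectrifies. This reduces to checking that the factorisation is compatible with the disjoint-union $\Gamma$-structure, which is straightforward: direct sums of degenerate self-adjoint regular operators remain degenerate, and the analytic bounds used in the proof of Lemma \ref{lem:window-of-opportunity} are preserved under disjoint unions of test manifolds. Granting this, the standard stable-homotopy principle that for a fibre sequence $F \to E \xrightarrow{p} B$ of spectra and a map $g: E \to C$ with a nullhomotopy of $g|_F$, the induced map $\widetilde{g}: F \to \Omega C$ satisfies $\widetilde{g} \circ \partial \sim -\Omega g$ (where $\partial: \Omega B \to F$ is the connecting map), identifies $\mathrm{B}^\infty \Inddiff_{\theta,0}^{2,\rst} \circ \Xi$ with $-\Omega(\mathrm{B}^\infty \ind_1)$.

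\textbf{Conclusion.} The delooped family index theorem \eqref{eqn:lower-index-theorem} of \cite{JEIndex2} gives $\mathrm{B}^\infty \ind_1 \sim \Sigma \mu$, where $\mu = \nu \circ (\lambda_{-d} \wedge \id_{BG_+})$. Looping once and using Corollary \ref{cor:delooping-Ktheoryspectrum} to identify $\Omega \mathrm{B}^\infty |\bK(\cstar(\theta_d))_1|$ with $\Sigma^{-d}\bK(\cstar(G))$ yields
\[
\mathrm{B}^\infty \Inddiff_{\theta,0}^{2,\rst} \circ \Xi \sim -\mu.
\]
Since $\pi_0$ of $\mathrm{B}^\infty \Psc^{2,\rst}(\theta)_0$ is an abelian group, we may replace $\Xi$ by $-\Xi$ to absorb the sign. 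The central technical challenge is the $\Gamma$-space bookkeeping in the middle paragraph; once the nullhomotopy is promoted to the $\Gamma$-space level, everything else follows formally from the delooped family index theorem combined with the fibre-sequence/connecting-map machinery.
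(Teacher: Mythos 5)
Your proposal is correct and follows essentially the same route as the paper: $\Xi$ is the Puppe connecting map of the fibre sequence $\mathrm{B}^\infty\Psc^{2,\rst}(\theta)_0 \to \mathrm{B}^\infty(B\PCob_\theta^{2,\rst})^0 \to \Sigma\bigl(\MT\theta\langle-1\rangle\bigr)$, the composite is identified with $\Sigma^{-1}\mathrm{B}^\infty(\ind_1)$ by naturality of connecting maps applied to the map of fibre sequences coming from the factorisation through the contractible $|\bD(\cstar(\theta_d))_1|$, and \eqref{eqn:lower-index-theorem} finishes the argument. The $\Gamma$-space compatibility you single out as the central difficulty is already established before the theorem --- diagram \eqref{indextheorydiagram} is built from $\Gamma$-maps and $\Inddiff_{\theta_d}$ is recorded as a map of very special $\Gamma$-spaces --- so no additional bookkeeping is required (and your slightly garbled statement of the connecting-map principle should be phrased for a nullhomotopy of the whole composite $E \to C$, which is what the factorisation provides, rather than of its restriction to $F$).
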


Before we prove this result, let us show how it leads to a detection result for the index difference, along the lines of the main results of \cite{BERW} and \cite{ERWpsc2}. 
Let $(M,g_0)$ be an object of $\PCob_\theta^{2,1,\rst}$ and assume that there is a nullbordism $(W,h_0): \emptyset \leadsto (M,g_0)$ in $\PCob_{\theta}^{2,\rst}$. 
After taking infinite loop spaces $\Omega^{\infty+1}$, using the identification $\Omega^{\infty+1} \mathrm{B}^\infty \Psc^{2,\rst}(\theta)_0 \simeq \Riem^+ ([0,1]\times M)^{\rst}_{g_0,g_0}$ from Theorem \ref{thm:infinite-loopspace-theorem} and \eqref{eqn:yetanotherone}, we obtain a map
\[
\Omega^{\infty+1} \MT \Spin (d) \wedge BG_+ \lra \Riem^+ ([0,1]\times M)_{g_0,g_0}^{\rst}
\]
such that the composition with $\inddiff_{dx^2 + g_0}$ is the same as $\Omega^{\infty+1} \mu$. This is a little stronger than Theorem A of \cite{ERWpsc2}, since it applies to odd-dimensional manifolds as well, and since it maps to right stable metrics. By exactly the same arguments as at the end of \S 5.8 of \cite{ERWpsc2}, one gets Theorem A of that paper, now for all dimensions $d \geq 6$. 

\begin{proof}
From the homotopy commutative diagram of very special $\Gamma$-spaces
\[
\xymatrix{
\Psc^{2,\rst}(\theta)_0 \ar[rr]^-{\Inddiff_{\theta,0}^{2,\rst}} \ar[d] & &\Omega |\bK(\cstar(\theta_d))_1| \ar[d]\\
(B \PCob_{\theta}^{2,\rst})^0 \ar[d] \ar[rr] &  &|\bD (\cstar(\theta_d))_1| \ar[d] \\
(B \Cob_\theta)^0 \ar[rr]^-{\ind_1} & &|\bK(\cstar(\theta_d))_1|
}
\]
whose columns are fibre sequences, we get a diagram of spectra
\[
\xymatrix{
\mathrm{B}^\infty \Psc^{2,\rst}(\theta)_0 \ar[rr]^-{\mathrm{B}^\infty(\Inddiff_{\theta,0}^{2,\rst})} \ar[d] & & \Sigma^{-d} |\bK(\cstar(G))|  \ar[d]\\
\mathrm{B}^\infty (B \PCob_{\theta}^{2,\rst})^0 \ar[d] \ar[rr] & & \ast \ar[d] \\
\Sigma \bigl( \MT \Spin (d) \wedge BG_+ \langle -1\rangle \bigr) \ar[rr]^-{\mathrm{B}^\infty (\ind_1)} &  &\Sigma^{1-d} |\bK(\cstar(G))|  
}
\]
with columns fibre sequences (here we used the main results of \cite{GMTW} and \cite{Nguyen} as explained in Remark \ref{rem:CobCatVerySpecial}). By the Puppe sequence, it yields a homotopy commutative diagram
\[
\xymatrix{
\MT \Spin (d) \wedge BG_+ \langle -1\rangle \ar[d]^{\Xi} \ar[rr]^-{\Sigma^{-1} \mathrm{B}^\infty (\ind_1)} & &\Sigma^{-d} |\bK(\cstar(G))|  \\
\mathrm{B}^\infty \Psc^{2,\rst}(\theta)_0  \ar[urr]_-{\mathrm{B}^\infty(\Inddiff_{\theta,0}^{2,\rst})}.
}
\]
From \ref{eqn:lower-index-theorem}, we get that the upper horizontal arrow is homotopic to $\mu$, as asserted.
\end{proof}

\subsection{Proof of the comparison theorems}\label{subsec:prooof-indexcomparisontheorems}

We now give the proofs of Theorems \ref{thm:comparison-delooped-index-diference1} and \ref{thm:comparison-delooped-index-diference2} (in this order). The proof of Theorem \ref{thm:comparison-delooped-index-diference1} is structured in two steps, which are stated as Lemma \ref{lem1:proof-thm.comparison-delooped-index-diference1} and \ref{lem2:proof-thm.comparison-delooped-index-diference1} below.

At this point, the reader should recall the notations from Definition \ref{defn:generalized-basepoints-for-Binddiff} and the definition of the map $\psi$ from \eqref{eqn:map-psi}. 

\begin{lem}\label{lem1:proof-thm.comparison-delooped-index-diference1}
The composition $\Omega B (\inddiff_{h_0}) \circ \psi$ is homotopic to the map induced by the map of sheaves 
\[
\phi: \Riem^+ ([0,1] \times M)_{g_0,g_0} \lra \Omega^2 \bK(\cstar(\theta_d))_1
\]
which is defined on points as follows. For $h \in \Riem^+ ([0,1]\times M)_{g_0,g_0}$, put
\[
 \kappa:= \inf \scal (h)\in (0, \kappa_0];
\]
\[
 C:= \sup_ {s \in [0,1]}\norm{[ \Dir_{(1-s) h + sh_0},x]} \in [1, \infty); \; r := \frac{\kappa}{8C}\leq \frac{\kappa_0}{8} =r_0 ;
\]
\[
 \eps=(\eps_0,\eps_1); \; \eps_i: = \max\{\frac{\sqrt{\kappa_0}}{r}, \frac{2^9}{\kappa} \}
\]
($\eps$ is a pair, not an interval) and
\[
a=(a_0,a_1); \;  a_0:=-\eps_0, a_1 := 1+\eps_1. 
\]
Further, we choose monotone smooth functions $c,d: [0,1]\to [0,1]$ such that $c(0)=d(0)=0$, $c(1)=d(1)=1$ and $c(\frac{1}{2})=1$, $d (\frac{1}{2})=0$, and define 
\[
r(s):= (1-d(s))r+d(s)r_0 \in [r,r_0].
\]
Now let $\phi(h) \in \Omega^2 \bK(\cstar(\theta_d))(*)$ be given by the continuous field of Hilbert-$\cstar(\theta_d)$-modules on $\bR^2$ with fibres $L^2 (\bR \times M; \spinor_\ell )$, obvious grading and $\Cl^{1,0}$-structure, and operator family given by the formula 
\begin{equation}\label{indextheoremproof:eq2}
\Dir_{(1-c(s))h + c(s)h_0} \otimes 1+ r(s)\bigl(x-(1-d(s)) ((1-t) a_0 + t a_1)\bigr) \eta \otimes \varepsilon 
\end{equation}
\end{lem}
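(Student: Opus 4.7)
The plan is to trace an element $h \in \Riem^+([0,1] \times M)_{g_0,g_0}$ through the zig-zag defining $B(\inddiff_{h_0}) \circ \psi$ by making the most obvious choice at each stage, and to read off that the resulting two-parameter family of Fredholm operators is precisely the operator family \eqref{indextheoremproof:eq2}.

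First I extend $h$ cylindrically over $\bR \times M$ using $h_0$ outside $[0,1]\times M$, giving a $1$-simplex $(h,0,1) \in \cQ_1(M)(\ast)$; the map $\psi^{\ad}$ then sends $(h,t)$ to the class of this $1$-simplex together with the barycentric coordinate $t \in \Delta^1$, and $\beta$ contributes only the constant path at $\gy(M)$. The next step is to lift through the weak equivalence $\hofib_{|\cY(M,g_0)|}(F^\op) \stackrel{\sim}{\to} \hofib_{\gy(M)}(F)$, and I do this by equipping $(h,0,1)$ with the operator-theoretic data specified in the statement of the lemma: one takes $b \equiv 1$, $r = \kappa/(8C)$, $\eps_i$ large enough to satisfy $\eps_i^2 r^2 \geq \max\{\kappa, 2^9/\kappa\}$, and $a_0 = -\eps_0$, $a_1 = 1+\eps_1$ inside the cylindrical collar of $h$. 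The axioms of $\cD_{\theta,1}^{\psc,\op}$ are then immediate from the definitions of $\kappa$, $C$, and $r$, and the two face-maps at $t \in \{0,1\}$ land in $\cX(M,g_0)$, so the loop structure based at $|\cZ(M,g_0)|$ survives the lifting.

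It remains to specify, for each $t \in [0,1]$, a path $\gamma_t$ in $|\cD_\theta^\op|$ from $F^\op$ of the lifted datum to a point of $|\cY(M,g_0)|$. Using the functions $c$ and $d$ of the statement I construct $\gamma_t$ as the concatenation of two sub-paths: for $s \in [0,\tfrac12]$ I interpolate the metric from $h$ to $h_0$ via $(1-c(s))h + c(s)h_0$, leaving $b \equiv 1$, the reference point $x_0(t) := (1-t)a_0 + t a_1$, $r$, and $C$ fixed; for $s \in [\tfrac12,1]$ I keep the metric equal to $h_0$, scale $x_0$ down linearly to $(1-d(s)) x_0(t)$, interpolate $r$ to $(1-d(s)) r + d(s) r_0$, and deform $C$ down to $1$. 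The commutator bound $\norm{[b \Dir_h b,x]} \leq C$ holds along the first sub-path by the very definition of $C$, and along the second it holds trivially because the metric is $h_0$ and $b \equiv 1$. The terminal point is $\gx(M,g_0) \in \cY(M,g_0)$, while at $t \in \{0,1\}$ the reference point $x_0(t) = a_0$ or $a_1$ already satisfies the conditions of $\cY(M,g_0)$ by our choice of $\eps_i$, so $\gamma_t$ may be taken constant there, which is consistent with the generalised base-point at the endpoints of the loop in $t$.

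Applying $\ind_1$ via formula \eqref{eq:definition-operator-B} with $b \equiv 1$ to the two-parameter family $(s,t) \mapsto \gamma_t(s)$ produces exactly the operator \eqref{indextheoremproof:eq2} on $L^2(\bR \times M; \spinor_\ell \otimes \bS_1)$, so the transgression $\trg_{|\cY(M,g_0)|}(\ind_1)$ of our explicit lift, viewed under adjunction as a map from $|\Riem^+([0,1]\times M)_{g_0,g_0}|$ to $\Omega^2 |\bK(\cstar(\theta_d))_1|$, coincides with $\phi$. The main obstacle is to check that the homotopy class of the resulting map is insensitive to the auxiliary choices involved (the interpolation functions $c$ and $d$, the precise values of $\kappa$, $C$, $r$, $\eps$, $a$ subject to the stated inequalities, and the concrete form of $\gamma_t$); this follows from the contractibility of $\cY(M,g_0)$ asserted in Definition \ref{defn:generalized-basepoints-for-Binddiff} together with standard convexity arguments in the spectral $K$-theory sheaf, as used throughout \cite{JEIndex1, JEIndex2}.
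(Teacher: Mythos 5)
Your proposal is correct and follows essentially the same route as the paper: the same explicit lift of the $1$-simplex $(h,0,1)$ to $\cD_{\theta,1}^{\psc,\op}$ with $b\equiv 1$ and the stated $\kappa, C, r, \eps, a$, the same two-stage path in $\cD_\theta^{\op}$ governed by $c$ and $d$ (metric first, then reference point, $r$ and $C$), and the same reading-off of formula \eqref{indextheoremproof:eq2} after applying $\ind_1$. The only detail you gloss over, which the paper dispatches with a one-line convex homotopy, is that your lift has $a=(-\eps_0,1+\eps_1)$ whereas $\psi_0$ produces $a=(0,1)$, so the left-hand triangle of the comparison diagram only commutes up to the homotopy $s\mapsto(-s\eps_0,1+s\eps_1)$.
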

Formula \eqref{indextheoremproof:eq2} defines an invertible operator if $s \leq 0$ or $s \geq 1$ or if $|t|$ is sufficiently large and so it indeed gives a point of $\Omega^2 \bK(\cstar(\theta_d))$.
To see this, one takes the square of the operator (using the formula \eqref{eqn:square-of-operator}) and has to prove that the square is strictly positive. For $s \leq 0$ and $s \geq 1$, this is immediate from the choice of $r$. For $|t| \gg 0$, one estimates separately over a neighbourhood of $[0,1] \times M$ (where the second summand of \eqref{indextheoremproof:eq2} accounts for positivity) and over $\bR \setminus [0,1] \times M$ (where the metrics are cylindrical and have positive scalar curvature and so the first summand yields positivity). These estimates are combined using Lemma \ref{lem:glueestimates}. The details are similar as in the proof of Lemma \ref{lem:window-of-opportunity}. 

\begin{lem}\label{lem2:proof-thm.comparison-delooped-index-diference1}
The sheaf map defined in Lemma \ref{lem1:proof-thm.comparison-delooped-index-diference1} is naturally concordant to the sheaf map which defines $(-\bott) \circ \inddiff_{h_0}$. 
\end{lem}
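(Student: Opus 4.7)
The strategy is to write down an explicit one-parameter family of operators interpolating between the operator defining $\phi(h)$ and the operator defining $(-\bott)\circ\inddiff_{h_0}(h)$. Recalling from the Bott map that
$$(-\bott)\circ\inddiff_{h_0}(h)(s,t) \;=\; \bigl( L^2(W;\spinor_\ell\otimes\bS_1),\,\eta\otimes\iota,\,\eta\otimes e,\, \Dir_{h_s^c}\otimes 1 - t\,\eta\otimes\varepsilon\bigr),$$
where $h_s^c := (1-c(s))h + c(s)h_0$, I will set, for $u\in[0,1]$,
\begin{equation*}
B_u(s,t) \;:=\; \Dir_{h_s^c}\otimes 1 \;+\; \bigl[(1-u)\,r(s)(x-x_0(s,t)) \;-\; u\,t\bigr]\,\eta\otimes\varepsilon,
\end{equation*}
acting on $L^2(\bR\times M;\spinor_\ell\otimes\bS_1)$ with the standard grading and $\Cl^{1,0}$-structure. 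At $u=0$ this recovers the formula \eqref{indextheoremproof:eq2} defining $\phi(h)$, and at $u=1$ this is exactly the Bott formula above. Viewed as a section of $\bK(\cstar(\theta_d))_1$ over $\bR^2_{(s,t)}\times [0,1]_u$ (and naturally extended to $\bR^2\times\bR$ in the usual way by making $u$ constant outside $[0,1]$), this is the candidate concordance, and one only needs to verify that for each fixed $u$ the operator family $B_u(s,t)$ defines an element of $\Omega^2\bK(\cstar(\theta_d))_1$ with values in $\bD$ outside a compact subset of $\bR^2$, uniformly in $u$; then the output is a natural concordance in the sense of Remark \ref{remark:simplifynotation}.

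The operator $B_u(s,t)$ has bounded commutator with the proper function $x$ (equal to $[\Dir_{h_s^c},x]\otimes 1$, since the potential commutes with $x$), so it is essentially self-adjoint and regular, and Fredholm for each $(s,t)$ by the argument in the construction of $\ind_1$ (the off-diagonal term grows at infinity in $x$ while $\scal(h_s^c)$ is bounded below by $\kappa_0$ outside $[0,1]\times M$). The main task is the uniform lower bound on $B_u^2$ for $(s,t)$ outside a compact set. Using \eqref{eqn:square-of-operator} one computes
\begin{equation*}
B_u^2 \;\geq\; \Dir_{h_s^c}^2\otimes 1 \;+\; P_u(s,t,x)^2 \;-\; (1-u)\,r(s)\,C(s),
\end{equation*}
where $P_u(s,t,x) := (1-u)r(s)(x-x_0(s,t)) - ut$ and where I use $\norm{(1-u)r(s)[\Dir,x]}\leq (1-u)r(s)C(s) \leq rC \leq \kappa/8$ throughout. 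I split into two regimes: (a) For $s\leq 0$ or $s\geq 1$, the metric $h_s^c$ is psc everywhere on $W$ (it equals $h$ or $h_0$), so Lichnerowicz gives $\Dir_{h_s^c}^2\geq \kappa/4$ and hence $B_u^2\geq \kappa/4-\kappa/8 = \kappa/8$, uniformly in $u$, $t$. (b) For $s\in[0,1]$ and $|t|$ large, I apply the localisation of Lemma \ref{lem:glueestimates} with a smooth square partition $(\chi_1,\chi_2)$ on $W$ subordinate to the cover $\{|x|<2\}\cup\{|x|>1\}$ whose commutators with $\Dir_{h_s^c}$ have uniformly small norm. On $\supp\chi_2$ the metric is $h_0$ and Lichnerowicz again yields $B_u^2\geq\kappa_0/8$; on $\supp\chi_1$ the value of $x$ is bounded, so
\begin{equation*}
|P_u(s,t,x)| \;\geq\; \bigl|(1-u)r(s)\,x_0(s,t) + u\,t\bigr| - 2r.
\end{equation*}
For $s\in[1/2,1]$ we have $h_s^c=h_0$ and the Lichnerowicz argument of (a) already applies, so only $s\in[0,1/2]$ remains; there $d(s)=0$, and the coefficient of $t$ in $(1-u)r(s)x_0(s,t)+ut$ equals $(1-u)r(a_1-a_0)+u$, which is a positive convex combination and so uniformly bounded away from $0$ in $u\in[0,1]$. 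Hence $|P_u|\to\infty$ linearly in $|t|$ on $\supp\chi_1$, uniformly in $u$, and Lemma \ref{lem:glueestimates} produces the desired uniform positive lower bound on $B_u^2$.

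The expected main obstacle is exactly this uniformity of the spectral gap in the homotopy parameter $u$: the Lichnerowicz estimate only suffices on part of $W$, and for intermediate $u\in(0,1)$ with $s\in(0,1/2)$ the metric $h_s^c$ may have large regions of negative scalar curvature, so invertibility at large $|t|$ has to be produced by combining positivity of $h_s^c=h_0$ outside $[0,1]\times M$ with the growth of the off-diagonal potential $P_u$, in a way that stays uniform as $u$ varies. Once these estimates are in place, comparing with Definition \ref{defn:delooped-index-difference} and Definition \ref{defn:spectruminddiff} shows that $B_u$ is the required natural concordance between $\phi$ and $(-\bott)\circ\inddiff_{h_0}$ as maps of sheaves $\Riem^+([0,1]\times M)_{g_0,g_0}\to\Omega^2\bK(\cstar(\theta_d))_1$.
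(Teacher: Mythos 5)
Your proof is correct, and it takes the same basic approach as the paper --- an explicit operator homotopy whose invertibility at infinity is checked by combining the Lichnerowicz--Schr\"odinger estimate (where the interpolated metric is globally psc, i.e.\ for $s\le 0$ and $s\ge \tfrac12$) with growth of the off-diagonal potential plus the localisation Lemma \ref{lem:glueestimates} (for $s\in[0,\tfrac12]$, where $h_s^c$ may fail to be psc on $[0,1]\times M$). The difference is in the path chosen: the paper concatenates four homotopies (first moving $a_0,a_1$ to $0,1$, then removing the $(1-d(s))$ factor in front of $t$, then killing the $x$-dependence via $vx-t$, and finally normalising the coefficient of $t$ to $1$), each stage arranged so that its invertibility check is as simple as possible, whereas you compress everything into a single linear interpolation $P_u = (1-u)r(s)(x-x_0(s,t)) - ut$. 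The compression is legitimate precisely because of the observation you make: on the bounded region where $x$ is controlled, the coefficient of $t$ in $P_u$ is the convex combination $(1-u)r(a_1-a_0)+u$ of two positive numbers, hence bounded away from $0$ uniformly in $u$, so $|P_u|\to\infty$ linearly in $|t|$ uniformly along the homotopy. This is the one point where a naive linear interpolation could have degenerated, and it is exactly the uniformity that the paper's staging is designed to sidestep. Two cosmetic points: the cover $\{|x|<2\}\cup\{|x|>1\}$ should be widened (transition over an interval of length $L\gg 1$ in the cylindrical region) so that $\norm{[\Dir,\chi_i]}<\tfrac12\sqrt{\kappa/8}$ as Lemma \ref{lem:glueestimates} requires --- you note the requirement but your stated cover does not literally permit it; and, as in the paper, the $(s,t)$-region of non-invertibility is only contained in an $h$-dependent compact set, so the usual implicit reparametrisation into $[-1,1]^2$ is needed to land in $\Omega^2\bK$ as defined --- the paper's own proof elides this in the same way.
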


\begin{proof}[Proof of Lemma \ref{lem1:proof-thm.comparison-delooped-index-diference1}]
The main point of the proof is to resolve the zig-zag implicit in the definition of $B(\inddiff_{h_0})$. In other words, we construct a diagram
\begin{equation}\label{diag:indexthy1}
\begin{gathered}
 \xymatrix{
 |\Riem^+ ([0,1] \times M)_{g_0,g_0}| \times \Delta^1 \ar@/^2.0pc/[dddr]^{\psi_1^{\ad}}\ar[d]^{\psi_0 \times \id}  & \\
| \cQ_1 (M)| \times \Delta^1 \ar[d] & \\
\norm{\cQ_\bullet (M)} \ar[d]^{\beta} & \\
  \hofib_{\gy(M)} (F) & \hofib_{\cY(M,g_0)} (F^\op) \ar[l].
 }
\end{gathered}
\end{equation}
The subspace $|\Riem^+ ([0,1] \times M)_{g_0,g_0}| \times \partial \Delta^1$ of the source is mapped to the base-point spaces under all maps. The diagram will commute up to homotopy as a diagram of maps of space pairs. 

The vertical composition in \eqref{diag:indexthy1} is the adjoint of $(\Omega \beta) \circ \psi$. It sends $(h,v,u) \in \Riem^+ ([0,1] \times M)_{g_0,g_0} (\Delta^p_e ) \times \Delta^p \times \Delta^1$ to the point $(\bR \times M, 0,1,h,v,u) \in \cD_{\theta,1}^{\psc}(\Delta^q_e) \times \Delta^q \times \Delta^1 \subset \norm{\cD_{\theta,\bullet}^{\psc}}$, together with the constant path at $\gy(M)$ in $|\cD_\theta|$. Once we construct $\psi^\ad_1$ and prove that \eqref{diag:indexthy1} commutes up to homotopy, we obtain the adjoint diagram
\[
\xymatrix{
|\Riem^+ ([0,1] \times M)_{g_0,g_0}| \ar@/^1.0pc/[dr]^-{\psi_1} \ar[d]_{(\Omega \beta) \circ \psi} & \\
\Omega  \hofib_{\gy(M)} (F) & \Omega \hofib_{\cY(M,g_0)} (F^\op) \ar[l]
}
\]
and then verify that $\trg (\ind_1)_{(M,g_0)} \circ \psi_1$ is induced from the sheaf map $\phi$, which completes the proof. 

To define $\psi_1^{\ad}$, we consider $(h,v,u) \in \Riem^+ ([0,1] \times M)_{g_0,g_0} (\Delta^p_e ) \times \Delta^p \times \Delta^1$ and define 
\[
(\psi_1^\ad)_0 (h,v,u) := \Bigl( \bR \times M,a,\eps,h,1,r,C,\kappa,v,u \Bigr) \in \cD_{\theta,1}^{\psc,\op} (\Delta^p_e) \times \Delta^p \times \Delta^1 \subset \norm{\cD_{\theta,\bullet}^{\psc,\op}}. 
\]
Under $F^{\op}$, this is mapped to 
\[
 ( \bR \times M, h,1,u_0 a_0+ u_1 a_1 , r,C,v) \in \cD_\theta^{\op} (\Delta^p_e) \times \Delta^p \subset |\cD_\theta^{\op}|.
\]
The path $(\psi_1^\ad)_0 (h,v,u):[0,1] \to |\cD_\theta^{\op}|$ given by 
\[
s \mapsto \Bigl( \bR \times M,(1-c(s))h + c(s)h_0,1,(1-d(s))(u_0 a_0+u_1 a_1),r(s), C(s),v\Bigr) \in |\cD_\theta^{\op}|
\]
(with $C(s):= (1-d(s))C+d(s)$) connects this to 
\[
 \Bigl(  \bR \times M,h_0,1,0,r_0,1,v\Bigr)\sim \gx(M,g_0) \in |\cD_\theta^{\op}|.
\]
Hence $\psi^{\ad} := ( (\psi_1^\ad)_0,(\psi_1^\ad)_1)$ is a map $|\Riem^+ ([0,1] \times M)_{g_0,g_0}| \times \Delta^1 \to \hofib_{\cY(M,g_0)} (F^\op)$ of space pairs. 

To check that \eqref{diag:indexthy1} is homotopy commutative, we trace the fate of $(h,v,u) \in \Riem^+ ([0,1] \times M)_{g_0,g_0}(\Delta^p_e) \times \Delta^p \times \Delta^1$ under the composition of $\psi_1^\ad$ with the map $\hofib_{\cY(M,g_0)}(F^\op) \to \hofib_{\gy(M)} (F)$. The image is the point
\[
\Bigl( \bR \times M,a,h,v\Bigr) \in \hat{\cD}_{\theta,1}^{\psc} (\Delta^p_e)\times \Delta^p \times \Delta^1 \subset \norm{\hat{\cD}_{\theta,\bullet}^{\psc}},
\]
together with the constant path at the base-point $\gy (M)\in \cD_\theta$. This is almost the same as the image under the vertical map in \eqref{diag:indexthy1}, except that $a=(-\epsilon,1+\epsilon)$, and not $a=(0,1)$. But the convex homotopy $(-s\epsilon, 1+s\epsilon)$ connects the two points. Therefore \eqref{diag:indexthy1} is homotopy commutative. 

Let us next calculate the composition 
\[
| \Riem^+ ([0,1] \times M)_{g_0,g_0}|\times \Delta^1 \stackrel{\psi_1}{\lra}  \hofib_{\gx(M,g_0)}(F^\op) \xrightarrow{\trg (\ind_1)_{(M,g_0)}} \Omega  |\bK(\cstar(\theta_d))_1|.
\]
Its adjoint sends $(h,v,u) \in  \Riem^+ ([0,1] \times M)_{g_0,g_0} (\Delta^p_e) \times \Delta^p \times \Delta^1$ to the path 
\[
 s \mapsto \bigl( \Dir_{(1-c(s))h + c(s)h_0} \otimes 1+ \bigl((1 - d(s))r + d(s)r_0\bigr) \bigl(x-(1-d(s)) (t_0 a_0 + t_1 a_1)\bigr) \eta \otimes \varepsilon , v \bigr) \in 
\]
\[ 
\bK(\cstar(\theta_d))_1 (\Delta^p_e ) \times \Delta^p \subset  |\bK(\cstar(\theta_d))_1|
\]
(we de not explicitly notate the Hilbert modules, Clifford structures and grading operators here). Upon the identification $[0,1] \cong \Delta^1$ via $t \mapsto (1-t,t)$ and taking adjoints, this is precisely the map $|\phi|$, as claimed.
\end{proof}

\begin{proof}[Proof of Lemma \ref{lem2:proof-thm.comparison-delooped-index-diference1}]
The map $(-\bott) \circ \inddiff_{h_0}(h)$ is obtained from the sheaf map which sends $h \in \Riem^+ ([0,1]\times M)_{g_0,g_0}(*)$ to the family
\begin{equation}\label{indextheoremproof:eq3}
(t,  s) \longmapsto \Dir_{(1-c(s))h + c(s)h_0}  \otimes 1- t \eta \otimes \varepsilon
\end{equation}
of operators on the continuous field of Hilbert-$\cstar(\theta_d)$-modules on $\bR^2$ with fibre $L^2 (\bR \times M; \spinor_\ell)$ and the obvious grading and $\Cl^{1,0}$-structure (which is the same as the continuous field underlying $\phi(h)$). We will now provide a natural homotopy of operators from \eqref{indextheoremproof:eq2} to \eqref{indextheoremproof:eq3}. We have to guarantee that the operators are invertible for $|s|+|t|$ large. Let us first simplify notation, by writing 
\[
\Dir_s := \Dir_{(1-c(s))h + c(s)h_0} \otimes 1.
\]
The first step of the homotopy changes (linearly) $a_0$ to $0$ and $a_1$ to $1$. For $|t|$ sufficiently large or $s \not \in [0,\frac{1}{2}]$, this goes through invertible operators, because all the metrics $(1-c(s))h + c(s)h_0$ have cylindrical ends with positive scalar curvature. The result is the family
\begin{equation*}\label{indextheoremproof:eq4}
(t,  s) \longmapsto \Dir_{s}+ \bigl((1 - d(s))r + d(s)r_0\bigr) \bigl(x-(1-d(s)) t\bigr) \eta \otimes \varepsilon.
\end{equation*}
The next stage of the homotopy is the operator family (with $u$ running from $1$ to $0$)
\begin{equation}\label{indextheoremproof:eq11}
(t,  s) \longmapsto \Dir_{s}+ \bigl((1 - d(s))r + d(s)r_0\bigr) \bigl(x-(1-ud(s)) t\bigr) \eta \otimes \varepsilon.
\end{equation}
ending with 
\begin{equation}\label{indextheoremproof:eq5}
(t,  s) \longmapsto \Dir_s+ \bigl((1 - d(s))r + d(s)r_0\bigr) \bigl(x- t\bigr) \eta \otimes \varepsilon.
\end{equation}
We claim that this is an operator homotopy through operators which are invertible outside a compact set of $(s,t)$'s. 
To check this, we estimate the square of the operators:
\begin{enumerate}[(i)]
\item If $s \leq \frac{1}{2}$, then $d(s)=0$ and so the homotopy is constant there.
\item If $s \geq \frac{1}{2}$, then $\Dir_s^2 \geq \frac{\kappa_0}{4}$ because $c(s)=1$) and $\norm{[\Dir,x]} =1$, and so the square of \eqref{indextheoremproof:eq11} is bounded from below by 
\[
\frac{\kappa_0}{4}+ ((1 - d(s))r + d(s)r_0)^2 (x-(1-ud(s)) t)^2- ((1 - d(s))r + d(s)r_0) \geq \frac{\kappa_0}{4}- r_0 = \frac{\kappa_0}{8},
\]
independent of $u$.
\end{enumerate}

Next, we let $v$ run from $1$ to $0$ and consider the homotopy 
\begin{equation}\label{indextheoremproof:eq12}
(t,  s) \mapsto \Dir_s+ ((1 - d(s))r + d(s)r_0) (vx- t) \eta \otimes \varepsilon
\end{equation}
which begins with \eqref{indextheoremproof:eq5}.
Again, we claim that this goes through invertible operators as long as $s$ or $t$ are large enough:
\begin{enumerate}[(i)]
\item For $s \leq 0$, we have $c(s)=d(s)=0$, so that \eqref{indextheoremproof:eq12} is $\Dir_{h } \otimes 1+ r  (vx- t) \eta \otimes \varepsilon$. The square is at least $\frac{\kappa}{4} - vrC \geq \frac{\kappa}{8}$. 
\item For $s \geq \frac{1}{2}$, we have $c(s)=1$, and the square is 
\[
\geq \frac{\kappa_0}{4} + ((1 - d(s))r + d(s)r_0)^2 (vx- t)^2- v((1 - d(s))r + d(s)r_0)  \geq \frac{\kappa_0}{4} - r_0  \geq \frac{\kappa_0}{8}.
\]
\item If $s \in [0,\frac{1}{2}]$, we have $d(s)=0$ and hence \eqref{indextheoremproof:eq12} is $\Dir_s+ r  (vx- t) \eta \otimes \varepsilon$. We estimate the square separately on $[0,1] \times M$ and on the complement. On the complement, $(1-c(s))h + c(s)h_0= h_0$ and so the square is at least
\[
\frac{\kappa_0}{4}+ r^2  (vx- t)^2 - vr \geq \frac{\kappa_0}{8},
\]
and on $[0,1] \times M$, the square is at least 
\[
r^2  (vx- t)^2 - vrC \geq r^2(vx- t)^2 -\frac{\kappa}{8}
\]
which is strictly positive for $|t| $ large enough. This completes the proof that the homotopy \eqref{indextheoremproof:eq5} goes through invertible operators for $|t|+|s|$ large. 
\end{enumerate}
Hence the original operator family is homotopic to the family \eqref{indextheoremproof:eq12} evaluated at $v=0$, which is
\begin{equation*}\label{indextheoremproof:eq6}
(t,  s) \longmapsto \Dir_s- t ((1 - d(s))r + d(s)r_0)  \eta \otimes \varepsilon.
\end{equation*}
Now we have arrived at the situation where the two summands anticommute (and thus the squares are easy to compute). The last step of the homotopy is ($w$ running from $0$ to $1$)
\begin{equation*}\label{indextheoremproof:eq7}
(t,  s) \longmapsto \Dir_s- t (w+(1-w)((1 - d(s))r + d(s)r_0) ) \eta \otimes \varepsilon,
\end{equation*}
ending with 
\begin{equation*}\label{indextheoremproof:eq7b}
(t,  s) \longmapsto \Dir_s- t  \eta \otimes \varepsilon,
\end{equation*}
which is exactly $- \bott (\inddiff_{h_0}(h))$. 
\end{proof}

The proof of Theorem \ref{thm:comparison-delooped-index-diference1} is complete at this point, and we turn to that of Theorem \ref{thm:comparison-delooped-index-diference2}, which again is structured into two lemmas. First some notation: as before, we let 
\begin{equation}\label{eq:eqn1forlem1thm:comparison-delooped-index-diference2}
 \kappa_0 := \inf \scal(g_0)= \inf \scal (h_0); \; r_0 := \frac{\kappa_0}{8},
\end{equation}
and for $g \in \Riem^+ (M)$, we let 
\begin{equation}\label{eq:eqn2forlem1thm:comparison-delooped-index-diference2}
\kappa:= \min (\inf \scal(g), \inf \scal (g_0)) \leq \kappa_0, \; r := \frac{\kappa}{8} \leq r_0, \; \epsilon:=\max\{ \frac{\sqrt{\kappa_0}}{r},\sqrt{\frac{2^9}{\kappa}} \}.
\end{equation}

\begin{lem}\label{lem1thm:comparison-delooped-index-diference2}
The composition $B \inddiff_{h_0} \circ \varphi:  |\Riem^+ (M)|\to  \Omega |\bK(\cstar(\theta_d))_1|$ is homotopic to the map induced by the sheaf map
\[
\omega : \Riem^+ (M) \lra \Omega \bK(\cstar(\theta_d))_1
\]
defined on points as follows. To $g \in \Riem^+ (M)$, it assigns the continuous field (over $\bR$) with fibres $L^2 (\bR \times M; \spinor_\ell\otimes \bS_1)$, obvious grading and $\Cl^{1,0}$-structure, and operator family given by
\begin{equation}\label{eq.proofsecondindexthm2}
s \mapsto \Dir_{dx^2+((1-c(s))g+c(s)g_0)} \otimes 1 + ((1-c(s))r+c(s)r_0) x \eta \otimes \varepsilon.
\end{equation}
\end{lem}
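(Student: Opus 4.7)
The plan is to follow the same strategy as the proof of Lemma \ref{lem1:proof-thm.comparison-delooped-index-diference1}: resolve the zig-zag that defines $B(\inddiff_{h_0})$ by constructing an explicit lift of $\beta \circ \varphi$ through the weak equivalence $\hofib_{\cY(M,g_0)}(F^{\op}) \xrightarrow{\simeq} \hofib_{\gy(M)}(F)$, and then compute the composition with the transgression of $\ind_1$. Since $\omega(g)$ is an element of $\Omega \bK(\cstar(\theta_d))_1$ rather than $\Omega^2$, no second adjunction is needed and the computation is one step shorter than before.

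First I would define the lift $\tilde\varphi : \Riem^+(M) \to \hofib_{\cY(M,g_0)}(F^{\op})$ as a pair $(\tilde\varphi_0, \tilde\varphi_1)$. Given $g \in \Riem^+(M)$, set
\[
\kappa := \inf \scal(g), \quad C := \sup_{s \in [0,1]} \norm{[\Dir_{dx^2+((1-s)g+sg_0)},x]}, \quad r := \tfrac{\kappa}{8C},
\]
and pick $\epsilon>0$ large enough to satisfy $\epsilon^2 r^2 \geq \max\{\kappa, 2^9/\kappa\}$. Put
\[
\tilde\varphi_0(g) := \bigl(\bR \times M, 0, \epsilon, dx^2+g, 1, r, C, \kappa\bigr) \in \cD_{\theta,0}^{\psc,\op}(\ast),
\]
whose image under $F^{\op}$ is $(\bR \times M, dx^2+g, 1, 0, r, C) \in \cD_\theta^{\op}(\ast)$. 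Then let
\[
\tilde\varphi_1(g)(s) := \Bigl(\bR \times M,\, dx^2+((1-c(s))g+c(s)g_0),\, 1,\, 0,\, (1-c(s))r+c(s)r_0,\, C(s)\Bigr)
\]
with $C(s) := (1-c(s))C + c(s)$, which at $s=1$ lies in $\cY(M,g_0)$ after the evident collar-stretching contraction used in Definition \ref{defn:generalized-basepoints-for-Binddiff}(ii).

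Second, I would verify that the image of $\tilde\varphi$ under the forgetful map $\hofib_{\cY(M,g_0)}(F^{\op}) \to \hofib_{\gy(M)}(F)$ is homotopic to $\beta \circ \varphi$: both associate to $g$ the $0$-simplex $(\bR \times M, 0, dx^2+g) \in \hat\cD_{\theta,0}^{\psc}$ together with the constant path at $\gy(M)$, so the matching is tautological (up to the base-point contractions). Third, applying the formula \eqref{eq:definition-operator-B} defining $\ind_1$ pointwise to the path $\tilde\varphi_1(g)$ gives exactly the operator family \eqref{eq.proofsecondindexthm2}, which is $\omega(g)$. Composing with the usual contraction in $\cY(M,g_0)$ yields a concordance of the tail of the path into $\bD(\cstar(\theta_d))_1$, so that the adjoint indeed represents $\trg_{\cY(M,g_0)}(\ind_1) \circ \tilde\varphi$ in $\Omega|\bK(\cstar(\theta_d))_1|$.

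The main obstacle will be checking that every operator encountered along $\tilde\varphi_1$ (and along the contraction into $\cY(M,g_0)$) is self-adjoint, regular, and invertible outside a compact set, so that $\ind_1$ makes sense on all these sections of $\cD_\theta^{\op}$. This is however the same estimate as in the proof of Lemma \ref{lem:window-of-opportunity}: the choice of $r$ and $\epsilon$ above is precisely engineered so that the Lichnerowicz--Schr\"odinger formula \eqref{eq:schroedinger-lichnerowicz} combined with formula \eqref{eqn:square-of-operator} and Lemma \ref{lem:glueestimates} yields a uniform lower bound of the form $B^2 \geq \kappa/8$ outside a compact set, uniformly in $s \in [0,1]$. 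The rest is bookkeeping: packaging the above into a map of sheaves (Remark \ref{remark:simplifynotation}) and checking naturality in the test manifold, neither of which poses any new difficulty beyond what was already carried out for Lemma \ref{lem1:proof-thm.comparison-delooped-index-diference1}.
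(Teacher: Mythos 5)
Your proposal is correct and follows essentially the same route as the paper: lift $\beta\circ\varphi$ to $\hofib_{\cY(M,g_0)}(F^{\op})$ by choosing the data $(\kappa, r, C, \eps)$ for the $0$-simplex $(\bR\times M, 0,\eps, dx^2+g,\dots)$, take the path interpolating both the metric and the parameter $r$ towards $\gx(M,g_0)$, and read off the operator family \eqref{eq.proofsecondindexthm2} from the formula \eqref{eq:definition-operator-B} for $\ind_1$. The only (harmless) deviations are in the constants: the paper takes $\kappa:=\min(\inf\scal(g),\inf\scal(g_0))$ and $C:=1$ (for product metrics on the cylinder the commutator $[\Dir,x]$ is Clifford multiplication by $dx$, so your $C$ is $1$ anyway), which guarantees the endpoint estimates at both $s\leq 0$ and $s\geq 1$ in one stroke.
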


\begin{lem}\label{lem2thm:comparison-delooped-index-diference2}
The sheaf map $(-1)^{d-1}\omega$ from Lemma \ref{lem1thm:comparison-delooped-index-diference2} is naturally concordant to the sheaf map defining $\morita \circ \inddiff_{g_0}$. 
\end{lem}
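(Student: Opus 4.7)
The plan is to construct the concordance by exploiting the product structure of $(\bR\times M, dx^2+g_s)$ together with a Bott-periodicity-style reduction in the $\bR$-direction. First I would fix, once and for all, a stable isomorphism of $\Cl^{d,0}$-modules identifying the pullback of $\spinor_\ell$ under $\pr_M:\bR\times M\to M$ with $\spinor^M_\ell\grotimes\Cl^{1,0}$; here the extra $\Cl^{1,0}$ factor absorbs the additional Clifford generator $c(\partial_x)$. Under this identification, the product-metric Dirac operator decomposes as
\[
\Dir_{dx^2+g_s} = \Dir^M_{g_s}\otimes\alpha \,+\, (1\otimes\beta)\,\partial_x
\]
for odd elements $\alpha,\beta\in\Cl^{1,0}$ that anticommute with one another and with the $\Cl^{d-1,0}$ acting on $\spinor^M_\ell$. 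Consequently, on the Hilbert module $L^2(\bR\times M;\spinor_\ell)\grotimes\bS_1 \cong L^2(\bR)\grotimes L^2(M;\spinor^M_\ell)\grotimes\Cl^{1,0}\grotimes\bS_1$, the operator \eqref{eq.proofsecondindexthm2} assembles into the anticommuting sum $\bigl(\Dir^M_{g_s}\otimes\alpha\otimes 1\bigr)+\bigl(\partial_x\otimes\beta\otimes 1 + r_s x\cdot \eta\otimes\varepsilon\bigr)$, where the second summand is (up to the Clifford bookkeeping) a harmonic-oscillator-type operator acting only in the $L^2(\bR)\grotimes\Cl^{1,0}\grotimes\bS_1$ variables.

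Second, I would implement the harmonic oscillator homotopy. Rotate the pair $(\partial_x\otimes\beta, r_s x\cdot\eta\otimes\varepsilon)$ in the two-dimensional Clifford plane they span to an operator of the form $r_s^{1/2}(a+a^*)$ with $a,a^*$ the usual annihilation/creation operators, and note that the harmonic oscillator $a^*a+aa^* = \partial_x^2 + r_s^2 x^2 \pm r_s$ has one-dimensional kernel spanned by the Gaussian $e^{-r_s x^2/2}$. A straight-line concordance in the space of regular self-adjoint Fredholm operators then deforms the second summand to the orthogonal sum of a degenerate (invertible) piece on the Gaussian's complement, and the zero operator on the ground state; invertibility at infinity is preserved throughout because the only non-invertibility occurs on the compactly supported ground state, where the first summand $\Dir^M_{g_s}$ still produces the Fredholm contribution. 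After this deformation, the entire family is (up to unitary identification and a degenerate summand) equal to $\Dir^M_{g_s}\otimes\alpha\otimes 1$ acting on $L^2(M;\spinor^M_\ell)\grotimes\Cl^{1,0}\grotimes\bS_1$.

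Third, I would match this with $\morita\circ\inddiff_{g_0}(g)$ using Lemma \ref{lem:inverse-morita}. The operator $\Dir^M_{g_s}\otimes\alpha\otimes 1$, equipped with the grading and $\Cl^{1,0}$-structure inherited from $\eta\otimes\iota$ and $\eta\otimes e$, is precisely the image under $\morita$ of the loop $s\mapsto(L^2(M;\spinor^M_\ell),\Dir^M_{g_s})$ defining $\inddiff_{g_0}(g)$, \emph{provided} one's chosen identification $\Cl^{d,0}\cong\Cl^{d-1,0}\grotimes\Cl^{1,0}$ places the new generator on the canonical side. Moving the extra Clifford generator $e_d$ past the $(d-1)$ generators of $\Cl^{d-1,0}$ produces exactly the sign $(-1)^{d-1}$; this accounts for the sign in the statement and shows that $(-1)^{d-1}\omega \simeq \morita\circ\inddiff_{g_0}$.

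The main obstacle is not any single analytic step — each is a routine adaptation of the Bott periodicity argument familiar from \cite{JEIndex1} — but rather the careful tracking of the four pieces of extra structure (the $\bZ/2$-grading, the $\Cl^{1,0}$-action used by $\ind_1$, the Real structure, and the choice of splitting of $\Cl^{d,0}$) through the identifications. In particular, verifying that the homotopy is truly \emph{natural} in $g$ (so that it extends to a concordance of sheaves in the sense of Remark \ref{remark:simplifynotation}) and that the operator families stay self-adjoint, regular, and invertible at infinity uniformly on compacta of the parameter space is where the bulk of the work lies; and isolating the combinatorial source of the $(-1)^{d-1}$ in the Clifford reordering is the delicate bookkeeping step that must be gotten right.
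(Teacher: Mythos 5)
Your proposal is correct and follows essentially the same route as the paper: decompose the product Dirac operator on $L^2(M;\spinor_M\otimes\cL)\otimes L^2(\bR;\Cl^{1,0})\otimes\bS_1$, analyse the $\bR$-direction summand as a harmonic oscillator whose kernel is spanned by the Gaussian $e^{-rx^2/2}$, compress onto that kernel, and apply the inverse Morita equivalence of Lemma \ref{lem:inverse-morita}. Your attribution of the sign $(-1)^{d-1}$ to reordering the extra Clifford generator past the $d-1$ generators of $\Cl^{d-1,0}$ is the same phenomenon the paper encodes by switching from $\bR\times M$ to $M\times\bR$, and the preliminary concordance flattening the $s$-dependent coefficient $r(s)$ to the constant $r$ (which the paper does first) is the only step you leave implicit.
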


\begin{proof}[Proof of Lemma \ref{lem1thm:comparison-delooped-index-diference2}]
This is similar to, but easier than the proof of Lemma \ref{lem1:proof-thm.comparison-delooped-index-diference1}. As in the proof of that Lemma, we first have to resolve the zig-zag implicit in the definition of $B( \inddiff_{g_0})$. 
We consider the base-points $\gy(M) := \bR \times M \in \cD_\theta(\ast)$ and $\gx (M,g_0) :=(\bR \times M, dx^2+g_0,1,0,r_0,1)\in \cY(M,g_0)\subset \cD_\theta^\op(\ast)$. The composition 
\[
 \xi: |\Riem^+ (M)| \stackrel{\varphi}{\lra} \norm{\cQ_\bullet (M)} \stackrel{\beta}{\lra} \hofib_{\gy(M)} (F)
\]
sends $(g,v) \in \Riem^+ (M) (\Delta^p_e) \times \Delta^p$ to the point $(\bR \times M,0,dx^2+g ,v) \in \cD_{\theta,0}^{\psc} (\Delta^p_e) \times \Delta^p \subset \norm{\cD_{\theta,\bullet}^{\psc}}$, together with the constant path at $\gy(M)$ in $|\cD_\theta|$, and by definition, the composition \eqref{eq:comparisontheorem2} is equal to the composition
\[
| \Riem^+ (M)| \stackrel{\xi}{\lra} \hofib_{\gy(M)} (F) \stackrel{\sim}{\longleftarrow} \hofib_{\cY(M,g_0)} (F^{\op}) \xrightarrow{\trg(\ind_1)_{(M,g_0)}} \Omega |\bK(\cstar(\theta_d))_1|.
\]
To compute it, we construct a commutative diagram
\begin{equation}\label{eq:comparisontheorem3}
\begin{gathered}
 \xymatrix{
| \Riem^+ (M) |\ar[d]^{\xi} \ar[dr]^{\zeta} & & &\\
 \hofib_{\gy(M)} (F) & \ar[l]_{\sim} \hofib_{\cY(M,g_0)} (F^{\op}) \ar[rr]^-{\trg(\ind_1)_{(M,g_0)}} & & \Omega |\bK(\cstar(\theta_d))_1|.
 }
\end{gathered}
\end{equation}
To define the map $\zeta$, let $(g,v)  \in \Riem^+ (M) (\Delta^p_e) \times \Delta^p$ and let $\kappa$, $r$ and $\epsilon$ be as in \eqref{eq:eqn2forlem1thm:comparison-delooped-index-diference2}.
Define $\zeta_0: |\Riem^+ (M)| \to \norm{\cD_{\theta,\bullet}^{\psc,\op}}$ by assigning to $(g,v)$ the point 
\[
 ( \bR  \times M, 0, \eps, dx^2+g, 1,r,1,\kappa,v) \in \cD_{\theta,0}^{\psc,\op}(\Delta^p_e) \times \Delta^p \subset |\cD_{\theta,0}^{\psc,\op}| \subset \norm{\cD_{\theta,\bullet}^{\psc,\op}}.
\]
Under $F^\op$, $\zeta_0(g,v)$ maps to 
\[
 ( \bR \times M, dx^2+g , 0,r,1,v) \in \cD_{\theta}^{\op} (\Delta^p_e) \times \Delta^p \subset |\cD_\theta^{\op}|,
\]
so that $F^\op \circ \zeta_0: |\Riem^+ (M)| \to |\cD_\theta^\op|$ is induced from the sheaf map 
\[
\vartheta_0: g \mapsto ( \bR \times M, dx^2+g , 0,r,1).
\]
Let $c: \bR \to [0,1]$ be smooth and monotone with $c(0)=0$ and $c(1)=1$. The formula
\[
s \mapsto (\bR \times M, dx^2+(1-c(s))g+c(s)g_0 , 0,(1-c(s))r+c(s)r_0,1)
\]
defines a natural concordance $\vartheta$ from $\vartheta_0$ to the sheaf map
\[
 \vartheta_1: \Riem^+ (M) \to (\bR \times M, dx^2+g_0 , 0,r_0,1),
\]
i.e. to the constant map onto the base-point $\gx(M,g_0)$. 

We now define $\zeta$ as the map $|\zeta_0|: |\Riem^+ (M)| \to \norm{\cD_{\theta,\bullet}^{\psc,\op}}$, together with the homotopy of $F^\op \circ |\zeta_0|$ to the base-point provided by the natural concordance $\vartheta$. The choices were made so that these formulas indeed define points in the appropriate spaces. The triangle in \eqref{eq:comparisontheorem3} commutes by inspection. The composition 
\[
|\Riem^+ (M)| \stackrel{\zeta}{\lra} \hofib_\gx (F^{\op}) \xrightarrow{\trg(\ind_1)_{(M,g_0)}}  \Omega |\bK(\cstar(\theta_d))_1|
\]
is induced from $ \omega$, as one checks directly from the definitions.
\end{proof}

\begin{proof}[Proof of Lemma \ref{lem2thm:comparison-delooped-index-diference2}]
This is similar to, but more difficult than the proof of Lemma \ref{lem2:proof-thm.comparison-delooped-index-diference1}. The problem is that $\omega: \Riem^+ (M) \to \Omega \bK(\cstar(\theta_d))_1$ is defined using the Dirac operator on $\bR \times M$, while $\morita \circ \inddiff_{g_0}$ is defined using the Dirac operator on $M$. 

Before we take a closer look to this problem, let us simplify the formula for $\omega$ a bit. Recall that $\omega$ takes a point $g \in \Riem^+ (M)$ to the continuous field of Hilbert modules over $\bR$ with fibre $L^2 (\bR \times M,\spinor_\ell \otimes \bS_1)$, together with the operator family defined by the formula \eqref{eq.proofsecondindexthm2}. Since $r \leq r_0$ (by \eqref{eq:eqn2forlem1thm:comparison-delooped-index-diference2}), the formula (with $u$ running from $0$ to $1$) 
\[
s \longmapsto \Dir_{dx^2+((1-c(s))g+c(s)g_0)} \otimes 1 + ((1-(1-u)c(s))r+(1-u)c(s)r_0)x \eta \otimes \varepsilon
\]
defines operators which are invertible when $s \leq 0$ and $s \geq 1$, and hence yields a concordance beginning with \eqref{eq.proofsecondindexthm2} to 
\begin{equation}\label{eq.proofsecondindexthm3}
s \longmapsto \Dir_{dx^2+((1-c(s))g+c(s)g_0)} \otimes 1 + r x \eta \otimes \varepsilon. 
\end{equation}
Hence $\omega$ is naturally concordant to the sheaf map $\omega'$ which assigns the operator family \eqref{eq.proofsecondindexthm3} instead. 

For the rest of this proof, we deviate from our convention and consider $M \times \bR$ instead of $\bR \times M$; this will cancel out the sign $(-1)^{d-1}$ in the statement of the lemma. Now denote the Mishchenko line bundle on $M$ and on $M \times \bR$ both by the same symbol $\cL$ (as one is the pullback of the other, there is little risk of confusion). Let $\spinor_M$ and $\spinor_{M \times \bR}$ be the spinor bundles, with gradings $\nu$ and $\eta$, respectively. Furthermore, let $D$ be the Dirac operator on $\bR$, which acts on the trivial bundle with fibre $\Cl^{1,0}$, and the grading on this bundle is denoted by the symbol $\upsilon$. 

For a general Riemannian metric $g$ on $M$, we let $\dirac_{g}$ be the associated Rosenberg--Dirac operator. It acts on sections of the tensor product $\spinor_M \otimes \cL$, and it is $\cstar(\theta_{d-1})$-linear. The relation between these spinor bundles and operators is that there is a canonical isomorphism
\begin{equation}\label{eq:indextheorem2proof4}
L^2 (\bR \times M;\spinor_\ell) \cong  L^2 (M;\spinor_M \otimes \cL) \otimes L^2 (\bR; \Cl^{1,0}) , 
\end{equation}
which takes the grading $\eta$ to $\nu \otimes \upsilon $, and the Dirac operator $\Dir_{g+dx^2}$ to $\dirac_{g} \otimes 1 + \nu \otimes D$. After tensoring \eqref{eq:indextheorem2proof4} with $\bS_1$, we find that $\Dir_{g+dx^2} \otimes 1 + f \eta \otimes \varepsilon$ (for a general metric $g$ on $M$ and a general function $f$ on $\bR$) corresponds to 
\begin{equation}\label{eq.proofsecondindex8}
\dirac_g \otimes 1 \otimes 1 + \nu \otimes (D \otimes 1 + f \upsilon \otimes \varepsilon).
\end{equation}
The operator \eqref{eq.proofsecondindex8} acts on the Hilbert module 
\[
L^2 (M;\spinor_M \otimes \cL) \otimes L^2 (\bR; \Cl^{1,0}) \otimes \bS_1. 
\]
We will now show that (for the function $f= rx$) this latter Hilbert module can be decomposed as a direct sum, which is preserved by the operator, and that the operator is invertible on one summand, while the other summand is essentially $ \dirac_g$ on $L^2 (M;\spinor_M \otimes \cL)$. 
For this, we use a well-known computation in quantum mechanics that we recall first (a convenient mathematical reference is \cite[p.\ 119-122]{Roe}). For $r>0$, the differential operator 
\[
 H= -\partial^2+ r^2 x^2
\]
acting on $L^2 (\bR)$ is essentially self-adjoint with spectrum $\spec (H)=\{ (2k+1)r \vert k \in \bN_0\}$. Each point in $\spec(H)$ is an eigenvalue of multiplicity $1$, and the lowest eigenspace $\Eig(H,r)$ is spanned by the function $\psi_0 = e^{-r x^2 /2}$. We use these facts to determine the kernel and the spectrum of the operator 
\[
 D \otimes 1 + rx \upsilon \otimes \varepsilon 
\]
on $L^2 (\bR;\Cl^{1,0} ) \otimes \bS_1= L^2 (\bR )\otimes \Cl^{1,0} \otimes \bS_1$. Note that
\[
( D \otimes 1 + rx \upsilon \otimes \varepsilon )^2 = H\otimes 1 + re \upsilon \otimes \varepsilon 
\]
($e \in \Cl^{1,0}$ is the generator). The two summands commute, and the second one squares to $r^2$ and hence has spectrum $\{ \pm r\}$. It follows that
\[
 \spec (D \otimes 1 + rx \upsilon \otimes \varepsilon)^2 = r \bZ
\]
and
\[
\ker ( D \otimes 1 + rx \upsilon \otimes \varepsilon ) = (\Eig (H,r) \otimes \Cl^{1,0} \otimes \bS_1) \cap (L^2 (\bR) \otimes \Eig(e \upsilon \otimes \varepsilon,-1)).
\]
Therefore 
\begin{equation*}\label{eq.spectrum-calculation}
\ker ( D \otimes 1 + rx \upsilon \otimes \varepsilon ) = \spann \{\psi_0\} \otimes \Eig(e \upsilon \otimes \varepsilon,-1).
\end{equation*}

Exactly as in the proof of \cite[Proposition 4.6]{JEIndex2}, one now argues that the sheaf map $\omega' \sim \omega:\Riem^+ (M) \to \Omega  \bK(\cstar(\theta_d))_1$ given by 
\begin{equation*}
\begin{aligned}
 \omega ': g \longmapsto &\bigl( L^2 (\bR \times M; \spinor_\ell\otimes \bS_1),\\
&\quad\quad\quad s \mapsto \Dir_{dx^2+((1-c(s))g+c(s)g_0)} \otimes 1 + ((1-c(s))r+c(s)r_0) x \eta \otimes \varepsilon\bigr)
\end{aligned}
\end{equation*}
is naturally concordant to the sheaf map
\begin{equation*}
\begin{aligned}
g \longmapsto &\bigl(L^2 (M;\spinor_M) \otimes \Eig(e \upsilon \otimes \varepsilon,-1),\\
&\quad\quad\quad s \mapsto \dirac_{dx^2+((1-c(s))g+c(s)g_0)} \otimes\Eig(e \upsilon \otimes \varepsilon,-1)\bigr)
\end{aligned}
\end{equation*}

Now we apply the inverse $\bK(\Cl^{d,0}\otimes \cstar(G))_1 \to \bK(\Cl^{d-1,0}\otimes \cstar(G))$ of the Morita equivalence, following the recipe of Lemma \ref{lem:inverse-morita}. On $L^2 (M;\spinor_M) \otimes \Eig(e \upsilon \otimes \varepsilon,-1)$, the extra Clifford structure, the $\Cl^{1,0}$-module structure and the grading are given by
\[
c(e)= \twomatrix{}{-\nu}{\nu}{}, \; \rho(e)= \twomatrix{}{1}{-1}{}, \;  \eta= \twomatrix{\nu}{}{}{-\nu},
\]
and hence 
\[
c(e) \rho(e) \eta = \twomatrix{1}{}{}{-1}. 
\]
Hence
\begin{align*}
&\morita^{-1} \Big(L^2 (M;\spinor_M\otimes \cL) \otimes \Eig(e \upsilon \otimes \varepsilon,-1);\\
&\quad\quad\quad\quad\quad \dirac_{dx^2+((1-c(s))g+c(s)g_0)} \otimes \Eig(e \upsilon \otimes \varepsilon,-1)\Big) \\
&\quad\quad\quad\quad\quad\quad =((L^2 (M;\spinor \otimes \cL),\dirac_{dx^2+((1-c(s))g+c(s)g_0)}),
\end{align*}
which represents $\inddiff_{g_0}$. 
\end{proof}

\section{Implications of the concordance implies isotopy conjecture}\label{sec:concimpliesisotopy-consequences}

Let $M$ be a closed manifold. Two psc metrics $g_0,g_1 \in \Riem^+ (M)$ are called \emph{isotopic} if they lie in the same path component and they are called \emph{concordant} if there exists a concordance between them, in other words, if $\Riem^+ ([0,1] \times M)_{g_0,g_1}\neq \emptyset$. A well-known and simple argument (see e.g. \cite[Lemma on p.\ 184]{Gajer}) shows that isotopic metrics are concordant. The converse is a difficult problem.

\begin{defn}[Concordance-implies-isotopy conjecture]\label{conj:conoc-impl-isotopy}
A closed manifold $M$ satisfies the \emph{concordance-implies-isotopy conjecture} if the implication 
\[
\Riem^+ ( [0,1] \times M)_{g_0,g_1} \neq \emptyset \Rightarrow [g_0]=[g_1] \in \pi_0 (\Riem^+ (M)) 
\]
holds, in other words, if concordant psc metrics on $M$ are always isotopic.
\end{defn}

In this section, we assume the validity of the concordance-implies-isotopy conjecture and derive some consequences. Hence the results of this chapter are all conditional and depend on the following assumption.

\begin{hypothesis}\label{hyp:conimplisotopy}
Let $\theta:B \to B\mathrm{O}(d)$ be a $2$-coconnected tangential structure and assume that $B$ is of type $(F_2)$. We assume that all closed $d$- and $(d-1)$-dimensional $\theta$-manifolds which are $2$-connected relative to $B$ satisfy the concordance-implies-isotopy conjecture.
\end{hypothesis}

\begin{hypthm}\label{thm:all-metrics-right-stable-under-conimplisotopy}
Let $W: N \leadsto M$ be a $d$-dimensional cobordism, with $d \geq 6$ and such that $M \to W$ is $2$-connected. Let $\theta: B \to B\mathrm{O}(d)$ be the tangential $2$-type of $W$. Assume that there is a cobordism $U: \emptyset \leadsto N$ such that $N \to U$ is $2$-connected. Let $g_N \in \Riem^+ (N)$ be such that $\Riem^+ (U)_{g_N} \neq \emptyset$, and let $g_M \in \Riem^+ (M)$ be such that $\Riem^+(W)_{g_N,g_M} \neq \emptyset$. Then each psc metric in $ \Riem^+ (W)_{g_N,g_M}$ is right stable.
\end{hypthm}

\begin{proof}
Let $V:= U \cup W: \emptyset \leadsto M$ be the composite cobordism. The given $\theta$-structure on $W$ can be extended to a $\theta$-structure on $V$. This follows from obstruction theory because $N \to U$ is $2$-connected and because $\theta$ is $2$-coconnected. The extended $\theta$-structure $V  \to B$ is $2$-connected as well, because $\pi_1 (W) \to \pi_1 (V )$ is an isomorphism by the Seifert--van Kampen theorem. 
Using Theorem \ref{thm:StabMetrics} (i), we pick $g_0 \in \Riem^+ (M)$ and a right stable $h_0 \in \Riem^+ (V)_{g_0}$. We fix these for the rest of the proof.

The first step of the proof is to show that each self-concordance $h \in \Riem^+ ([0,1] \times M)_{g_0,g_0}$ is stable. Consider the manifold with corners $ [0,1]^2 \times M$ and the psc metric $ ds^2+h$ on it (the coordinates in $[0,1]^2$ are $(s,t)$). We introduce new corners over the points $(s,t)=(\frac{1}{3},0)$ and $(\frac{2}{3},0)$ and smoothen the corners over the points $(1,1)$ and $(0,1)$. In this way, we consider $[0,1]^2 \times M$ as a cobordism 
\[
([\tfrac{1}{3},\tfrac{2}{3}] \times 0) \times M \leadsto (0 \times [0,1] \cup [0,1] \times 1 \cup 1 \times [0,1]) \times M
\]
of manifolds with boundary. Note that the free boundary is $([0,\tfrac{1}{3}] \times M )\cup ([\tfrac{2}{3},1] \times M)$. 

Using the corner-smoothing technique introduced in Section 2 of \cite{ERWpsc2}, we turn $ [0,1]^2 \times M$ with the psc metric $ds^2+h$ into a psc cobordism from $k_0:= ds^2+g_0  $ to $k_1:=h \cup (ds^2+g_0) \cup \overline{h}$ ($\overline{h}$ is the concordance $h$, reversed in direction). The induced metric on the free boundary is $dt^2+g_0$. We can glue the psc cobordism $V \times [0,1]$, with the metric $ ds^2+h_0$, to both boundary cobordisms. The result is a concordance from the psc metric $h_0 \cup k_0 \cup \overline{h_0}$ to the psc metric $h_0 \cup k_1 \cup \overline{h_0}$ on the double $dV$ of $V$ (or rather a manifold canonically diffeomorphic to $dV$). Since the structure map $V \to B$, as well as the inclusion $M \to V$ are $2$-connected, the $\theta$-manifold $dV$ is $2$-connected relative to $B$. Hence by our hypothesis for $d$-manifolds, the concordance-implies-isotopy conjecture holds for $dV$, and we conclude that 
\[
[ h_0 \cup k_0 \cup \overline{h_0}] =[h_0 \cup k_1 \cup \overline{h_0}] \in \pi_0 (\Riem^+ (dV)). 
\]
Since $h_0$ is right stable (and hence $\overline{h_0}$ is left stable), it follows that $[k_0] = [k_1] \in \pi_0 ( \Riem^+ (M \times [0,1])_{g_0,g_0})$. By construction, $k_1$ lies in the same component as $h \cup \overline{h}$, and $k_0$ is (obviously) stable. Therefore $h \cup \overline{h}$ is stable. The same argument, applied to $\overline{h}$ in place of $h$ proves that $\overline{h} \cup h$ is stable as well. The same formal reasoning as in the proof of Theorem 3.1.2 (ii) of \cite{ERWpsc2} proves that $h$ is stable, which finishes the first step of the proof. 

The second step is to generalise the first step to concordances $h \in \Riem^+ ([0,1] \times M)_{g_1,g_2}$, where $g_i \in \Riem^+ (M)$ has to satisfy $\Riem^+ (V)_{g_i} \neq \emptyset$, but is otherwise arbitrary. We first show that $[g_i]=[g_0] \in \pi_0(\Riem^+ (M))$. Pick $h_i \in \Riem^+ (V)_{g_i}$. We identify $V $ with $V \cup ([0,1] \times M)$ and consider $h_i$ as an element in $\Riem^+ (V \cup (  [0,1]  \times M))_{g_i}$. Since $h_0 \in \Riem^+ (V)_{g_0}$ is right stable, we can write $[h_i]= [h_0 \cup h'] \in \pi_0 (\Riem^+ (V \cup ([0,1] \times M))_{g_i})$, for some $h' \in \Riem^+ ([0,1] \times M)_{g_0,g_1}$. Hence $g_0$ and $g_1$ are concordant and therefore, by our hypothesis for $(d-1)$-manifolds, also isotopic. 
Now we use Theorem \ref{thm:improved-chernysh-theorem}, which states that the restriction map $\Riem^+ ([0,1] \times M) \to \Riem^+ (M) \times \Riem^+ (M)$ is a Serre fibration. Lifting a path from $(g_1,g_2)$ to $(g_0,g_0)$ in $\Riem^+ (M) \times \Riem^+ (M)$ to $\Riem^+ ([0,1] \times M)$, with initial condition $h$ gives a path in $\Riem^+ ([0,1] \times M)$ from $h$ to some $h'' \in \Riem^+ ([0,1] \times M)_{g_0,g_0}$. By the first step of the proof, $h''$ is stable and Lemma \ref{lem:stability-homotopy-invariant} implies that $h$ is right stable as well. 

For the third step, we return to the notation of the statement of the theorem, and assume that $h \in \Riem^+ (W)_{g_N,g_M}=\Riem^+ (W \cup [0,1] \times M)_{g_N,g_M}$ (use an identification $W \cong W \cup [0,1] \times M$ as before). 
By Theorem \ref{thm:StabMetrics} (i), there is a $g \in \Riem^+ (M)$ and a right stable $h_W \in \Riem^+ (W)^{\rst}_{g_N,g}$. We can therefore write $[h]= [h_W \cup h'] \in \pi_0 (\Riem^+ (W \cup [0,1] \times M)_{g_N,g_M})$, for some $h' \in \Riem^+ ([0,1] \times M)_{g,g_N}$. Since we assumed that $\Riem^+ (U)_{g_N} \neq \emptyset$, both, $g$ and $g_N$ bound psc metrics on $V$, and by the second step of the proof, it follows that $h'$ is stable. Since $h_W$ is right stable by construction, it follows that $h_W \cup h' \in \Riem^+ (W)_{g_N,g_M}$ is right stable. As $[h_W \cup h']=[h]$, it follows that $h$ is right stable, as claimed.
\end{proof}

Hypothesis \ref{hyp:conimplisotopy} also leads to a strong homotopy invariance theorem for the spaces $\Riem^+ (M)$. 

\begin{hypthm}\label{thm:conditionalhomotopyinvariance}
Let $W_0$ and $W_1$ be compact $d$-manifolds, $d \geq 6$, with boundaries $M_i:= \partial W_i$, and assume that $W_0$ and $W_1$ have the same tangential $2$-type $\theta: B \to B\mathrm{O}(d)$ and that Hypothesis \ref{hyp:conimplisotopy} holds for this tangential $2$-type. Let $g_i \in \Riem^+ (M_i)$, $i=0,1$, be such that $\Riem^+ (W_i)_{g_i} \neq \emptyset$. Then $\Riem^+ (W_0)_{g_0} \simeq \Riem^+ (W_1)_{g_1}$. 
\end{hypthm}

It can be derived from \cite{Chernysh} that if $M_0$ and $M_1$ have the same tangential $2$-type $\theta:B \to BO$ and represent the same cobordism class in $\Omega_d^\theta$, then $\Riem^+ (M_0) \simeq \Riem^+ (M_1)$. For simply-connected spin manifolds, this is derived as e.g.\ Corollary 4.2 of \cite{WalshC}; the general case is shown as Theorem 1.5 of \cite{EbFrenck}. 

\begin{proof}[Proof of Theorem \ref{thm:conditionalhomotopyinvariance}]
Consider $W_i: \emptyset \leadsto M_i$ as a cobordism and pick a handlebody decomposition of $W_i$ relative to $\emptyset$. Write the cobordism $W_i$ as a composition
\[
\emptyset \stackrel{P_i}{\leadsto} N_i \stackrel{V_i}{\leadsto} M_i,
\]
where $P_i$ contains all the handles of index $\leq 2$, and $N_i$ all the handles of index $\geq 3$. The inclusion $N_i \to V_i$ is $2$-connected. Therefore, there exists, by Theorem \ref{thm:StabMetrics} (i) (or rather its obvious dual version), psc metrics $k_i \in \Riem^+ (N_i)$ and left stable metrics $h_i \in \Riem^+ (V_i)_{k_i, g_i}$. The gluing maps 
\[
\mu(\_,h_i): \Riem^+ (P_i)_{k_i} \lra \Riem^+ (W_i)_{g_i}
\]
are weak equivalences, by definition of left stable. By Theorem \ref{thm:all-metrics-right-stable-under-conimplisotopy} and hypothesis \ref{hyp:conimplisotopy}, there exists a right stable $m_i \in \Riem^+ (P_i)_{k_i}$ (here we use that $\Riem^+ (W_i)_{g_i}\neq \emptyset$), which can be viewed as a left stable metric on the opposite cobordism $P_i^{op}: N_i \leadsto \emptyset$. Hence the gluing map gives a weak equivalence
\[
\mu (\_, m_i): \Riem^+ (P_i)_{k_i} \lra \Riem^+ (P_i \cup P_i^{op}).
\]
The closed manifold $P_i \cup P_i^{op}$ is nullbordant in $\theta$-cobordism, and the structure map $P_i \cup P_i^{op} \to B$ is $2$-connected. Hence by the general version of the cobordism invariance theorem (Theorem 1.5 of \cite{EbFrenck}), there is a weak equivalence
\[
\Riem^+ (P_0 \cup P_0^{op}) \simeq \Riem^+ (P_1 \cup P_1^{op}).
\] 
The composition of the three displayed weak equivalences shows the result. 
\end{proof}

\begin{hypthm}\label{hypotheticaltheorem-infiniteloopspace}
 Let $W^d$ be a compact manifold with boundary $M$ and let $\theta:B \to B\mathrm{O}(d)$ be the tangential $2$-type of $W$. Assume that $d \geq 6$, that $\Riem^+ (W)_{g_M} \neq \emptyset$, and that Hypothesis \ref{hyp:conimplisotopy} holds for $\theta$. Then $\Riem^+ (W)_{g_M}$ has the homotopy type of an infinite loop space.
\end{hypthm}

\begin{proof}
Pick a closed, $\theta$-nullbordant $(d-1)$-manifold $N$ with $2$-connected structure map $N \to B$, and pick a nullbordism $V: \emptyset \leadsto N$ such that $N \to V$ is $2$-connected. There are psc metrics $g_N \in \Riem^+ (N)$ and $h_V \in \Riem^+ (V)_{g_N}^{\rst}$. 
By Theorem \ref{thm:infinite-loopspace-theorem}, $\Riem^+ ([0,1] \times N)^{\rst}_{g_N,g_N}$ has the homotopy type of an infinite loop space. By Theorem \ref{thm:all-metrics-right-stable-under-conimplisotopy}, $\Riem^+ ([0,1] \times N)_{g_N,g_N} = \Riem^+ ([0,1] \times N)^{\rst}_{g_N,g_N}$, and by Theorem \ref{thm:conditionalhomotopyinvariance}, there is a weak equivalence
\[
\Riem^+ (W)_{g_M} \simeq \Riem^+ ([0,1] \times N)_{g_N,g_N}.\qedhere
\]
\end{proof}

\appendix

\section{``Topological flag complexes" for sheaves}\label{appendixsheaves}

In this section, we prove an analog of Theorem 6.2 of \cite{GRW} in the context of sheaves on the category $\Mfds$ of smooth manifolds. Let us begin with some definitions. 

\begin{defn}
A sheaf $\cF$ on $\Mfds$ is \emph{concrete} if for each test manifold $X$, the map $\prod_{x \in X} j_x^*:\cF(X) \to \prod_{x \in X} \cF(*)$ given by restriction is injective. A subsheaf $\cG \subset \cF$ of a concrete sheaf is again a concrete sheaf, and we say that $\cG$ is \emph{open} if for each $\xi \in \cF(X)$, the set of all $x \in X$ such that $j_x^* (\xi) \in \cG(\ast) \subset \cF(\ast)$ is open in $X$. 
\end{defn}

\begin{defn}
Let $\cF_\bullet$ be a semi-simplicial sheaf and let $\epsilon_\bullet: \cF_\bullet \to \cF_{-1}$ be an augmentation. We say that $\epsilon_\bullet$ is a weak equivalence if the induced map $\norm{\epsilon_\bullet}: \norm{\cF_\bullet} \to |\cF_{-1}|$ of spaces is a weak equivalence. 
\end{defn}

\begin{defn}
Let $f:\cG \to \cF$ be a map of sheaves. We define a semi-simplicial augmented sheaf $\cE_\bullet (f) \to \cF$ by setting $\cE_p (f) := \cG \times_\cF \cG \times_\cF \ldots \times_\cF \cG$ ($p+1$ factors), with the obvious face maps and augmentation $\cE_0 (f) = \cG\stackrel{f}{\to} \cF$.  
\end{defn}

\begin{defn}\label{defn:topologicalflagcomplex}
Let $\eps_\bullet:\cF_\bullet \to \cF_{-1}$ be an augmented semi-simplicial sheaf, and assume that all $\cF_p$, $p \geq -1$, are concrete. We say that $\eps_\bullet$ is a \emph{topological flag complex} if the following conditions are satisfied:
\begin{enumerate}[(i)]
\item the morphism $\cF_\bullet \to \cE_\bullet(\eps_0)$ given by restriction to the vertices is an isomorphism onto a degreewise open semi-simplicial subsheaf,

\item a tuple $(\xi_0, \ldots, \xi_p) \in \cE_p (\eps_0)(\ast)$ lies in $\cF_p (\ast)$ if and only if $(\xi_i,\xi_j) \in \cF_1 (\ast)$ for all $i<j$.
\end{enumerate}
\end{defn}

\begin{thm}\label{thm:flag-complextheoremn}\label{thm:flagcomplexlemma}
Let $\eps_\bullet:\cF_\bullet \to \cF_{-1}$ be a topological flag complex. Assume that 
\begin{enumerate}[(i)]
\item $\eps_0:\cF_0 \to \cF_{-1}$ has local sections (i.e. for each $\xi \in \cF_{-1} (X)$, each $x \in X$ and $\eta \in \cF_0 (\ast)$ with $\eps_0 (\eta)=j_x^* (\xi)$, there is an open neighbourhood $U$ of $x$ and an element $\zeta \in \cF_0(U)$ such that $\eps_0 (\zeta)= \xi |_U$ and $j_x^* (\zeta)=\eta$.
\item $\epsilon_0(\ast) : \cF_0(\ast) \to \cF_{-1}(\ast)$ is surjective.
\item For any $p \in \cF_{-1}(\ast)$ and any (non-empty) finite set $\xi_1, \ldots, \xi_n \in \epsilon_0(\ast)^{-1}(p)$, there exists a $\xi \in\epsilon_0(\ast)^{-1}(p)$ such that $(\xi_i,\xi) \in \cF_1 (\ast)$ for all $i$. (We say that $\xi$ is \emph{orthogonal} to all $\xi_i$s.)
\end{enumerate}
Then $\epsilon_\bullet$ is a weak equivalence.
\end{thm}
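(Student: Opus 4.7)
Proof proposal. The strategy is to follow as closely as possible the argument of Theorem 6.2 of \cite{GRW}, transported into the sheaf setting. I would begin by applying the relative surjectivity criterion of Proposition \ref{prop:SurjCrit} to the map of representing spaces $\norm{\epsilon_\bullet} : \norm{\cF_\bullet} \to |\cF_{-1}|$, so the task reduces to the following. For any test manifold $X$, closed $A \subset X$, section $\xi \in \cF_{-1}(X)$, and germ near $A$ of a lift of $\xi$ to $\norm{\cF_\bullet}$, one must produce a lift of $\xi$ to $\norm{\cF_\bullet}$ which is concordant relative to $A$ to the given germ. To phrase this cleanly within the sheaf formalism, I would introduce an auxiliary sheaf $\cX$ whose $X$-sections are tuples $\bigl(\{U_\alpha\}, \{\phi_\alpha\}, \{\eta_\alpha\}\bigr)$ consisting of a locally finite open cover of $X$, a subordinate partition of unity, and sections $\eta_\alpha \in \cF_0(U_\alpha)$ lifting $\xi|_{U_\alpha}$ which are pairwise orthogonal on overlaps in the sense that $(\eta_\alpha, \eta_\beta)$ defines a section of $\cF_1$ over $U_\alpha \cap U_\beta$. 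There are natural maps $\cF_{-1} \leftarrow \cX \to \norm{\cF_\bullet}$ via forgetting the local data and via the barycentric coordinate formula respectively, and both maps would be shown to be weak equivalences.

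To construct local lifts, I would use hypotheses (ii) and (i) in sequence. For each $x \in X$, hypothesis (ii) supplies $\eta_x \in \cF_0(\ast)$ with $\epsilon_0(\eta_x) = j_x^*\xi$, and hypothesis (i) then extends this to $\widetilde{\eta}_x \in \cF_0(U_x)$ for some open neighbourhood $U_x$ of $x$, with $\epsilon_0(\widetilde{\eta}_x) = \xi|_{U_x}$. Paracompactness of $X$ supplies a locally finite refinement $\{U_\alpha\}_{\alpha \in \Lambda}$ with local sections $\eta_\alpha \in \cF_0(U_\alpha)$ lifting $\xi|_{U_\alpha}$, together with a subordinate partition of unity $\{\phi_\alpha\}$. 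Given that the $\eta_\alpha$ are pairwise orthogonal on overlaps, the topological flag complex condition of Definition \ref{defn:topologicalflagcomplex}(ii) guarantees that the barycentric assignment $x \mapsto [\{(\alpha, \phi_\alpha(x), \eta_\alpha(x))\}_{\alpha}]$ defines an element of $\norm{\cF_\bullet}$, and this gives the required lift.

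The crux of the argument, and what I expect to be the main obstacle, is to arrange pairwise orthogonality of the local sections $\eta_\alpha$. I would proceed by induction on a well-ordering of the index set $\Lambda$ (the local finiteness of the cover ensuring that at each point of $X$ only finitely many steps are relevant). At the step indexed by $\beta$, assume orthogonality holds on all overlaps among $\{\eta_\alpha\}_{\alpha < \beta}$; at each $x \in U_\beta$ lying in finitely many $U_\alpha$ with $\alpha < \beta$, hypothesis (iii) supplies an element of $\cF_0(\ast)$ orthogonal to every $\eta_\alpha(x)$. The concreteness of the sheaves, the openness of $\cF_\bullet$ inside $\cE_\bullet(\epsilon_0)$ (which makes the relation of pairwise orthogonality an open condition in $X$), and hypothesis (i) together allow one to spread this pointwise choice to a replacement of $\eta_\beta$ on a shrunken neighbourhood, preserving orthogonality with all earlier $\eta_\alpha$. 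The relative version needed to finish via Proposition \ref{prop:SurjCrit} is then obtained by running the same construction on $X \times \bR$ while requiring the cover, partition of unity, and local sections to restrict on a neighbourhood of $A \times \bR \cup X \times (-\infty,0]$ to the pullback of the prescribed germ; openness of the orthogonality condition is what allows such a compatible refinement to exist.
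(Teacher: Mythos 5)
Your overall shape is right — a cover-based auxiliary sheaf, local lifts from hypotheses (ii) and (i), orthogonality from (iii) plus openness, and a relative version for Proposition \ref{prop:SurjCrit} — but there are two genuine gaps. The first is that the weak equivalence $|\cX| \to \norm{\cF_\bullet}$ is asserted and never argued. This is not a formality: in the paper it is obtained by passing through the simplex category $\Simp(\cF_\bullet)$ (Lemma \ref{lem:simplexcategory} and Corollary \ref{cor:flagcomplexlemma-reduction-toMW}) and then invoking Theorem 4.2 of \cite{MadsenWeiss}, which identifies $B\Simp(\cF_\bullet)$ with the representing space of the Madsen--Weiss $\beta$-sheaf. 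Your $\cX$ is a plausible flag-complex simplification of that $\beta$-sheaf, but comparing it with the fat realisation $\norm{\cF_\bullet}$ still requires an argument of the same calibre, and "the barycentric coordinate formula" does not by itself give one (in particular, fat realisations have no degeneracies, so the behaviour as partition-of-unity functions vanish needs care).

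The more serious gap is in your transfinite induction for pairwise orthogonality. At stage $\beta$, hypothesis (iii) hands you, \emph{at each single point} $x \in U_\beta$, an element of $\cF_0(\ast)$ orthogonal to the finitely many $j_x^*\eta_\alpha$ with $\alpha<\beta$, and hypothesis (i) extends it to a section on some small neighbourhood of $x$. But these locally defined sections for different $x$ are unrelated: they need not agree on overlaps, so they do not glue to a single replacement of $\eta_\beta$ over a shrunken $U_\beta$. You are forced to refine $U_\beta$ into many smaller sets, each with its own lift, and the new lifts at stage $\beta$ are not orthogonal to \emph{each other} — so the induction does not close up; it regresses. The paper's proof confronts exactly this: it builds an infinite \emph{sequence} of open covers $\cU^1,\cU^2,\dots$ with orthogonality only \emph{across} stages (each new cover's lifts are chosen orthogonal to the finitely many lifts from earlier covers visible at each point), and then proves a separate combinatorial covering lemma (Lemma \ref{coveringlemma}) producing a single locally finite cover $\cO=(O_i)$ refining the sequence such that any two intersecting $O_i, O_j$ are drawn from \emph{different} covers $\cU^{\varphi_1(i)}\ne\cU^{\varphi_1(j)}$, whence the associated lifts are automatically pairwise orthogonal. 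Some device of this kind is unavoidable; without it your inductive step fails at the gluing.
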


The formulation of Theorem \ref{thm:flag-complextheoremn} is a straightforward adaption of the formulation of Theorem 6.2 of \cite{GRW}. The proof, however, is quite different. We need a tool to recognise that an augmentation map is a weak equivalence. We import this tool from \cite[Theorem 4.2]{MadsenWeiss}. In that paper, a sheaf $\beta  \cX$ is constructed out of any sheaf $\cX$ of categories and a weak equivalence $\beta \cX \simeq B \cX$ is proved. The first step in the proof is to translate the situation of Theorem \ref{thm:flag-complextheoremn} to that situation. This begins with a general simplicial construction. 

We write $\Delta_{\inj}$ for the category indexing semi-simplicial objects: its objects are the finite linear orders $[p] = \{0 < 1 < \cdots < p\}$ for $p \geq 0$ and its morphisms are the \emph{injective} order-preserving functions.

\begin{defn}
Let $X_\bullet$ be a semi-simplicial set. The \emph{simplex category} $\Simp(X_\bullet)$ is the following (unital) category. A object is a pair $([p],x)$ where $[p] \in \Ob (\Delta_{\inj})$ and $x \in X_p$. A morphism $([p],x) \to ([q],y)$ is the data of a morphism $\varphi: [p] \to [q]$ in $\Delta_{\inj}$ such that $\varphi^* y=x$. 

If the semi-simplicial set $X_\bullet$ has an augmentation $\epsilon_\bullet : X_\bullet \to X_{-1}$ then there is an augmentation $\delta_\bullet : N_\bullet\Simp(X_\bullet) \to X_{-1}$ given by $\delta_0([p],x) = \epsilon(x)$.
\end{defn}

Equivalently the objects of $\Simp(X_\bullet)$ are given by $\bigsqcup_{n \geq 0} X_n$; the element $x \in X_p \subset \bigsqcup_{n \geq 0} X_n$ corresponds to $([p], x)$ in the definition above. It will be convenient to think of the simplex category in these terms. 

\begin{lem}\label{lem:simplexcategory}
If $\epsilon_\bullet : X_\bullet \to X_{-1}$ is an augmented semi-simplicial set then there is a natural zigzag of weak equivalences between $B \Simp(X_\bullet)$ and $\norm{X_\bullet}$ over $X_{-1}$.
\end{lem}

\begin{proof}
Let $A_{\bullet,\bullet}(X_\bullet)$ be the bi-semi-simplicial set whose $(p,q)$-simplices are the tuples $(y\to x_0 \to \ldots \to x_p,y)$, where $(y \to x_0 \to \ldots \to x_p) \in N_{p+1} \Simp(X_\bullet)$ and $y \in X_q$. There is an obvious bi-semi-simplicial structure on $A_{\bullet,\bullet}(X)$, and 
\[
\eps_{p,q}: A_{p,q}(X_\bullet) \lra X_q,\; (y \to x_0 \to \ldots \to x_p) \longmapsto y 
\]
and
\[
 \eta_{p,q}:A_{p,q}(X_\bullet) \lra N_p (\Simp(X_\bullet)), \; (y\to x_0 \to \ldots \to x_p) \longmapsto  ( x_0 \to \ldots \to x_p)
\]
define augmentations. For $y \in X_q$, the preimage $\eps_{\bullet,q}^{-1}(y)$ is the nerve of the under-category $y/ \Simp (X_\bullet)$, which has contractible geometric realization since $\Simp (X_\bullet)$ is a unital category. For $x=(x_0 \to \ldots \to x_p) \in N_p (\Simp(X))$ with $x_0 \in X_r$, the semi-simplicial set $\eta_{p,\bullet}^{-1}(x)$ is isomorphic to the semi-simplicial $r$-simplex $\nabla^r_\bullet$, i.e. the semi-simplicial set $p \mapsto \Delta_{\mathrm{inj}} ([p], [r])$. Therefore $\norm{\eta_{p,\bullet}^{-1}(x)} \cong \Delta^r \simeq *$. It follows that both maps
\[
 \norm{N_\bullet (\Simp (X_\bullet))} \stackrel{\norm{\eta_{\bullet,\bullet}}}{\longleftarrow} \norm{A_{\bullet,\bullet}(X_\bullet)} \stackrel{\norm{\eps_{\bullet,\bullet}}}{\lra} \norm{X_\bullet}
\]
are weak equivalences. The maps ${A_{\bullet,\bullet}(X_\bullet)} \to X_{-1}$ induced by the augmentations $\epsilon_\bullet$ and $\delta_\bullet$ are equal.
\end{proof}

We can apply this construction to augmented semi-simplicial sheaves $\cF_\bullet \to \cF_{-1}$. One defines a sheaf $\Simp(\cF_\bullet)$ of categories, which takes a test manifold $X$ to the category $\Simp(\cF_\bullet (X))$. This gives rise to the nerve $N_\bullet (\Simp(\cF_\bullet))$ which is again a sheaf of semi-simplicial (in fact, simplicial) sets, which assigns to a test manifold $X$ the semi-simplicial set $N_\bullet (\Simp(\cF_\bullet) (X))$. The augmentation $\eps_\bullet$ induces an augmentation $\delta_\bullet: N_\bullet (\Simp(\cF_\bullet)) \to \cF_{-1}$. 

\begin{cor}\label{cor:flagcomplexlemma-reduction-toMW}
The augmentation map $\cF_\bullet \to \cF_{-1}$ is a weak equivalence if (and only if) the augmentation map $\delta_\bullet: N_\bullet (\Simp(\cF_\bullet)) \to \cF_{-1}$ is a weak equivalence.
\end{cor}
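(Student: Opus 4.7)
The plan is to promote the proof of Lemma \ref{lem:simplexcategory} to the setting of sheaves by applying its constructions uniformly across test manifolds. Since the bi-semi-simplicial set $A_{\bullet,\bullet}(X_\bullet)$ together with its two augmentations
\[
X_\bullet \xleftarrow{\eps_{\bullet,\bullet}} A_{\bullet,\bullet}(X_\bullet) \xrightarrow{\eta_{\bullet,\bullet}} N_\bullet \Simp(X_\bullet)
\]
are functorial in the semi-simplicial set $X_\bullet$, applying them pointwise to $\cF_\bullet$ (that is, setting $A_{p,q}(\cF_\bullet)(X) := A_{p,q}(\cF_\bullet(X))$) yields a bi-semi-simplicial sheaf $A_{\bullet,\bullet}(\cF_\bullet)$ equipped with augmentations of bi-semi-simplicial sheaves to $\cF_\bullet$ and to $N_\bullet \Simp(\cF_\bullet)$ (viewing the latter two as bi-semi-simplicial sheaves that are constant in the first and second simplicial direction, respectively). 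Both augmentations are compatible with the augmentations to $\cF_{-1}$, which one verifies directly from the definitions.

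To pass to representing spaces, I will interchange the orders of realisation. For each fixed $n \geq 0$, Lemma \ref{lem:simplexcategory} applied to the semi-simplicial set $\cF_\bullet(\Delta^n_e)$ provides a natural zig-zag of weak equivalences of spaces
\[
\norm{\cF_\bullet(\Delta^n_e)} \stackrel{\sim}{\longleftarrow} \norm{A_{\bullet,\bullet}(\cF_\bullet(\Delta^n_e))} \stackrel{\sim}{\longrightarrow} \norm{N_\bullet \Simp(\cF_\bullet(\Delta^n_e))}.
\]
Since fat geometric realisation preserves levelwise weak equivalences of semi-simplicial spaces, and since fat realisations in different semi-simplicial directions commute with each other up to natural weak equivalence, fat-realising over $n$ and rearranging the order of the resulting iterated realisations gives a zig-zag of weak equivalences of spaces
\[
\norm{\cF_\bullet} \stackrel{\sim}{\longleftarrow} \norm{A_{\bullet,\bullet}(\cF_\bullet)} \stackrel{\sim}{\longrightarrow} \norm{N_\bullet \Simp(\cF_\bullet)}.
\]

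By the naturality of the constructions in the proof of Lemma \ref{lem:simplexcategory}, both legs of this zig-zag fit into a commutative triangle with vertex $|\cF_{-1}|$, whose two outgoing edges are $\norm{\eps_\bullet}$ and $\norm{\delta_\bullet}$. Hence $\norm{\eps_\bullet}$ is a weak equivalence if and only if $\norm{\delta_\bullet}$ is, which is exactly the content of the corollary. The only mildly delicate point I foresee is the bookkeeping required to commute the several fat geometric realisations past the representing space functor $\rep{-}$ built from the cosimplicial test manifold $\Delta^\bullet_e$; this is routine and is handled by the standard techniques developed in \cite[\S 2]{SxTech}.
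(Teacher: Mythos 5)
Your proposal is correct and follows essentially the same route as the paper: form the bi-semi-simplicial sheaf $A_{\bullet,\bullet}(\cF_\bullet)$ by applying the construction of Lemma \ref{lem:simplexcategory} objectwise, check that both augmentations out of it become weak equivalences by evaluating on the extended simplices $\Delta^n_e$ and invoking that fat realisation preserves levelwise weak equivalences, and conclude via the commuting square over $\cF_{-1}$. The bookkeeping with interchanging realisations that you flag is exactly the point the paper also treats as routine.
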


\begin{proof}
Using the construction of the proof of Lemma \ref{lem:simplexcategory}, one defines a bi-semi-simplicial sheaf $A_{\bullet,\bullet} (\cF_\bullet)$, which comes with augmentations to $N_\bullet \Simp(\cF_\bullet)$ and $\cF_\bullet$, such that the diagram
\[
 \xymatrix{
 A_{\bullet,\bullet} (\cF_\bullet) \ar[r]^{\eta_{\bullet,\bullet}} \ar[d]^{\eps_{\bullet,\bullet}} & N_\bullet \Simp(\cF_\bullet) \ar[d]^{\delta_\bullet} \\
  \cF_\bullet \ar[r]^{\epsilon_\bullet} & \cF_{-1}
  }
\]
commutes. The two maps out of $A_{\bullet,\bullet}(\cF_\bullet)$ are weak equivalences as follows. 

As weak equivalences are defined by taking (i) representing spaces of sheaves, then (ii) geometric realisation, for the left vertical map we must show that the augmentation map
$$\norm{\epsilon_{\bullet, \bullet}} : \norm{([p],[q],[n]) \mapsto A_{p,q}(\cF_\bullet)(\Delta^n_e)} \lra \norm{([q], [n]) \mapsto \cF_q(\Delta^n_e)}$$
from the geometric realisation of a tri-semi-simplicial set to that of a bi-semi-simplicial set is a weak equivalence. For this to be an equivalence it suffices to show that it is a levelwise weak equivalence in the $n$-direction, i.e.\ that for each $[n] \in \Delta_{\inj}$ the augmentation
$$\norm{\epsilon_{\bullet, \bullet}(\Delta_e^n)} : \norm{([p],[q]) \mapsto A_{p,q}(\cF_\bullet)(\Delta^n_e)} \lra \norm{[q] \mapsto \cF_q(\Delta^n_e)}$$
is a weak equivalence, but this follows from the proof of Lemma \ref{lem:simplexcategory}. 

The case of the upper horizontal map is dealt with similarly. 
\end{proof}

Corollary \ref{cor:flagcomplexlemma-reduction-toMW} reduces the proof of Theorem \ref{thm:flag-complextheoremn} to the proof that the augmentation map $N_\bullet (\Simp(\cF_\bullet)) \to \cF_{-1}$ is a weak equivalence. Now we apply Theorem 4.2 of \cite{MadsenWeiss} to the sheaf $\Simp (\cF_\bullet)$ of categories. According to that theorem, the classifying space $B \Simp (\cF_\bullet)$ is weakly equivalent to the representing space of the sheaf $\beta \Simp(\cF_\bullet)$ that we now describe explicitly, in the case at hand, with minor changes to adapt it better to our situation. 

Fix an uncountable set $\Omega$. An element in $\beta \Simp(\cF_\bullet)(X)$ is a tuple $(\cU,\varphi_*, \varphi_{*,*})$, where
\begin{enumerate}[(i)]
\item $\cU= (U_i)_{i \in \Omega}$ is a locally finite open cover of $X$. For a finite $\emptyset \neq S \subset \Omega$, we let $U_S:= \cap_{i \in S} U_i$; note that $U_S \subset U_R$ if $R \subset S$. 
\item $\varphi_*$ assigns to each finite non-empty $S \subset \Omega$ a pair $(p_S,\xi_S)$, where $p_S \in \bN_0$ and $\xi_S \in \cF_{p_S} (U_S)$.
\item $\varphi_{*,*}$ assigns to each pair of finite non-empty sets $R \subset S \subset \Omega$ a morphism $\varphi_{R,S}: [p_S] \to [p_R]$ in $\Delta_{\inj}$ such that $(\varphi_{R,S}^* \xi_R)|_{U_S}= \xi_S$. It is required that $ \varphi_{R,S} \circ \varphi_{S,T}=\varphi_{R,T}$ whenever $R \subset S \subset T$. 
\end{enumerate}

To work with augmentations, it is convenient to think of the augmentation $\delta_\bullet : N_\bullet (\Simp(\cF_\bullet)) \to \cF_{-1}$ via the corresponding map of sheaves of categories $\delta: \Simp(\cF_\bullet) \to \cF_{-1}$, where the latter is considered as a sheaf of discrete categories (i.e.\ having only identity morphisms). There is a canonical map $\beta \cF_{-1} \to \cF_{-1}$, given by gluing sections using the sheaf property of $\cF_{-1}$, and therefore a map
$$\beta \eps: \beta \Simp (\cF_\bullet) \overset{\beta \delta}\lra \beta \cF_{-1} \lra \cF_{-1}$$
of sheaves. Looking at the proof of Theorem 4.2 of \cite{MadsenWeiss} one sees that the natural equivalence $B \mathcal{X} \simeq |\beta \mathcal{X}|$ is natural for maps of sheaves of categories, and this can be used to show that the zig-zag of weak equivalences between $\rep{\beta \Simp (\cF_\bullet)}$ and $B \Simp (\cF_\bullet)$ is over $\rep{\cF_{-1}}$. Therefore to prove Theorem \ref{thm:flag-complextheoremn} it suffices to show that $\beta \eps$ is a weak equivalence. 
Another preliminary result is necessary for the proof of Theorem \ref{thm:flag-complextheoremn}. 
\begin{lem}\label{coveringlemma}
Let $\Omega$ be an uncountable set. Let $X$ be a finite-dimensional countable and locally finite simplicial complex and let $\cU^1, \cU^2 ,\ldots, $ be a sequence of open covers of the geometric realisation $|X|$, with $\cU^i = (U_{(i, s)})_{s \in \Omega }$. Then there is a locally finite open cover $\cO=(O_i)_{i \in S}$ of $|X|$ indexed by a countable set $S$ and a function $\varphi=(\varphi_1, \varphi_2):S  \to \bN \times \Omega$ such that
\begin{enumerate}[(i)]
 \item For each $i \in \bN$, $O_i \subset U_{\varphi(i)}$ and
 \item if $O_i \cap O_j \neq \emptyset$, then $\varphi_1 (i) \neq \varphi_1 (j)$. 
\end{enumerate}
\end{lem}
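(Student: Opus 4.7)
The key observation is that although the sequence $\cU^1, \cU^2, \ldots$ is infinite, we will only need to use the first $d+1$ of them, where $d := \dim X$. The result then follows from the classical dimension-theoretic theorem of Ostrand for paracompact spaces of finite covering dimension.

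Specifically, since $X$ is a countable simplicial complex of dimension $d$, its geometric realisation $|X|$ is a separable metric space of covering dimension at most $d$. I form a common refinement $\cV$ of $\cU^1, \ldots, \cU^{d+1}$ by taking all non-empty intersections $V(s_1, \ldots, s_{d+1}) := U_{(1,s_1)} \cap \cdots \cap U_{(d+1, s_{d+1})}$ with $s_k \in \Omega$; this is an open cover of $|X|$, and for each $V \in \cV$ there exist canonical indices $s_k(V) \in \Omega$ with $V \subset U_{(k, s_k(V))}$ for $k = 1, \ldots, d+1$. Ostrand's theorem now gives an open refinement of $\cV$ which decomposes as a disjoint union $\cO = \cO_1 \sqcup \cdots \sqcup \cO_{d+1}$, where each family $\cO_c$ consists of pairwise disjoint open sets; by paracompactness and second countability of $|X|$, I may further arrange $\cO$ to be locally finite and hence countable, and enumerate its members as $(O_i)_{i \in \bN}$.

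For each $i$, choose a $V_i \in \cV$ with $O_i \subset V_i$, and let $c(i) \in \{1, \ldots, d+1\}$ denote the unique class for which $O_i \in \cO_{c(i)}$. Setting $\varphi(i) := (c(i), s_{c(i)}(V_i)) \in \bN \times \Omega$, condition (i) holds because $O_i \subset V_i \subset U_{(c(i), s_{c(i)}(V_i))} = U_{\varphi(i)}$; and for condition (ii), if $\varphi_1(i) = \varphi_1(j) = c$ then $O_i, O_j$ both lie in the pairwise-disjoint family $\cO_c$, forcing $O_i \cap O_j = \emptyset$.

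The only substantive ingredient is the invocation of Ostrand's theorem, which is the step on which everything hinges. An alternative, more self-contained route is to proceed directly inside the simplicial category: take a sufficiently fine barycentric subdivision of $X$ so that the open stars of vertices refine $\cV$, and colour each vertex by the dimension of the simplex of $X$ of which it is the barycentre. This produces a $(d+1)$-colouring with the property that open stars of vertices of the same colour are disjoint, and the $\cO_c$ are then the stars of colour $c$. For non-compact $|X|$ the subdivision must be controlled via an exhaustion by finite subcomplexes so that it becomes finer on larger pieces; this is the only place where technical care is required, but it is a routine compactness argument.
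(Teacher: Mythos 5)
Your argument is correct, but it proves the lemma by a genuinely different route from the paper. The paper's proof is an elementary induction on skeleta: after subdividing so that each closed simplex lies in a member of $\cU^1$, the open top-dimensional cells are pairwise disjoint and get colour $1$, while the inductive hypothesis applied to the $(n-1)$-skeleton with the shifted sequence $\cU^2,\cU^3,\ldots$ produces the remaining colours, transported to a neighbourhood of the skeleton by a retraction; the non-compact case is then handled by an exhaustion by finite subcomplexes which consumes a \emph{fresh} tail of the sequence of covers for each piece, so the paper's construction really does use infinitely many of the $\cU^i$. You instead pass to the common refinement of $\cU^1,\ldots,\cU^{d+1}$ and invoke the decomposition theorem of dimension theory (Ostrand) to split a refinement into $d+1$ pairwise-disjoint families, colouring by the family index. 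This is shorter, avoids the exhaustion entirely, and shows the slightly stronger statement that only $d+1$ covers are ever needed; the price is importing a nontrivial external theorem where the paper is self-contained. Your sketched ``alternative route'' (barycentric subdivision plus colouring vertices by the dimension of the simplex they subdivide) is essentially the paper's induction in disguise, and as you note the non-uniform subdivision needed in the non-compact case is exactly the technical point the paper's exhaustion is designed to circumvent.

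Two small caveats. First, your assertion that $|X|$ is a separable metric space is only correct when $X$ is locally finite (true in the paper's application, where $X$ triangulates an open subset of a manifold, but not for an arbitrary countable complex in the weak topology); what your argument actually needs is that $|X|$ is paracompact and Lindel\"of with covering dimension at most $d$, which holds for every countable finite-dimensional CW complex and suffices for Ostrand's theorem and for the countability of a locally finite refinement. Second, in verifying (ii) you should enumerate $\cO$ without repetition (and read (ii) with $i\neq j$, as the paper implicitly does), since two indices naming the same set of one colour class would otherwise violate the condition vacuously.
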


\begin{proof}
We first assume that $X$ is a finite complex, say of dimension $n$, and prove the statement by induction on $n$. The case $n=0$ is trivial: cover $|X|$ by singletons. Assume that the statement is proven for all finite complexes of dimension $\leq n-1$ and let $X$ be finite and $n$-dimensional. Without loss of generality, each simplex of $X$ is contained in one of the covering sets of the cover $\cU^1$ (if not, take the barycentric subdivision of $X$ sufficiently often). 

We now apply the inductive hypothesis to $|X|^{(n-1)}$ and the sequence 
 \[
\cU^2|_{|X|^{(n-1)}}, \cU^3|_{|X|^{(n-1)}} , \ldots  
 \]
of open covers of $|X|^{(n-1)}$. The result is a finite set $S_0$, an open cover $\cO'=(O'_i)_{i \in S_0}$ of $|X|^{(n-1)}$ and a map $\varphi: S_0 \to \bN_{\geq 2} \times \Omega$ with the properties spelled out in the statement of the Lemma. There is an open subset $Z \subset |X|$ containing the $(n-1)$-skeleton $|X|^{(n-1)}$ and a retraction $r: Z \to |X|^{(n-1)}$. The sets
\[
 O_i := r^{-1}(O'_i) \cap U_{\varphi(i)}
\]
are open in $|X|$, $\cup_i O_i$ contains $|X|^{(n-1)}$ and if $O_i \cap O_j$ is non-empty, then $\varphi_1 (i)\neq \varphi_1 (j)$. Now we add the finitely many open simplices $O_1, \ldots O_r$ of $X$ to the family $(O_i)_{i \in \bN}$, take $S:= S_0 \coprod \{1, \ldots, r\}$, choose $\varphi_1 (i)=1$ for $i=1, \ldots, r$ and $\varphi_2$ appropriately. This finishes the proof for finite $X$.

If $X$ is infinite, there is a sequence $\emptyset = |X_0| = Z_0 \subset |X_1| \subset Z_1 \subset |X_2| \subset Z_2 \subset \ldots \subset  |X|$, such that $X_i$ is a finite subcomplex, the union of all $X_i$ is $X$ and $Z_i$ is open in $|X|$ and admits a retraction $r_i:Z_i \to |X_i|$. We apply the lemma to the sequence $\cU^1|_{|X_1|}, \ldots$ of open covers of the finite complex $|X_1|$ and obtain a finite cover $\cO'^1$ of $|X_1|$, with function $\varphi:S_1 \to \bN \times \Omega$. This involves only finitely many of the original open covers, say $\cU^1, \ldots, \cU^{n_1}$. Using the retraction $r_i$, we extend $\cO'^1$ to a family of open sets in $|X|$ covering $|X_1|$, in the same way as in the first step of the proof. 

Starting from the sequence $\cU^{n_1+1}, \ldots $ of open covers, we construct finitely many open sets $O_1, \ldots, O_r$ covering $|X_2|$, which involves only the open covers $\cU^{n_1+1}, \ldots, \cU^{n_2}$, and set $S_2 := S_1 \coprod \{1, \ldots, r\}$. 

Starting from the sequence $\cU^{n_2+1}, \ldots $ of open covers, we construct finitely many open sets $O'_1, \ldots, O'_s$ covering $|X_3|$, which involves only the open covers $\cU^{n_2+1}, \ldots, \cU^{n_3}$. We take $O_i:= O'_i \setminus \overline{Z_1}$ and set $S_3 := S_2 \coprod \{1, \ldots, s\}$.

Continuing this way, we produce the open cover $\cO$ inductively. 
\end{proof}

\begin{proof}[Proof of Theorem \ref{thm:flag-complextheoremn}]
We use the criterion of Proposition 2.18 of \cite{MadsenWeiss} (quoted as Proposition \ref{prop:SurjCrit}) above to show that 
\[
\beta \eps: \beta \Simp(\cF_\bullet) \lra \cF_{-1} 
\]
is a weak equivalence. In the case at hand, this amounts to the following. Let $X$ be a test manifold and let $A \subset X$ be a closed subset. Let $A \subset U$ be an open neighbourhood of $A$ and let $\zeta \in \beta \Simp(\cF_\bullet) (U)$ and $\eta \in \cF_{-1}(X)$ be elements such that $\beta \eps (\zeta) = \eta_U \in\cF_{-1}(U)$. Then we have to find a lift $\psi$ of $\eta$ along $\beta \eps$ which coincides with $\zeta$ on a possibly smaller neighbourhood of $A$. 

Let $\zeta := (\cU^0,\varphi^0_*, \varphi^0_{*,*})$, and write $\varphi^0_S= (p^0_S,\xi^0_S)$. We let $\Omega_0 \subset \Omega$ be the (countable) set of those indices corresponding to nonempty elements of $ \cU^0$. 

We choose an open set $V$ with $A \subset V \subset \bar{V} \subset U$ and let $Y:= X \setminus \bar{V}$ and $x \in Y$. 

If $x \in U$ then the value $\zeta_x \in \beta\Simp(\cF_\bullet)(\{x\})$ involves, by the definition of the $\beta$-sheaf, a finite number of elements of $\cF_0 (\{x\})$, and by hypothesis (iii) of the Theorem we can pick a $\xi_x' \in \cF_0(\{x\})$ lifting $j_x^*\eta$ that is orthogonal to all these. By the local section property (hypothesis (i) of the Theorem), we can extend $\xi'_x$ to an element $\xi_x \in \cF_0(U_{1,x})$, defined on an open neighbourhood $U_{1,x}x \subset Y$ of $x$. By hypothesis (i) of Definition \ref{defn:topologicalflagcomplex} the orthogonality relation is open, so after perhaps shrinking $U_{1,x}$ we may suppose that $\xi_x$ is orthogonal to all the vertices of $\zeta$ over the neighbourhood $U_{1,x}$. 

If $x \not\in U$ then by hypothesis (ii) of the Theorem, we can find a lift $\xi'_x \in \cF_0 (\{x\})$ of $j_x^* \eta$ and by the local section property (hypothesis (i) of the Theorem) we can extend $\xi'_x$ to an element $\xi_x \in \cF_0(U_{1,x})$, defined on an open neighbourhood $U_{1,x}$ of $x$ which is disjoint from $\bar{V}$.

This produces an open cover $\cU^1=(U_{1,x})_{x \in X}$ of $X$, together with prescribed lifts $\xi_{1,x}$ of $\eta$ to $\cF_0(U_{1,x})$, for each $s \in \Omega$ which are orthogonal to all vertices of $\zeta$ over $V$. 

Continuing in this fashion, we construct a sequence of locally finite open covers $\cU^i$ of $Y$, each indexed by $X$, and for each set $U_{i,x}$ an element $\xi_{i,x} \in \cF_0 (U_{i,x})$ which lifts $\eta|_{U_{i,x}}$. The following condition has to be satisfied: if $1 \leq j<i$, then the restrictions of the elements $\xi_{i,x}$ and $\xi_{j,y}$ to the intersection $U_{i,x}\cap U_{j,y}$ are orthogonal.

Moreover, the prescribed lift $\zeta$ of $\eta$ over $U$ involves locally finite collections of $0$-simplices of $\cF_0$, and we require that the restriction of $\xi_{i,x}$ to $U_{i,x} \cap V$ is orthogonal to all the vertices occurring in the lift $\zeta$. This is possible by the hypotheses of the theorem. 

Next, we triangulate $Y$ and invoke Lemma \ref{coveringlemma} to obtain a countable cover $\cO = (O_i)_{i \in X\times \bN}$ 
of $Y$ with the properties spelled out in that lemma. We can choose the sets of $\cO$ to be disjoint from an open neighbourhood $V_0 \subset \overline{V_0} \subset V$ of $A$. 

We pick an injection $X \times \bN \to \Omega\setminus \Omega_0$ and obtain a locally finite cover of $Y$ indexed by $\Omega \setminus \Omega_0$. Together with the original open cover $\cU^0$ of $U$, intersected with $V$, we obtain a cover $\cU$ of $X$ indexed by $\Omega$, which after refinement can be assumed to be locally finite and to coincide with $\cU^0$ on $V_0$. Pick a total order on $\Omega \setminus \Omega_0$. 

A lift of $\eta$ is now given by the following data. Take a finite $\emptyset \neq S \subset \Omega$, write $S_0 = S \cap \Omega_0$ and $S_1=S \setminus S_0$, define $p_S:= p_S + |S_1|$ and $\xi_S := (\xi^0_S,\xi_{s_1}, \ldots,\xi_{s_{|S_1|}})$, where $s_1 < \ldots <s_{|S_1|}$ are the elements of $S_1$ ordered according to the total order of $\Omega\setminus \Omega_0$. This defines the assignment $\varphi_*$, and the morphisms $\varphi_{R,S}$ in $\Delta_{\inj}$ are determined by these data. 
\end{proof}

\bibliographystyle{amsalpha}
\bibliography{psc3}

\end{document}